\newtheorem*{thm*}{Theorem}
\newcommand\wrwr{\operatorname{\wr\wr}}
\newcommand\wrf{\wr^{\text{\upshape f.v.}}}
\renewcommand\sym{\operatorname{Sym}}
\def\pair<#1>{{\langle\!\langle}#1{\rangle\!\rangle}}
\newcommand\CG{\widehat\Gamma}
\newcommand\SL{{\operatorname{SL}}}
\newenvironment{fsa}[1][auto]{
    \begin{tikzpicture}[#1,->,>=Stealth,shorten >=1pt,auto,
      node distance=2.8cm,semithick,double equal sign distance]
      \tikzstyle{every state}=[fill=gray,draw=none,text=white]}{%
    \end{tikzpicture}}
\def\9{{\mathbf 0}}
\def\8{{\mathbf 1}}
\begin{document}
\title{Growth of groups and wreath products}

\author{Laurent Bartholdi}
\address{\'Ecole Normale Sup\'erieure, Paris \emph{and} Mathematisches Institut, Georg-August Universit\"at zu G\"ottingen}
\thanks{Partially supported by ANR grant ANR-14-ACHN-0018-01 and DFG grant BA4197/6-1}
\dedicatory{To Wolfgang Woess, in fond remembrance of many a visit to Graz}
\date{22 December 2015}
\maketitle

\setcounter{tocdepth}{1}
\tableofcontents

%%%%%%%%%%%%%%%%%%%%%%%%%%%%%%%%%%%%%%%%%%%%%%%%%%%%%%%%%%%%%%%%
\section*{Introduction}
These notes are an expanded version of a mini-course given at ``Le
Louverain'', June 24-27, 2014. Its main objective was to gather
together useful facts about wreath products, and especially their
geometry, in its application to problems and questions about growth of
groups. The wreath product is a fundamental construction in group
theory, and I hope to help make the reader more familiar with it.

It has proven very useful, in the recent years, in better
understanding asymptotics of the the word growth function on groups,
namely the function assigning to $R\in\N$ the number of group elements
that may be obtained by multiplying at most $R$ generators. The
papers~\cites{bartholdi-erschler:permutational,bartholdi-erschler:givengrowth,bartholdi-erschler:orderongroups,bartholdi-erschler:imbeddings,bartholdi-erschler:distortion}
may be hard to read, and contain many repetitions as well as
references to outer literature; so that, by providing a unified
treatment of these articles, I may provide the reader with easier
access to the results and methods.

I have also attempted to define all notions in their most natural
generality, while restricting the statements to the most important or
fundamental cases. In this manner, I would like the underlying ideas
to appear more clearly, with fewer details that obscure the line of
sight. I avoided as much as possible reference to literature, taking
the occasion of reproving some important results along the way.

I have also allowed myself, exceptionally, to cheat. I do so only
under three conditions: (1) I clearly mark where there is a cheat; (2)
the complete result appears elsewhere for the curious reader; (3) the
correct version would be long and uninformative.

I have attempted to make the text suitable for a short course. In
doing so, I have included a few exercises, some of which are hopefully
stimulating, and a section on open problems. What follows is a brief
tour of the highlights of the text.

\subsection*{Wreath products}
The wreath product construction, described in~\S\ref{ss:wreath}, is an
essential operation, building a new group $W$ out of a group $H$ and a
group $G$ acting on a set $X$. Assuming\footnote{There is no need to
  require the action of $G$ on $X$ to be faithful; this is merely a
  visual aid. See~\S\ref{ss:wreath} for the complete definition.} that
$G$ is a group of permutations of $X$, the wreath product is the group
$W=H\wrwr_X G$ of $H$-decorated permutations in $G$: if elements of
$G$ are written in the arrow notation, with elements of $X$ lined in
two identical rows above each other and an arrow from each $x\in X$ to
its image, then an element of $H\wrwr_X G$ is an arrow diagram with an
element of $H$ attached to each arrow, e.g.
\[
\begin{tikzpicture}
  \foreach \i/\j/\pos in {1/2/0.3,2/1/0.3,3/4/0.3,4/5/0.7,5/3/0.5}
  \draw[->] (1.7*\i,1) -- node[pos=\pos] {\contour{white}{$h_\i$}} (1.7*\j,0);
\end{tikzpicture}
\]

One of the early uses of wreath products is as a classifier for
extensions, as discovered by Kaloujnine, see Theorem~\ref{thm:kk}:
there is a bijective correspondence between group extensions with
kernel $H$ and quotient $G$ on the one hand, and appropriate subgroups
of the wreath product $H\wrwr G$, with $G$ seen as a permutation group
acting on itself by multiplication.  We extend this result to
permutational wreath products:
\begin{thm*}[Theorem~\ref{thm:kkwreath}]
  Let $G,H$ be groups, and let $G$ act on the right on a set
  $X$. Denote by $\pi\colon H\wrwr G\to G$ the natural
  projection. Then the map $E\mapsto E$ defines a bijection between
  \[\frac{\left\{\begin{array}{l@{\;}r}E:&E\le H\text{ and the $G$-sets $X$ and $H\backslash E$ are isomorphic}\\&\text{via a homomorphism }E\to G\end{array}\right\}}{\text{isomorphism $E\to E'$ of groups intertwining the actions on $H\backslash E$ and $H\backslash E'$}}\]
    and
    \[\frac{\left\{\begin{array}{l@{\;}l}E\le H\wrwr G:&\pi(E)\text{ transitive on }X\text{ and }\\&\ker(\pi)\cap E\overset\cong\longrightarrow H\text{ via }f\mapsto f(x)\text{ for all }x\in X\end{array}\right\}}{\text{conjugacy of subgroups of }H\wrwr G}.
  \]
\end{thm*}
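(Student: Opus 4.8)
The strategy is to replay the proof of Theorem~\ref{thm:kk}, working relative to the $G$-set $X$ rather than to $G$ acting on itself. Fix a base point $x_0\in X$. Given a group $E$ from the left-hand side, write $\varphi\colon E\to G$ for the accompanying homomorphism and, using the identification $H\backslash E\cong X$, choose a set-theoretic transversal $s\colon X\to E$ of the cosets with $s(x_0)=1$ and $Hs(x)$ the coset labelled by $x$. Define $\Phi\colon E\to H\wrwr G$ by $\Phi(e)=\bigl(f_e,\varphi(e)\bigr)$, where $f_e(x)=s(x)\,e\,s\bigl(x\cdot\varphi(e)\bigr)^{-1}$. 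I would then check, in order: that $f_e(x)\in H$, which is exactly the assertion that $x\cdot\varphi(e)$ names the coset $Hs(x)e$, i.e.\ that the $G$-action on $H\backslash E$ is the one induced by $\varphi$; that $\Phi$ is a homomorphism, via the one-line identity $f_{e_1e_2}(x)=f_{e_1}(x)\cdot f_{e_2}\bigl(x\cdot\varphi(e_1)\bigr)$ after unwinding the multiplication in $H\wrwr G$; that $\Phi$ is injective, since $\Phi(e)=1$ forces $\varphi(e)=1$ and then $f_e(x_0)=s(x_0)\,e\,s(x_0)^{-1}=e$; and finally that $\Phi(E)$ satisfies the right-hand conditions, namely $\pi\bigl(\Phi(E)\bigr)=\varphi(E)$ is transitive on $X$ while $\ker\pi\cap\Phi(E)=\Phi(\ker\varphi)$ is carried isomorphically onto $H$ by each evaluation $f\mapsto f(x)$, that evaluation being conjugation by $s(x)$ followed by the fixed identification.

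Conversely, given $E\le H\wrwr G$ as on the right, I keep the same abstract group, use $\pi|_E\colon E\to G$ as the homomorphism, and recover the copy of $H$ and the $G$-set $H\backslash E$ from the hypotheses: since $\pi(E)$ is transitive \emph{and} the evaluation at every point of $X$ is an isomorphism onto $H$, the orbit map based at $x_0$ gives a $G$-equivariant bijection $H\backslash E\to X$. Comparing formulas, the composite right$\to$left$\to$right sends $E$ to $\Phi(E)$ built from a transversal extracted from $E$ itself, and left$\to$right$\to$left returns the original group with the same data up to the chosen identifications; in particular on underlying groups the correspondence is literally $E\mapsto E$, which is the content of the stated bijection.

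The real work is to make the two assignments descend to the quotients and be mutually inverse there. First, two transversals $s,s'$ differ by the map $x\mapsto s(x)s'(x)^{-1}$ into $H$, and conjugating $\Phi(E)$ by the corresponding element of the base group $H^X\le H\wrwr G$ turns one embedding into the other, so the class of $\Phi(E)$ modulo conjugacy does not depend on choices. Second, an isomorphism $\alpha\colon E\to E'$ intertwining the $G$-actions on $H\backslash E$ and $H\backslash E'$ must satisfy $\varphi'\circ\alpha=\varphi$ up to an inner automorphism of $G$ and the kernel of $G\to\sym(X)$ allowed by the setup (cf.\ the footnote in~\S\ref{ss:wreath}), hence carries the embedding of one to a conjugate of the embedding of the other; third, conversely, a conjugation inside $H\wrwr G$ restricts to an isomorphism of the underlying groups intertwining the actions on the two coset spaces. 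Assembling these three points gives the bijection. The step I expect to be the main obstacle is this last bundle of compatibilities, and within it the need to see that \emph{every} intertwiner on the left corresponds to a conjugation on the right and conversely --- equivalently, that nothing is lost passing between ``isomorphic as $G$-sets'' and the coordinatewise requirement that $\ker\pi\cap E$ evaluate isomorphically onto $H$ at every point; this is precisely where the hypothesis ``for all $x\in X$'', rather than at a single base point, is used.
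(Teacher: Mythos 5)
The transversal construction $\Phi(e)=(f_e,\varphi(e))$ with $f_e(x)=s(x)\,e\,s(x\cdot\varphi(e))^{-1}$ is precisely the one the paper uses in its sketch of the first half of Theorem~\ref{thm:kkwreath}, and your verifications up to and including injectivity are correct. The gap is in the very last check of the forward direction, and it recurs dually in the converse. For $e\in\ker\varphi$ one has $f_e(x)=s(x)\,e\,s(x)^{-1}$, so the evaluation at $x$ carries $\ker\pi\cap\Phi(E)=\Phi(\ker\varphi)$ onto $s(x)\,(\ker\varphi)\,s(x)^{-1}=\ker\varphi$, which is a normal subgroup of $E$ contained in $H$ but is not all of $H$ unless $\ker\varphi=H$. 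The left-hand hypothesis yields only $H=\varphi^{-1}\bigl(\operatorname{Stab}_{\varphi(E)}(x_0)\bigr)\supseteq\ker\varphi$, with equality precisely when $\operatorname{Stab}_{\varphi(E)}(x_0)$ is trivial, i.e.\ when $\varphi(E)$ acts \emph{regularly} on $X$. In the converse direction, writing $K:=\ker\pi\cap E$, one has $K\backslash E\cong\pi(E)$, and the orbit map $\pi(E)\to X$, $g\mapsto x_0g$, is a surjection by transitivity but a bijection only in the regular case, so the claimed ``$G$-equivariant bijection $H\backslash E\to X$'' is not available. A concrete failure: take $H=\Z/2$, $X=\{1,2\}$, and $G=\Z/4$ acting through its quotient $\Z/2$. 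The unique left-hand candidate is $E\cong\Z/4$ with $H$ its index-two subgroup, forcing $\ker\varphi=1$, and then $\Phi(E)$ has $\ker\pi\cap\Phi(E)=1\not\cong H$; meanwhile the right-hand side contains two non-conjugate subgroups, both isomorphic to $\Z/4\times\Z/2$, and neither is the image of a left-hand $E$.

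This is in fairness a defect of the theorem's formulation and not only of your write-up: ``$H\le E$ and $H\backslash E\cong X$'' does not encode the abstract data one recovers from a subgroup $E\le H\wrwr G$ with $\ker\pi\cap E\cong H$. That data is a group $E$ together with a distinguished normal subgroup identified with $H$ and a homomorphism $\varphi\colon E\to G$ whose kernel is that subgroup and whose image acts transitively on $X$ --- extension-with-transitive-quotient data, reducing to Theorem~\ref{thm:kk} exactly when $X=G$ acts regularly on itself. With the left-hand side so re-stated, your $\Phi$ and your arguments about independence of the transversal and about conjugacy do establish the correspondence; as written, the proof silently assumes $\ker\varphi=H$, which the stated hypotheses do not provide. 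The paper's own proof constructs only the homomorphism $f\colon E\to K\wrwr_{H\backslash E}E/\operatorname{core}(H)$ and never argues the bijection, so it offers no cover for this step.
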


The wreath product $H\wrwr_X G$ is uncountable, if $H\neq1$ and $X$ is
infinite. It contains some important subgroups: $H\wr_X G$, defined as
those decorated permutations in which all but finitely many labels are
trivial; and $H\wrf_X G$, defined as those decorated permutations in
which the labels take finitely many different values. Clearly $H\wr_X
G\subseteq H\wrf_X G\subseteq H\wrwr_X G$, and $H\wr_X G$ is countable
as soon as $H,G,X$ are countable.

\subsection*{Growth of groups}
Let us summarise here the main notions; for more details,
see~\S\ref{ss:growth}.  A choice of generating set $S$ for a group $G$
gives rise to a graph, the \emph{Cayley graph}: its vertex set is $G$,
and there is an edge from $g$ to $gs$ for each $g\in G$, $s\in S$. The
path metric on this graph defines a metric $d$ on $G$ called the
\emph{word metric}. The Cayley graph is invariant under left
translation, and so is the word metric.

One of the most naive invariants of this graph is its \emph{growth},
namely the function $v_{G,S}(R)$ measuring the cardinality of a ball
of radius $R$ in the Cayley graph. If the graph exhibits some kind of
regularity, then it should translate into some regularity of the
function $v_{G,S}$.

For example, Klarner~\cites{klarner:growth1,klarner:growth2} studied
the growth of crystals (that expand according to a precise and simple
rule) via what turns out to be the growth of an abelian group.

A convenient tool to study various forms of regularity of a function
$v_{G,S}$ is the associated generating function
$\Gamma_{G,S}(z)=\sum_{R\in\N}(v(R)-v(R-1))z^R$. The regularity of
$v_{G,S}$ translates then into a property of $\Gamma_{G,S}$ such as
being a rational, algebraic, $D$-finite, \dots\ function of $z$.

We may rewrite $\Gamma_{G,S}(z)=\sum_{g\in G}z^{d(1,g)}$; then a
richer power series keeps track of more regularity of $G$:
\[\CG_{G,S}(z)=\sum_{g\in G}g z^{d(1,g)}.\]
This is a power series with co\"efficients in the group ring $\Z G$,
and again we may ask whether $\CG_{G,S}$ is rational or
algebraic\footnote{The \emph{rational subring} of $\Z G[[t]]$ is the
  smallest subring of $\Z G[[t]]$ containing $\Z G[t]$ and closed
  under Kleene's star operation $A^*=1+A+A^2+\cdots$, for all $A(z)$
  with $A(0)=0$.

  The \emph{algebraic subring} of $\Z G[[t]]$ is the set of power
  series that may be expressed as the solution $A_1$ of a non-trivial
  system of non-commutative polynomial equations
  $\{P_1(A_1,\dots,A_n)=0,\cdots,P_n(A_1,\dots,A_n)=0\}$ with
  co\"efficients in $\Z G[t]$. The solution is actually rational if
  furthermore the $P_i$ are of the form
  $c_{i,0}+\sum_{j=1}^n c_{i,j}A_j$ with $c_{i,j}\in\Z G[t]$.}.

If $G$ has an abelian subgroup of finite index~\cite{liardet:phd}, or
if $G$ is word-hyperbolic~\cite{grigorchuk-n:complete}, then
$\CG_{G,S}$ is a rational function of $z$ for all choices of $S$. We
give a sufficient condition for $\CG_{G,S}$ to be algebraic:
\begin{thm*}[Theorem~\ref{thm:wreathalg}]
  Let $H=\langle T\rangle$ be a group such that $\CG_{H,T}$ is
  algebraic, and let $F$ be a free group. Consider $G=H\wr F$,
  generated by $S=T\cup\{\text{a basis of }F\}$. Then $\CG_{G,S}$ is
  algebraic.
\end{thm*}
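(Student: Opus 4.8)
The plan is to compute the word metric on $G=H\wr F$ explicitly, and then to extract from it a finite system of non-commutative polynomial equations over $\Z G[z]$ for $\CG_{G,S}$ (together with finitely many auxiliary series), into which $\CG_{H,T}$ enters only through the hypothesis on it. First I would record the word metric. Write an element of $G=H\wr_F F$ as a pair $(f,\varphi)$ with $\varphi\in F$ and $f\colon F\to H$ of finite support, and let $S=T\cup S_F$, where $S_F$ is a basis of $F$ together with its inverses; the letters of $T$ modify the lamp at the current position while those of $S_F$ translate along the Cayley graph of $F$, which is the $2d$-regular tree, $d$ the rank of $F$. Then
\[
|(f,\varphi)|_S=\sum_{u\in\operatorname{supp}f}|f(u)|_T+\bigl(2|E(\Theta)|-d_F(1,\varphi)\bigr),
\]
where $\Theta$ is the smallest subtree of $F$ containing $\{1,\varphi\}\cup\operatorname{supp}f$; the second summand is the minimal length of a walk from $1$ to $\varphi$ visiting $\operatorname{supp}f$, which on a tree crosses every edge of $\Theta$ twice except those on the geodesic $[1,\varphi]$, crossed once. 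The bound ``$\le$'' is given by the evident strategy, and ``$\ge$'' by projecting a geodesic word to a walk in $F$ and using subadditivity of $|\cdot|_T$ at each vertex visited.

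Next, factor $(f,\varphi)=(f,1)\cdot(0,\varphi)$, where $(f,1)$ is the product over $u\in\operatorname{supp}f$ of the base-group element carrying the single lamp $f(u)$ at $u$; these factors commute, having disjoint support. I would expand $\CG_{G,S}=\sum_{(f,\varphi)}(f,\varphi)z^{|(f,\varphi)|}$ along the geodesic $\pi=[1,\varphi]=(1=v_0,v_1,\dots,v_n=\varphi)$: each point of $\operatorname{supp}f$ lies over a unique vertex of $\pi$, so $(f,1)$ is, vertex by vertex along $\pi$, a commuting product of the lamp at $v_i$ and, for each direction $t$ transverse to $\pi$ at $v_i$, a ``closed branch'' contributing a factor $1+z^{2}(0,t)\Psi_t(0,t^{-1})$. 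Here $\Psi_t\in\Z G[[z]]$ records the branch configurations in direction $t$, translated so their root sits at $1$; the self-similarity of the $2d$-regular tree (removing a vertex leaves $2d$, resp.\ $2d-1$, isomorphic rooted subtrees), together with the fact that translating a branch by $\varphi\in F$ is conjugation by $(0,\varphi)$ in $\Z G$, yields the degree-$(2d-1)$ equations
\[
\Psi_t=\iota(\CG_{H,T})\prod_{t'\in S_F\setminus\{t^{-1}\}}\bigl(1+z^{2}(0,t')\Psi_{t'}(0,t'^{-1})\bigr)-1\qquad(t\in S_F),
\]
with $\iota\colon\Z H\hookrightarrow\Z G$ the inclusion of $H$ as the lamps at $1$ (the ``$-1$'' encodes that a leaf of $\Theta$ distinct from $1$ and $\varphi$ must carry a nontrivial lamp). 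Writing $\Phi_D:=\iota(\CG_{H,T})\prod_{t\in D}\bigl(1+z^{2}(0,t)\Psi_t(0,t^{-1})\bigr)$, the contribution of $v_i$ is $(0,v_i)\Phi_{D_i}(0,v_i^{-1})$, where $D_i\subseteq S_F$ is the set of directions transverse to $\pi$ at $v_i$; multiplying these along $\pi$, together with the trailing $(0,\varphi)$ and the power of $z$, the conjugating factors telescope --- consecutive products $(0,v_i^{-1})(0,v_{i+1})$ collapse to the corresponding letter of $\varphi$, and $(0,v_n^{-1})(0,v_n)=1$ --- leaving
\begin{align*}
\CG_{G,S}&=\Phi_{S_F}+\sum_{t\in S_F}z\,\Phi_{S_F\setminus\{t\}}\,(0,t)\,M_t,\\
M_t&=\Phi_{S_F\setminus\{t^{-1}\}}+\sum_{t'\in S_F\setminus\{t^{-1}\}}z\,\Phi_{S_F\setminus\{t^{-1},t'\}}\,(0,t')\,M_{t'}\qquad(t\in S_F),
\end{align*}
a system linear in the $M_t$ with coefficients polynomial in $\iota(\CG_{H,T})$ and the $\Psi_t$.

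To finish, observe that $\CG_{G,S}$, the finitely many series $M_t$, $\Psi_t$ and $\iota(\CG_{H,T})$ together satisfy a finite system of non-commutative polynomial equations over $\Z G[z]$ --- the only scalars occurring are $z$, integers, and the group elements $(0,t)^{\pm1}$. Moreover $\iota(\CG_{H,T})$ is algebraic over $\Z G[z]$: it is the image under the ring homomorphism $\iota$ of a component of a solution of the hypothesised nontrivial algebraic system for $\CG_{H,T}$ over $\Z H[z]$, and pushing that system forward by $\iota$ gives a nontrivial algebraic system over $\Z G[z]$ which $\iota(\CG_{H,T})$ solves. Adjoining it to the equations above exhibits $\CG_{G,S}$ as a component of a solution of a nontrivial algebraic system over $\Z G[z]$, which is exactly what is required. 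The step I expect to be most delicate is the non-commutative bookkeeping: since $H$ need not be abelian one must verify carefully that the branch series really do factor as claimed into commuting products (this rests entirely on the disjoint-support structure of the base group), and one must arrange the decomposition along $\pi$ and the treatment of the trailing permutation $(0,\varphi)$ so that the cross-terms telescope cleanly instead of producing an unbounded family of conjugates; keeping exact track of which series is supported where is essential. Proving the word-length formula rigorously is routine but also requires care.
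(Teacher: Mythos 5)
Your approach is the same as the paper's: factor elements of $H\wr F$ by their support tree in the Cayley tree of $F$, derive a finite non-commutative polynomial system over $\Z G[z]$ for $\CG_{G,S}$ together with auxiliary branch series, and close with the observation that $\iota(\CG_{H,T})$ is itself algebraic over $\Z G[z]$. The final system you wrote is essentially the one the paper produces (Equations \eqref{eq:parry1}--\eqref{eq:parry4}), with the helpful addition of making the lamp at the origin explicit through $\iota(\CG_{H,T})$.

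There is, however, an inversion error in your derivation that you need to repair --- it is exactly the pitfall \S\ref{ss:genwreath} warns against. In your normal form $(f,\varphi)=(f,1)\cdot(0,\varphi)$, a word $(t_0@1)s_1(t_1@1)\cdots s_\ell(t_\ell@1)$ with $\varphi=s_1\cdots s_\ell$ deposits the lamp $t_i$ at $(s_1\cdots s_i)^{-1}$, not at $s_1\cdots s_i$. Consequently the support of $f$ lies over the geodesic $[1,\varphi^{-1}]$, not over $[1,\varphi]$, and the correct support tree is $\Theta=\operatorname{conv}\bigl(\{1,\varphi\}\cup(\operatorname{supp}f)^{-1}\bigr)$ (equivalently $\operatorname{conv}(\{1,\varphi^{-1}\}\cup\operatorname{supp}f)$). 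This is not cosmetic: already in $C_2\wr\Z$ take $\varphi=s^{10}$ and a single lamp at $s^{-5}$; your $\Theta$ has $15$ edges and your formula gives length $2\cdot15-10+1=21$, while the true length is $11$ (realised by $s^5(t@1)s^5$, a straight walk). So the displayed length formula and the sentence ``each point of $\operatorname{supp}f$ lies over a unique vertex of $\pi$'' are both wrong as stated.

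What saves you is that your \emph{formulas}, unlike your prose, already incorporate the inversion: $(0,v_i)\Phi_{D_i}(0,v_i^{-1})$ places the root lamp of $\Phi_{D_i}$ at $v_i^{-1}$ (since $(0,v)(h@1)(0,v^{-1})=h@v^{-1}$), and $(0,t)\Psi_{t}(0,t^{-1})$ likewise roots the branch at $t^{-1}$, so the whole tree you actually build hangs off $[1,\varphi^{-1}]$ while the telescoping runs along $[1,\varphi]$. You flagged ``non-commutative bookkeeping'' as the delicate step; this mismatch between the prose and the equations is precisely where it bites, and it must be reconciled (rewrite the length formula and the decomposition in terms of $(\operatorname{supp}f)^{-1}$) before this counts as a proof. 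Once aligned, your argument is the same as the paper's; the paper additionally treats the more general free-like base $G$ (a free product of $\Z$'s and $C_2$'s) with the same Eulerian-cycle system.
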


We then turn to studying the asymptotics of the growth function
$v_{G,S}$. Let us write $v\precsim w$ to mean that $v(R)\le w(CR)$ for
some constant $C\in\R_+$ and all $R\ge0$, and $v\sim w$ to mean
$v\precsim w\precsim v$. Then the $\sim$-equivalence class of
$v_{G,S}$ is independent of the choice of $S$, so we may simply talk
about $v_G$.

For ``most'' examples of groups, either $v_G(R)$ is bounded by a
polynomial in $R$ or $v_G(R)$ is exponential in $R$. This is, in
particular, the case for soluble, linear and word-hyperbolic
groups. There exist, however, examples of groups for which $v_G(R)$
admits an intermediate behaviour between polynomial and exponential;
they are called groups of \emph{intermediate growth}. The question of
their existence was raised by Milnor~\cite{milnor:5603}, was answered
positively by Grigorchuk~\cite{grigorchuk:growth}, and has motivated
much group theory in the second half of the 20th century.

Let $\eta_+\approx 2.46$ be the positive root of $T^3-T^2-2T-4$, and
set $\alpha=\log2/\log\eta_+\approx0.76$. We shall show that, for
\emph{every} sufficiently regular function $f\colon\R_+\to\R_+$ with
$\exp(R^\alpha)\precsim f\precsim\exp(R)$, there exists a group with
growth function equivalent to $f$:
\begin{thm*}[Theorem~\ref{thm:givengrowth}]
  Let $f\colon\R_+\to\R_+$ be a function satisfying
  \[f(2R)\le f(R)^2 \le f(\eta_+R)\text{ for all $R$ large enough}.\]
  Then there exists a group $G$ such that $v_G\sim f$.
\end{thm*}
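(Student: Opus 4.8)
The plan is to realize $G$ as a permutational wreath product built from a self-similar group acting on a rooted tree, with the branching of the tree tuned so that the growth matches $f$. Concretely, I would take a group $H$ of intermediate growth — a suitable generalization of the Grigorchuk group — acting on a spherically homogeneous rooted tree whose branching degrees $(m_0,m_1,\dots)$ are chosen according to the target function $f$, and then form $G = H \wr_X \Z$ or, more likely, an iterated permutational wreath product $\bigl(\cdots(A \wr_{X_1} A)\wr_{X_2}A\cdots\bigr)$ where $A$ is a fixed finite group (e.g. $A = \Z/2$) and the sets $X_i$ have sizes dictated by $f$. The constant $\eta_+$, the positive root of $T^3 - T^2 - 2T - 4$, should enter as the growth rate of the "portrait" count: it is the largest eigenvalue governing how fast the number of admissible decorated configurations grows as one passes from level $n$ to level $n+1$ of the tree, and $\alpha = \log 2/\log\eta_+$ is then the exponent such that the "fully homogeneous" such construction has growth $\exp(R^\alpha)$.

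The key steps, in order, would be: (1) Fix the algebraic data — a finite group $A$, a generating set, and the self-similar recursion — and verify that the ordinary homogeneous iterated wreath product has growth $\sim \exp(R^\alpha)$; this is where the cubic $T^3 - T^2 - 2T - 4$ is extracted, by setting up the linear recursion for word lengths under the wreath decomposition (a word of length $R$ at level $n$ splits into pieces at level $n-1$, incurring a bounded overhead, and the recursion $\ell_{n} \approx \ell_{n-1}/\eta_+ + O(1)$ or its length-generating analogue produces the characteristic polynomial). (2) Given $f$ satisfying $f(2R)\le f(R)^2 \le f(\eta_+ R)$, translate the two inequalities into admissible choices of the level-sizes $|X_i|$: the condition $f(R)^2 \le f(\eta_+ R)$ says $f$ grows no faster than the "all-branchings-trivial" rate, allowing us to slow the construction down, and $f(2R)\le f(R)^2$ says $f$ grows at least like $\exp(R)$'s lower cousin, i.e. fast enough that we can always speed up. One builds a sequence of cut-off levels $(n_k)$ and interpolates. (3) Prove the two-sided growth estimate $v_G \sim f$: the upper bound $v_G \precsim f$ by bounding the number of elements of word length $\le R$ via counting decorated portraits on the first $O(\log R)$ levels; the lower bound $f \precsim v_G$ by exhibiting enough distinct elements, typically by independently filling in decorations on a large antichain of the tree. (4) Check that $G$ is genuinely a group with a finite generating set and that the estimates are independent of the choice of $S$ up to $\sim$.

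The main obstacle, I expect, is step (2)–(3) done \emph{simultaneously and sharply}: it is easy to get growth between $\exp(R^\alpha)$ and $\exp(R)$, but pinning the growth to a \emph{prescribed} $f$ (rather than merely some function in the window) requires a delicate oscillating construction — one alternates between phases where the tree branches maximally (driving growth up toward $\exp R$) and phases where it branches minimally (letting growth relax toward $\exp(R^\alpha)$), and one must control the growth function throughout the transition, not just in the limit. The regularity hypothesis $f(2R)\le f(R)^2\le f(\eta_+R)$ is exactly what guarantees that at every scale there is "room" to steer $f$ in either direction; making this quantitative, and showing the resulting $G$ has growth equivalent to $f$ on \emph{all} scales $R$ (with uniform constants), is the technical heart. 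A secondary difficulty is verifying the lower bound: one needs that the decorations one writes down really do give elements at the claimed word distance, which uses that the action on the tree is sufficiently "contracting" and that there is no unexpected collapse — this is the place where a careful choice of $H$ (with known growth behavior, e.g. via Theorem-type inputs on Grigorchuk-like groups) does the work.
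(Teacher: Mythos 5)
Your high-level plan---realize $G$ as a permutational wreath product and oscillate between phases that drive the growth rate up and phases that let it relax, with the doubling conditions on $f$ guaranteeing room to steer at every scale---is indeed the shape of the paper's argument. But the concrete mechanism you propose is not the one that works, and several essential ingredients are missing.

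The paper never varies the branching degree of the tree: the tree is binary throughout. What varies is the defining sequence $\omega\in\{0,1,2\}^\infty$ of the Grigorchuk family $G_\omega$, i.e.\ which of the three nontrivial homomorphisms $C_2\times C_2\to C_2$ is applied at each level of the wreath recursion. The constant $\eta_+$ arises as the spectral radius of $M_0$ composed with a cyclic permutation, where $M_0,M_1,M_2$ are the $3\times3$ matrices acting projectively on the $2$-simplex $\Delta$ of weightings of the generators $\{b,c,d\}$; the cubic $T^3-T^2-2T-4$ reflects this three-dimensional state space, not a first-order length recursion as you suggest, and variable branching degrees would produce neither $\eta_+$ nor matching two-sided bounds. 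Moreover the group constructed is not $G_\omega$ itself (whose growth is unknown even for $G_{012}$) but the wreath product $W_\omega(C_2)=C_2\wr_X G_\omega$ over the Schreier orbit $X$ of a boundary ray: adjoining a \emph{finite} lamp group is precisely what sharpens the estimate to the two-sided form $E^{2^k}\le v(\eta_\omega\cdots\eta_{\sigma^{k-1}\omega}\mu_{\sigma^k\omega})\le F^{2^k}$ of Corollary~\ref{cor:global}. The technical engine, which you do not mention at all, is the \emph{inverted orbit growth} $\Delta_\omega$ together with the inverted orbit choice growth $\Sigma_\omega$ (Definition~\ref{def:invorbit} and Propositions~\ref{prop:invgrowth}, \ref{prop:invchoices}, \ref{prop:growthW}); your ``fill in decorations on a large antichain'' lower bound is the right intuition but only becomes sharp through that analysis, which requires the weighted metrics $\|\cdot\|_\omega$ coming from the simplex.

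You also have the two hypotheses reversed: $f(R)^2\le f(\eta_+R)$ forces $f$ to grow \emph{at least} as fast as $\exp(R^{\log2/\log\eta_+})$, the slowest achievable rate (realized by $\omega=(012)^\infty$), while $f(2R)\le f(R)^2$ forces $f$ to grow \emph{at most} exponentially, the fastest achievable rate (approached along long runs of a single symbol). The adaptive construction in the proof of~\eqref{eq:AB} appends syllables $(012)$ or $2$ one at a time so as to keep $\log f(\eta(p_0,\omega'))/2^k$ bounded above and below, and Lemma~\ref{lem:expconv}, giving $\eta(p,(012)^n)\ge\eta_+^{3n}A'$ and $\eta(p,2^n)\le 2^nB'$ uniformly in $p$, is exactly what makes each such step possible without losing control of the multiplicative constants. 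This is the quantitative content you flag as the ``technical heart,'' and it is the part your outline leaves entirely to be supplied.
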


Thus, groups of intermediate growth abound, and the space of
asymptotic growth functions of groups is as rich as the space of
functions. Furthermore, we shall show that there is essentially no
restriction on the subgroup structure of groups of intermediate
growth. Let us call a group $H$ \emph{locally of subexponential
  growth} if every finitely generated subgroup of $H$ has growth
function $\precnsim\exp(R)$. Clearly, if $H$ is a subgroup of a group
of intermediate growth then it has locally subexponential growth. We
show, conversely:
\begin{thm*}[Theorem~\ref{thm:imbed}]
  Let $B$ be a countable group locally of subexponential growth. Then
  there exists a finitely generated group of subexponential growth in
  which $B$ imbeds as a subgroup.
\end{thm*}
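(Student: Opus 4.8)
The plan is to realize $B$ as a subgroup of an iterated permutational wreath product built over a rooted tree, in such a way that the ambient group is finitely generated and still of subexponential growth. First I would fix a generating filtration $B=\bigcup_{n\ge1}B_n$ by finitely generated subgroups, chosen so that each $B_n$ has growth $\precnsim\exp(R)$, say $v_{B_n}(R)\precsim\exp(R^{1-1/n})$ after passing to a subsequence. The idea is to spread the ``complexity'' of $B$ across infinitely many levels of a tree: at level $n$ one uses a wreath product with base acting on a set $X_n$ whose size grows slowly enough that the contributions of the $B_n$'s to the word metric are diluted. Concretely, I would build $G$ as a subgroup of an infinitely iterated wreath product $(\cdots\wr(B_2\wr(B_1\wr A)))$ along a spherically homogeneous tree $T$, where $A$ is a fixed finitely generated group (for instance $A=\Z$ or a suitable finite group) providing the ``action'' at each level, and where only finitely supported decorations are used so that countability and the subexponential estimates survive.

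The key geometric input is the length formula in permutational wreath products: an element of $H\wr_X G$ projecting to $g\in G$ with decorations supported on a finite set $Y\subseteq X$ has word length comparable to $|g|_G$ plus $\sum_{x\in Y}|h_x|_H$ plus the length of a tour in the Schreier graph of $X$ visiting all of $Y$ (this is the standard ``travelling salesman'' estimate underlying all the Bartholdi--Erschler constructions, and it is exactly the geometry emphasized in~\S\ref{ss:wreath}). Iterating this along $T$, an element of $G$ of word length $R$ can only disturb decorations within distance $R$ of the root in $T$, hence only finitely many levels $n\le\phi(R)$ are activated, for a slowly growing function $\phi$. Counting: the number of elements of $G$ of length $\le R$ is bounded by a product over activated levels $n$ of (the number of ways to choose the support in $X_n$) times (the number of decoration values in $B_n$ of bounded length), and by choosing the branching of $T$ and the exhaustion $(B_n)$ to grow slowly enough one forces $\log v_G(R)=o(R)$. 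Making $G$ finitely generated is arranged by the classical device of adding a single ``rotation'' or ``directed'' generator that cyclically permutes the infinitely many copies/levels, so that a bounded generating set suffices to reach every level — this is the same mechanism by which Grigorchuk's group and its generalizations are finitely generated.

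Finally, I would check that $B$ indeed imbeds: the copies of $B_n$ sitting at level $n$, together with the inclusions $B_n\hookrightarrow B_{n+1}$ realized as inclusions of decorated elements supported deeper in the tree, assemble (using that only finitely many decorations are nonzero for any fixed element, and that each individual $b\in B$ lies in some $B_n$) into an injective homomorphism $B\to G$. The main obstacle I anticipate is the tension in the choice of parameters: the branching sizes $|X_n|$ and the exhaustion $(B_n)$ must be large enough that the maps $B_n\hookrightarrow B_{n+1}$ can be accommodated and that a bounded generating set of $G$ can ``drive'' all the levels, yet small enough that the entropy count above yields subexponential growth. Balancing these — essentially a careful bookkeeping of how fast one is allowed to let the tree branch and the groups grow, as a function of the (a priori uncontrolled) growth of the $B_n$ — is the delicate heart of the argument; everything else is the now-standard wreath-product geometry recalled earlier in the text.
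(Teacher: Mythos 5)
Your proposal does not follow the paper's route, and it contains genuine gaps that prevent it from being made to work as stated.

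The central issue is the imbedding mechanism. You propose an infinitely iterated wreath product $(\cdots\wr(B_2\wr(B_1\wr A)))$ and then assert that ``the copies of $B_n$ sitting at level $n$, together with the inclusions $B_n\hookrightarrow B_{n+1}$ realized as inclusions of decorated elements supported deeper in the tree, assemble into an injective homomorphism $B\to G$.'' This does not define a homomorphism: an element $b\in B_1$ also lies in all $B_n$ with $n\ge1$, so your prescription either puts $b$ simultaneously at every level (in which case the image is not a well-defined element, and in any case would have unbounded support for each single $b$, forcing you into an unrestricted product where finite generation fails) or forces you to choose one level per element, in which case multiplicativity breaks when $b b'$ jumps levels. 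Similarly, the ``single rotation generator that cyclically permutes the infinitely many levels'' is never constructed; the finitely generated examples you have in mind (Grigorchuk-type groups) do not become finitely generated by adding a rotation --- they are finitely generated because a self-similar wreath recursion folds all the levels into a fixed finite set of generators, which is a fundamentally different mechanism that you would have to make compatible with embedding arbitrary $B_n$ at level $n$.

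The paper's proof avoids both problems by a two-stage device you do not reproduce. First (Proposition~\ref{prop:imbed'}), $B$ is imbedded into the \emph{derived subgroup} $[G,G]$ of a group $G$ built from finite-valued permutational wreath products $\wrf$, in a way that keeps $G$ ``hyper-$B$'' and hence still locally of subexponential growth --- this is the Neumann-style ``imbed into a commutator subgroup'' trick, upgraded with the cohomological Lemma~\ref{lem:section} so that the relevant functions have finite image. Second, one chooses a single function $f\colon X\to G$ supported on a \emph{rectifiable, spreading, locally stabilizing} sequence $(x_{n(i)})$ in the Schreier graph of a Grigorchuk group $G_\omega$, and sets $W_{(n)}=\langle G_\omega,f\rangle$. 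Rectifiability yields $[G,G]\le [W_{(n)},W_{(n)}]$ (Lemma~\ref{lem:contains [G,G]}), so $B$ imbeds; and the spreading/stabilizing property yields Proposition~\ref{prop:Wconvergence}: for a sufficiently fast $(n(i))$, the ball of radius $m(i)$ in $W_{(n)}$ \emph{coincides} with the ball in $W_{n(1),\dots,n(i)}$, which is a subgroup of a wreath product $G\wr_X G_\omega$ with $G$ finitely generated, hence of subexponential growth by Corollary~\ref{cor:growthW}. Subexponential growth of $W_{(n)}$ then follows from the limit, not from a parameter balance. Your proposal replaces this sharp approximation argument with a heuristic entropy count over ``activated levels''; that count does not account for the inverted-orbit geometry in the travelling-salesman estimate you invoke, and without the derived-subgroup device you also have no route to making the imbedding injective and well-defined while keeping only finitely many generators. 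The ``delicate parameter balancing'' you anticipate is not the actual heart of the argument; what is missing is the qualitative insight that balls of bounded radius cannot see beyond finitely many of the $b_i$ once the support of $f$ is sparse enough relative to the given radii.
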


Finally, it may happen that a group $G$ has exponential growth, namely
that the growth rate $\lim_{R\to\infty}v_{G,S}(R)^{1/R}$ is $>1$ for
all $S$, but that the infimum of these growth rates, over all $S$, is
$1$. Such a group is called of \emph{non-uniform exponential
  growth}. The question of their existence was raised by
Gromov~\cite{gromov:metriques}*{Remarque~5.12}. Again, soluble, linear
and word-hyperbolic groups cannot have non-uniform exponential growth;
but, again, it turns out that such groups abound. We shall show:
\begin{thm*}[Theorem~\ref{thm:nueg}]
  Every countable group may be imbedded in a group of non-uniform
  exponential growth.

  Furthermore, the group $W$ in which the countable group imbeds may
  be required to have the following property: there is a constant $K$
  such that, for all $R>0$, there exists a generating set $S$ of $W$
  with
  \[v_{W,S}(r) \le \exp(K r^\alpha)\text{ for all }r\in[0,R].
  \]
\end{thm*}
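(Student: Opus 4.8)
The plan begins with the observation that the displayed bound already implies non-uniform exponential growth for any finitely generated $W$ of exponential growth. Indeed, if $S$ is the generating set attached to a given $R$, the growth rate $\omega(W,S)=\inf_{r\ge1}v_{W,S}(r)^{1/r}$ satisfies $\omega(W,S)\le v_{W,S}(R)^{1/R}\le\exp(KR^{\alpha-1})$, which tends to $1$ as $R\to\infty$ since $\alpha<1$; hence $\inf_S\omega(W,S)=1$, while $\omega(W,S)>1$ for every $S$. So it suffices to produce a finitely generated group $W$ containing $B$, of exponential growth, and to exhibit for each $R$ a generating set satisfying the displayed bound. Replacing $B$ by $B*\Z$ (still countable), we may and do assume $B\neq1$.

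Fix the first Grigorchuk group $\Gamma$ acting on the rooted binary tree $\{0,1\}^*$; its growth satisfies $v_\Gamma(R)\precsim\exp(R^\alpha)$ for exactly the exponent $\alpha=\log 2/\log\eta_+$ of the statement. We shall use that $\Gamma$ is a regular branch group and that its self-similar action is \emph{contracting}: the two first-level sections of an element $g$ have total word length at most $\rho\,|g|+C$ for a fixed $\rho<1$, so that transporting an element into a vertex at level $m$ of the tree multiplies its word length by $\asymp\mu^m$, where $\mu:=1/\rho>1$. Enumerate $B=\langle b_1,b_2,\dots\rangle$. Building on the wreath-product techniques of Theorem~\ref{thm:imbed}, fix a ray $\xi=(v_0,v_1,\dots)$ and adjoin to the generators of $\Gamma$ finitely many \emph{decorated} generators, together with a \emph{shift} $s$ translating $\xi$ by one level, chosen so that, through a wreath recursion travelling down $\xi$, they encode $b_i$ ``active only below depth $L(i)$'' for a rapidly increasing sequence $L$, and so that $W:=\langle\Gamma,s,\text{decorations}\rangle$ is self-similar along $\xi$. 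One checks, as for Theorem~\ref{thm:imbed}, that $W$ is finitely generated and that $B\hookrightarrow W$; and since $s$ translates the support of $b_1$ to pairwise disjoint sets, $\langle b_1,s\rangle$ is a lamplighter group $\langle b_1\rangle\wr\Z$, of exponential growth because $b_1\neq1$. Hence $W$ has exponential growth.

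The heart of the argument is the choice of generating sets. Put $R_m:=\mu^m$. Let $S_m$ consist of: the generators of $\Gamma$ transported to the level-$m$ vertex $v_m$ of $\xi$; a bounded ``scaffolding'', namely the level-$k$ swaps for $k\le m$ together with $s^{\pm1}$; and the decoration shifted sufficiently far down $\xi$ (conjugated by a large power of $s$) that, relative to $S_m$, the whole copy of $B$ lies at distance $\ge 2R_m$ from the identity. Then $S_m$ still generates $W$: the transported copy of $\Gamma$ together with the swaps regenerates $\Gamma$ acting at every level of the tree, while conjugating the shifted decoration back by the corresponding power of $s^{-1}$ recovers the original decoration. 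The claim to be proved is that the word ball of radius $R_m$ in $(W,S_m)$ contains only ``$\Gamma$-like'' elements: any $S_m$-geodesic of length $\le R_m$ that involves the decoration must first spend $\gtrsim R_m$ letters descending to the depth at which $B$ becomes active, so within radius $R_m$ the group is indistinguishable from $\Gamma$, whence
\[
v_{W,S_m}(r)\le v_\Gamma(r)\le\exp(Kr^\alpha)\qquad(0\le r\le R_m),
\]
with $K$ independent of $m$. I expect the main obstacle to lie precisely here — in making the construction of $W$ rigid enough that $S_m$ still generates all of $W$ while no $B$-information re-enters the ball of radius $R_m$ through the scaffolding; this is essentially the same length-and-contraction bookkeeping that yields the growth estimate for the group of Theorem~\ref{thm:imbed}, now run in reverse, to certify that the exponentially growing, $B$-generated part of $W$ has been pushed past radius $R_m$.

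Granting the displayed inequality, the theorem follows: given $R>0$, choose $m$ with $R_m\ge R$, so that $v_{W,S_m}(r)\le\exp(Kr^\alpha)$ for all $r\in[0,R]$; together with the exponential growth of $W$ and the observation of the first paragraph, this shows that $W$ is of non-uniform exponential growth, and it contains $B$.
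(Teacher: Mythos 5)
The overall shape of your argument is right — build $W$ from the first Grigorchuk group so that, for each $R$, you can choose a generating set that pushes all the exponentially-growing $B$-information past radius $R$, leaving a ball indistinguishable from something whose growth you control. Your opening paragraph (the displayed bound implies $\inf_S\lambda_{W,S}=0$, so non-uniform exponential growth follows once $W$ has exponential growth) is correct and is exactly the reduction the paper makes implicitly. But the heart of your plan has three genuine gaps.

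First, the construction of $W$ is not actually defined. Automorphisms of a rooted tree fix the root, so there is no element ``translating $\xi$ by one level''; your ``shift'' $s$ cannot live in $\operatorname{Aut}(\{\9,\8\}^*)$ or in any permutational wreath product over the Schreier graph, and you never say what group it does live in. Consequently the assertions that $W$ is finitely generated, that $B\hookrightarrow W$, and that $\langle b_1,s\rangle\cong\langle b_1\rangle\wr\Z$ are all conditional on an object that is not exhibited. The paper's $W$ is simply a permutational wreath product $G\wr_X G_{012}$ where $G=\langle s,t\rangle$ contains $B$ and $s,t$ have order $5$ (Theorem~\ref{thm:hnntorsion}); there is no extra shift generator — the Grigorchuk group itself moves the supports around, and exponential growth of $W$ is arranged by making $G$ have exponential growth.

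Second, your bound $v_{W,S_m}(r)\le v_\Gamma(r)$ cannot be right even granting your construction: the ball of radius $r$ in $(W,S_m)$ contains words in the scaffolding and in the transported decorations, which are not elements of $\Gamma$. The paper instead compares $(W,S_i)$ to the wreath product $W'=A\wr_X G_{012}$ with the \emph{finite} fiber $A=C_5\times C_5$, using Theorem~\ref{thm:grigWgrowth} to get $v_{W',S'}(R)\le\exp(KR^\alpha)$; the whole point is that moving one base generator $t@x_i$ far from the other $s@x_0$ makes them behave inside a ball exactly like the two commuting order-$5$ generators of $A$, not like nothing at all.

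Third, the key claim — that no $B$-information re-enters the ball of radius $R_m$ — is the actual content of the theorem, and you explicitly defer it. The paper discharges this with a short, concrete locality argument: the action of a length-$R$ element of $G_{012}$ on a ray depends only on the last $\lceil\log_2 R\rceil$ symbols, so for $i$ large the labelled radius-$R$ balls around $x_0$ and $x_i=\9^\infty\8\9^i$ in the Schreier graph coincide and are disjoint; hence a word of length $\le R$ in $S_i=\{a,b,c,d,s@x_0,t@x_i\}$ is trivial in $W$ if and only if the corresponding word in $S'=\{a,b,c,d,s'@x_0,t'@x_0\}$ is trivial in $W'$. Your ``transport $\Gamma$ to level $m$ plus scaffolding'' idea is a much more invasive modification of the generating set, and you would still need something like this locality argument to certify the ball; it is not clear the length-and-contraction bookkeeping of Theorem~\ref{thm:imbed} transfers the way you hope, since there the group is fixed and the sequence of supports is chosen sparse, whereas here you would be changing the generating set wholesale.
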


\subsection*{(Self-)similar groups and branched groups}
All the constructions mentioned in the previous subsection take place
in the universe of \emph{(self-)similar groups}. Here is a brief
description of these groups; see~\S\ref{ss:ss} for details.

Just as a self-similar set, in geometry, is a set describable in terms
of smaller copies of itself, a \emph{self-similar group} is a group
describable in terms of ``smaller'' copies of itself. A
\emph{self-similar structure} on a group $G$ is a homomorphism
$\phi\colon G\to G\wr_X P$ for a permutation group $P$ of $X$. Thus
elements of $G$ may be recursively written in terms of $G$-decorated
permutations of $X$. For this description to be useful, of course, the
homomorphism $\phi$ must satisfy some non-degeneracy condition (in
particular be injective), and $P$ should be manageable, say finite.

The fact that the copies of $G$ in $G\wr_X P$ are ``smaller'' than the
original is expressed as follows: there is a norm on $G$ such that,
for $g\in G$ and $\phi(g)$ a permutation with labels $(g_x:x\in X)$,
the elements $g_x$ are shorter than $g$, at least as soon as $g$ is
long enough. For example, consider $G$ finitely generated, and denote
by $\|\cdot\|$ the word norm on $G$. One requires $\|g_x\|<\|g\|$ for
all $x\in X$ and all $\|g\|\gg1$; this is equivalent to the existence
of $\lambda\in(0,1)$ and $K\ge0$ such that $\|g_x\|\le\lambda\|g\|+K$
for all $x\in X,g\in G$.

Furthermore, in cases that interest us, the map $\phi$ is almost an
isomorphism, in that its image $\phi(G)$ has finite index in $G\wr_X
P$. Thus $\phi$ may be thought of as a \emph{virtual isomorphism}
between $G$ and $G^X$, namely an isomorphism between finite-index
subgroups. When one endows $G^X$ with the $\ell^\infty$ metric
$\|(g_x)\|=\max_{x\in X}\|g_x\|$, the condition above requires that
this virtual isomorphism be a contraction. On the other hand, endowing
$G^X$ with the $\ell^1$ metric $\|(g_x)\|=\sum_{x\in X}\|g_x\|$, the
optimal Lipschitz constant of the virtual isomorphism plays a
fundamental r\^ole in estimating the growth of $G$.

\emph{Similar} groups are a natural generalization: one is given a set
$\Omega$ and a self-map $\sigma\colon\Omega\righttoleftarrow$; for
each $\omega\in\Omega$, a group $G_\omega$ and a permutation group
$P_\omega$ of a set $X_\omega$; and homomorphisms
$\phi_\omega\colon G_\omega\to
G_{\sigma\omega}\wr_{X_\omega}P_\omega$.
Taking for $\Omega$ a singleton recovers the notion of self-similar
group. Taking $\Omega=\N$ and $\sigma(n)=n+1$ defines in full
generality a similar group $G_0$; but it is often more convenient to
consider a larger family of groups in which $(G_n)_{n\in\N}$
imbeds. In particular, one obtains a \emph{topological space} of
groups, in such a manner that close groups have close properties (for
example, their Cayley graphs coincide on a large ball).

\subsection*{Acknowledgments}
I am very grateful to Yago Antolin, Laura Ciobanu and Alexey
Talambutsa for having organised the workshop in Le Louverain where I
presented a preliminary version of this text, and to the participants
of the workshop for their perspicacious questions.

A large part of the material is taken from articles written in
collaboration with Anna Erschler, and I am greatly indebted to her for
generously embarking me on her projects. It owes much to her energy
and enthusiasm that we were able to finish our joint articles.

Hao Chen, Yves de Cornulier and Pierre de la Harpe helped improve
these notes by pointing out a number of mistakes and inconsistencies.

\subsection*{Open problems}
This text presents a snapshot of what is known on growth of groups in
2014; there remain a large number of open problems. Here are some
promising directions for further research.

\setcounter{section}{0}
\renewcommand\thesection{@}

\begin{enumerate}[leftmargin=*]
\item Which groups $G$ are such that, for all generating sets $S$, the
  complete growth series $\CG_{G,S}$ is a rational function of $z$?

  This is known to hold for virtually abelian groups, and for
  word-hyperbolic groups. Conjecturally, this holds for no other
  group.

  The related question of which groups have a rational (classical)
  growth function is probably more complicated, see~\S\ref{ss:formal}.

\item Is the analytic continuation $1/\CG_{G,S}(1)$ related to the
  complete Euler characteristic of $G$, just as $1/\Gamma_{G,S}(1)$ is
  (under some additional conditions) the Euler characteristic of $G$?
  See~\cite{stallings:centerless}*{\S1.8} for complete Euler
  characteristic.

\item Does there exist a group $G$ with two generating sets $S_1,S_2$
  such that $\CG_{G,S_1}$ is rational but $\CG_{G,S_2}$ is
  transcendental?

  Such an example could be $G=F_2\times F_2$. Set $S=\{x,y\}^{\pm1}$ a
  free generating set of $F_2$, and
  $S_1=S\times\{1\}\sqcup\{1\}\times S$ and
  $S_2=S_1\sqcup\{(s,s):s\in S\}$.

  The same properties probably hold for the usual generating series
  $\Gamma_{G,S_1}$ and $\Gamma_{G,S_2}$. The radius of convergence of
  $\Gamma_{G,S_1}$ is $1/3$, but that of $\Gamma_{G,S_2}$ is
  unknown.

  This problem is strongly related to the ``Matching subsequence
  problem'', which asks for the longest length of a common subsequence
  among two independently and uniformly chosen words of length $n$
  over a $k$-letter alphabet;
  see~\cite{chvatal-sankoff:commonsubsequence}.  It is easy to see
  that, for two uniformly random reduced words of length $n$ in $F_2$,
  the longest common subword has length $\approx\gamma n$ for some
  constant $\gamma$, as $n\to\infty$. Thus a pair $(g,h)\in G$ with
  $\|g\|=\|h\|=n$ has length $2n$ with respect to $S_1$, but
  approximately $(2-\alpha)n$ with respect to $S_2$. We might call
  $\gamma$ the \emph{Chv\'atal-Sankoff constant} of $F_2$.

\item Do there exist infinite simple groups of subexponential growth?

  There is no reason for such groups \emph{not} to exist; but the
  construction methods described in this text yield groups acting on
  rooted trees, which therefore are as far as possible from being
  simple.

  There is also no reason for finitely presented groups of
  subexponential growth \emph{not} to exist; again, the obstacle is
  probably more our mathematical limitations than fundamental
  mathematical reasons.

  The following question, by de la Harpe~\cite{harpe:uniform}, is
  still open at the time of writing: ``Do there exist groups with
  Kazhdan's property (T) and non-uniform exponential growth?''

  Similarly, it is not known whether there exist simple finitely
  generated groups of non-uniform exponential growth, and whether
  there exist finitely presented groups of non-uniform exponential
  growth.

%\item Are random groups of uniformly exponential growth?

\item Do there exist groups whose growth function lies strictly
  between polynomials and $\exp(R^{1/2})$?

  See the discussion in~\S\ref{ss:asymptotic}. There exists a
  superpolynomial function $f(R)\succsim R^{(\log R)^{1/100}}$ such
  that no group has growth strictly between polynomials and
  $f(R)$. There exists no residually nilpotent group whose growth is
  strictly between polynomials and $\exp(R^{1/2})$, see
  Theorem~\ref{thm:growth:hp}.

\item What is the asymptotic growth of the first Grigorchuk group?
  What is its exact growth, for the generating set $\{a,b,c,d\}$?
  Does the growth series of the Grigorchuk group exhibit some kind of
  regularity?

  Some experiments indicate that this must be the case. For example,
  consider the quotient $G_n$ of the first Grigorchuk group that acts
  on $\{\9,\8\}^n$. It is a finite group of cardinality
  $2^{5\cdot2^{n-3}+2}$. For $n\le 7$, the diameter $D_n$ of its
  Cayley graph (for the natural generating set $\{a,b,c,d\}$) is the
  sequence $1,4,8,24,56,136,344$ and satisfies the recurrence
  $D_n=D_{n-1}+2D_{n-2}+4D_{n-3}$. If this pattern went on, the growth
  of the first Grigorchuk group would be asymptotically
  $\exp(R^{\log2/\log\eta_+})\approx\exp(R^{0.76})$.

  If a group has subexponential growth, then its growth series is
  either rational or transcendental, and if the group has intermediate
  growth, then the growth series must be transcendental;
  see~\ref{thm:polya-carlson} in~\S\ref{ss:formal}. Thus Grigorchuk
  group's growth series is transcendental. Does the series satisfy a
  functional equation? That would make it akin to the classical
  partition function $\sum_{n\ge0}p(n)z^n=\prod_{n\ge1}(1-z^n)^{-1}$,
  which (up to scaling and multiplying by $z^{1/24}$) is a
  \emph{modular function}\footnote{i.e. a function
    $A(z)=A(\exp(2\pi i\tau))$ such that the corresponding function
    $\tau\mapsto A(\exp(2\pi i\tau))$ on the upper half plane is
    invariant under a finite-index subgroup of $\SL_2(\Z)$.}. Ghys
  asked me once: ``Is the growth series $\Gamma(z)$ of Grigorchuk's
  group modular?''

\item For every $k\in\N$, the \emph{space of marked $k$-generated
    groups} $\mathscr S_k$ may be defined as the space of normal
  subgroups of the free group $F_k$, by identifying $G=\langle
  s_1,\dots,s_k\rangle$ with the kernel of the natural map $F_k\to G$
  sending generator to generator. It is a compact space. What
  properties does the set $\mathscr I$ of groups of intermediate
  growth, and the set $\mathscr N$ of groups of non-uniform
  exponential growth, enjoy in this space?  For example,

  ``Is there an uncountable open subset of $\mathscr S_k$ in which
  $\mathscr N$, or $\mathscr I$, is dense? Is $\mathscr N$ dense in
  the complement of groups of polynomial growth?''

  Recall that similar groups are families of groups
  $(G_\omega)_{\omega\in\Omega}$ indexed by a space $\Omega$. If all
  $G_\omega$ are $k$-generated, we obtain a map
  $\Omega\to\mathscr S_k$, which under favourable circumstances is
  continuous. This has been exploited
  e.g. in~\cite{nekrashevych:cantor} to produce groups of non-uniform
  exponential growth.

  It had actually been doubted, before Grigorchuk's
  discovery~\cite{grigorchuk:growth}, whether there exist groups of
  intermediate growth. This text tries to convince the reader that
  they are abundant. Giving a precise meaning to the above question
  would quantify, in some manner, the extent to which they are
  abundant.
\end{enumerate}

\subsection*{Notational conventions}
I try to adhere to standard group-theoretical notation. In particular,
the right action of a group element $g$ on a point $x$ is written
$xg$, and a left action would be written ${}^g x$. The stabilizer of
$x$ is written $G_x$. The conjugation action of a group on itself is
written $g^h=h^{-1}g h$, and the commutator of two elements is
$[g,h]=g^{-1}h^{-1}g h=g^{-1}g^h=h^{-g}h$.

I also introduce a minimal amount of new, ``fancy'' notation to
represent elements of wreath products or of self-similar groups, and
hope that it helps in achieving clarity and conciseness.

\renewcommand\thesection{\Alph{section}}
%%%%%%%%%%%%%%%%%%%%%%%%%%%%%%%%%%%%%%%%%%%%%%%%%%%%%%%%%%%%%%%% 
\section{Wreath products}\label{ss:wreath}
We start by the basic construction. Let $H$ be a group, and let $G$ be
a group acting on the right on a set $X$. We construct two groups
\begin{align*}
  H\wr_X G &= \big({\prod_X}'H\big)\rtimes G\qquad\text{the \emph{restricted} wreath product},\\
  H\wrwr_X G &= \big(\prod_X H\big)\rtimes G\qquad\text{the \emph{unrestricted} wreath product}.
\end{align*}

Here the unrestricted product $\prod_X H$ may be viewed as the group
of functions $X\to H$, with pointwise composition and with left
$G$-action given by pre-composition\footnote{Note that the side of the
  action changes! It is best to always use the appropriate side, so as
  to avoid inverses. Recall however that every left action can be
  converted into a right action by setting $f^g:={{}^{g^{-1}}\!f}$ and
  vice versa.}: ${}^g\!f$ is the function given by
$({}^g\!f)(x)=f(x g)$. The \emph{restricted} product $\prod_X' H$ is
then identified with finitely supported functions $X\to H$. In both
cases, this product is a subgroup of the wreath product, and is called
its \emph{base group}.

In the particular case of $X=G$ with natural right action by
multiplication, one calls $H\wrwr_G G$ the \emph{regular unrestricted
  wreath product}, and writes it simply $H\wrwr G$; and similarly for
the \emph{regular restricted wreath product} $H\wr_G G=H\wr G$.

Assume that the action of $G$ on $X$ is faithful; so that elements of
$G$ may be identified with permutations of $X$. The best way to
describe elements of $H\wrwr_X G$ or its subgroup $H\wr_X G$ is by
\emph{decorated permutations}: one writes a permutation of $X$,
decorated by elements of $H$, such as

\begin{equation}\label{eq:permutation}
  \begin{tikzpicture}[baseline=5mm]
    \foreach \i/\j/\pos in {1/2/0.3,2/1/0.3,3/4/0.3,4/5/0.7,5/3/0.5}
    \draw[->] (1.7*\i,1) -- node[pos=\pos] {\contour{white}{$h_\i$}} (1.7*\j,0);
  \end{tikzpicture}
\end{equation}
Permutations are multiplied as usual: by stacking them, and pulling
the arrows tight. Likewise, decorated permutations are multiplied by
stacking them and multiplying the labels along the composed arrows. We
do not write the labels when they are the identity. Here is a
graphical computation of a product:
\begin{equation*}
  \begin{tikzpicture}[baseline=0mm]
    \foreach \i/\j/\pos in {1/2/0.3,2/1/0.3,3/4/0.3,4/5/0.7,5/3/0.5}
    \draw[->] (1.2*\i,0.8) -- node[pos=\pos] {\contour{white}{$h_\i$}} (1.2*\j,0.06);
    \foreach \i/\j/\pos in {1/2/0.3,2/3/0.7,3/1/0.5,4/5/0.3,5/4/0.3}
    \draw[->] (1.2*\i,-0.06) -- node[pos=\pos] {\contour{white}{$k_\i$}} (1.2*\j,-0.8);
  \end{tikzpicture}\quad=\quad
  \begin{tikzpicture}[baseline=7mm]
    \foreach \i/\j/\k/\pos in {1/2/3/0.2,2/1/2/0.3,3/4/5/0.2,4/5/4/0.7,5/3/1/0.5}
    \draw[->] (1.4*\i,1.4) -- node[pos=\pos] {\contour{white}{$h_\i k_\j$}} (1.4*\k,0);
  \end{tikzpicture}.
\end{equation*}

When writing formul\ae, we must sometimes depart from the graphical
notation, in which permutations are written top-to-bottom or
left-to-right and thanks to their arrows there is no ambiguity in
knowing in which order to compose the labels. We invariably let
permutations act on the right on sets, and thus `$\sigma\tau$' means
`first $\sigma$, then $\tau$'.

\begin{exse}
  In the wreath product $W=\{\pm1\}\wr\sym(2)$, consider the element
  \[h=\begin{tikzpicture}
    \draw[->] (0,1) -- (2,0);
    \draw[->] (2,1) -- node [pos=0.3] {\contour{white}{$-1$}} (0,0);
  \end{tikzpicture}
  \]
  Show by concatenating the diagram with itself that $h$ has order
  exactly $4$. The group $W$ has order $8$; which of the order-$8$
  groups is it?
\end{exse}

Let us consider a wreath product $W=H\wrwr_X G$. In writing elements
$w,w'\in W$ algebraically, we may express them in the form $w=f g$ with
$f\colon X\to H$ and $g\in G$, or in the form $w'=g f$. In both cases,
the element $g$ and the function $f$ are unique, by definition of the
semidirect product. The compositions $(f g)(f'g')$ and $(g f)(g'f')$
are, in all cases, computed using the relation
\[g\cdot f = {}^g\!f\cdot g,
\]
namely
\begin{equation}\label{eq:wreathmult}
  (f g)(f'g')=(f\cdot{{}^g\!f'})(g g')\text{ and }(g f)(g' f')=(g
g')({}^{(g')^{-1}}\!f \cdot f').
\end{equation}

\begin{exse}
  Let $R$ be a ring, viewed as a group under addition, and let $R G$
  denote the group ring of $G$, on which $G$ acts by right
  multiplication. Show that $R\wr G$ is isomorphic to $R G\rtimes
  G$.
  More generally, let $X$ be a $G$-set; then $RX$ is a
  $G$-module. Show that $R\wr_X G$ and $RX\rtimes G$ are isomorphic.
\end{exse}

\subsection{Actions}
Assume now moreover that $H$ acts from the right on a set $Y$. Then
there are two natural sets on which $W=H\wrwr_X G$ acts:
\begin{itemize}
\item There is an action on $Y\times X$, given by $(y,x)\cdot
  f g=(y f(x),x g)$ for $(y,x)\in Y\times X$; it is called the
  \emph{imprimitive} action;
\item There is an action on $Y^X$, the set of functions $X\to Y$,
  given by $(\phi\cdot f g)(x g) = \phi(x)f(x)$ for $\phi\colon X\to Y$;
  it is called the \emph{primitive} action.
\end{itemize}

\begin{exse}
  There are natural bijections between the sets
  $(Z\times Y)\times X=Z\times (Y\times X)$ and
  $(Z^Y)^X=Z^{Y\times X}$. Assume now that a group $G$ acts on $X$, a
  group $H$ acts on $Y$ and a group $I$ acts on $Z$. Show that the
  bijections above give isomorphisms between the groups
  $(I\wr_Y H)\wr_X G$ and $(I\wr_{Y\times X}(H\wr_X G))$ as
  permutation groups, both of $Z\times Y\times X$ and of $(Z^Y)^X$.
\end{exse}

\subsection{History}
Leo Kaloujnine understood the importance of wreath products in the
early 1940's. It is said that he worked, during the second World War,
in a uniform factory and observed a rivet machine; it was made of a
rotating ring containing many rotating disks in it. Identifying the
movement of the ring with an action of $G$ and each subdisk with an
action of $H$, one sees that motions of the machine are described by
wreath product elements. In his dissertation (under \'Elie Cartan,
\cite{kaloujnine:struct}), he studied the Sylow subgroups of symmetric
groups, and showed that they were iterated wreath products.

However, this description of maximal $p$-subgroups of symmetric groups
already appears in the classical 1870 treatise by Camille
Jordan~\cite{jordan:subs}*{II.I.41}, who implicitly defined wreath
products there. This is all the more remarkable since Sylow's theorems
were only published two years later~\cite{sylow:subs}!

Kaloujnine returned to Soviet Union after the war, and contributed
greatly to the development of mathematics in Ukraine, founding in 1959
the department of algebra and mathematical logic. He is remembered for
the following important result classifying group extensions; namely,
that the wreath product is a universal object containing all
extensions:
\begin{thm}[Kaloujnine-Krasner,~\cite{kaloujnine-krasner:extensions}]\label{thm:kk}
  Let $G,H$ be groups. Denote by $\pi\colon H\wrwr G\to G$ the natural
  projection. Then the map $E\mapsto E$ defines a bijection between
  \[\frac{\big\{E : 1\to H\to E\to G\to 1\big\}}{\text{isomorphism of extensions}}\]
  and
  \[\frac{\big\{E\le H\wrwr G:\pi(E)=G\text{ and }\ker(\pi)\cap E\overset\cong\longrightarrow H\text{ via }c\in H^G\mapsto c(1)\big\}}{\text{conjugacy of subgroups of }H\wrwr G}.
  \]
\end{thm}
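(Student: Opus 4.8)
The plan is to exhibit the bijection explicitly in both directions and check that the two constructions are mutually inverse up to the stated equivalence relations. First I would recall the classical construction realising an extension inside the wreath product. Given an extension $1\to H\to E\xrightarrow{p} G\to 1$, choose a set-theoretic section $s\colon G\to E$ of $p$ with $s(1)=1$. Define a map $\iota\colon E\to H\wrwr G$ by sending $e\in E$ to the decorated permutation with underlying permutation $p(e)\in G$ and label function $c_e\colon G\to H$ given by $c_e(g)=s(g)\,e\,s(gp(e))^{-1}$, which lands in $H=\ker p$ because $p$ kills it. A direct computation using the cocycle-type identity for $s$ shows $\iota$ is a homomorphism; injectivity follows since the label at $g=1$ already recovers $e$ when $p(e)=1$, and in general $p(e)$ plus the label determine $e$. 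The image $\bar E:=\iota(E)$ visibly projects onto all of $G$ under $\pi$, and $\ker(\pi)\cap\bar E=\iota(H)$ maps isomorphically to $H$ via $c\mapsto c(1)$, since $\iota(h)$ has trivial permutation part and label function $g\mapsto s(g)\,h\,s(g)^{-1}$, whose value at $1$ is $h$. So $\iota$ produces a subgroup of the required type.

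Next I would run the construction backwards. Given $\bar E\le H\wrwr G$ with $\pi(\bar E)=G$ and $\ker(\pi)\cap\bar E\xrightarrow{\cong}H$ via $c\mapsto c(1)$, set $E:=\bar E$ as an abstract group, let the map $E\to G$ be $\pi$ restricted to $\bar E$, and identify $H$ with $\ker(\pi)\cap\bar E$ through the given isomorphism. Surjectivity of $\pi|_{\bar E}$ and the identification of the kernel show this is an extension of $G$ by $H$; this is essentially tautological. The content is then to verify the two round trips. Starting from an extension, applying $\iota$ and then reading off the abstract extension returns an extension isomorphic to the original (the isomorphism is $\iota$ itself, which intertwines the projections to $G$ and restricts to the identity on $H$ after the canonical identifications). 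Conversely, starting from a subgroup $\bar E$, forming the abstract extension and then re-embedding via some section $s$ yields a subgroup $\bar E'$; I must show $\bar E'$ is conjugate to $\bar E$ in $H\wrwr G$.

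The main obstacle is precisely this last point: the embedding $\iota$ depends on the chosen section $s$, and different sections give different — but conjugate — subgroups. The key computation is that if $s,s'$ are two sections with $s(1)=s'(1)=1$, then $s'(g)=a(g)s(g)$ for a unique function $a\colon G\to H$ with $a(1)=1$, and the two resulting embeddings $\iota_s,\iota_{s'}$ satisfy $\iota_{s'}(e)=\alpha^{-1}\iota_s(e)\alpha$ where $\alpha\in\ker(\pi)=H^G$ is the label function $a$ itself, viewed as an element of the base group of $H\wrwr G$. One checks this by substituting $s'=a\cdot s$ into the formula for $c_e$ and using~\eqref{eq:wreathmult} to see the $a$-terms collect into a single conjugation. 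This simultaneously handles both the section-dependence in the forward map and the reconstruction ambiguity in the backward map, since any $\bar E$ of the prescribed form is already of the shape $\iota_s(E)$ for the section $s(g):=$ the unique element of $\bar E$ with permutation part $g$ and label $1$ at the identity — one must check such an element exists and is unique, which again uses the kernel condition. Once the conjugation formula is in hand, well-definedness of both maps on the quotient sets and the fact that they are inverse bijections follow formally. The generalisation to permutational wreath products (Theorem~\ref{thm:kkwreath}) will proceed along the same lines, replacing ``$\pi(E)=G$'' by ``$\pi(E)$ transitive on $X$'' and tracking the $G$-set structure of $H\backslash E$ throughout.
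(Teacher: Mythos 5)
Your proposal is correct and follows exactly the construction the paper sketches in the hint to the exercise (and reuses in the proof of Theorem~\ref{thm:kkwreath}): the embedding $e\mapsto\bigl(g\mapsto s(g)\,e\,s(g\tau(e))^{-1}\bigr)\tau(e)$, together with the observation that changing the section $s$ conjugates the image by the label function of the discrepancy. You supply the well-definedness, injectivity and round-trip checks that the paper leaves to the reader; the only slip is cosmetic — with your sign conventions the relation comes out as $\iota_{s'}(e)=\alpha\,\iota_s(e)\,\alpha^{-1}$ rather than $\alpha^{-1}\iota_s(e)\alpha$, which of course does not affect the conjugacy statement.
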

\begin{exse}
  Prove Theorem~\ref{thm:kk}.

  Hint: given an extension $1\to H\to E\overset\tau\to G\to 1$, choose
  a set-theoretic section\footnote{namely, a map satisfying
    $\tau(\widetilde g)=g$ for all $g\in G$} $g\mapsto\widetilde g$ of
  $\tau$, and define an imbedding $E\to H\wrwr G$ by
  \[e\mapsto\left(g\mapsto \widetilde g e(\widetilde{g\tau(e)})^{-1}\right)\tau(e).\]
\end{exse}

Theorem~\ref{thm:kk} may be interpreted more abstractly as saying
that, if $1\to H\to E\to G\to 1$ is an exact sequence and $f_0\colon
H\to K$ is a group homomorphism, then $f_0$ extends naturally to a
homomorphism $f\colon E\to K\wrwr G$. The case $f=\mathsf{id}$ and
$H=K$ is exactly the statement of the theorem, and the generalised
version is proven in exactly the same manner. The advantage of this
formulation is that one need not require that $H$ be normal in $E$;
and the more general statement is
\begin{thm}\label{thm:kkwreath}
  Let $H\le E$ be groups, and let $f_0\colon H\to K$ be a
  homomorphism. Then $f_0$ extends naturally to a
  homomorphism\footnote{recall that the core of the subgroup $H$ is
    the intersection of its conjugates. $E/\operatorname{core}(H)$ is
    the natural permutation group acting faithfully on the left coset
    space $H\backslash E$.}
  $f\colon E\to K\wrwr_{H\backslash E}E/\operatorname{core}(H)$.

  More precisely, let $G,H$ be groups, and let $G$ act on the right on a set
  $X$. Denote by $\pi\colon H\wrwr G\to G$ the natural
  projection. Then the map $E\mapsto E$ defines a bijection between
  \[\frac{\left\{\begin{array}{ll}E:&H\le E\text{ and the $G$-sets $X$ and $H\backslash E$ are isomorphic}\\&\text{via a homomorphism }E\to G\end{array}\right\}}{\text{isomorphism $E\to E'$ of groups intertwining the actions on $H\backslash E$ and $H\backslash E'$}}\]
    and
    \[\frac{\left\{\begin{array}{ll}E\le H\wrwr G:&\pi(E)\text{ transitive on }X\text{ and }\\&\ker(\pi)\cap E\overset\cong\longrightarrow H\text{ via }c\in H^X\mapsto c(x)\text{ for all }x\in X\end{array}\right\}}{\text{conjugacy of subgroups of }H\wrwr G}.
  \]
\end{thm}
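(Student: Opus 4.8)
The plan is to reprise the explicit Kaloujnine--Krasner embedding indicated in the hint to Theorem~\ref{thm:kk}, with the bookkeeping adjusted for the fact that $H$ need not be normal in $E$. First I would fix a transversal $\{t_x:x\in X\}$ for the right cosets of $H$ in $E$, indexed so that $Ht_x$ corresponds to $x$ under the given isomorphism $H\backslash E\cong X$ and normalised so that $t_{x_0}=1$ for the base point $x_0$ attached to the coset $H$ itself. Write $\overline E:=E/\operatorname{core}(H)$ for the faithful quotient of $E$ acting on $X=H\backslash E$, and let $\tau\colon E\to G$ be a homomorphism realising $H\backslash E\cong X$ as $G$-sets, so that $\overline E$ is identified with a subgroup of $G$ and $Ht_xe=Ht_{x\tau(e)}$ for all $e\in E$, $x\in X$. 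The ``correction terms'' $t_xe\,t_{x\tau(e)}^{-1}$ then lie in $H$, and I would set
\[f\colon E\longrightarrow K\wrwr_X\overline E,\qquad f(e)=\bigl(x\mapsto f_0(t_xe\,t_{x\tau(e)}^{-1})\bigr)\,\overline e,\]
where $\overline e$ denotes the image of $e$ in $\overline E$.

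That $f$ is a homomorphism follows from a term-by-term comparison with the wreath-product multiplication~\eqref{eq:wreathmult}, using the telescoping identity
\[t_xe\,t_{x\tau(e)}^{-1}\cdot t_{x\tau(e)}e'\,t_{x\tau(ee')}^{-1}=t_xee'\,t_{x\tau(ee')}^{-1}\]
together with $f_0$ being a homomorphism; and $f$ extends $f_0$ because the label of $f(h)$ at $x_0$ is $f_0(t_{x_0}h\,t_{x_0}^{-1})=f_0(h)$, the point $x_0$ being fixed by $\tau(h)$ and $t_{x_0}$ being trivial. This proves the first assertion of the theorem. Taking $K=H$ and $f_0=\operatorname{id}$ and using $\overline E\le G$, one gets a subgroup $f(E)\le H\wrwr G$, and I would check the two conditions on the wreath side: $\pi(f(E))=\overline E$ is transitive on $X$ since $H\backslash E$ is a transitive $E$-set, and for each $x$ the evaluation $c\mapsto c(x)$ carries the part of $f(E)$ lying over the stabiliser of $x$ isomorphically onto $H$, with the conjugations $h\mapsto t_xh\,t_x^{-1}$ accounting for the several base points.

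For the bijection I would send the class of an abstract $E$ to the conjugacy class of $f(E)$, and conversely send a subgroup $\widehat E$ satisfying the wreath-side conditions to the pair formed by $\widehat E$ and the copy of $H$ delivered by the evaluation isomorphism, equipped with the action on its coset space read off from the projection onto $\pi(\widehat E)$; by construction these are mutually inverse. The work is to see that both directions respect the stated equivalences, and this is precisely a matching of two ambiguities. Replacing the transversal $t_x$ by $h_xt_x$ with $h_x\in H$ replaces $f$ by its conjugate under the base-group element $(x\mapsto f_0(h_x))\in\prod_X K$ --- a one-line computation with~\eqref{eq:wreathmult} --- and the remaining freedom in the identification $H\backslash E\cong X$ is absorbed by conjugation by elements of $G\le H\wrwr G$. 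In the reverse direction, conjugating $\widehat E$ by a base-group element does not change $\ker(\pi)\cap\widehat E$ as an abstract group nor the induced coset action, while conjugating by an element of $G$ merely relabels $X$.

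The step I expect to be the main obstacle is surjectivity of the correspondence: from a subgroup $\widehat E\le H\wrwr G$ with $\pi(\widehat E)$ transitive and all evaluations isomorphisms one must produce, inside the reconstructed abstract group, an honest transversal of the recovered copy of $H$ for which $\widehat E$ is \emph{literally} the image of $f$. This is where the hypothesis that each evaluation $c\mapsto c(x)$ is an isomorphism --- surjective, not merely injective --- does the real work, pinning the transversal down uniquely up to the base-group conjugation already accounted for; with this in hand, the remaining identifications are the same routine verifications as in the normal case of Theorem~\ref{thm:kk}.
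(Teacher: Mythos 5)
The proposal is correct and follows essentially the same route as the paper: a right transversal $T$ of $H$ in $E$, the same label formula $t_xe\,t_{x\tau(e)}^{-1}$ on the edge of the decorated permutation, and post-composition with $f_0$. The paper's own proof is even terser, stopping after defining the map, so your extra discussion of the homomorphism check, the transversal ambiguity, and the inverse direction supplies detail the paper leaves to the reader.
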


Note that we could have written $E$ instead of
$E/\operatorname{core}(H)$ everywhere above. We included it to obtain
a smaller group acting on the set $X$, thus reinforcing the parallel
with Theorem~\ref{thm:kk}, and expressing $E$ as a subgroup of a
presumably easier-to-construct group. In particular, if
$[E:H]=d<\infty$, then $E/core(H)$ is a group of cardinality between
$d$ and $d!$, since it acts faithfully and transitively on the set
$H\backslash E$ of cardinality $d$.
\begin{proof}
  We follow the sketched proof of Theorem~\ref{thm:kk}; rather than a
  section $G\to E$, we choose a right transversal $T$ of $H$ in $E$;
  namely, a subset $T\subset E$ such that every $e\in E$ may uniquely
  be written in the form $ht$ with $h\in H,t\in T$.

  For $e\in E$, let $f(e)$ be the following $H$-decorated permutation
  of $H\backslash E$: the permutation is given by the natural
  right-multiplication action of $E$ on $H\backslash E$, and on every
  edge, say from $H t$ to $H u$ if $H t\cdot e=H u$, the label is
  $t e u^{-1}$.

  In formulas, this may be written as follows, although all
  verifications are easier in the ``decorated permutations'' form.
  Denote by $\pi$ the natural map $E\to E/\operatorname{core}(H)$. Set
  then $f(e)=c\pi(e)$ with $c\colon H\backslash E\to H$ given, for
  $t\in T$, by $c(Ht)=t e u^{-1}$ for the unique $u\in T$ such that
  $t e u^{-1}\in H$.

  Composing the map $c$ by the homomorphism $f_0\colon H\to K$ gives the
  first statement.
\end{proof}

Wreath products have occupied a prominent place in the theory of
permutation groups. For illustration, they appear as fundamental
constructions in the O'Nan-Scott theorem classifying maximal subgroups
of the symmetric group. We skip subcases
\textit{(iii.3)\dots(iii.viii)} which are too technical:
\begin{thm}[O'Nan-Scott,~\cite{scott:p-reps}]
  Let $G$ be a maximal subgroup of $\sym(X)$ for a finite set $X$. Then either
  \begin{enumerate}
  \item the action of $G$ is not transitive\footnote{i.e.\ there are
      at least two orbits on $X$}; then $X=Y\sqcup Z$ and
    $G=\sym(Y)\times\sym(Z)$; or
  \item the action of $G$ is transitive, but not
    primitive\footnote{i.e.\ there is a non-trivial $G$-invariant
      equivalence relation on $X$}; then $X=Y\times Z$ and
    $G=\sym(Y)\wr\sym(Z)$ in its imprimitive action; or
  \item the action of $G$ is primitive; and then either
    \begin{enumerate}
    \item[(3.i)] $G$ is an affine group over a finite field; or
    \item[(3.ii)] $X=Y^Z$ and $G=\sym(Y)\wr\sym(Z)$ in its primitive action; or
    \item[(3.iii)] \ldots\ (3.viii) \ldots\qed
    \end{enumerate}
  \end{enumerate}
\end{thm}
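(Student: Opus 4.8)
The plan is to distinguish cases according to the transitivity and primitivity of the action of $G$, and in each case to exhibit a proper subgroup of $\sym(X)$ that preserves a natural combinatorial structure and contains $G$; maximality of $G$ will then force equality. First, if $G$ is intransitive, pick one orbit $Y\subseteq X$; then $G$ maps $Y$ to itself and hence also $X\setminus Y$ to itself, so $G\le\sym(Y)\times\sym(X\setminus Y)$ (lumping all non-$Y$ orbits into the second factor). This subgroup is intransitive, hence a proper subgroup of $\sym(X)$, so maximality gives $G=\sym(Y)\times\sym(X\setminus Y)$, which is case (1) with $Z=X\setminus Y$. Second, if $G$ is transitive but imprimitive, fix a nontrivial $G$-invariant partition of $X$; by transitivity all blocks share a common size, so we may write $X=Y\times Z$ with the blocks being the sets $Y\times\{z\}$. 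Then $G$ lies in the setwise stabiliser of this partition inside $\sym(X)$, which is precisely $\sym(Y)\wr\sym(Z)$ in its imprimitive action; this stabiliser is itself imprimitive, hence a proper subgroup of the primitive group $\sym(X)$, and maximality forces $G=\sym(Y)\wr\sym(Z)$, which is case (2).

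The real content is the primitive case, where I would carry out the socle analysis underlying the O'Nan--Scott theorem. Write $N=\operatorname{soc}(G)$. A minimal normal subgroup of a primitive permutation group is a direct power $T^k$ of a simple group $T$ and acts transitively on $X$; one first reduces to understanding such an $N$, handling along the way the possibility that $G$ has two minimal normal subgroups, which are then isomorphic, both regular, and centralise each other. If $T$ is abelian, then $T\cong C_p$ and $N$ is elementary abelian; being transitive and abelian it is regular, so we may identify $X$ with the additive group of a $d$-dimensional vector space over the field with $p$ elements, with $N$ acting by translations. Since $C_G(N)=N$, the stabiliser $G_0$ of the zero vector embeds into $\operatorname{Aut}(N)=\operatorname{GL}_d(p)$, whence $G\le\operatorname{AGL}_d(p)$; maximality of $G$ then yields $G=\operatorname{AGL}_d(p)$, an affine group, which is case (3.i).

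If instead $T$ is nonabelian, one studies the point stabiliser $N_x=N\cap G_x$ together with the conjugation action of $G$ on the $k$ simple factors of $N$, which $G$ permutes transitively since $N$ is a minimal normal subgroup. The subdirect structure of $N_x$ inside $T^k$ and the way $G$ partitions the factors sort $G$ into the remaining O'Nan--Scott families. Exactly one of these is the product-action type: here $G$ preserves a product decomposition $X=Y^Z$, so $G$ sits inside $\sym(Y)\wr\sym(Z)$ acting in its primitive action on functions $Z\to Y$, and maximality of $G$ gives $G=\sym(Y)\wr\sym(Z)$, which is case (3.ii). The other possible configurations --- holomorphs of a simple group, simple and compound diagonal actions, twisted wreath products, and almost simple type --- are the subcases (3.iii)--(3.viii) that we omit.

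I expect the main obstacle to be precisely this last analysis: controlling the subgroups of $T^k$ that project subdirectly onto the factors, keeping track of the $G$-invariant partition of the factor set (essentially Scott's lemma, together with the description of twisted wreath products), and then verifying, in each resulting family, that the natural overgroup is genuinely maximal --- that no subgroup lies strictly between it and $\sym(X)$. It is the length and the case-by-case character of this bookkeeping, rather than any single deep idea, that makes it reasonable to skip subcases (3.iii)--(3.viii); I note also that this structural classification does not in itself require the classification of finite simple groups, which enters only when one wishes to list the almost simple examples explicitly.
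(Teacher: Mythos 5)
The paper does not prove this theorem: the \verb|\qed| placed at the end of subcase (3.viii), together with the citation to Scott, indicates that it is quoted without proof, so there is no in-paper argument against which to match your sketch.

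On its own terms your outline is the standard one and is sound as far as it goes. Cases (1) and (2) are handled correctly: identify an invariant combinatorial structure (an orbit, respectively a nontrivial block system), observe that its full stabiliser in $\sym(X)$ is a proper overgroup of $G$, and conclude by maximality. One small remark on case (1): your argument in fact gives the sharper conclusion $\#Y\neq\#Z$, since when $\#Y=\#Z$ the group $\sym(Y)\times\sym(Z)$ lies properly inside $\sym(Y)\wr\sym(2)<\sym(X)$ and so cannot be maximal; the theorem as stated is weaker than what you prove. Case (3) you rightly present as carrying all the weight: the socle analysis, the dichotomy abelian/nonabelian, and --- for nonabelian $T$ --- Scott's lemma on subdirect subgroups of $T^k$ and the trichotomy on how $G$ permutes the simple factors. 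You are also correct that this structural classification does not itself use the classification of finite simple groups; CFSG enters only in the Liebeck--Praeger--Saxl determination of which of the candidate overgroups are genuinely maximal, which is a separate question not asserted here. The gaps you flag in (3.iii)--(3.viii) are real, but you flag them honestly, and the case-by-case bookkeeping they conceal is precisely why the paper cites Scott rather than reproducing the argument.
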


\subsection{Generators for wreath products}\label{ss:genwreath}
We now consider restricted wreath products $W=H\wr_X G$ in more
detail. We introduce the following notation: if $X$ be a set and $H$
be a group, then for all $x\in X$ and $h\in H$,
\[\text{we define $h@x\colon X\to H$ by }\begin{cases}(h@x)(x) = h,\\ (h@x)(y) = 1\text{ for all }y\neq x.\end{cases}.\]
\begin{wrapfigure}[21]{r}{4cm}
  \vspace{-3ex}
  \includegraphics[width=4cm]{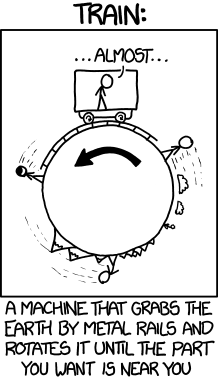}

  \centerline{\textsf{(reproduced from xkcd.com)}}
\end{wrapfigure}
One has the important formula relating conjugation in $W$ with translation:
% a hack to properly center the figure
\[\displaywidth=\parshapelength\numexpr\prevgraf+2\relax
(h@x)^g = h@xg\]
(check it by drawing the decorated permutations!)

Let $S$ be a generating set for $G$, and let $T$ be a generating set
for $H$. Let $Y\subseteq X$ be a choice of one representative from
each $G$-orbit on $X$. Then $W$ is generated by
\[\displaywidth=\parshapelength\numexpr\prevgraf+2\relax
\{t@y:t\in T,y\in Y\}\sqcup S.\]

A good, but also slightly misleading example, is the ``lamplighter
group'': this is the group $W=\{\pm1\}\wr\Z$. It is best understood by
its primitive action (on the space of functions $\{\pm1\}^\Z$):
imagine an infinite street with at each integer position a
lamppost. The lamp there can be ``on'' or ``off''. Imagine also that
the street is viewed from the perspective of the ``lamplighter'', namely
the person in charge of turning various lamps on and off. The generator
$s$ of $\Z$ means ``move up the street'', or equivalently ``shift all
lamps down'' relatively to the lamplighter; and the generator $t@1$
means ``change the state of the lamp currently in front of the
lamplighter''.

This is what is expressed by the cartoon above: relativistically
speaking, it makes no difference to think that a train moves on its
tracks, or that the tracks move under the train. The lamplighter is on
the train, and the lamp configurations are on the track.

The misleading aspect of this example is that one must remember that,
when the lamplighter moves in one direction, the lamps actually move
in the \emph{opposite} direction. In a regular wreath product, this
makes no difference; but in the general case it does.

\begin{defn}\label{def:schreier}
  Let $G=\langle S\rangle$ be a finitely generated group acting on the
  right on a set $X$. The associated \emph{Schreier graph} has vertex
  set $X$, and for each $x\in X,s\in S$ an edge from $x$ to $x s$.
\end{defn}
In the case $X=G$ of a group $G$ acting on itself by (right or left)
multiplication, one obtains the (right or left) Cayley graph of
$G$. The group $G$ acts then by graph isometries on its Cayley graph
by (left or right) multiplication.
\begin{exse}
  Consider $G=\sym(4)$ generated by $\{(1,2),(2,3),(3,4)\}$. Draw the
  Schreier graphs of (in order of difficulty)
  \begin{enumerate}
  \item the action of $G$ on $\{1,2,3,4\}$;
  \item the action on the collection of $2$-element subsets of
    $\{1,2,3,4\}$;
  \item the action of $G$ on itself by conjugation.
  \end{enumerate}
\end{exse}

It would be a great mistake to think that a path in the permutational
wreath product $W=H\wr_X G$ is a path in the Schreier graph of $G$
with decorations in $H$ along the path. We shall see where exactly the
decorations appear, but their positions are rather at the
\emph{inverses} of path points. To complicate matters, in case $G$ is
abelian the map $x\mapsto x^{-1}$ is an automorphism of $G$, whence
the above ``mistaken'' description luckily gives the correct answer.

Let us fix generating sets $G=\langle S\rangle$ and
$H=\langle T\rangle$, and assume for simplicity that $G$ acts
transitively on $X$. We also choose a base point $x\in X$. Then $W$ is
naturally generated by $T@x\sqcup S$.

We consider first the description of elements of $W$ as $f g$ with
$f\colon X\to H$ and $g\in G$. This is the ``lamplighter moving''
version, because the ``lamp vector'' $f$ is not shifted, but the
position at which it is changed varies. By~\eqref{eq:wreathmult}, the
right action on $w=f g$ of
\begin{list}{}{}
\item --- a generator $s\in S$ gives $w s=f(gs)$;
\item --- a generator $t@x\in T@x$ gives $w(t@x)=(f\cdot t@xg^{-1})g$.
\end{list}
Thus if $w=(t_0@x)s_1(t_1@x)\cdots s_\ell(t_\ell@x)=f g$, then the
support of $f$ is included in $\{x,x s_1^{-1},\dots,x (s_1\dots
s_\ell)^{-1}\}$.

We may also describe elements of $W$ as $g f$; this is the ``earth
moving'' version, because the ``lamp vector'' $f$ is shifted, and the
lamplighter always changes the lamp at a fixed
position. By~\eqref{eq:wreathmult}, the right action on $w=g f$ of
\begin{list}{}{}
\item --- a generator $s\in S$ gives $w s=(gs)\,{}^{s^{-1}}\!f$;
\item --- a generator $t@x\in T@x$ gives $w(t@x)=g(f\cdot t@x)$.
\end{list}
Thus if $w=(t_0@x)s_1(t_1@x)\cdots s_\ell(t_\ell@x)=g f$, then the
support of $f$ is included in $\{x s_1\dots s_\ell,x s_2\dots
s_\ell,\dots,x s_\ell,x\}$.

The semidirect product description of $W=H\wr_X G$ gives a
presentation of $W$ by generators and relations;
see~\cite{cornulier:fpwreath}. Generating as above $W$ by $T@Y\sqcup
S$, we get
\begin{align*}
  W=\big\langle T@Y\sqcup S\,\big|\,&\text{relations of $G$, relations of $H$, and }\\
  &\forall t,t'\in T,\,\forall y\neq y'\in Y:[t@y,t'@y'],\text{ and }\\
  &\forall t\in T,\,\forall y\in Y,\,\forall g\in G_y:[t@y,g],\text{ and }\\
  &\forall t,t'\in T,\,\forall y\in Y,\,\forall g\in (G_y\backslash G)\setminus\{G_y\}:[t@y,(t'@y)^g]\big\rangle.
\end{align*}
The exact criterion, assuming $X\neq\emptyset$ and $H\neq1$, is:
\begin{thm}[Cornulier~\cite{cornulier:fpwreath}*{Theorem~1.1}]\label{thm:cornulier}
  The wreath product $W$ is finitely presented if and only if both
  $G,H$ are finitely presented, $G$ acts on $X$ with finitely
  generated stabilizers, and $G$ acts diagonally on $X\times X$ with
  finitely many orbits.\qed
\end{thm}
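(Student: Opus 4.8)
The plan is to take the explicit presentation of $W$ displayed just before the statement as the starting point, and to show that the three conditions are exactly what make that (a priori infinite) presentation Tietze-equivalent to a finite one. Throughout, fix presentations $H=\langle T\mid R_H\rangle$ and $G=\langle S\mid R_G\rangle$.

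\emph{If.} Suppose $G,H$ are finitely presented, the stabilisers $G_y$ are finitely generated, and $G$ has finitely many orbits on $X\times X$. Projecting $X\times X\to X$, $G$ has finitely many orbits on $X$, so the transversal $Y$ may be chosen finite; taking $S,T$ finite then makes the generating set $T@Y\sqcup S$ finite. Among the four displayed families of relations: $R_G$ and (copied over the finite set $Y$) $R_H$ contribute finitely many relators; the commutators $[t@y,t'@y']$ with $y\neq y'$ in $Y$ are finite in number; the relations $[t@y,g]$ with $g\in G_y$ reduce to $g$ ranging over a finite generating set of $G_y$, since for fixed $t@y$ the set of $g$ commuting with it is a subgroup of $G$; and the relations $[t@y,(t'@y)^g]$ with $yg\neq y$ reduce to finitely many, because conjugating the relation attached to a pair $(x_1,x_2)$ by $g'\in G$ yields, via the identity $(h@x)^g=h@xg$ (a consequence of $R_G$), the relation attached to $(x_1g',x_2g')$; hence a set of representatives for the finitely many $G$-orbits on $\{(x,x')\in X\times X:x\neq x'\}$, translated so that the first coordinate lies in $Y$, gives a finite sufficient subfamily. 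So $W$ is finitely presented.

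\emph{Only if.} Suppose $W$ is finitely presented, hence finitely generated. Then $G$, being a retract of $W$ (the projection $\pi\colon W\to G$ is split by the inclusion $\iota\colon G\hookrightarrow W$), is finitely presented, a retract of a finitely presented group being finitely presented (indeed $G\cong W/\langle\!\langle\, s\,\iota\pi(s)^{-1}:s\in S_0\rangle\!\rangle$ for any finite generating set $S_0$ of $W$). Since $W$ is finitely generated and $G$ is finitely presented, $\ker\pi=H^{(X)}$ is finitely generated as a normal subgroup of $W$; a finite normal generating family is supported on finitely many $G$-orbits, so $X$ has finitely many $G$-orbits, and projecting to a single coordinate shows $H$ is finitely generated. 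It remains to establish that $H$ is finitely presented, that the $G_y$ are finitely generated, and that $G$ has finitely many orbits on $X\times X$; the idea is to run the reduction of the previous paragraph backwards: finite presentability of $W$ forces the displayed presentation to collapse onto a finite subset of its relators, and one must check that this is impossible unless each of the three infinite families ($\{r(t@y):r\in R_H\}$, $\{[t@y,g]:g\in G_y\}$, and $\{[t@y,(t'@y)^g]:yg\neq y\}$) is already a consequence of a finite subfamily together with the other families (which are finite once the first two conditions are known).

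\emph{Main difficulty.} The crux is exactly this last step: proving the displayed presentation essentially irredundant, i.e.\ exhibiting, for each relator in the three infinite families, a group on the generators $T@Y\sqcup S$ satisfying all the other displayed relations but not that one. For the $R_H$-relations and the stabiliser relations this can be arranged through explicit auxiliary quotients of $W$. For the family $[t@y,(t'@y)^g]$ the obstruction is genuinely ``second order'': it is \emph{not} implied by $G,H$ being finitely presented and the $G_y$ finitely generated---witness $\Z\wr\Z$, where $G=\Z$ is finitely presented with trivial point stabilisers, yet $G$ has infinitely many orbits on $\Z\times\Z$ and $\Z\wr\Z$ is not finitely presented---and it must be handled by a double-coset / Bieri--Strebel $\Sigma$-invariant style argument, which is precisely where the hypothesis on $X\times X$ earns its keep. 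This analysis is carried out in full in Cornulier's paper.
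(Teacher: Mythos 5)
The paper does not actually prove Theorem~\ref{thm:cornulier}: it states the result and cites Cornulier, closing the statement with \qed. So there is no internal proof to compare against, and the fair question is whether your sketch would stand on its own as a substitute. It does not yet, and you say as much.

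Your \emph{if} direction is essentially correct and matches what one would do with the displayed presentation. A couple of small points worth being precise about: the reduction of the stabiliser family $[t@y,g]$ to a finite generating set of $G_y$ holds because ``commuting with a fixed element'' is a subgroup-closed condition \emph{in the group presented by the other relators}, not merely in $W$ (this is fine, but is a fact about group words, not about $W$); and the reduction of the family $[t@y,(t'@y)^g]$ to orbit representatives in $X\times X$ uses not only the identity $(h@x)^{g'}=h@xg'$ but also the already-imposed stabiliser relations to make $(t'@y)^g$ depend only on $yg$ rather than on $g$, and then a further conjugation to bring the first coordinate back into the transversal $Y$. None of these is a problem, just worth saying.

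The genuine gap is exactly where you flag it, in the \emph{only if} direction, but it is slightly broader than you acknowledge. You correctly extract that $G$ is finitely presented (retract of a finitely presented group), and correctly invoke the standard fact that when $W$ and $W/N\cong G$ are both finitely presented the kernel $N$ is finitely normally generated (note: you wrote ``$W$ is finitely generated and $G$ is finitely presented''; you need $W$ finitely \emph{presented} here). From that you get finitely many $G$-orbits on $X$ and $H$ finitely generated. But \emph{none} of the remaining three conclusions---finite presentability of $H$, finite generation of the stabilisers $G_y$, and finitely many orbits on $X\times X$---is established; your proposal only says that ``one must check'' the displayed presentation is essentially irredundant and that ``this is impossible unless'' the conditions hold, and then defers to Cornulier. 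That deferral \emph{is} the proof. The invocation of B.~H.~Neumann's observation that any presentation on a finite generating set of a finitely presented group has a finite sub-presentation gives the right general framework, but to conclude that no finite sub-presentation of the displayed one exists under the negation of a hypothesis, one has to exhibit, for each offending relator, a quotient of the group defined by the remaining relators in which that relator fails; for the $X\times X$ condition this requires a module-theoretic (Bieri--Strebel style) argument that your sketch names but does not run. Your $\Z\wr\Z$ example is a good sanity check that the condition is independent of the others, but a single example is not the argument. As written, the proposal is a correct road-map for the easy half plus an honest acknowledgement that the hard half is elsewhere.
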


\subsection*{Diestel-Leader graphs}
We present a particularly intuitive description of the Cayley graph of
``lamplighter groups'' $W=F\wr\Z$, for a finite group $F$ of
cardinality $q$, see~\cite{woess:dl}.

Let $\mathscr T$ denote the $(q+1)$-regular tree, choose a basepoint
$o$, and a geodesic ray $\omega\colon\N\to\mathscr T$ starting at
$o$. Imagine $\mathscr T$ as ``hanging from $\omega$'': orient the
edges on $\omega$ from $\omega(n)$ to $\omega(n+1)$, and orient every
other edge from its furthest point to $\omega$ to its closest. Define
then $h\colon\mathscr T\to\Z$ as follows: for each $x\in\mathscr T$,
there is a unique path in $\mathscr T$ from $o$ to $x$, and set
$h(x)=($number of edges oriented forward$)-($number of edges oriented
backward$)$ on this path\footnote{This is usually called a
  \emph{Busemann function}}.

Consider now the following graph $\mathcal B(q,q)$. Its vertex set is
$\{(x,y)\in\mathscr T\times \mathscr T:h(x)+h(y)=0\}$. There is an edge
from $(x,y)$ to $(x',y')$ precisely if $\{x,x'\}$ and $\{y,y'\}$ are
connected in $\mathscr T$. Here is a portion of $\mathcal B(2,2)$,
with one tree pointing up and one tree pointing down:

\tdplotsetmaincoords{80}{110}
\begin{center}\begin{tikzpicture}[tdplot_main_coords]
  \coordinate (x000) at (0,-3,0);
  \coordinate (x001) at (0,-2,0);
  \coordinate (x010) at (0,-1.4,0);
  \coordinate (x011) at (0,-0.4,0);
  \coordinate (x100) at (0,0.4,0);
  \coordinate (x101) at (0,1.4,0);
  \coordinate (x110) at (0,2,0);
  \coordinate (x111) at (0,3,0);
  \coordinate (0x00) at (1,-2,1);
  \coordinate (0x01) at (1,-0.7,1);
  \coordinate (0x10) at (1,0.7,1);
  \coordinate (0x11) at (1,2,1);
  \coordinate (1x00) at (-1,-2,1);
  \coordinate (1x01) at (-1,-0.7,1);
  \coordinate (1x10) at (-1,0.7,1);
  \coordinate (1x11) at (-1,2,1);
  \coordinate (00x0) at (2,-1,2);
  \coordinate (00x1) at (2,1,2);
  \coordinate (01x0) at (0.7,-1,2);
  \coordinate (01x1) at (-0.7,1,2);
  \coordinate (10x0) at (-0.7,-1,2);
  \coordinate (10x1) at (0.7,1,2);
  \coordinate (11x0) at (-2,-1,2);
  \coordinate (11x1) at (-2,1,2);
  \coordinate (000x) at (3,0,3);
  \coordinate (001x) at (-0.4,0,3);
  \coordinate (010x) at (1.4,0,3);
  \coordinate (011x) at (-2,0,3);
  \coordinate (100x) at (2,0,3);
  \coordinate (101x) at (-1.4,0,3);
  \coordinate (110x) at (0.4,0,3);
  \coordinate (111x) at (-3,0,3);
  \foreach\i in {0,1}\foreach\j in {0,1}\foreach\k in {0,1}\foreach\l in {0,1} {
    \draw (x\i\j\k) -- (\l x\i\j);
    \draw (\i x\j\k) -- (\l\i x\j);
    \draw (\i\j x\k) -- (\l\i\j x);
    \draw[thick] (x\i\j\k) -- (0x\i\j) -- (00x\i) -- (000x);
    \draw[thick] (x111) -- (\k x11) -- (\j\k x1) -- (\i\j\k x);
  };
  \fill (00x0) circle [radius=2pt];
  \fill (11x1) circle [radius=2pt];
\end{tikzpicture}\end{center}

\begin{prop}
  Consider a wreath product $G=F\wr\Z$ with a finite group $F$ of
  cardinality $q$. Denote by $s$ a generator of $\Z$, and consider for
  $G$ the generating set $S=(F@1)s\sqcup s^{-1}(F@1)$. Then the Cayley
  graph of $(G,S)$ is $\mathcal B(q,q)$.
\end{prop}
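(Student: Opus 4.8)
The plan is to exhibit an explicit bijection between the vertex set of the Cayley graph of $(G,S)$ and the vertex set of $\mathcal B(q,q)$, and check that edges correspond. First I would record a normal form for elements of $G=F\wr\Z$: writing an element as $fg$ with $f\colon\Z\to F$ finitely supported and $g\in\Z$, the generating set $S=(F@1)s\sqcup s^{-1}(F@1)$ should be chosen exactly so that multiplying on the right by an element of $S$ moves the $\Z$-coordinate by $\pm1$ \emph{and} changes one lamp — namely the lamp that is about to be ``left behind'' as the lamplighter walks past it. The key point (cf.\ the ``lamplighter moving'' computation in \S\ref{ss:genwreath}) is that with this particular $S$, an element of $G$ at word distance $\le R$ from the identity is one whose lamp-support lies in an interval determined by the trajectory of the lamplighter, and the lamplighter's net displacement is the $\Z$-coordinate.

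Next I would set up the identification with pairs of points in the $(q+1)$-regular tree $\mathscr T$. A point $x\in\mathscr T$ with $h(x)=n$ should encode: the integer $n$ (the height), together with the choice, at each level $m>n$ along the geodesic from $x$ towards $\omega$, of which of the $q$ ``forward'' edges was taken — equivalently, an element of $F$ for each integer $m$ strictly above $n$, almost all equal to a fixed base choice. Dually, a point $y\in\mathscr T$ with $h(y)=-n$ encodes an element of $F$ for each integer $m\le n$ (or $m<n$, depending on orientation bookkeeping), again almost all trivial. Thus a vertex $(x,y)$ of $\mathcal B(q,q)$, subject to $h(x)+h(y)=0$, encodes precisely an integer $n=h(x)$ together with a finitely supported function $\Z\to F$: the part of the function strictly above $n$ is read off from $x$, the part at or below $n$ from $y$. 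I would then \emph{define} the bijection by sending $fg\in G$ to the pair $(x,y)$ with $h(x)=g$ and with the lamp data distributed above/below $g$ in exactly this way.

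Finally I would verify that this bijection is a graph isomorphism, i.e.\ that $fg$ and $(fg)\cdot s_0$ map to adjacent vertices of $\mathcal B(q,q)$ for each $s_0\in S$, and conversely. By the definition of $\mathcal B(q,q)$, adjacency of $(x,y)$ and $(x',y')$ means $\{x,x'\}$ and $\{y,y'\}$ are edges of $\mathscr T$; since an edge of $\mathscr T$ changes $h$ by $\pm1$, such a move changes $n=h(x)$ by $\pm1$ and, on the side where $h$ increases, \emph{drops} one coordinate of lamp data (the newly exposed level), while on the side where $h$ decreases one new coordinate of lamp data becomes free to be chosen — which is exactly the effect of right multiplication by a generator in $s^{-1}(F@1)$ or $(F@1)s$ respectively. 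Matching the two cases of $S$ with the two choices of which tree-coordinate moves up is the crux. I expect the main obstacle to be purely bookkeeping: pinning down the orientation conventions (the Busemann function, the ``opposite direction'' subtlety emphasized after Definition~\ref{def:schreier}) so that the lamp at position $n$ is consistently assigned to $x$ versus $y$, and so that multiplication by $S$ on the \emph{right} matches edges of $\mathscr T$ rather than their reverses. Once the conventions are fixed, both the bijectivity and the edge-correspondence are immediate from the normal form.
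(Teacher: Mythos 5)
Your approach — write elements of $G$ in the normal form $fg$ with $f\colon\Z\to F$ finitely supported, identify vertices of $\mathscr T$ with a height together with a finitely supported function on a half-line, merge the two tree coordinates of a $\mathcal B(q,q)$ vertex into a single function $\Z\to F$ plus an integer, and then match the two kinds of generators in $S$ with the two kinds of tree moves — is essentially the paper's proof, which carries out exactly this encoding (identifying $\mathscr T$ with $\bigsqcup_{n}F^{(-\infty,n]}\times\{n\}$ and merging $(\sigma,\sigma',n)\mapsto(\tau,n)$ by splitting $\tau$ at level $n$). You correctly flag the orientation bookkeeping (and the fact that the lamp being touched is the one at the boundary between the two half-lines) as the only real content of the final edge verification, which the paper likewise dispatches as routine.
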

\begin{proof}
  Only in this proof, for a subset $A$ of the integers, let us denote
  by $F^{A}$ the set of finitely-supported functions $A\to F$.  The
  vertex set of $\mathscr T$ may be identified with
  $\bigsqcup_{n\in\Z}F^{(-\infty,n]}\times\{n\}$, in such a manner
  that there is an edge from $(\sigma,n)$ to
  $(\sigma|_{(-\infty,n-1)},n-1)$ for all $n\in\Z$ and all
  $\sigma\in F^{(\infty,n]}$. Vertices on the ray $\omega$ are those
  of the form $(1,n)$ with $n\le0$.

  Therefore, the vertex set of $\mathcal B(q,q)$ may be identified
  with
  $\bigsqcup_{n\in\Z}F^{(-\infty,n]}\times
  F^{(-\infty,-n]}\times\{n\}$.
  This is easily put in correspondence with $F^\Z\times\Z$ via the map
  $(\sigma,\sigma',n)\mapsto(\tau,n)$ with $\tau\colon\Z\to F$ the
  finitely supported function given by $\tau(k)=\sigma(k)$ if $k\le n$
  and $\tau(k)=\sigma'(1-k)$ if $k>n$.

  Thus we put the vertex set of $\mathcal B(q,q)$ is bijection with
  $G$. It is now routine to check that the generators in $S$ produce
  the edges of $\mathcal B(q,q)$.
\end{proof}

This description of lamplighter groups makes some of their geometric
features quite transparent. For example, let us consider a finitely
generated group $G=\langle S\rangle$, and its Cayley graph. A vertex
$v\in G$ is called a \emph{dead end} if all neighbours of $v$ are at
least as close to $1$ as $v$. More generally, let us say $v$ is
\emph{on a $k$-hill} if all paths from $v$ to an element of norm
$\|v\|+1$ has to go through a vertex of norm $\|v\|-k$. In other
words, from the top of the hill the only way of going to infinity is
to first go down at least $k$ steps.

Consider the two dots on the picture above. Say one of them is the
origin $1$. Then the other one is on a $1$-hill. More generally, any
element in the lamplighter group that is reached from the origin by
going down $k$ steps, up $2k$ steps and down again $k$ steps along a
reduced path reaches the top of a $k$-hill.

This is an fact a familiar phenomenon: consider a long street and two
remote addresses we want to visit on that street, in whichever order,
from our starting point on the street. The shortest way of doing this
is to first go to the closest, and then the other one. The
\emph{worst} possible place to start is at equal distance from both
addresses.

%%%%%%%%%%%%%%%%%%%%%%%%%%%%%%%%%%%%%%%%%%%%%%%%%%%%%%%%%%%%%%%%
\section{Growth of groups}\label{ss:growth}
This section is not a treatise on growth of groups; for that, see
rather~\cite{mann:howgroupsgrow}. We do recall some elementary, basic
notions, and provide some motivation for the material to appear
later. Let $G$ be a group generated by a finite symmetric set
($S=S^{-1}$). One defines the \emph{word norm} on $G$ by
\[\|g\|=\min\{n:g=s_1\cdots s_n,\,s_i\in S\},
\]
and a distance\footnote{it is only here that we use the fact that $S$
  is symmetric, to obtain $d(g,h)=d(h,g)$; in fact, it suffices in all
  that follows to assume that $S$ generates $G$ as a monoid.} on $G$
that is invariant under left
translation\footnote{i.e. $d(t g,th)=d(g,h)$}:
\[d(g,h)=\|g^{-1}h\|.\]

Thus $G$ is viewed as a normed space, and as a metric space on which
$G$ acts by isometries via left translation.

\subsection{Formal growth}\label{ss:formal} One may be interested
in regularity properties of the metric space $G$; these are best
studied via the \emph{growth series}, the formal power series
\[\Gamma_G(z)=\sum_{g\in G}z^{\|g\|}\in\Z[[z]].\]

The natural questions that arise are: what is the domain of
convergence of $\Gamma_G(z)$? What can be said of analytic
continuations of $\Gamma_G(z)$? What are its singularities? Is the
function $\Gamma_G(z)$ rational (i.e.\ in $\Q(z)$)? or at least
algebraic (i.e.\ there exists a two-variable polynomial
$F(y,z)\in\Z[y,z]$ with $F(\Gamma_G(z),z)\equiv0$)?

The consideration of the power series $\Gamma_G(z)$, and of the above
questions, is justified by the answers that have been given:
\begin{itemize}
\item $\Gamma_G(z)$ converges in a disk of radius at least $1/\#S$. If
  $S$ is symmetric, then the convergence radius is it fact at least
  $1/(\#S-1)$, with equality if and only $G$ is a free product of
  $\Z$'s and $C_2$'s with its natural generating set, as in
  Definition~\ref{defn:freelike} below.
\item If groups $G,H$ are respectively generated by $S,T$, then the
  direct product $G\times H$ is naturally generated by $S\sqcup
  T$. One then has
  \[\Gamma_{G\times H}(z)=\Gamma_G(z)\Gamma_H(z),\]
  see Proposition~\ref{prop:CG:x}.
\item If groups $G,H$ are respectively generated by $S,T$, then the
  free product $G*H$ is naturally generated by $S\sqcup T$. One
  then has
  \[\frac1{\Gamma_{G*H}(z)}=\frac1{\Gamma_G(z)}+\frac1{\Gamma_H(z)}-1,\]
  see Proposition~\ref{prop:CG:*}.
\item If the group $G$ is virtually abelian~\cite{benson:zn}, or
  word-hyperbolic~\cite{gromov:hyperbolic}, or the discrete Heisenberg
  group~\cite{duchin-shapiro:heisenberg}
  $H_3=\big(\begin{smallmatrix}1&\Z&\Z\\&1&\Z\\&&1\end{smallmatrix}\big)$,
  then $\Gamma_G$ is a rational function of $z$ for all choices of the
  finite generating set $S$.
\item Wreath products give some examples of power series $\Gamma_G$
  that are algebraic functions, as we shall see
  in Corollary~\ref{cor:parry} below.
\item If $G$ is a $2$-step nilpotent group with cyclic derived
  subgroup, then there exist generating sets for $G$ such that
  $\Gamma_G$ is a rational function of $z$. However, if $G$ is the
  $5$-dimensional Heisenberg group
  $H_5=\bigg(\begin{smallmatrix}1&\Z&\Z&\Z&\Z\\&1&&&\Z\\&&1&&\Z\\&&&1&\Z\\&&&&1\end{smallmatrix}\bigg)$,
  then there exist generating sets for which $\Gamma_G(z)$ is
  transcendental; see~\cite{stoll:heisenberg}.
\end{itemize}

In respect to this last point, note that, for $G$ nilpotent, the growth
of $G$ is polynomial so $\Gamma_G(z)$ converges in the unit disk. It
is either rational or transcendental, by the Fatou
theorem~\cite{fatou:seriesentieres}. More is known:
\begin{thm}[P\'olya-Carlson~\cite{carlson:potenzreihen}]\label{thm:polya-carlson}
  Let $A(z)=\sum_{n\ge0} a_n z^n$ be a power series with integer
  co\"efficients. If $A$ is not rational, then $A$ does not extend
  analytically beyond the unit circle.
\end{thm}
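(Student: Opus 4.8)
The plan is to prove the contrapositive: if $A(z)=\sum_{n\ge0}a_nz^n$ has integer coefficients and extends analytically to a neighbourhood of the closed unit disk (or at least across every point of the unit circle), then $A$ is rational. The essential observation is that a power series with integer coefficients converging on $|z|<1$ has radius of convergence exactly $1$ unless it is a polynomial; so under the hypothesis the unit circle is the natural boundary-or-better situation, and we must show that the only way an integer-coefficient series can fail to have $|z|=1$ as a natural boundary is by being a rational function whose poles lie on the unit circle (hence are roots of unity, forcing eventual periodicity of the coefficients up to a polynomial correction).

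First I would set up the analytic input: by hypothesis there is a simply connected open set $U\supseteq\overline{\mathbb D}$ on which $A$ is holomorphic, except possibly for finitely many poles on $|z|=1$ — one reduces to the no-poles case by subtracting off a rational function with those poles, which does not affect integrality questions because the coefficients of $1/(z-\zeta)^k$ grow polynomially. So assume $A$ is holomorphic on a disk of radius $1+\delta$. The heart of the argument is a clever contour-integral representation of the coefficients, due to Carlson, using a kernel that concentrates mass appropriately: one writes, for a parameter and a carefully chosen polynomial or rational kernel $K_N(z)$,
\[
a_n=\frac1{2\pi i}\oint_{|z|=1+\delta'}\frac{A(z)}{z^{n+1}}\,dz,
\]
and then deforms and combines these integrals over a family of $n$'s so as to produce an integer of absolute value provably less than $1$ — hence zero — for all large $n$. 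The standard device is to consider, for suitable integers, the expression $\sum_j c_j a_{n+j}$ built from a contour integral of $A(z)\cdot(\text{kernel})\cdot z^{-n}$ where the kernel is chosen so that on $|z|=1$ it is small while its Laurent coefficients are integers; Carlson's original choice exploits the arithmetic of binomial coefficients (the kernel $(1-z)^{2N}/z^N$ type expressions, whose coefficients are integers, combined with the fact that $\binom{2N}{N}\sim 4^N/\sqrt{\pi N}$ beats the geometric decay coming from analyticity past $|z|=1$).

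The main obstacle — and the step I would spend the most care on — is the precise construction and estimate of this kernel: one needs a sequence of functions, holomorphic on $1+\delta$, with (i) integer Laurent/Taylor coefficients in the relevant range, (ii) sup-norm on $|z|=1$ decaying fast enough, and (iii) a normalization at the relevant coefficient equal to $1$, so that pairing against $A$ yields an integer that tends to $0$. Balancing (i) and (ii) is exactly where the $4^N$ versus $(1+\delta)^{-N}$ competition must be won, and it forces the specific binomial-coefficient kernel rather than an arbitrary smooth bump. Once the estimate gives $\sum_j c_j a_{n+j}=0$ for all large $n$ with fixed integer coefficients $c_j$ not all zero, the coefficient sequence $(a_n)$ satisfies a linear recurrence with constant coefficients from some point on, which is precisely the statement that $A(z)$ is rational; tracking back the poles we subtracted off at the start, they too are roots of unity, so the full $A$ is rational, completing the contrapositive.
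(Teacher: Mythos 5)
The paper does not prove this theorem; it simply cites Carlson's 1921 paper, so there is no internal proof to compare against. I will therefore assess your proposal on its own merits, and the assessment is that there is a genuine gap at the very first reduction.

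Your setup assumes much more than the theorem's hypothesis gives. The contrapositive says: \emph{if $A$ extends analytically across some point of the unit circle, then $A$ is rational.} You immediately upgrade this to ``$A$ is holomorphic on a simply connected $U\supseteq\overline{\mathbb D}$, except possibly for finitely many poles on $|z|=1$,'' and then, after removing those poles, to ``$A$ is holomorphic on a disk of radius $1+\delta$.'' Neither step is licensed: the hypothesis allows the analytic continuation to exist only on a small neighbourhood of a single boundary arc, with the rest of $|z|=1$ remaining a genuine singular set (not a finite set of poles). The conclusion that the singularities reduce to finitely many poles on the circle is part of what one must \emph{prove}, not something one can assume. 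Moreover, once you have (incorrectly) reduced to analyticity on $\{|z|<1+\delta\}$, the kernel machinery is unnecessary: the plain Cauchy estimate $|a_n|\le M(1+\delta/2)^{-n}$ already forces $a_n=0$ for large $n$, so $A$ would be a polynomial and the whole apparatus of binomial-coefficient kernels does no work. This is a signal that the difficulty of the theorem has been reduced away rather than confronted.

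The actual proof takes a structurally different route. The key criterion for rationality is Kronecker's theorem: $A$ is rational if and only if the Hankel determinants $H_k=\det(a_{i+j})_{0\le i,j\le k}$ vanish for all large $k$. One then shows that if $A$ continues past any arc of the circle, the compact set $E\subset\{|z|=1\}$ of genuine boundary singularities has logarithmic capacity (equivalently transfinite diameter) strictly less than $1$. Choosing monic polynomials $p_n$ of degree $n$ that are uniformly small on $E$ (Fekete/Chebyshev polynomials for $E$), one expresses $H_k$ by a contour/Gram-determinant representation pushed towards $E$ and obtains $|H_k|\le C\,\rho^{\,k^2/2}$ with $\rho<1$ coming from the sub-unit capacity. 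Since each $H_k\in\Z$, eventually $H_k=0$, and Kronecker's theorem finishes. Your proposal shares only the arithmetic endgame (an integer quantity smaller than $1$ must vanish); what is missing is the identification of the correct integer-valued quantity (a Hankel determinant, not an ad hoc linear combination $\sum_j c_j a_{n+j}$ whose nontriviality you never establish) and the capacity input that makes the estimate go through under the genuinely weak hypothesis.
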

  
Most importantly, the growth series is a convenient object that
encodes information on $G$. A quite satisfactory theory of ``Euler
characteristic'' has been developed for groups,
see~\cite{chiswell:euler}. Here is a special case: if $G$ is the
fundamental group of a cellular complex $\mathscr X$ with contractible
universal cover, one declares $\chi(G)$ to be $\chi(\mathscr X)$. More
generally, if $G$ has a finite-index subgroup $H$ which is the
fundamental group of a space $\mathscr Y$, one sets
$\chi(G)=\chi(\mathscr Y)/[G:H]$; this makes sense because if $G$ is
the fundamental group of $\mathscr X$ then $H$ is the fundamental
group of a $[G:H]$-sheeted covering of $\mathscr X$, whose Euler
characteristic is $[G:H]\chi(\mathscr X)$. In particular, if $G$ is
finite then $\chi(G)=1/\#G$. This led to the idea that $1/\Gamma_G(z)$
could behave like an Euler characteristic, and that its limit
$1/\Gamma_G(1)$ could express $\chi(G)$. This is not always true, but
it does hold in some illustrative cases.

Let us compute for instance the growth series of a free group $F_k$
generated by a
basis\footnote{i.e. $S=\{x_1,x_1^{-1},\dots,x_k,x_k^{-1}\}$ and $F_k$
  may be identified with reduced words over $S$.}. It follows from the
formula for free products, or by direct counting if one notes, for all
$\ell\ge1$, that there are $2k(2k-1)^{\ell-1}$ elements of norm $\ell$
in $F_k$, that
\begin{equation}\label{eq:growthFm0}
  \Gamma_{F_k}(z)=\frac{1+z}{1-(2k-1)z}.
\end{equation}
The value $\Gamma_{F_k}(1)$ is uniquely defined by analytic
continuation, and one has $1/\Gamma_{F_k}(1)=1-k$, in agreement with
$F_k$ being the fundamental group of a graph with $1$ vertex and $k$
edges. See~\cites{floyd-p:fuchsian,MR97d:20031a,smythe:euler,lewin:graphgroup}
for more such examples of the `$1/\Gamma_G(1)=\chi(G)$'
phenomenon.

\subsection{Complete growth series}\label{ss:completegrowth}
There exists a stronger property than having a rational growth series:
a group $G=\langle S\rangle$ has a \emph{rational geodesic combing} if
there exists a finite directed graph with edge labels in $S$ and a
fixed ``initial'' vertex, such that the set $L\subseteq S^*$ of words
read from the initial vertex along paths in the graph has the
following property: $L$ maps bijectively to $G$ by the natural
evaluation map of words as elements of $G$, and the words in $L$ have
minimal length among all words in $S^*$ having the same evaluation in
$G$. Kervaire suggested to consider the \emph{complete} growth series
\[\CG_G(z)=\sum_{g\in G}g z^{\|g\|}\in\Z G[[z]].
\]
Note that $\CG_G$ depends on the choice of generating set $S$, even
though we do not mention it explicitly. The series $\CG_G(z)$ is a
power series with co\"efficients in the group ring, and one may again
ask whether it is rational or algebraic. Since $\Z G$ need not be
commutative, let us define more precisely these notions; we refer
to~\cite{salomaa-s:fps} for details. Let
$\Lambda\subseteq\overline\Lambda$ be rings. An \emph{algebraic system
  over $\Lambda$} in variables $X_1,\dots,X_n$ is a
non-degenerate\footnote{Let us not detail this too much; suffice it to
  say that the system must have a unique solution once its initial
  terms $f_1(0),\dots,f_n(0)$ have been fixed.}  $n$-tuple of
polynomials $P_1,\dots,P_n$ in non-commuting indeterminates
$X_1,\dots,X_n$ and co\"efficients in $\Lambda$. In a \emph{linear
  system over $\Lambda$}, the polynomials are restricted to have
degree $1$ and contain the indeterminate on the right; i.e.\ the $P_i$
are sums of monomials all belonging to
$\Lambda\cup\bigcup_{1\le i\le n}\Lambda X_i$. A \emph{solution} is an
$n$-tuple $(f_1,\dots,f_n)\in\overline\Lambda^n$ such that all
$P_i(f_1,\dots,f_n)=f_i$ for all $i=1,\dots,n$.

We then say that a power series $F(z)\in\Z G[[z]]$ is \emph{rational},
respectively \emph{algebraic}, if it is the first co\"ordinate of the
solution of a linear, respectively algebraic system over the
polynomial ring $\Z G[z]$. A more direct definition of the ring of
rational functions is that it is the smallest subring of $\Z G[[z]]$
containing $\Z G[z]$ and closed under Kleene's \emph{quasi-inversion},
the operation $F(z)^*=(1-F(z))^{-1}=1+F(z)+F(z)^2+\cdots$ defined for
all $F(z)\in \Z G[[z]]$ with $F(0)=0$.

\begin{exse}
  If $G$ admits a rational geodesic combing, then its growth series is
  rational.

  Hint: define one variable $X_i$ for each vertex $i$ of the graph
  defining the combing, and encode the edges of the graph into
  polynomials.
\end{exse}

\begin{exse}\label{exse:quasi-geodesic combing}
  If the growth series of $G$ is rational, then $G$ admits a
  \emph{quasi-geodesic combing}: a language $L\subset S^*$, recognised
  by a finite graph as above, with the property that, for some
  constant $C\in\N$, all words $s_1\dots s_\ell\in L$ have the
  property $\ell\le C\|s_1\cdots s_\ell\|$.

  Hint: consider a polynomial system defining $\CG_{G,S}(z)$, make
  sure that every term in $\Z G$ is accompanied by at least one factor
  $z$. Write the terms in $\Z G$ as linear combinations of words over
  $S$, yielding a polynomial system over $\Z S^*$. Let $C$ be the
  maximal length of all these words over $S$ that appear in the
  polynomial system. A solution to the polynomial system will be a sum
  of monomials of the form $s_1\dots s_\ell z^n$, where
  $n=\|s_1\cdots s_\ell\|\ge \ell/C$.
\end{exse}

These notions strengthen the ones for the classical growth series: if
$\CG_G(z)$ is rational or algebraic, then its image under
the augmentation map $\Z G\twoheadrightarrow\Z$ is rational or
algebraic. On the other hand, statements concerning the complete
growth series are usually not much harder to prove than the analogous
ones concerning the classical growth series:

\begin{prop}\label{prop:CG:x}
  Let the groups $G,H$ and $G\times H$ be respectively generated by
  $S,T$ and $S\sqcup T$. One then has
  \[\CG_{G\times H}(z)=\CG_G(z)\CG_H(z).\]
\end{prop}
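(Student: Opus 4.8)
The plan is to exhibit an explicit length-preserving bijection between the group $G\times H$ and the set of pairs, and to track the group-ring coefficients. First I would recall that with respect to the generating set $S\sqcup T$, a word representing $(g,h)\in G\times H$ can be shuffled: since the $S$-letters and $T$-letters commute in $G\times H$, any word of length $n$ evaluating to $(g,h)$ can be sorted, without changing its length, into an $S$-block followed by a $T$-block. Hence $\|(g,h)\|_{S\sqcup T}$ is at most $\|g\|_S+\|h\|_T$. Conversely, projecting a geodesic word for $(g,h)$ onto its $S$-letters gives a word for $g$, and onto its $T$-letters a word for $h$, so $\|(g,h)\|_{S\sqcup T}\ge\|g\|_S+\|h\|_T$. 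Therefore
\[
  \|(g,h)\|_{S\sqcup T}=\|g\|_S+\|h\|_T.
\]
This is the only genuine content; everything else is bookkeeping.

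Granting the norm formula, I would compute directly:
\[
  \CG_{G\times H}(z)=\sum_{(g,h)\in G\times H}(g,h)\,z^{\|(g,h)\|}
  =\sum_{g\in G}\sum_{h\in H}(g,h)\,z^{\|g\|+\|h\|}.
\]
Under the canonical identification $\Z(G\times H)\cong\Z G\otimes_\Z\Z H$ sending $(g,h)$ to $g\otimes h$ — equivalently, viewing $g\in G$ and $h\in H$ as commuting elements of $G\times H$ so that $(g,h)=g\cdot h$ — the double sum factors:
\[
  \sum_{g\in G}\sum_{h\in H}g\,h\,z^{\|g\|}z^{\|h\|}
  =\Big(\sum_{g\in G}g\,z^{\|g\|}\Big)\Big(\sum_{h\in H}h\,z^{\|h\|}\Big)
  =\CG_G(z)\,\CG_H(z),
\]
where the product on the right is taken inside $\Z(G\times H)[[z]]\supseteq\Z G[[z]],\Z H[[z]]$ and the rearrangement is legitimate because for each fixed power $z^n$ only finitely many pairs $(g,h)$ with $\|g\|+\|h\|=n$ contribute, so there is no convergence issue.

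The main obstacle, such as it is, is the norm identity $\|(g,h)\|=\|g\|+\|h\|$; once that shuffling argument is in place the rest is a formal manipulation of power series with coefficients in a (non-commutative) group ring, and the only subtlety worth a sentence is checking that the coefficient of $z^n$ on both sides is a well-defined finite sum in $\Z(G\times H)$, which follows since balls in $G$ and in $H$ are finite. I would also remark that the same proof gives the classical identity $\Gamma_{G\times H}=\Gamma_G\Gamma_H$ by applying the augmentation $\Z(G\times H)\to\Z$, recovering the statement cited earlier in the text.
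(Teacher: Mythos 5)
Your proof is correct and takes essentially the same route as the paper's: both reduce the identity to the norm formula $\|(g,h)\|=\|g\|+\|h\|$ and then factor the double sum. The paper simply asserts the norm formula, whereas you supply the shuffling and projection argument for it; otherwise the two proofs coincide.
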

\begin{proof}
  Every element $(g,h)\in G\times H$ satisfies
  $\|(g,h)\|=\|g\|+\|h\|$; so
  \[\CG_{G\times H}(z) = \sum_{(g,h)\in G\times H}g h z^{\|g\|+\|h\|}
  = \sum_{g\in G}g z^{\|g\|}\sum_{h\in H}h z^{\|h\|} = \CG_G(z)\CG_H(z).\qedhere\]
\end{proof}

\begin{prop}\label{prop:CG:*}
  Let the groups $G,H$ and $G*H$ be respectively generated by
  $S,T$ and $S\sqcup T$. One then has
  \[\frac1{\CG_{G*H}(z)}=\frac1{\CG_G(z)}+\frac1{\CG_H(z)}-1.\]
\end{prop}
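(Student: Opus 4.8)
The plan is to combine the normal form of elements of a free product with additivity of the word norm over syllables, and then convert the resulting combinatorial recursion into the stated identity between power series in $\Z(G*H)[[z]]$.

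First I would recall that every $w\in G*H$ has a unique expression as a reduced word $w=w_1w_2\cdots w_k$ whose syllables $w_i$ lie alternately in $G\setminus\{1\}$ and $H\setminus\{1\}$. The one substantive step is a lemma: with respect to the generating set $S\sqcup T$ the word norm is additive over syllables, $\|w\|=\sum_{i=1}^k\|w_i\|$, where $\|w_i\|$ is measured in $S$ if $w_i\in G$ and in $T$ if $w_i\in H$. The inequality ``$\le$'' is immediate, by concatenating geodesic words for the $w_i$. For ``$\ge$'' I would induct on $n=\|w\|$: take a geodesic word $c_1c_2\cdots c_n$ over $S\sqcup T$ representing $w$, observe that $w'=c_2\cdots c_n$ has $\|w'\|=n-1$ so the inductive hypothesis computes its syllable sum, and then check --- according to whether the leading syllable of $w'$ lies in the same free factor as $c_1$, in the other one, or $w'=1$ --- that $w=c_1w'$ has syllable sum at most $n$, using only the triangle inequality inside a single factor. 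This is routine but it is where the actual content lies.

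Next I would abbreviate $A=\CG_G-1$ and $B=\CG_H-1$ and split $\CG_{G*H}$ by leading syllable: let $U$ be the sum of $w\,z^{\|w\|}$ over reduced words that are trivial or have leading syllable in $G$, and $V$ the analogous sum with ``$G$'' replaced by ``$H$''. Then $\CG_{G*H}=U+V-1$, since the trivial word is counted twice. Writing a nontrivial word with leading $G$-syllable uniquely as $w_1\cdot w'$ with $w_1\in G\setminus\{1\}$ and $w'$ trivial or with leading $H$-syllable, the lemma gives $\|w_1w'\|=\|w_1\|+\|w'\|$, so this bijection yields $U=1+AV$; symmetrically $V=1+BU$. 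Substituting $A=\CG_G-1$ turns $U=1+AV$ into $U+V-1=\CG_G V$, that is $\CG_{G*H}=\CG_G V$, and likewise $\CG_{G*H}=\CG_H U$.

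Finally I would invert. The three series $\CG_G,\CG_H,\CG_{G*H}$ all have constant term $1$ in the respective group rings, hence are units in $\Z G[[z]]$, $\Z H[[z]]$, $\Z(G*H)[[z]]$; so $V=\CG_G^{-1}\CG_{G*H}$ and $U=\CG_H^{-1}\CG_{G*H}$, and feeding these back into $\CG_{G*H}=U+V-1$ gives $\CG_{G*H}=\CG_H^{-1}\CG_{G*H}+\CG_G^{-1}\CG_{G*H}-1$. Right-multiplying by $\CG_{G*H}^{-1}$ collapses this to $1=\CG_H^{-1}+\CG_G^{-1}-\CG_{G*H}^{-1}$, which is exactly $\CG_{G*H}^{-1}=\CG_G^{-1}+\CG_H^{-1}-1$. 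The main obstacle is the norm-additivity lemma; everything after it is formal manipulation, and I would note that no commutativity is used, so the identity holds verbatim for the complete growth series and specialises, under augmentation, to the classical statement for $\Gamma_{G*H}$.
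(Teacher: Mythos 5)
Your proof is correct and is essentially the paper's argument: both hinge on the unique reduced-word normal form for $G*H$ together with additivity of the $S\sqcup T$-word norm over syllables, and then reduce the identity to formal manipulation of noncommutative power series. The only cosmetic difference is that you organize the enumeration by leading syllable into the pair of linear equations $U=1+AV$, $V=1+BU$, whereas the paper parametrizes by block count and sums the resulting geometric series $\sum_\ell \CG_H((\CG_G-1)(\CG_H-1))^\ell\CG_G$ directly; you also spell out the norm-additivity lemma that the paper uses implicitly.
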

\begin{proof}
  Every element of $w\in G*H$ may be uniquely written in the form
  $w=h_0 g_1 h_1\cdots g_\ell$ with $h_0\in H$,
  $h_1,\dots,h_{\ell-1}\in H\setminus\{1\}$, $g_1,\dots,g_{\ell-1}\in
  G\setminus\{1\}$, $g_\ell\in G$. Thus
  \begin{align*}
    \CG_{G*H}(z) &= \sum_{\ell\ge0}\CG_H(z)((\CG_G(z)-1)(\CG_H(z)-1))^\ell\CG_G(z)\\
    &= \CG_H(z)\frac1{1-(\CG_G(z)-1)(\CG_H(z)-1)}\CG_G(z);
    \intertext{so}
    \frac1{\CG_{G*H}(z)} &= \frac1{\CG_G(z)}(\CG_G(z)+\CG_H(z)-\CG_G(z)\CG_H(z))\frac1{\CG_H(z)}\\
    & = \frac1{\CG_H(z)} + \frac1{\CG_G(z)} -1.\qedhere
  \end{align*}
\end{proof}

These results are generalized to \emph{graph products}
in~\cite{allen+:growthproducts}: given a graph $\Gamma$ with vertex
set $V$ and a group $G_v$ for each $v\in V$, the \emph{graph product}
of the $G_v$ is
\[G_\Gamma:=\bigast_{v\in V}G_v\Big/\big\langle[G_v,G_w]\text{ for each edge }(v,w)\big\rangle.
\]
Recall that a \emph{clique} in a graph is a subset of the vertices any
two of which are connected by an edge. They show:
\begin{prop}[\cite{allen+:growthproducts}*{Theorem~3.8}]
  Let each group $G_v$ have generating set $S_v$, and consider the
  generating set $\bigcup_{v\in V}S_v$ of $G_\Gamma$. Then
  \[\frac1{\CG_{G_\Gamma}(z)}=\sum_{\text{clique }W\subseteq V}\prod_{v\in W}\Big(\frac1{\CG_{G_v}}-1\Big).\]
\end{prop}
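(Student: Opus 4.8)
The plan is to induct on the number of vertices $n=\#V$. For $n\le1$ the claimed identity is the tautology $\CG_{G_v}^{-1}=1+(\CG_{G_v}^{-1}-1)$. For the inductive step I fix a vertex $v$, write $L=\operatorname{lk}(v)$ for its set of neighbours, $\Gamma'=\Gamma\setminus\{v\}$ and $\Gamma_L=\Gamma[L]$, and use the standard fact that a graph product splits as an amalgam over a sub-graph-product,
\[G_\Gamma=G_{\Gamma'}\ast_{G_{\Gamma_L}}\bigl(G_v\times G_{\Gamma_L}\bigr),\]
where $G_v\times G_{\Gamma_L}=G_{\Gamma[L\cup\{v\}]}$ (since $v$ is joined to every vertex of $L$), all generating sets being the evident sub-unions of $\bigcup_u S_u$, and where the amalgamated subgroup $G_{\Gamma_L}$ is \emph{convex} in each factor: sub-graph-products are geodesically embedded in graph products, and the homomorphisms killing the superfluous vertex groups are $1$-Lipschitz retractions onto $G_{\Gamma_L}$.

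The engine of the proof will be an additivity formula for the complete growth series of an amalgam $G=A\ast_C B$ in which $C$ is convex in both $A$ and $B$, with $G$ generated by $S_A\cup S_B$ and $S_C\subseteq S_A\cap S_B$:
\[\frac1{\CG_G}=\frac1{\CG_A}+\frac1{\CG_B}-\frac1{\CG_C}.\]
I would prove this exactly as Proposition~\ref{prop:CG:*}. Choosing minimal-length right transversals $\mathcal T_A\ni1$, $\mathcal T_B\ni1$ of $C$ in $A$ and in $B$, convexity yields $\|cr\|=\|c\|+\|r\|$ whenever $c\in C$ and $r$ is a transversal element, so $\CG_A=\CG_C\widetilde A$ and $\CG_B=\CG_C\widetilde B$ with $\widetilde A=\sum_{r\in\mathcal T_A}\bar r z^{\|r\|}$, $\widetilde B=\sum_{r\in\mathcal T_B}\bar r z^{\|r\|}$ of constant term $1$; the same estimate applied along the amalgam normal form $g=c\,r_1r_2\cdots r_k$ (with $c\in C$ and the $r_i$ nontrivial, alternating between $\mathcal T_A$ and $\mathcal T_B$) shows that word length in $G$ is additive along it, and then the geometric-series computation of Proposition~\ref{prop:CG:*} — with $\widetilde A-1$ and $\widetilde B-1$ in the roles of $\CG_G-1$ and $\CG_H-1$ there — produces $\CG_G$, which upon inversion gives the displayed relation (the case $C=1$ being precisely Proposition~\ref{prop:CG:*}).

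Granting the amalgam formula, the induction closes. Feeding into it Proposition~\ref{prop:CG:x} for the factor $G_v\times G_{\Gamma_L}$ (so that $\CG_{G_v\times G_{\Gamma_L}}^{-1}=\CG_{G_v}^{-1}\CG_{G_{\Gamma_L}}^{-1}$, these two commuting) and the inductive hypothesis for $\Gamma'$ and for $\Gamma_L$, one gets
\begin{align*}
\frac1{\CG_{G_\Gamma}}
&=\frac1{\CG_{G_{\Gamma'}}}+\Bigl(\frac1{\CG_{G_v}}-1\Bigr)\frac1{\CG_{G_{\Gamma_L}}}\\
&=\sum_{\substack{W\text{ clique of }\Gamma\\v\notin W}}\;\prod_{u\in W}\Bigl(\frac1{\CG_{G_u}}-1\Bigr)
\;+\;\Bigl(\frac1{\CG_{G_v}}-1\Bigr)\sum_{W\text{ clique of }\Gamma_L}\;\prod_{u\in W}\Bigl(\frac1{\CG_{G_u}}-1\Bigr).
\end{align*}
The cliques of $\Gamma'$ are exactly the cliques of $\Gamma$ omitting $v$; the cliques of $\Gamma$ containing $v$ are exactly the sets $\{v\}\cup W$ with $W$ a clique of $\Gamma_L$; and $\CG_{G_v}^{-1}-1$ commutes with each $\CG_{G_u}^{-1}-1$ for $u\in L$ (as $v$ is joined to $L$), so $\bigl(\CG_{G_v}^{-1}-1\bigr)\prod_{u\in W}\bigl(\CG_{G_u}^{-1}-1\bigr)=\prod_{u\in\{v\}\cup W}\bigl(\CG_{G_u}^{-1}-1\bigr)$. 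Hence the second sum equals $\sum_{W\text{ clique of }\Gamma,\,v\in W}\prod_{u\in W}(\CG_{G_u}^{-1}-1)$, and the two sums recombine into $\sum_{W\text{ clique of }\Gamma}\prod_{u\in W}(\CG_{G_u}^{-1}-1)$, which is the claim.

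The hard part is the amalgam additivity formula — more precisely its convexity hypothesis. Word length in $A\ast_C B$ is \emph{not} additive along the amalgam normal form for arbitrary $C$, and the convexity of $C$ in $A$ and $B$ (equivalently, the existence of $1$-Lipschitz retractions $A\to C$, $B\to C$) is exactly what forces the transversal elements, and then the successive syllables of the normal form, not to interfere. For graph products this convexity, together with the amalgam decomposition above, is classical; a more self-contained but bookkeeping-heavier alternative would be to argue directly from the graph-product normal form — every $g\in G_\Gamma$ is a product of syllables, unique up to shuffling commuting ones, with $\|g\|$ equal to the sum of the syllable lengths, so that $\CG_{G_\Gamma}=\sum\bar s_1\cdots\bar s_\ell\,z^{\sum\|s_i\|}$ over shuffle classes of reduced syllable sequences — and then run a Cartier--Foata/M\"obius-inversion argument peeling off the ``initial layer'' of syllables that can be shuffled to the front, which always occupies a clique of vertices.
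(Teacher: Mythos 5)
The paper states this proposition only as a citation to Allen et al.~Theorem~3.8 and gives no proof of its own, so there is no in-paper argument to compare against; I therefore assess your proposal on its merits. Your argument is correct, and it is the natural continuation of the paper's own two special cases, Propositions~\ref{prop:CG:x} and~\ref{prop:CG:*}. The algebra does close: with right coset decompositions $\CG_A=\CG_C\widetilde A$, $\CG_B=\CG_C\widetilde B$ and length-additivity along the amalgam normal form $g=c\,r_1\cdots r_k$, the same geometric-series manipulation as in Proposition~\ref{prop:CG:*} yields
\[
\frac1{\CG_G}=\Bigl(\frac1{\widetilde A}+\frac1{\widetilde B}-1\Bigr)\frac1{\CG_C}
=\frac1{\CG_C\widetilde A}+\frac1{\CG_C\widetilde B}-\frac1{\CG_C}
=\frac1{\CG_A}+\frac1{\CG_B}-\frac1{\CG_C},
\]
the point being that $\frac1{\widetilde A}\cdot\frac1{\CG_C}=(\CG_C\widetilde A)^{-1}=\CG_A^{-1}$ in the noncommutative ring $\Z G[[z]]$. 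Feeding $\CG_{G_v\times G_{\Gamma_L}}=\CG_{G_v}\CG_{G_{\Gamma_L}}$ into this and invoking the inductive hypothesis on $\Gamma'$ and on $\Gamma_L$, together with the (correct) bijection between cliques of $\Gamma$ containing $v$ and cliques of $\Gamma_L$, gives exactly the claimed sum over cliques. You were right to single out, as the one real input, the length-additivity (equivalently convexity) along the amalgam splitting $G_\Gamma=G_{\Gamma'}\ast_{G_{\Gamma_L}}(G_v\times G_{\Gamma_L})$; this is a genuine fact about graph-product normal forms that the proposal defers to the literature rather than proves, but you flag it explicitly. The direct Cartier--Foata argument you sketch at the end (peel off the initial clique of shufflable syllables, M\"obius-invert over the clique complex) is in fact closer to how the cited paper proceeds, and avoids the amalgam machinery at the cost of more bookkeeping; either route is fine.
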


There are few classes of groups in which $\CG_G(z)$ is rational for
all choices of generating set:
\begin{prop}[Liardet~\cite{liardet:phd}]
  If $G$ is virtually abelian then $\CG_G(z)$ is rational for all
  choices of generating set.\qed
\end{prop}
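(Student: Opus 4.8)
The plan is to reduce the statement to the classical fact that finitely generated virtually abelian groups have rational growth (Benson's theorem, cited as~\cite{benson:zn}), and then lift rationality from the ordinary growth series to the complete growth series by exhibiting a suitable finite-state structure. Concretely, I would first reduce to the case where $G$ is abelian: if $A\le G$ is a free abelian normal subgroup of finite index, choose coset representatives $g_1,\dots,g_m$ for $A$ in $G$, and observe that $\CG_{G,S}(z)=\sum_{i=1}^m \big(\sum_{a\in A} a g_i\, z^{\|a g_i\|}\big)$. So it suffices to understand, for each fixed $g\in G$, the series $\sum_{a\in A} a g\, z^{\|a g\|}$ as an element of $\Z G[[z]]$. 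Since $A$ has finite index, the word metric $\|\cdot\|$ on $G$ restricted to the coset $Ag$ is, up to a bounded additive error, a norm on $A\cong\Z^n$ that is comparable to a piecewise-linear (in fact polyhedral) function; this is exactly the geometric input that drives Benson's argument.

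The key step is then to produce a finite automaton-like description. Benson's theorem proceeds by writing $\Z^n$ as a finite disjoint union of translated polyhedral cones (chambers), inside each of which the word length is a fixed linear functional, and on each cone the corresponding multivariate generating function over $z^{\|\cdot\|}$ is a rational function of $z$ (a sum of geometric series). I would run the same decomposition, but keep track of the group element: inside a chamber $C\subseteq A$, the contribution is $\sum_{a\in C} a\,z^{L(a)}$ for a linear functional $L$, and this is a rational element of $\Z A[[z]]$ because $\sum_{a\in\N v_1+\dots+\N v_k} a\,z^{L(a)}=\prod_{j}(1-v_j z^{L(v_j)})^{-1}$ after translation to the cone's apex — a finite product of quasi-inverses, hence rational over $\Z A[z]\subseteq\Z G[z]$. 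Summing the finitely many chamber contributions, multiplying on the right by $g_i$, and summing over $i$ and over cosets gives $\CG_{G,S}(z)$ as a finite $\Z G$-linear combination of rational series, which is again rational. Equivalently, one may phrase this via Exercise~\ref{exse:quasi-geodesic combing}: virtually abelian groups admit a geodesic combing recognised by a finite automaton whose edges are labelled by $S$, and feeding that automaton into a linear system over $\Z G[z]$ yields rationality of $\CG_{G,S}$ directly.

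The main obstacle is making the chamber decomposition and the "word length is linear on each chamber" statement precise and uniform across all cosets $Ag$ simultaneously, including the bounded discrepancy between the genuine word metric on $G$ and the model polyhedral norm on $\Z^n$; this is where Benson's work does the heavy lifting, and I would invoke it rather than reprove it. A secondary technical point is non-commutativity of $\Z G$: one must check that the rational-series calculus (closure under quasi-inversion, the product formula for cones) is valid in $\Z G[[z]]$ as defined in~\S\ref{ss:completegrowth}, but since within each chamber the relevant elements all lie in the commutative subring $\Z A[[z]]$ and are only afterwards multiplied by fixed constants $g_i\in\Z G$, no genuine non-commutative difficulty arises — the linear-system framework of the rational subring absorbs the left multiplications by $g_i$ without trouble.
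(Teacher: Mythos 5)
The paper states this proposition without proof, citing Liardet's thesis, so there is no in-paper argument to compare against; I can only assess your proposal on its merits. Your plan is sound and is in fact the standard route: decompose $G$ into cosets of a finite-index free abelian normal subgroup $A$; invoke Benson's polyhedral chamber decomposition of each coset, on each chamber of which the word length of $a g_i$ is an integral affine function of $a$; then compute the contribution of a translated simplicial cone with apex $p$ and generators $v_1,\dots,v_k$ as $p z^{\|p g_i\|}\prod_j(1-v_j z^{\ell_j})^{-1}$, a legitimate product of quasi-inverses of elements of $\Z A[z]\subseteq\Z G[z]$ with zero constant term (the $\ell_j>0$ are the slopes of the affine function), which therefore lies in the rational subring of $\Z G[[z]]$; finite sums over chambers and right multiplication by coset representatives preserve rationality. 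Your reduction to the commutative subring $\Z A[[z]]$ inside each chamber is exactly what neutralises the non-commutativity concern.

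One minor slip in the alternative phrasing you offer at the end: Exercise~\ref{exse:quasi-geodesic combing} goes in the wrong direction (rational series implies quasi-geodesic combing); the implication you want is the preceding, unlabelled exercise (rational geodesic combing implies rational $\CG_G(z)$), and invoking it would still require you to prove that a virtually abelian group with an arbitrary finite generating set admits a regular geodesic normal form --- that is a genuine theorem (it can be extracted from Benson's chambers, or from Neumann--Shapiro on regularity of geodesic languages in $\Z^n$), not a freebie. Your primary argument via Benson's decomposition does not depend on this alternative phrasing and carries the proof by itself.
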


\begin{prop}[Grigorchuk-Nagnibeda~\cite{grigorchuk-n:complete}]
  If $G$ is word-hyperbolic then $\CG_G(z)$ is rational for all
  choices of generating set.\qed
\end{prop}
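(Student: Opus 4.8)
The plan is to reduce the statement to two ingredients: that a word-hyperbolic group admits a regular language of geodesic words mapping \emph{bijectively} onto $G$, and a routine translation of such a language into a linear system over $\Z G[z]$. For the first ingredient, fix a total order on $S$ and recall Cannon's theorem: a word-hyperbolic group has only finitely many \emph{cone types}, where $g,g'\in G$ have the same cone type if for every $w\in S^*$ one has $\|gw\|=\|g\|+|w|$ exactly when $\|g'w\|=\|g'\|+|w|$. Finiteness of cone types shows that the set $\mathrm{Geo}\subseteq S^*$ of geodesic words is recognized by a finite automaton (the cone-type automaton). Passing to the shortlex-least geodesic representative of each element, and using the fellow-traveller property of geodesics in a hyperbolic group, one checks that the language $L$ of shortlex-minimal geodesics is again regular. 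Thus there is a deterministic finite automaton with state set $Q$, initial state $q_0$, accepting set $F\subseteq Q$ and partial transition function $\delta\colon Q\times S\to Q$, such that $w\mapsto\overline w\in G$ restricts to a bijection $L\to G$ with $|w|=\|\overline w\|$ for all $w\in L$ (after trimming non-co-accessible states).

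For the second ingredient, for each $q\in Q$ introduce an indeterminate $X_q$ and set, in $\Z G[[z]]$,
\[X_q(z)=\sum_{\substack{w\in S^*\\ \delta(q,w)\in F}}\overline w\,z^{|w|}.\]
A path from $q$ to an accepting state is either empty, contributing $1$ when $q\in F$, or consists of an edge $q\xrightarrow{s}\delta(q,s)$ followed by such a path from $\delta(q,s)$; since $\overline{sw'}=\overline s\,\overline{w'}$ and lengths add, this gives
\[X_q(z)=[q\in F]+\sum_{\substack{s\in S\\ \delta(q,s)\text{ defined}}}\overline s\,z\cdot X_{\delta(q,s)}(z),\]
where $[q\in F]$ denotes $1$ or $0$ accordingly. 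The coefficients $\overline s\,z$ and $[q\in F]$ lie in $\Z G[z]$ and each monomial on the right has its indeterminate on the right, so this is a linear system over $\Z G[z]$ in the sense of \S\ref{ss:completegrowth}; it is non-degenerate since each application of the recursion raises the $z$-degree, so the initial terms $X_q(0)=[q\in F]$ determine a unique solution. Because $L$ maps bijectively and geodesically onto $G$, the first coordinate $X_{q_0}(z)$ equals $\sum_{g\in G}g\,z^{\|g\|}=\CG_G(z)$, which is therefore rational; as $S$ was arbitrary, this holds for every finite generating set.

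The main obstacle is entirely in the first ingredient: Cannon's finiteness of cone types, together with the care needed to extract a \emph{bijective} geodesic combing rather than merely the language of all geodesics — using $\mathrm{Geo}$ directly would count each group element once per geodesic representative and break the identity $X_{q_0}=\CG_G$. The second ingredient is pure bookkeeping, the complete-growth-series analogue of the fact that a rational geodesic combing forces the ordinary growth series to be rational.
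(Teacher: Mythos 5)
Your argument is correct and is essentially the proof of Grigorchuk--Nagnibeda that the paper cites without reproducing: the linear system you write down is exactly the paper's hint for the exercise ``if $G$ admits a rational geodesic combing, then its growth series is rational,'' carried out over $\Z G[z]$ rather than $\Z[z]$, and the input that a hyperbolic group admits such a combing (Cannon's finiteness of cone types, plus fellow-travelling to pass from the regular language of all geodesics to a regular \emph{bijective} shortlex subset) is the standard one. You also correctly flag the only non-routine point, namely that one must prune to a bijective combing since using $\mathrm{Geo}$ itself would overcount each group element.
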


Note that there is no need to consider rings such as $\Z G$; the
definition is more naturally phrased in terms of a \emph{semiring}
such as $\N G$. The polynomials $P_i$ are restricted to be sums of
products of monomials, and no subtraction is allowed. The notions of
$\Z$-rationality and $\N$-rationality differ subtly, see
e.g.~\cite{berstel:Nrat}.

Let us now compute explicitly the complete growth series of a free
group. It simplifies a little the notation to answer a slightly more
general question. We denote throughout the text the cyclic group of
order $p$ by $C_p$:
\begin{defn}\label{defn:freelike}
  A \emph{free-like} group is a finite free product of $\Z$'s and
  $C_2$'s.

  Say that a free-like group $G$ has $m_1$ factors isomorphic to
  $C_2$ and $m_2$ factors isomorphic to $\Z$; then it has a
  symmetric generating set of size $m_1+2m_2$, consisting of one
  generator for each $C_2$ and a generator and its inverse for each
  $\Z$. The group $G$ is characterised by the property that its Cayley
  graph (see after Definition~\ref{def:schreier}) is an
  $(m_1+2m_2)$-regular undirected tree. We call such an $S$ a
  \emph{natural} generating set for $G$.
\end{defn}
For instance, the free group $F_{m/2}$ is free-like for $m$ even, and
$\text{\LARGE$\ast$}^m C_2$ is free-like.

Let $G$ be free-like, and let $S$ denote a natural generating set of
$G$ with cardinality $m$. We shall see that $G$ has rational complete
growth series. For ease of notation, we write $\overline s$ for
$s^{-1}$. We identify elements of $G$ with \emph{reduced} words over
$S$, i.e.\ words not containing consecutive $s\overline s$.

For all $s\in S$, define $F_s\in\Z G[[z]]$ by
\[F_s=\sum_{\substack{w\in G,\text{ not}\\\text{starting with
    }\overline s}} w z^{\|w\|};
\]
so $\CG_G(z)=1+\sum_{s\in S}s z F_s$. We have the linear system in
non-commutative unknowns $F_s$
\[F_s = 1 + \sum_{t\in S,t\neq \overline s}t z F_t,
\]
with solution $F_s=(1-\overline s z)(1-\sum_{t\in S}t
z+(m-1)z^2)^{-1}$, so finally
\begin{equation}\label{eq:growthFm}
  \CG_G(z)=\frac{1-z^2}{1-\sum_{s\in S}s z+(m-1)z^2},
\end{equation}
compare with~\eqref{eq:growthFm0}.

\subsection{Asymptotic growth}\label{ss:asymptotic}
We return to a group $G$ with generating set $S$, and view it as a
metric space for the word metric. We consider the volume growth of
balls in the metric space $(G,d)$; this is the \emph{growth function}
$v_G\colon\R_+\to\N$ given by
\[v_G(R)=\#\{g\in G: \|g\|\le R\}.\]

It is naturally related to the formal power series $\Gamma(z)$: indeed
$v_G(R)$ is the sum of all co\"efficients of $\Gamma(z)$ of degree
$\le R$; equivalently, for $R\in\N$ it is the degree-$R$ co\"efficient
of $\Gamma(z)/(1-z)$.  Thus, by Tauberian and Abelian theorems (see
e.g.~\cite{nathanson:density}), asymptotics of $v_G(R)$ as
$R\to\infty$ may be related to asymptotics of $\Gamma_G(z)$ as $z\to$
the convergence radius. In particular, the function $v_G(R)$
grows as $R^d$ if and only if $\Gamma_G(z)$ converges in the unit disk
and has an order-$d$ pole singularity at $1$.

The norm $\|\cdot\|$ depends on the choice of generating set $S$, but
only mildly: different choices of generating sets give equivalent
norms, and equivalent metrics. If for $v,w\colon\R_+\to\N$ we write
$v\precsim w$ to mean that $v(R)\le w(CR)$ for a constant $C\in\R_+$
and all $R\gg0$, and we write $v\sim w$ to mean $v\precsim w\precsim
v$, then
\begin{lem}\label{lem:indepgens}
  The $\sim$-equivalence class of $v_G$ is independent of the
  choice of generating set.
\end{lem}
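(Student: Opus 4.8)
The statement is the standard fact that word growth is a quasi-isometry invariant of the group, in the weak sense captured by the relation $\sim$. The plan is to compare the word norms $\|\cdot\|_S$ and $\|\cdot\|_{S'}$ coming from two finite symmetric generating sets $S,S'$ directly, and show each is bounded by a constant multiple of the other; the inequality $v_{G,S}\precsim v_{G,S'}$ then follows immediately from the definition of $v_G$ as a ball-counting function.

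\emph{First step: compare the norms.} Fix two finite symmetric generating sets $S$ and $S'$ of $G$. Since $S'$ generates $G$, each generator $s\in S$ can be written as a word in the elements of $S'$; let
\[
C=\max_{s\in S}\|s\|_{S'},
\]
a finite number because $S$ is finite. Then for any $g\in G$ with $\|g\|_S=n$, writing $g=s_1\cdots s_n$ with $s_i\in S$ and concatenating $S'$-expressions for each $s_i$ yields $\|g\|_{S'}\le Cn=C\|g\|_S$. By symmetry (interchanging the roles of $S$ and $S'$) there is likewise a constant $C'$ with $\|g\|_S\le C'\|g\|_{S'}$ for all $g$.

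\emph{Second step: transfer to the growth functions.} From $\|g\|_{S'}\le C\|g\|_S$ we get, for every $R\ge0$, the inclusion
\[
\{g\in G:\|g\|_S\le R\}\subseteq\{g\in G:\|g\|_{S'}\le CR\},
\]
hence $v_{G,S}(R)\le v_{G,S'}(CR)$, i.e.\ $v_{G,S}\precsim v_{G,S'}$. Swapping $S$ and $S'$ gives $v_{G,S'}\precsim v_{G,S}$, so $v_{G,S}\sim v_{G,S'}$. Since $S,S'$ were arbitrary, the $\sim$-equivalence class of $v_{G,S}$ does not depend on the choice of generating set, which is the assertion of the lemma.

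\emph{Main obstacle.} There is essentially no obstacle here: the only point requiring the hypothesis is the finiteness of the generating sets, which is what makes the constant $C=\max_{s\in S}\|s\|_{S'}$ finite (for infinite generating sets the statement can fail). One should just be slightly careful that the relation $\precsim$ as defined only requires the inequality for $R\gg0$, so one does not even need to worry about small values of $R$; and that $\sim$ is genuinely an equivalence relation (transitivity uses that a product of two linear bounds is again linear), so that speaking of "the $\sim$-equivalence class" is justified.
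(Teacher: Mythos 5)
Your proof is correct and takes essentially the same approach as the paper: bound one norm linearly by the other using the finiteness of the generating sets, and transfer the resulting ball inclusion to the growth functions, concluding by symmetry. (If anything your writeup is slightly cleaner, since the paper's statement of the constant inequality has the roles of $S$ and $S'$ momentarily reversed before the conclusion.)
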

\begin{proof}
  Let $S,S'$ be two finite generating sets for $G$, and let us
  temporarily write $\|g\|_S,\|g\|_{S'}$ and
  $v_{G,S},v_{G,S'}$ for the norms and growth functions with
  respect to $S,S'$. There exists then a constant $C\in\N$ such that
  $\|s'\|_S\le C$ for all $s'\in S'$, and thus $\|g\|_{S'}\le
  C\|g\|_S$. This gives $v_{G,S}(R)\le v_{G,S'}(CR)$. The
  reverse inequality holds by symmetry.
\end{proof}

Note, as a consequence, that all exponentially-growing functions are
equivalent, and that $R^d$ and $C\cdot R^d$ are equivalent as soon as
$d>0$. The exponential growth rate
\begin{equation}\label{eq:lambda}
  \lambda_{G,S}=\lim v_{G,S}(R)^{1/R}
\end{equation}
is nevertheless worthy of consideration, and will be discussed
in~\S\ref{ss:nug}.

\subsection{History} Interest in asymptotic growth of groups dates
back at least to the early 1950's, in the works of
Krause~\cite{krause:growth}, Efremovich~\cite{efremovich:proximity}
and \v Svarc~\cite{svarts:growth}; they were seeking coarse invariants
of manifolds based on their fundamental group. Milnor noted
in~\cite{milnor:curvature} that, if $G$ is the fundamental group of a
compact riemannian manifold $\mathscr M$, then $v_G$ is equivalent to
the volume growth of balls in the universal cover of $\mathscr M$.

Here is a schematic of the known equivalence classes of growth
functions of groups. Note the two dots for the two groups of order
$4$, respectively the two groups $\Z^4$ and the Heisenberg group $H_3$
with quartic growth:
\[\begin{tikzpicture}
  \node at (0,0.5) {$1$};
  \node at (0,0) {$\bullet$};
  \node at (0,-0.8) {$1$};

  \node (z/2) at (0.3,1) {$\tfrac{\Z}{2\Z}$};
  \node at (0.3,0) {$\bullet$};
  \draw[dotted] (z/2) -- (0.3,0);
  \node at (0.3,-0.8) {$2$};

  \node at (0.6,0) {$\bullet$};
  \node at (0.6,-0.8) {$3$};

  \node at (0.9,0.2) {$\bullet$};
  \node at (0.9,-0.2) {$\bullet$};
  \node at (0.9,-0.8) {$4$};

  \node (z/5) at (1.2,1) {$\tfrac{\Z}{5\Z}$};
  \node at (1.2,0) {$\bullet$};
  \draw[dotted] (z/5) -- (1.2,0);
  \node at (1.2,-0.8) {$5$};

  \node at (1.6,0) {$\cdots$};
  \node at (1.6,-0.8) {$\cdots$};

  \node at (2,0.5) {$\Z$};
  \node at (2,0) {$\bullet$};
  \node at (2.0,-0.6) {$R$};

  \node (z2) at (2.3,1) {$\Z^2$};
  \node at (2.3,0) {$\bullet$};
  \node (r2) at (2.3,-1.0) {$R^2$};
  \draw[dotted] (z2) -- (r2);

  \node at (2.6,0) {$\bullet$};
  \node at (2.6,-0.6) {$R^3$};

  \node at (2.9,1.0) {$\Z^4$};
  \node at (2.9,0.5) {$H_3$};
  \node at (2.9,0.2) {$\bullet$};
  \node at (2.9,-0.2) {$\bullet$};
  \node (r4) at (2.9,-1.0) {$R^4$};
  \draw[dotted] (2.9,-0.2) -- (r4);

  \node at (3.4,0) {$\cdots$};
  \node at (3.4,-0.8) {$\cdots$};

  \draw (4.6,0) ellipse (10mm and 4mm);
  \node at (4.6,0) {gap};
  \node at (5.6,-0.8) {$v_{\min}$};

  \node at (6.25,0) {$???$};

  \node at (7,0.5) {$W_{012}$};
  \node at (7,0) {$\bullet$};
  \node at (7,-0.8) {$\exp(R^{0.76\dots})$};

  \draw[pattern=north west lines] (8.5,0) ellipse (14mm and 4mm);

  \node at (10,0.5) {$F_2$};
  \node at (10,0) {$\bullet$};
  \node at (10,-0.8) {$\exp(R)$};
\end{tikzpicture}\]

The left of the graph is occupied by finite groups; the growth of a
finite group is equivalent to the constant function taking value the
order of the group. Abelian groups, and more generally
virtually\footnote{A property is said to \emph{virtually} hold if it
  holds for a finite-index subgroup.} nilpotent groups have polynomial
growth of type $R^d$ for an integer $d$. The converse is a deep result
by Gromov:
\begin{thm}[Gromov~\cite{gromov:nilpotent}]\label{thm:gromov}
  A finitely generated group has growth function bounded by a
  polynomial if and only if it is virtually nilpotent.\qed
\end{thm}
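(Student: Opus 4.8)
We sketch how one would establish this; the full argument is long, so I only outline the architecture. The direction ``virtually nilpotent $\Rightarrow$ polynomial growth'' is the easy one. By the finite-index comparison behind Lemma~\ref{lem:indepgens} it suffices to treat a finitely generated nilpotent $G$; such a $G$ is polycyclic, so it has a subnormal series with cyclic factors, and writing group elements in the associated normal form $x_1^{a_1}\cdots x_n^{a_n}$ one uses iterated commutator identities (this is where nilpotence enters) to bound $|a_i|\le C\|g\|^{e_i}$ for suitable integers $e_i$; then $v_G(R)\le C'R^{e_1+\cdots+e_n}$. One in fact has $v_G(R)\sim R^d$ with $d=\sum_{i\ge1}i\cdot\operatorname{rank}(\gamma_iG/\gamma_{i+1}G)$ by Bass and Guivarc'h, but only the crude upper bound is needed here.

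The substance is the converse, which I would prove by induction on $d:=\limsup_{R\to\infty}\log v_G(R)/\log R$, finite by hypothesis. If $G$ is finite we are done, so assume $G$ infinite. The heart of the matter is the assertion that \emph{an infinite finitely generated group of polynomial growth has a finite-index subgroup $G'$ with a surjection $\varphi\colon G'\twoheadrightarrow\Z$}. Granting this, one closes the induction by a Milnor--Wolf argument: let $N=\ker\varphi$ and pick $t\in G'$ with $\varphi(t)=1$; conjugating a finite generating set of $G'$ by the powers of $t$ and using that polynomial growth forbids an infinite strictly increasing chain of such subgroups (otherwise one manufactures super-polynomial growth) shows $N$ is finitely generated; a short computation then gives $v_N(R)\le CR^{d-1}$, so by induction $N$ is virtually nilpotent; hence $G'$ is (virtually nilpotent)-by-$\Z$, so virtually polycyclic, and Wolf's theorem that a virtually polycyclic group of polynomial growth is virtually nilpotent shows $G'$, hence $G$, is virtually nilpotent.

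Everything thus reduces to producing the finite-index subgroup surjecting onto $\Z$, where the real work lies. Following Gromov, as streamlined by van den Dries and Wilkie: first pass to a sequence of scales $R_i\to\infty$ along which $v_G$ is ``doubling'', $v_G(2R_i)\le Kv_G(R_i)$ for a fixed $K$ --- such scales must exist, since otherwise $v_G$ would grow faster than any polynomial. On the rescaled pointed Cayley graphs $(G,\tfrac1{R_i}d,e)$ the doubling bound yields a uniform bound on the number of $\varepsilon$-balls needed to cover an $R$-ball, so by Gromov's compactness theorem a subsequence converges, in the pointed Gromov--Hausdorff topology, to a pointed metric space $(Y,y_0)$. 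One checks $Y$ is complete, locally compact, geodesic, connected, locally connected (indeed locally contractible), homogeneous, and of finite topological dimension $\ge1$. Its isometry group $L=\operatorname{Isom}(Y)$ is then locally compact, acts transitively with compact point stabilisers, and is therefore --- by the Montgomery--Zippin solution of Hilbert's fifth problem for locally compact, finite-dimensional, locally connected groups --- a Lie group with finitely many components, with $Y$ a manifold of positive dimension. The final, delicate step is to feed $G$ back in: carefully tracking how the left translations of $G$ behave in the limit produces a homomorphism from a finite-index subgroup of $G$ into the connected Lie group $L^0$ with infinite image; a finitely generated subgroup of a Lie group is, by the Tits alternative, either virtually solvable or contains a nonabelian free group, the latter forcing exponential growth against the hypothesis; so the image is an infinite finitely generated virtually solvable group, and such a group always has a finite-index subgroup surjecting onto $\Z$, which pulls back along $G'\to L^0$. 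I expect this last step --- extracting an honest homomorphism out of $G$, with infinite image, from purely asymptotic data --- to be the crux; the compactness and Hilbert-fifth inputs are heavy but citable.

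An alternative route, due to Kleiner, sidesteps Montgomery--Zippin: one shows that the vector space of harmonic functions of linear growth on $G$ is finite-dimensional --- a discrete analogue of a theorem of Colding and Minicozzi --- and, for $G$ infinite of polynomial growth, strictly larger than the constants; then $G$ acts on it linearly and one applies the Tits alternative exactly as above, still finishing with the Milnor--Wolf induction. A fully self-contained proof of either kind is well beyond the scope of these notes.
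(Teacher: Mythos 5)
Note first that the paper does not prove this theorem: it is attributed to Gromov and stated without proof, so there is no in-paper argument against which to compare yours. Your outline is nonetheless a faithful sketch of the standard route: the easy direction; the reduction to producing a finite-index subgroup $G'\twoheadrightarrow\Z$; the Milnor--Wolf closure of the induction (finite generation of $N=\ker$ via the non-stabilising chain of conjugate generators, the degree drop $v_N(R)\precsim R^{d-1}$ --- indeed short once one notes that the intrinsic word metric on $N$ dominates the one restricted from $G'$, so the $G'$-ball count already controls $v_N$ --- and Wolf's theorem for the polycyclic case); the Gromov--Hausdorff limit along doubling scales; Montgomery--Zippin; and the Tits-alternative finish. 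You also rightly single out the extraction of a homomorphism with infinite image into the Lie group as the delicate point, and the Kleiner harmonic-function alternative is accurately described. Two small corrections: van den Dries and Wilkie did not streamline the Gromov--Hausdorff step --- they replaced it with ultralimits from nonstandard analysis, leaving the surrounding induction otherwise unchanged; and ``locally contractible'' for the limit space $Y$ is more than you should claim at that stage --- local connectedness (automatic for a geodesic space) is what the Montgomery--Zippin input needs, and local contractibility only becomes evident once $Y$ is known to be a manifold.
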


It follows also from Gromov's argument that there exists a
superpolynomial function $v_{\min}(R)$ such that all groups with
growth $\precnsim v_{\min}$ are virtually nilpotent; so there are no
functions with growth strictly between polynomial and
$v_{\min}$. Explicit estimates in~\cite{shalom-tao:gromov} imply that
one may take $v_{\min}(R)=R^{(\log R)^{1/100}}$, although the gap is
probably larger. Note also that there need not exist a
$\precnsim$-largest function $v_{\min}$. If one restricts oneself to
virtually residually\footnote{A property is said to hold
  \emph{residually} if for every non-trivial element there exists a
  quotient in which this element remains non-trivial and the property
  holds.} nilpotent groups, then the gap extends at least to
$\exp(R^{1/2})$, see~\cite{grigorchuk:hp}; if one restricts oneself to
virtually residually solvable groups, then the gap extends at least to
$\exp(R^{1/6})$, see~\cite{wilson:gap}.

Milnor asked in 1968, in a famous problem in the ``American Math
Monthly''~\cite{milnor:5603}, whether there exist groups whose growth
function is neither polynomial nor exponential. He also conjectured in
that note that groups of polynomial growth are precisely the virtually
nilpotent groups. Milnor and Wolf showed
in~\cites{milnor:solvable,wolf:solvable} that virtually solvable
groups have either polynomial or exponential growth, and the
\emph{in}existence of groups with growth between polynomial and
exponential became known as ``Wolf's conjecture''. Recall the
celebrated ``Tits' alternative''~\cite{tits:linear}: a finitely
generated subgroup of a linear group in characteristic $0$ either is
virtually solvable or contains a non-abelian free subgroup; from this
it follows that linear groups always have polynomial or exponential
growth (see furthermore~\S\ref{ss:nug}).

However, groups of \emph{intermediate growth} exist, and
Grigorchuk~\cite{grigorchuk:growth} gave such an example, known as the
\emph{first Grigorchuk group} $G_{012}$; see~\S\ref{ss:ss}.

The growth of $G_{012}$ is not known, even up to $\sim$-equivalence;
conjecturally, it is the same as the growth of the group
$W_{012}(C_2)$, which will be introduced
in~\S\ref{ss:ssgrowth}. The hatched region above indicates that, in
fact, there are many groups of intermediate growth, and that any
``reasonable'' function between $\exp(R^{0.76\dots})$ and $\exp(R)$ is
equivalent to the growth function of a group, see
Theorem~\ref{thm:givengrowth}.

There are at least two arguments for considering asymptotic growth
rather than exact growth of groups. Firstly, the asymptotics of the
growth function does not depend on the generating set, by
Lemma~\ref{lem:indepgens}, so is an invariant of the group itself.
Secondly, we expect ``most'' growth series to be transcendental power
series, so that they are probably difficult to describe, manipulate or
expand; this happens e.g.\ for groups of subexponential growth, whose
growth series converges in the unit disk so is either rational or
transcendental, by Fatou's theorem (see
Theorem~\ref{thm:polya-carlson}).

%%%%%%%%%%%%%%%%%%%%%%%%%%%%%%%%%%%%%%%%%%%%%%%%%%%%%%%%%%%%%%%% 
\section{Growth of regular wreath products}\label{ss:growthrw}
We consider in this section a wreath product $W=H\wr_X G$, and compute
its growth series. We assume that generating sets $S,T$ for $G,H$
respectively have been chosen, and that the growth series of $G$ and
$H$ are known.

\subsection{Wreath products over finite sets}
As a first step, let us suppose that the set $X$ is finite, say
$X=\{x_1,\dots,x_d\}$. Then, as generating set for $W$, we may take
$\{t@x:t\in T,x\in X\}\sqcup S$. For this generating set, we have
\[\Gamma_W(z)=\Gamma_H(z)^{\#X}\Gamma_G(z),
\]
and by a small abuse of notation the same relation on complete growth series:
\[\CG_W(z)=\Big(\prod_{i=1}^d \CG_H(z)@x_i\Big) \CG_G(z).
\]
Indeed, every element $w\in W$ may uniquely be written in the form
$w=(h_1@x_1)\cdots(h_d@x_d)g$ for some $h_1,\dots,h_d\in H,g\in G$,
and the growth series of $\{h_i@x_i\colon h_i\in H\}$ naturally
coincides with that of $H$. In particular, if $\Gamma_G(z)$ and
$\Gamma_H(z)$ are rational, then so is $\Gamma_W(z)$.

Johnson obtained in~\cite{johnson:wreath} the same conclusion for more
complicated generating sets of $W$.

\subsection{Lamplighter groups}
The next case we consider is $G=X=\Z$, and in particular ``lamplighter
groups''. Since the computations will be generalised in the next
section, we content ourselves with a brief description of the growth
series, and for simplicity assume that $H$ is a finite group. We
consider $W=H\wr\Z$, denote a generator of $\Z$ by $s$, and let $W$ be
generated by the set $\{s,s^{-1}\}\sqcup H@1$.

Consider an element $w\in W$. If its image under the natural map
$W\to\Z$ is nonnegative, then it may be written minimally in the form
$s^{-m}(h_0@1)s(h_1@1)\cdots s(h_p@1)s^{-n}$ with $h_i\in H$,
$m,n\ge0$ and $p\ge m+n$, while if its image in $\Z$ is negative, then
it may be written minimally in the form
$s^m(h_0@1)s^{-1}(h_1@1)\cdots s^{-1}(h_p@1)s^n$ with
$p>m+n$. Furthermore, $h_0$ must be non-trivial unless $m=0$, and
$h_p$ must be non-trivial unless $n=0$.

All these constraints are local and therefore rational, except the
long-range relation between $m,n,p$. However, in terms of computing
growth series, the letters in the expression
$s^{-m}(h_0@1)s(h_1@1)\cdots s(h_p@1)s^{-n}$ can be permuted at no
cost; and the set of expressions of the form
\[(h_0@1)s^{-1}s (h_1@1)\cdots s^{-1}s(h_m@1)s(h_{m+1}@1)\cdots
s(h_{p-n}@1)s s^{-1} (h_{p-n+1}@1)\cdots s s^{-1}(h_p@1)
\]
is indeed a rational language, so that its growth function is
rational.

\begin{exse}
  Compute the growth series of $C_2\wr\Z$ with the standard generators
  $C_2@1\cup\{s,s^{-1}\}$. Note that the growth function grows
  exponentially, at the same rate as Fibonacci numbers. Could you have
  guessed the appearance of Fibonacci numbers without going through
  the calculations?
\end{exse}

\subsection{Regular wreath products with free groups}
We compute in this subsection the complete growth series of a wreath
product of the form $W=H\wr G$ for $G$ a free group. In fact, we
suppose more generally that $G$ is free-like, see
Definition~\ref{defn:freelike}, so that its Cayley graph for the
generating set $S$ is an $m$-regular tree $\mathscr T$. We keep the
convention of writing $\overline s$ for $s^{-1}\in S$. We suppose as
usual that $W$ is generated by $T@1\sqcup S$.

Consider $w\in W$, written as
$w=(h_0@1)g_1(h_1@1)\cdots g_\ell(h_\ell@1)$ with $g_i\in G$ and
$h_i\in H$. Following the arguments in~\S\ref{ss:genwreath}, one may
write it as
\[w=\prod_{i=0}^\ell(h_i@e_i)\cdot g_1\cdots g_\ell,\qquad\text{with }e_i=(g_1\cdots g_i)^{-1}.
\]
The \emph{support} of $w$ is the subgraph of $\mathscr T$ traced by
inverses of prefixes of the word $g_1\dots g_\ell$; it is the convex
hull of $\{e_0,\dots,e_\ell\}$ in $\mathscr T$. We shall count
elements of $W$ by examining their possible supports and summing over
them.

For each $s\in S$, let $\Theta_s$ denote the set of finite subtrees of
$\mathscr T$, containing $1$ and no element of $S$ except possibly
$s$. Each $\theta\in\Theta_s$ has \emph{outer} vertices, with at most
one neighbour in $\theta$, and \emph{inner} vertices, with at least
two neighbours in $\theta$. We introduce non-commutative power series
$E_s(x,y,z)$ with co\"efficients in $\Z G[z]$, which count the number
of Eulerian cycles\footnote{i.e. cycles that traverse each edge once}
in trees $\Theta_s$, weighted by length in $z$; the variables $x,y$
belong to $G$ and in particular are \emph{not} assumed central. The
series $E_s(x,y,z)$ are defined by the algebraic system
\begin{equation}\label{eq:parry1}
  E_s(x,y,z)=1+s y\overline s z^2+s x\Big(\prod_{t\in S,t\neq\overline s}E_t(x,y,z)-1\Big)\overline s z^2.
\end{equation}

\[\begin{tikzpicture}
  \draw[very thin] (0,0) node[above=2mm] {1} -- +(150:1.5)
  (0,0) -- +(180:1.5)
  (0,0) -- +(210:1.5)
  (0,0) -- (1.5,0) node[above=2mm] {$s$} -- ++(30:1.5) -- ++(0:1) ++(0:-1) -- +(-20:1)
  (1.5,0) -- ++(0:1.5) -- ++(20:1) ++(20:-1) -- ++(0:1) ++(0:-1) -- ++(-20:1)
  (1.5,0) -- ++(-30:1.5) -- ++(20:1) ++(20:-1) -- ++(0:1);
  \draw[thick,rounded corners,->] (0,0.1) -- ++(1.5,0) -- ++(30:1.6) -- ++(-60:0.2) -- ++(30:-1.6) -- ++(-1.6,0);
%  \forall\i in {(30:1.5),(0:1.5),(-30:1.5)}
%  (1.5,0) -- +(0:1.5);
\end{tikzpicture}\]

The monomials in $E_s$ are in bijection with (Eulerian cycles tracing)
trees in $\Theta_s$; if a tree $\theta$ with $p$ edges has inner
vertices at $f_1,\dots,f_n$ and non-trivial outer vertices at
$f'_1,\dots,f'_{n'}$, then the monomial corresponding to it is the
product, in some order, of
$z^{2p},x^{f_1},\dots,x^{f_n},y^{f'_1},\dots,y^{f'_{n'}}$. Indeed the
equation defining $E_s$ says that a monomial counted by $E_s$ is
either the empty tree (counted as $1$), or a single edge from $1$ to
$s$ (counted as $s y\overline s z^2$), or an edge from $1$ to $s$,
followed by $m-1$ subtrees counted recursively by $E_t$ for all
$t\neq\overline s$, which are not all empty. Note that, if $\theta$
has $p$ edges, then a minimal closed path that explores all vertices
of $\theta$ has length $2p$.

Let $D_s(z)$ denote the sum of $w z^{\|w\|}$ over all elements
$w\in W$ which belong to the base group $H^G$ and whose support is an
element of $\Theta_s$. Such elements may be counted as follows:
starting from a support $\theta\in\Theta_s$, choose a word
$g_1\dots g_\ell$ in $G$ of minimal length that visits all vertices of
$\theta$; and, each time a vertex is first visited, insert an element
of $H$, which furthermore must be non-trivial if the vertex is
outer. Therefore,
\begin{equation}\label{eq:parry2}
  D_s(z)=E_s(\CG_H(z),\CG_H(z)-1,z).
\end{equation}

Next, let $F_s(z)$ denote the sum of $w z^{\|w\|}$ over all elements
$w=f g\in W$ with $f\colon G\to H$, $g\in G$ not beginning in
$\overline s$, and whose support does not contain $\overline s$. We
have a linear system
\begin{equation}\label{eq:parry3}
  F_s(z)=\prod_{t\neq\overline s}D_t(z) + \sum_{t\neq\overline s}\Big(\prod_{u\neq t,\overline s}D_u(z)\Big) t z F_t,
\end{equation}
since in every such element either $g=1$ and the support explores all
the neighbours $t$ of $1$ except $\overline s$, or $g$ begins by a
generator, say $t$, and then its support explores all neighbours of
$1$ except $\overline s,t$, then moves to $t$, and continues by an
element not starting by $\overline t$.  Finally,
\begin{equation}\label{eq:parry4}
  \CG_W(z)=\prod_{s\in S}D_s(z) + \sum_{s\in S}\Big(\prod_{t\neq s}D_t(z)\Big)s z F_s(z),
\end{equation}
for the same reasoning as above. Combining
Equations~(\ref{eq:parry1}--\ref{eq:parry4}), we deduce:
\begin{thm}\label{thm:wreathalg}
  If $H$ is a finitely generated group whose complete growth series
  $\CG_H(z)$ is algebraic, and $G$ is a free-like group, then the
  complete growth series of $W$ is also algebraic.
\end{thm}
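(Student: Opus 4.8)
The plan is to assemble the theorem directly from the system of equations~\eqref{eq:parry1}--\eqref{eq:parry4} that precedes it, showing at each stage that ``algebraic'' is preserved. Recall that a power series in $\Z G[[z]]$ is algebraic precisely when it is a component of the solution of a finite non-degenerate polynomial system over $\Z G[z]$; so the strategy is to exhibit $\CG_W(z)$ as such a component, using $\CG_H(z)$ (which is algebraic by hypothesis, hence itself a component of some finite polynomial system over $\Z H[z]\subseteq\Z W[z]$) as one of the ``input'' series and combining everything into one big system.

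First I would record the elementary closure properties needed: the algebraic subring of $\Z G[[z]]$ is closed under addition and multiplication (one takes the disjoint union of the defining systems and adds variables $X_{sum}=X_1+X_2$ and $X_{prod}=X_1 X_2$), and it contains $\Z G[z]$; moreover an algebraic series with invertible constant term has algebraic inverse (add a variable $Y$ with defining equation $Y=Y(1-X)+1$, i.e. $Y=1+Y-YX$, which is of the required shape once one normalizes so the constant term is $1$). These are all standard and each amounts to enlarging the variable set by finitely many new unknowns; here $S$ is finite since $G$ is free-like, so all products $\prod_{t\in S}(\cdots)$, $\prod_{t\neq\overline s}(\cdots)$ etc. are finite products and cause no trouble.

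Then I would feed the equations through in order. Equation~\eqref{eq:parry1} already presents the finite tuple $(E_s)_{s\in S}$ as the solution of an algebraic (indeed polynomial) system over $\Z G[z]$ in the non-commuting unknowns $x,y$; note the $x,y,z$-monomials have positive $z$-degree on the right-hand side except for the constant $1$, so the system is non-degenerate and has a unique solution with the prescribed constant terms --- this is the ``number of Eulerian cycles in trees $\Theta_s$'' combinatorial identity sketched after the equation, which I would take as given. Equation~\eqref{eq:parry2} is then a substitution $x\mapsto\CG_H(z)$, $y\mapsto\CG_H(z)-1$: since $\CG_H(z)$ is algebraic, $\CG_H(z)-1$ is too, and substituting algebraic series into an algebraic system keeps the result algebraic (adjoin the defining system of $\CG_H$ and equations $x=\CG_H$, $y=\CG_H-1$ to the system for the $E_s$), so each $D_s(z)$ is algebraic. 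Equation~\eqref{eq:parry3} is a \emph{linear} system over the (now enlarged, but still algebraic) coefficient ring in the unknowns $(F_s)_{s\in S}$, whose coefficients are polynomials in the $D_t(z)$; again it is non-degenerate because every occurrence of an $F_t$ on the right carries a factor $z$. Finally~\eqref{eq:parry4} expresses $\CG_W(z)$ as a single polynomial in the $D_s$ and $F_s$ and $z$. Collecting the defining systems for $\CG_H$, for $(E_s)$, the substitution relations giving $(D_s)$, the linear relations giving $(F_s)$, and the last relation giving $\CG_W$, into one finite system over $\Z W[z]$ with $\CG_W$ as its first component, we conclude $\CG_W(z)$ is algebraic.

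The main obstacle is not any of these formal closure manipulations --- those are routine bookkeeping --- but making sure the combinatorial content behind~\eqref{eq:parry1}--\eqref{eq:parry4} is actually correct, in particular the claim that the minimal word-length needed to realize a base-group element with support $\theta$ is $2p$ (twice the number of edges of $\theta$) \emph{plus} the contributions $\|h_i\|$, with a non-triviality constraint exactly at the outer vertices, and that every geodesic word for a general $w=fg\in W$ decomposes as an Eulerian-type exploration of the support followed by a geodesic ``exit'' to $g$. This is precisely the subtlety flagged in~\S\ref{ss:genwreath} about where the decorations sit (at inverses of prefixes), and verifying it carefully for free-like $G$ --- where the support is a subtree of the tree $\mathscr T$, so an Eulerian closed walk of length $2p$ exists and is optimal --- is where the real work lies; once that is in hand, the algebraicity is immediate from the structure of the equations.
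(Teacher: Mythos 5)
Your proposal follows exactly the paper's route: the paper's ``proof'' consists precisely of deriving equations~\eqref{eq:parry1}--\eqref{eq:parry4} in the text preceding the theorem and then noting that combining them yields algebraicity; you have simply spelled out the (routine) closure bookkeeping that the paper compresses into the single phrase ``Combining Equations~(\ref{eq:parry1}--\ref{eq:parry4}), we deduce.'' Your identification of the combinatorial content of~\eqref{eq:parry1}--\eqref{eq:parry4} as the genuine substance --- the Eulerian-walk-of-length-$2p$ description of geodesics for base-group elements supported on a subtree $\theta$, and the recursive decomposition of geodesics for $fg$ via which neighbour of the current vertex is entered first --- matches the paper's presentation, and your treatment of~\eqref{eq:parry2} as a substitution (adjoin a defining system for $\CG_H$ and the relations $x=\CG_H$, $y=\CG_H-1$) is the right way to make that step explicit. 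One minor remark: you observe that algebraic series with invertible constant term have algebraic inverse, but none of the four equations actually requires inversion, so that closure property is unused here.
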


\begin{cor}[Parry,~\cite{parry:wreath}]\label{cor:parry}
  If $H$ is a finitely generated group whose growth series
  $\Gamma_H(z)$ is algebraic, and $G$ is a free-like group, then the
  growth series of $W$ is also algebraic.

  If furthermore $\Gamma_H(z)$ is rational and $m\le2$, then
  $\Gamma_W(z)$ is also rational.

  On the other hand, if $m\ge3$ then $\Gamma_W(z)$ does not belong to
  the field generated by $z$ and $\Gamma_H(z)$.
\end{cor}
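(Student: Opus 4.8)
The plan is to push the non-commutative system (\ref{eq:parry1})--(\ref{eq:parry4}) through the augmentation $\Z G\to\Z$ (so every $s\in S$ becomes $1$, $\CG_H(z)$ becomes $\Gamma_H(z)$, abbreviated $a$, and $\CG_H(z)-1$ becomes $a-1$) and then to compare field extensions. Because $G$ is free-like, all augmented series $\bar E_s$ satisfy one and the same relation: writing $P=\prod_{t\in S}\bar E_t$, equation (\ref{eq:parry1}) becomes $\bar E_s^2=(1-z^2)\bar E_s+az^2P$ for every $s$ (when $\bar s=s$ this is direct; when $\bar s\neq s$ one first notes that $\bar E_s\neq\bar E_{\bar s}$ would force $\bar E_s\bar E_{\bar s}=az^2P$, impossible near $z=0$). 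As this quadratic has a unique solution with constant term $1$, all $\bar E_s$ coincide with one power series $D$, so $P=D^m$ and the system collapses to
\[az^2D^{m-1}-D+(1-z^2)=0,\qquad \Gamma_W=\frac{D^m+zD^{2m-2}}{1-(m-1)zD^{m-2}};\]
eliminating higher powers of $D$ by the first relation then rewrites $\Gamma_W$ as an explicit rational function $R(D,z,a)$, with cubic numerator and linear denominator in $D$.

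For $m\le2$ the first relation is \emph{linear} in $D$, so $D\in\Q(z,a)$, hence $\Gamma_W\in\Q(z,a)$, hence $\Gamma_W\in\Q(z)$ is rational whenever $\Gamma_H$ is; this is the second assertion. (The first assertion holds for every $m$: $D$ is a root of a polynomial over the finite extension $\Q(z,\Gamma_H)$ of $\Q(z)$, hence algebraic over $\Q(z)$, and so is $\Gamma_W\in\Q(z,\Gamma_H,D)$ — this is the augmented form of Theorem~\ref{thm:wreathalg}.) For the third assertion, with $m\ge3$, write $K=\Q(z,\Gamma_H)$. Back-solving $\Gamma_W=R(D,z,a)$ for $D$ — clear the denominator, reduce with $D^{m-1}=\bigl(D-(1-z^2)\bigr)/(az^2)$, and collect — expresses $D$ as a $K$-rational function of $\Gamma_W$, so $K(\Gamma_W)=K(D)$ and it suffices to prove $D\notin K$. (The one configuration where this back-solving degenerates forces $\Gamma_W$ to equal a fixed element of $K$ that is not a power series, having a pole at $z=0$; so it does not occur.)

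To prove $D\notin K$ I would argue analytically. Since $m\ge3$, $G$ and hence $W=H\wr G$ has exponential growth, and counting subtrees of the $m$-regular tree $\mathscr T$ shows $D$ has a finite, positive radius of convergence $\rho_D$. Moreover $\rho_D\le\rho_H$: if $\rho_D>\rho_H$, then $az^2D^{m-1}-D+(1-z^2)=0$ with $D,z$ analytic and $z^2D^{m-1}$ nonvanishing at $\rho_H$ would make $\Gamma_H=\bigl(D-(1-z^2)\bigr)/(z^2D^{m-1})$ analytic at $\rho_H$, which is false. Also $z^2\Gamma_H D^{m-1}=D-(1-z^2)\le D$ keeps $D^{m-2}$ bounded as $z\uparrow\rho_D$, so $D(\rho_D)<\infty$; being algebraic, $D$ therefore has a genuine branch point at its Pringsheim singularity $z=\rho_D$. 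If $\rho_D<\rho_H$, then $\Gamma_H$ is analytic at $\rho_D$, so every element of $K$ is meromorphic there, contradicting that $D$ branches at $\rho_D$; hence $D\notin K$, and we are done.

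The main obstacle is the borderline case $\rho_D=\rho_H$, which can occur only when $H$ is infinite and $\Gamma_H$ stays bounded up to its singularity; there the singularity of $D$ at $\rho_D$ might be inherited wholesale from $\Gamma_H$, so one must locate a branch point of $D$ invisible to $\Gamma_H$. Solving $g=\partial_D g=0$ for $g(z,D)=az^2D^{m-1}-D+(1-z^2)$ shows the branch points of $D$ are exactly the $\zeta$ with $\Gamma_H(\zeta)=\psi(\zeta)$, where $\psi(z)=\dfrac{(m-2)^{m-2}}{(m-1)^{m-1}}\cdot\dfrac1{z^2(1-z^2)^{m-2}}$ is a fixed rational function with $\psi\ge1$ on $(0,1)$ and $\psi(z)\to+\infty$ as $z\to0^+$; since $\Gamma_H(0)=1$, $\Gamma_H$ is increasing, and $\Gamma_H\not\equiv1$ (because $H\neq1$), the graphs of $\Gamma_H$ and $\psi$ must cross, producing such a $\zeta$. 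The delicate point I expect to grind through is to show that at the smallest such crossing either $\Gamma_H$ is analytic — in which case we conclude as before — or $D$ ramifies to strictly higher order than $\Gamma_H$ does, the extra factor of $2$ coming from the nondegenerate critical point $\partial_D^2 g\neq0$ (which holds because $m\ge3$); either way $D$'s local monodromy is not realisable in $K=\Q(z,\Gamma_H)$, so $D\notin K$ and therefore $\Gamma_W\notin K$.
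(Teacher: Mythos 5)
Your setup matches the paper's proof: apply the augmentation $\varpi$ to \eqref{eq:parry1}--\eqref{eq:parry4}, show all $\overline E_s$ coincide with a single series $D$ satisfying $az^2D^{m-1}-D+(1-z^2)=0$ (the paper takes this coincidence for granted; your argument via the quadratic $\overline E_s^2=(1-z^2)\overline E_s+az^2P$ is a welcome addition), express $\Gamma_W$ rationally in $D$, and then study the singularity of $D$. Your treatment of the $m\le2$ and algebraicity claims is correct and follows the paper. The paper's own argument for $m\ge3$ is a two-line sketch (``$D$ has a finite limit at $\rho$ \dots\ so has a non-pole singularity, so is not in $\Q(z,\Gamma_H)$''), and you correctly identify a gap in it: the conclusion is immediate only when $\rho_D<\rho_H$, since otherwise $\Gamma_H$ itself can contribute a non-pole singularity at $\rho_H$ and the field $K=\Q(z,\Gamma_H)$ can contain such functions. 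Your branch-point analysis of the case $\rho_D=\rho_H$ is a reasonable sketch, but as you yourself note it is not finished.

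There is, however, a gap in your proposal that you present as settled but isn't, and which the paper's sketch suffers from equally. You assert that ``back-solving'' gives $K(\Gamma_W)=K(D)$, reducing the claim to $D\notin K$. But clearing denominators in $\Gamma_W=(D^m+zD^{2m-2})/(1-(m-1)zD^{m-2})$ and reducing with $D^{m-1}=(D-(1-z^2))/(az^2)$ yields, with $u=az^2$, $w=1-z^2$,
\[(u+z)D^3\;-\;w(u+2z)D^2\;+\;\bigl(zw^2-\Gamma_W\,u(u-(m-1)z)\bigr)D\;-\;\Gamma_W\,u(m-1)zw\;=\;0,\]
a \emph{cubic} in $D$ with nonzero leading coefficient $u+z$, not a linear relation. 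Only for $m=3$ can the $D^3$ and $D^2$ terms be reduced further (using $D^2=(D-w)/u$) to linear, giving $D\in K(\Gamma_W)$. For $m=4$ the reduction stops at a quadratic and one would need the extra observation that $[K(D):K]$ divides $m-1=3$, which is prime, to force $K(\Gamma_W)=K(D)$; for $m\ge5$ this argument doesn't immediately apply and some irreducibility input is required. The paper simply concludes ``$\varpi(D_s)$ \dots\ is not in $\Q(z,\Gamma_H)$'' and treats the statement about $\Gamma_W$ as proved, so this step is implicit there too; but your proposal explicitly claims to have it, and the claimed mechanism fails.

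A small sanity check worth keeping in mind: when $H=1$ one has $a=1$, the discriminant $1-4z^2(1-z^2)=(1-2z^2)^2$ is a perfect square, and $D\equiv1\in K$; correspondingly the third assertion is false in that degenerate case. So whichever route you take from $D\notin K$ to $\Gamma_W\notin K$, it must use $H\neq1$ somewhere, and so must the existence of a genuine singularity of $D$ at finite radius.
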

\begin{proof}
  Apply the augmentation map $\varpi\colon g\mapsto1$ to
  Equations~(\ref{eq:parry1}--\ref{eq:parry4}); this gives an
  algebraic system of degree $\max(m-1,1)$ expressing $\Gamma_W(z)$ in
  terms of $z$ and $\Gamma_H(z)$. In particular, for $m=2$ it is a
  linear system.

  Conversely, assume $m\ge3$ and let $\rho$ denote the convergence
  radius of the series of the image of $D_s$ under $\varpi$. Note that
  $\lim_{z\to\rho^-}\varpi(D_s)(z)$ is finite: if the limit were
  infinite, convergence to infinity would be order $(m-1)\times$
  itself, a contradiction. Therefore $\varpi(D_s)$ has a non-pole
  singularity, so is not in $\Q(z,\Gamma_H(z))$.
\end{proof}

\begin{exse}
  Show that, for the lamplighter group $C_2\wr\Z$, the complete growth
  series is not rational.

  Hint: use Exercise~\ref{exse:quasi-geodesic combing}.
\end{exse}

\subsection{Traveling salesmen}
To glimpse at the limit of what can be computed, consider now the case
$G=\Z^2$. No property of $\Gamma_W(z)$ is known, and this is due to
the fact that there is no good description of words of minimal norm
describing group elements.

In fact, the problem can be quite precisely stated as follows. One is
given a point $p_\infty$ and a set $\{p_1,\dots,p_\ell\}$ in $\Z^2$,
and is required to find a walk of minimal length on the grid that
starts at $(0,0)$, visits all the points $p_1,\dots,p_\ell$ in some
order, and ends at $p_\infty$. This is a classical \emph{travelling
  salesman} problem, and is known to be NP-complete,
see~\cites{garey-johnson-stockmeyer:graphpb,garey-johnson:steinertree}. It
is a small step to venture that finding a good description of minimal
paths is at least as hard as finding those paths' length.

\subsection{Asymptotic growth}
Regular wreath products, in non-degenerate cases, all have exponential
growth. This is in stark contrast to the case of permutational wreath
products, as we shall see in~\S\ref{ss:ssgrowth}.

\begin{prop}
  If $H\neq 1$ and $G$ is infinite, then $W=H\wr G$ has exponential growth.
\end{prop}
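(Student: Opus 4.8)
The plan is to exhibit an explicit exponentially large family of distinct elements in a ball of radius $R$ in $W=H\wr G$. Fix a nontrivial element $h\in H\setminus\{1\}$, say with $\|h\|_T=c$ for some constant $c\ge1$, and work with the generating set $T@1\sqcup S$ of $W$ (legitimate since $W$ is generated over $H$ and $G$, and the growth type is independent of the generating set by Lemma~\ref{lem:indepgens}). Since $G$ is infinite it contains elements of arbitrarily large norm; choose $g_1,\dots,g_k\in G$ with $\|g_i\|_S\le R/(2k)$ (roughly) and such that the prefixes involved stay under control. The key point is that for any subset $A\subseteq\{1,\dots,k\}$ the word
\[
w_A=\Big(\prod_{i\in A}(h@1)\,s_i\Big)\cdot(\text{return path})
\]
can be written so as to encode $A$ in the support pattern of the base-group part.

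More concretely, first I would reduce to the case $G=\Z$: an infinite finitely generated group $G$ has an element $g$ of infinite order, or failing that (if $G$ is torsion) at least elements of unbounded order, but in fact it suffices to note $G$ contains a finitely generated infinite subgroup and hence an infinite cyclic \emph{or} a strictly increasing sequence of balls; cleanest is: either $G$ has an element of infinite order, giving a copy of $H\wr\Z$ inside $W$, and $H\wr\Z\le W$ as a subgroup so $v_{H\wr\Z}\precsim v_W$; or $G$ is an infinite torsion group, in which case $v_G(R)$ is already superpolynomial hence one argues directly. For the infinite-cyclic case write $G\supseteq\langle s\rangle\cong\Z$ and consider, for each binary string $\epsilon=(\epsilon_1,\dots,\epsilon_n)\in\{0,1\}^n$, the element
\[
w_\epsilon=(h^{\epsilon_1}@1)\,s\,(h^{\epsilon_2}@1)\,s\cdots s\,(h^{\epsilon_n}@1)
\]
of $H\wr\Z$. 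By the formulas of \S\ref{ss:genwreath} this element equals $f_\epsilon\cdot s^{n-1}$ where $f_\epsilon$ has support inside $\{1,s^{-1},\dots,s^{-(n-1)}\}$ and records $\epsilon$ exactly; in particular the $2^n$ elements $w_\epsilon$ are pairwise distinct, and each has word length at most $n-1+n\cdot c=O(n)$. Hence $v_{W}(Kn)\ge 2^n$ for a constant $K$ depending only on $c$, which is exponential growth.

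The step I expect to require the most care is the torsion case, i.e.\ when $G$ is an infinite finitely generated torsion group so that no honest copy of $\Z$ sits inside. One clean way around it: $G$ infinite means $v_G$ is unbounded, and we do not actually need $\Z$ — we only need, for each $n$, an element $g\in G$ whose reduced $S$-word $s_1\cdots s_m$ has $m\ge n$ and whose $n$ prefixes $g_1=s_1,\ g_2=s_1s_2,\dots$ hit $n$ \emph{distinct} points $x_i:=g_i^{-1}$ of the Schreier graph (here $X=G$). Such a geodesic word always exists once $\|g\|_S\ge n$, since a geodesic visits $\|g\|_S+1$ distinct vertices. Then the $2^n$ decorated elements $\big(\prod_i (h^{\epsilon_i}@x_i)\big)g$ are distinct, lie in $W$, and have length $\le \|g\|_S + nc$; bounding $\|g\|_S$ by the diameter growth of $G$ and optimizing $n$ against $\|g\|_S$ gives again $v_W(R)\succsim 2^{cR}$ for a suitable constant, i.e.\ exponential. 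I would present the infinite-order case as the main line and dispatch the torsion case in a short remark using this Schreier-graph observation; the only genuine subtlety is making sure the inserted labels $h^{\epsilon_i}@x_i$ at the prefix-inverses are genuinely recoverable from $w_\epsilon$, which is exactly what the support computation of \S\ref{ss:genwreath} guarantees.
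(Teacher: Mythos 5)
Your proof is correct, and the construction in your ``torsion case'' paragraph --- inserting $(h@1)^{\epsilon_i}$ along a geodesic word $s_1\cdots s_m$, rewriting the result as $f\cdot g$ via the formulas of \S\ref{ss:genwreath}, and recovering the $\epsilon_i$ from $f$ because the prefix-inverses $(s_1\cdots s_i)^{-1}$ are pairwise distinct --- is exactly the paper's proof. But you have wrapped it in an unnecessary dichotomy. The paper does not split into ``$G$ has an element of infinite order'' versus ``$G$ is torsion'': it simply observes that, because $G$ is infinite, its Cayley graph contains an infinite geodesic ray $g_1,g_2,\dots$ (a standard consequence of K\"onig's lemma for connected, locally finite, infinite graphs), and then runs precisely your ``torsion case'' construction along that ray to get $v_W(2\ell+1)\ge 2^{\ell+1}$. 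Since that argument needs nothing beyond $G$ being infinite and $S$ finite, there is no reason to privilege the infinite-order case; it should simply be dropped.

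Two smaller points. First, an element $g\in G$ of infinite order gives you the subgroup $\langle g,h@1\rangle\cong\langle h\rangle\wr\Z$, not $H\wr\Z$ (unless you also include all of $H@1$ in your generating set, in which case the bigger subgroup $\langle g\rangle\ltimes\bigoplus_{\langle g\rangle}H$ is indeed $H\wr\Z$); either way, $\langle h\rangle\wr\Z$ already has exponential growth, so this is a presentation issue rather than a mathematical one. Second, the phrase ``bounding $\|g\|_S$ by the diameter growth of $G$ and optimizing $n$ against $\|g\|_S$'' over-engineers a step that requires no optimization: just choose $g$ with $\|g\|_S=n$, which is possible because $G$ is infinite, and the bound $v_W((1+c)n)\ge 2^{n+1}$ is immediate.
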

\begin{proof}
  Choose $h\neq1\in H$, and without loss of generality assume that $h$
  is a generator of $H$. Since $G$ is infinite, there exists an
  infinite word $g_1g_2\dots$ that traces a geodesic in the Cayley
  graph (see after~\ref{def:schreier}) of $G$, with $g_1,g_2,\dots$
  generators of $G$ and also of $W$. In particular, all
  $(g_1\cdots g_i)^{-1}$ are distinct. Consider then, for any
  $\ell\in\N$, the set of elements
  \[\big\{(h@1)^{\epsilon_0}g_1(h@1)^{\epsilon_1}\cdots g_\ell(h@1)^{\epsilon_\ell}:\epsilon_0,\dots,\epsilon_\ell\in\{0,1\}\big\}.
  \]
  All these elements have norm at most $2\ell+1$, and there are
  $2^{\ell+1}$ such elements. They are all distinct, since when they
  are rewritten in the form $f g_1\dots g_\ell$ one has $f((g_1\cdots
  g_i)^{-1})=h^{\epsilon_i}$ so that the $\epsilon_i$ can be recovered
  from the element. Therefore, $v_W(2\ell+1)\ge2^{\ell+1}$.
\end{proof}

%%%%%%%%%%%%%%%%%%%%%%%%%%%%%%%%%%%%%%%%%%%%%%%%%%%%%%%%%%%%%%%%
\section{(Self-)similar groups}\label{ss:ss}
We begin by introducing self-similar groups. They are groups with an
additional structure:
\begin{defn}
  A group $G$ is \emph{self-similar} if it is endowed with a
  homomorphism $\phi\colon G\to G\wr_X\sym(X)$ for some set $X$. The
  map $\phi$ is called the \emph{wreath recursion} of $G$.
\end{defn}
In this text, we shall always assume that the set $X$ is finite, and
shall (unless stated otherwise) also assume that the homomorphism
$\phi$ is injective. A self-similar group is a group $G$ in which
elements may be recursively described by a permutation of $X$,
decorated by elements of $G$ itself. In case $X=\{0,1,\dots,d-1\}$, we
also write elements of $G\wr_X\sym(X)$ in the form
$\pair<g_0,\dots,g_{d-1}>\pi$ for group elements $g_0,\dots,g_{d-1}$
and a permutation $\pi\in\sym(X)$.

It is essential to understand that being self-similar is an
\emph{attribute} of a group, and not a property. Thus, for example, a
topological group is a group endowed with a topology; and every group
is a topological group, for the discrete and the coarse topology. In
the same vein, every group is self-similar, merely for the reason that
it is similar to itself. Taking $X=\{0\}$ and $\phi(g)=\pair<g>$ is
uninteresting, but is not illegal.

\subsection{Finite-state self-similar groups}\label{ss:fsa}
We describe two fundamental constructions of self-similar groups.

For the first, start by a well-understood group $F$, such as a free
group; and choose a (not necessarily injective!) homomorphism
$\tilde\phi\colon F\to F\wr_X\sym(X)$.  There exists then a maximal
quotient of $F$ on which the map $\tilde\phi$ induces an injective
wreath recursion. To wit, one defines an increasing sequence $N_i$ of
normal subgroups of $F$ by
\begin{equation}\label{eq:normalF}
  N_0=1,\quad N_{i+1}=\tilde\phi^{-1}(N_i^X),
\end{equation}
and sets $G=F/\bigcup_i N_i$. By construction, the map $\tilde\phi$
induces an injective map $\phi\colon G\to G\wr_X\sym(X)$.

An important example of group defined by this method --- and which,
essentially, cannot be defined differently --- is the \emph{first
  Grigorchuk group}, introduced in~\cite{grigorchuk:burnside} and
based on~\cite{aleshin:burnside}. Consider
\begin{equation}\label{eq:F}
  F=\langle a,b,c,d\mid a^2,b^2,c^2,d^2,bcd\rangle\cong C_2*(C_2\times C_2),
\end{equation}
and define $\tilde\phi\colon F\to F\wr\sym(2)$ by
\[\tilde\phi(a)=\tikz[baseline=-2ex]{\draw[->] (0,0) -- (0.5,-0.5); \draw[->](0.5,0) -- (0,-0.5);},\qquad
\tilde\phi(b)=\tikz[baseline=-2ex]{\draw[->] (0,0) -- node[right=-2pt] {\small$a$} (0,-0.5); \draw[->](0.5,0) -- node[right=-2pt] {\small$c$} (0.5,-0.5);},\qquad
\tilde\phi(c)=\tikz[baseline=-2ex]{\draw[->] (0,0) -- node[right=-2pt] {\small$a$} (0,-0.5); \draw[->](0.5,0) -- node[right=-2pt] {\small$d$} (0.5,-0.5);},\qquad
\tilde\phi(d)=\tikz[baseline=-2ex]{\draw[->] (0,0) -- (0,-0.5); \draw[->](0.5,0) -- node[right=-2pt] {\small$b$} (0.5,-0.5);}.
\]

It is straightforward to see that $\tilde\phi$ is a homomorphism ---
just compute the images of the relators. It is, however, remarkable
that one may compute efficiently in $G$ just using this
description. We use the same letters $a,b,c,d$ for the corresponding
generators of $G$. As an illustration, let us check that the relation
$(ad)^4$ holds in $G$. Writing the permutation diagrams horizontally,
one has
\[\phi((ad)^4)=\Big(\tikz[baseline=1.5mm]{\draw[->] (0,0.5) -- +(0.7,-0.5); \draw[->] (0,0) -- +(0.7,0.5);
  \draw[->] (0.75,0) -- +(0.7,0); \draw[->] (0.75,0.5) -- node[above=-1pt] {\small$b$} +(0.7,0);
}\Big)^4
=\Big(\tikz[baseline=1.5mm]{\draw[->] (0,0.5) -- +(0.7,-0.5); \draw[->] (0,0) -- +(0.7,0.5);
  \draw[->] (0.75,0) -- +(0.7,0); \draw[->] (0.75,0.5) -- node[above=-1pt] {\small$b$} +(0.7,0);
  \draw[->] (1.5,0.5) -- +(0.7,-0.5); \draw[->] (1.5,0) -- +(0.7,0.5);
  \draw[->] (2.25,0) -- +(0.7,0); \draw[->] (2.25,0.5) -- node[above=-1pt] {\small$b$} +(0.7,0);
}\Big)^2
=\Big(\tikz[baseline=1.5mm]{\draw[->] (0,0) -- node[below=-1pt] {\small$b$} +(0.7,0); \draw[->] (0,0.5) -- node[above=-1pt] {\small$b$} +(0.7,0);}\Big)^2
=\tikz[baseline=1.5mm]{\draw[->] (0,0) -- node[below=-1pt] {\small$b^2$} +(0.7,0); \draw[->] (0,0.5) -- node[above=-1pt] {\small$b^2$} +(0.7,0);}
=1,\]
so $(ad)^4=1$ in $G$ because $\phi$ is injective.

\begin{exse}
  Using similar calculations, compute the exponent of $ab$ and $ac$ in $G$.
\end{exse}

Note that, in that example, $G=\langle S\rangle$ for the set
$S=\{1,a,b,c,d\}$ which has the property that $\phi(S)$ is contained
in $S\times S\times\sym(2)$. More generally,
\begin{defn}
  Let $G$ be a self-similar group. A subset $S\subseteq G$ is
  \emph{state-closed} if $\phi(S)$ is contained in $S^X\times\sym(X)$.

  An element $g\in G$ is \emph{finite-state} if there exists a
  state-closed subset of $G$ containing $g$. A subset of $G$ is
  \emph{finite-state} if all its elements are finite-state.
\end{defn}

\begin{exse}
  Let $\Z=\langle t\rangle$ be endowed with the self-similar structure
  $\phi(t)=\tikz[baseline=-2ex]{\draw[->] (0,0) -- node[near
    start,left] {\small$t$} (0.5,-0.5); \draw[->] (0.5,0) --
    node[near start,right=-1pt] {\small$t^2$} (0,-0.5);}$.  Show that
  only $t^0$ is finite-state.
\end{exse}

\begin{lem}
  The product and inverse of finite-state elements is again
  finite-state.
\end{lem}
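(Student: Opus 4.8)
The plan is to manufacture, from finite state-closed sets witnessing that $g$ and $h$ are finite-state, new finite state-closed sets that contain $g^{-1}$ and $gh$. So I would fix finite state-closed subsets $S_g\ni g$ and $S_h\ni h$, so that $\phi(S_g)\subseteq S_g^X\times\sym(X)$ and $\phi(S_h)\subseteq S_h^X\times\sym(X)$, and then check that $S_g^{-1}$ and $S_gS_h$ are again state-closed.

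For the inverse: put $S_g^{-1}=\{s^{-1}:s\in S_g\}$, which is finite and contains $g^{-1}$. For $s\in S_g$ write $\phi(s)=\pair<s_x:x\in X>\pi$ with all $s_x\in S_g$; then the inversion formula in the wreath product, which follows from $g\cdot f={}^g\!f\cdot g$, gives $\phi(s^{-1})=\phi(s)^{-1}=\pair<s_{x\pi^{-1}}^{-1}:x\in X>\pi^{-1}$. All of its labels lie in $S_g^{-1}$, so $\phi(S_g^{-1})\subseteq(S_g^{-1})^X\times\sym(X)$, i.e.\ $S_g^{-1}$ is state-closed and $g^{-1}$ is finite-state.

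For the product: put $S_gS_h=\{st:s\in S_g,\,t\in S_h\}$, which is finite and contains $gh$. For $s\in S_g$ and $t\in S_h$ write $\phi(s)=\pair<s_x:x\in X>\pi$ and $\phi(t)=\pair<t_x:x\in X>\pi'$ with $s_x\in S_g$, $t_x\in S_h$. Then the multiplication rule $(f\pi)(f'\pi')=(f\cdot{}^\pi\!f')(\pi\pi')$ of~\eqref{eq:wreathmult} shows that $\phi(st)$ has permutation part $\pi\pi'\in\sym(X)$ and label $s_x\,t_{x\pi}$ at each $x\in X$, which lies in $S_gS_h$. Hence $\phi(S_gS_h)\subseteq(S_gS_h)^X\times\sym(X)$, so $S_gS_h$ is state-closed and $gh$ is finite-state.

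These two assertions are exactly the lemma; iterating them shows the finite-state elements of $G$ form a subgroup. I do not anticipate any genuine difficulty: the only point needing care is the bookkeeping emphasised in~\S\ref{ss:wreath}, namely tracking on which side the permutation acts when commuting a base-group element past it, so that the label coming from $t$ appears at $x$ in the form $t_{x\pi}$ (pre-composition by $\pi$) rather than $t_{x\pi^{-1}}$.
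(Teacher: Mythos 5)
Your proof is correct and follows essentially the same approach as the paper's: both work by producing a finite state-closed set from the given ones. The paper's version is more economical, asserting in one stroke that $g h^{-1}$ lies in the finite state-closed set $ST^{-1}$ (where $S\ni g$, $T\ni h$ are state-closed), whereas you handle inverse and product separately and spell out the wreath-product label computations that verify state-closedness; those computations are right, including the care about pre-composition giving the label $t_{x\pi}$ rather than $t_{x\pi^{-1}}$.
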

\begin{proof}
  If $g,h$ are finite-state contained respectively in state-closed
  sets $S,T$, then $g h^{-1}$ is finite-state, since it belongs to the
  finite state-closed set $ST^{-1}$.
\end{proof}

Therefore, a finitely generated self-similar group $G$ is finite-state
if and only if its generators are finite-state, and one may assume
that $G$ is generated by a state-closed set.

In that case, the wreath recursion of $G$ may conveniently be
represented by an \emph{automaton}, more precisely a \emph{Mealy
  automaton}. This is a directed graph with vertex set $S$ called its
\emph{states}, and with an edge from $s\in S$ to $t\in S$, with label
`$x|y$', whenever the decorated permutation $\phi(s)$ maps $x\in X$ to
$y\in X$ and has label $t$ on the edge $x\to y$. Thus, in a sense, the
graph is the dual of the permutation diagram, with the roles of $X$
and $S$ exchanged. The automaton generating the first Grigorchuk is,
with the convention $X=\{\9,\8\}$,
\[\begin{fsa}[baseline]
    \node[state] (b) at (1.4,3) {$b$};
    \node[state] (d) at (4.2,3) {$d$};
    \node[state] (c) at (2.8,0.7) {$c$};
    \node[state] (a) at (0,0) {$a$};
    \node[state] (e) at (5.6,0) {$1$};
    \path (b) edge node {$\8|\8$} (c) edge node {$\9|\9$} (a)
          (c) edge node {$\8|\8$} (d) edge node {$\9|\9$} (a)
          (d) edge node[above] {$\8|\8$} (b) edge node {$\9|\9$} (e)
          (a) edge [double,bend right=30] node {$\9|\8,\8|\9$} (e)
          (e) edge [double,loop right] node {$\9|\9,\8|\8$} (e);
  \end{fsa}
\]

Assume that the self-similar group was obtained as above as a quotient
of a self-similar group $F$ with a map $\tilde\phi\colon F\to
F\wr_X\sym(X)$. Recall that the \emph{word problem} asks, given a word
in the generators of a finitely generated group, to determine whether
the group element that it defines is trivial. There are groups, even
finitely presented, in which the word problem is
unsolvable~\cite{novikov:wp}; however,
\begin{lem}
  Let $F$ be a finite-state finitely generated self-similar group with
  wreath recursion $\tilde\phi$ and solvable word problem, and $G$ be
  the maximal quotient of $F$ on which the induced wreath recursion
  $G\to G\wr_X\sym(X)$ is injective. Then $G$ also has solvable word
  problem.
\end{lem}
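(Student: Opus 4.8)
The plan is to turn the inductive construction of $G=F/N$, where $N=\bigcup_i N_i$ and $N_i$ is as in~\eqref{eq:normalF}, into an explicit algorithm. First I would replace the generating set of $F$ by a finite \emph{state-closed} set $S$: the generators of $F$ are finite-state by hypothesis, each sits in a finite state-closed subset, and the union of these subsets is again state-closed and generates $F$, with solvable word problem with respect to $S$. The gain is that $\tilde\phi$ is now presented by a finite Mealy automaton with state set $S$, so that for a word $w=s_1\cdots s_n$ over $S$ every iterated \emph{section} of $w$ --- a label string occurring in $\tilde\phi(w)$, then in $\tilde\phi$ of such a string, and so on --- is again a word of length $n$ over $S$, since $\tilde\phi(S)\subseteq S^X\times\sym(X)$. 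Consequently the set $\Gamma$ of all iterated sections of a given input word $w$ is finite, of size at most $|S|^n$, and is effectively computable together with, for each $v\in\Gamma$, its section strings $v_x$ ($x\in X$) and the permutation $\sigma_v$ in $\tilde\phi(v)=\pair<(v_x)_{x\in X}>\sigma_v$.

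Next I would unwind~\eqref{eq:normalF}: $v\in N_0$ iff $v=1$ in $F$, and $v\in N_{i+1}$ iff $\sigma_v=1$ and $v_x\in N_i$ for every $x\in X$. By induction on $i$ this describes the finite sets $\mathcal N_i:=N_i\cap\Gamma$ recursively: $\mathcal N_0=\{v\in\Gamma:v=1\text{ in }F\}$, which is decidable since $F$ has solvable word problem and $\Gamma$ is finite; and $\mathcal N_{i+1}=\{v\in\Gamma:\sigma_v=1\text{ and }v_x\in\mathcal N_i\text{ for all }x\}$, the recursion staying inside $\Gamma$ because $\Gamma$ is closed under taking sections. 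Since $N_i\subseteq N_{i+1}$, the $\mathcal N_i$ form a non-decreasing chain inside the finite set $\Gamma$; hence it stabilises after at most $|\Gamma|$ steps --- a stabilisation detected by the condition $\mathcal N_{i+1}=\mathcal N_i$ --- at the set $\mathcal N_\infty=\bigcup_i\mathcal N_i=N\cap\Gamma$.

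Since the input word $w$ lies in $\Gamma$, we get $w\in N$ iff $w\in\mathcal N_\infty$, and $w$ represents the identity of $G=F/N$ iff $w\in N$. So the algorithm is: compute $\Gamma$ together with the data $(\sigma_v,(v_x)_{x\in X})_{v\in\Gamma}$, then $\mathcal N_0$, then iterate the operator above until the computed set is stable, and answer ``trivial'' precisely when $w$ lies in that stable set. Every step terminates, which proves the claim.

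The one point requiring care --- and the only place a naive argument breaks --- is that one cannot characterise $N$ as the set of words some level of whose sections is trivial in $F$: an element with $\tilde\phi(v)=\pair<(1,\dots,1)>\sigma$ and $\sigma\neq1$ has trivial first-level sections yet need not belong to $N$, so the permutation conditions $\sigma_v=1$ must be propagated through the recursion. Equivalently, $\mathcal N_\infty$ is the \emph{least} fixed point of the monotone operator $A\mapsto\{v\in\Gamma:\sigma_v=1,\ v_x\in A\text{ for all }x\}\cup\{v\in\Gamma:v=1\text{ in }F\}$ on subsets of $\Gamma$, obtained by iterating upward from $\mathcal N_0$, and it may be strictly smaller than other fixed points, such as the trace on $\Gamma$ of the kernel of the action of $G$ on the rooted tree $X^*$.
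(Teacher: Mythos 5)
Your proposal is correct, and it follows the same overall plan as the paper's: pass to a finite state-closed generating set $S$, observe that all iterated sections of a length-$n$ word over $S$ stay in the finite set $S^n$, and then do a fixed-point computation on this finite set of states. Where you differ is in how the fixed point is organised. The paper computes the set $U''_0$ of reachable states, then the recurrent set $U$ (states reachable by arbitrarily long paths) as a decreasing limit, and declares $w$ trivial iff every reachable state has trivial permutation part and every recurrent state is trivial in $F$; this criterion is correct but its equivalence with $w\in\bigcup_i N_i$ takes a short argument that the paper only sketches in one sentence. You instead iterate the defining recursion for $N_i$ directly on $\Gamma$, computing the increasing chain $\mathcal N_i=N_i\cap\Gamma$ until it stabilises, so that correctness is immediate from $N=\bigcup_i N_i$ and from the fact that $\Gamma$ is section-closed. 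Your version is thus a slightly more transparent reformulation of the same algorithm, and your closing observation --- that $\mathcal N_\infty$ is the \emph{least} fixed point and may be strictly smaller than the trace of the kernel of the tree action --- correctly identifies why one cannot shortcut the recursion, and foreshadows the extra hypothesis (faithfulness) needed in the next lemma of the paper. One tiny stylistic nit: for a word $v$ with $\tilde\phi(v)=\pair<1,\dots,1>\sigma$ and $\sigma\neq1$, such $v$ in fact \emph{cannot} lie in $N$ (not merely ``need not''), since $\sigma_v\neq1$ blocks membership in every $N_{i+1}$ and $v\neq1$ blocks $N_0$.
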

\begin{proof}
  Assume without loss of generality that $F$ is generated by the
  finite state-closed set $S$, and denote also by $S$ the
  corresponding generating set of $G$. Given a word $w\in S^*$ of
  length $\ell$, it defines a state-closed element of $F$, belonging
  to the state-closed set $S^\ell$. Consider the corresponding
  automaton with vertex set $S^\ell$.

  Let $U\subseteq S^\ell$ denote the set of states that are reachable
  from $w\in S^\ell$ by arbitrarily long paths. This set is
  computable: set $U'_0:=\{w\}$, and for $i\ge0$ set
  $U'_{i+1}:=U'_i\cup\{$endpoints of edges starting in $U'_i\}$; then
  the $U'_i$ form an increasing sequence of subsets of $S^\ell$, hence
  stabilize, say to $U''_0$. Note that $U''_0$ is the set of states
  reachable from $w$. For all $i\ge0$, let $U''_{i+1}$ denote those
  endpoints of edges starting in $U''_i$; then the $U''_i$ form a
  decreasing sequence of subsets of $S^\ell$, hence stabilize, to $U$.

  The element of $G$ defined by $w$ is trivial in $G$ if and only if
  both all the edges starting in $U''_0$ have labels of the form
  `$x|x$' for some $x\in X$, and all elements of $U$ define trivial
  elements of $F$ under the evaluation map $S^*\to F$.

  More precisely, let $m\in\N$ be minimal such that every element of
  $U$ may be reached from $w$ by a path of length at most $m$. Then
  the conditions above imply that $w$ belongs to the normal subgroup
  $N_m$ of $F$, see~\eqref{eq:normalF}.
\end{proof}

The above proof amounts to constructing a Mealy automaton for the
action of $S^\ell$, and examining it to determine which of its states
are trivial in $G$.

\subsection{Linear groups}
Here is another construction of self-similar groups. Consider a group
$G$, a subgroup $H$, and a homomorphism $\phi_0\colon H\to G$. By the
``permutational Kaloujnine-Krasner theorem''~\ref{thm:kkwreath}, there
exists a natural extension $\phi\colon G\to G\wr_X\sym(X)$, with
$X=H\backslash G$, in such a manner that
$\phi(h)=\pair<\dots,\phi_0(h),\dots>\dots$ for all $h\in H$, with the
`$\phi_0(h)$' in position $H\in H\backslash G$.

Alternatively, this map $\phi$ may be directly constructed as follows:
choose a transversal $T$ of $H$ in $G$, namely a subset $T\subseteq G$
such that every $g\in G$ may uniquely be written in the form $ht$ with
$h\in H,t\in T$. Identify $X$ with $T$. Let then $\phi(g)$ be the
decorated permutation that sends $t\in T$ to $u\in T$ with label
$\phi_0(t g u^{-1})$ whenever $t g u^{-1}$ belongs to $H$.

Here is a fundamental example: choose a prime number $p$, and consider
\[G=\Gamma_0(p)=\begin{pmatrix}\Z&\Z\\p\Z&\Z\end{pmatrix}\cap\SL_2(\Z).\]
Consider also the matrix
$\Phi=(\begin{smallmatrix}p&0\\0&p^{-1}\end{smallmatrix})\in\SL_2(\Q)$,
and $H=G\cap G^{\Phi^{-1}}$. Set $\phi_0(h)=h^\Phi$. This example
generalizes naturally to $G$ any matrix group, such as for instance a
congruence subgroup of $\SL_n(\Z)$ for arbitrary $n$, or even
$\SL_n(\Z)$ itself. This shows that the class of linear groups over
$\Z$ is contained in the class of self-similar groups.

In fact, the only essential ingredient of the above construction is
the element $\Phi$ in the \emph{commensurator} of $G$. Recall that,
for a subgroup $G$ of a group $L$, the commensurator of $G$ is the
subgroup of those $x\in L$ such that $G\cap G^x$ has finite index in
$G$ and in $G^x$. If $G$ is an irreducible lattice in a Lie group $G$,
then $G$ may be called \emph{arithmetic}~\cite{margulis:subgroups} if
its commensurator is dense in $L$; e.g.\ the commensurator of
$\SL_n(\Z)$ in $\SL_n(\R)$ is $\SL_n(\Q)$. Then all arithmetic
lattices admit self-similar actions on rooted
trees~\cite{kapovich:arithmetic}.

\subsection{Rooted trees}
Let $X$ be a set, and consider the associated \emph{rooted regular
  tree} $\mathscr T$: its vertex set is
$X^*=\{x_i\dots x_1:x_j\in X\}=\bigsqcup_{i\ge0}X^i$, and it has an
edge between $x_{i+1}x_i\dots x_1$ and $x_i\dots x_1$ for all
$x_i\in X$.  The tree is rooted at the empty word, the unique element
of $X^0$; the set of vertices at distance $i$ from the root is
identified with $X^i$, and the Cartesian product $X^\infty$ is
naturally interpreted as the boundary $\partial\mathscr T$ of the
tree, namely the set of infinite rays emanating from the root. Here
for illustration is the top of the binary tree:
\[\begin{tikzpicture}[level distance=10mm,
  every node/.style={inner sep=1pt},
  level 1/.style={sibling distance=40mm},
  level 2/.style={sibling distance=20mm},
  level 3/.style={sibling distance=10mm}]
  \node  {$\emptyset$}
  child {node {$\9$}
    child {node {$\9\9$}
      child {node {$\9\9\9$}}
      child {node {$\8\9\9$}}
    }
    child {node {$\8\9$}
      child {node {$\9\8\9$}}
      child {node {$\8\8\9$}}
    }
  }
  child {node {$\8$}
    child {node {$\9\8$}
      child {node {$\9\9\8$}}
      child {node {$\8\9\8$}}
    }
    child {node {$\8\8$}
      child {node {$\9\8\8$}}
      child {node {$\8\8\8$}}
    }
  };
\end{tikzpicture}\]
Let $W$ denote the isometry group of $\mathscr T$, namely
the set of bijections of $X^*$ that fix the root $\emptyset$ and
preserve the edge structure of $\mathscr T$. Given $g\in W$, let
$\sigma\in\sym(X)$ denote the action of $g$ on $X=X^1$, and for all
$x\in X$ define an element $g_x\in W$ by
$(x_n\cdots x_1 x)g=(x_n\cdots x_1)g_x\,(x^\sigma)$; namely, $g_x$
describes the action of $g$ on the subtree $X^*x$ as is it carried to
$X^*x^\sigma$ by $g$.
\begin{lem}
  The map
  \[\phi\colon\begin{cases}
    W &\to W\wr\sym(X)\\
    g &\mapsto\pair<g_x:x\in X>\sigma
  \end{cases}\]
  is a group isomorphism.
\end{lem}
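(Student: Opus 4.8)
The statement to prove is that the map $\phi\colon W\to W\wr\sym(X)$ sending an isometry $g$ to the decorated permutation $\pair<g_x:x\in X>\sigma$ is a group isomorphism, where $W=\operatorname{Isom}(\mathscr T)$ is the automorphism group of the rooted regular tree $\mathscr T$ on alphabet $X$. The plan is to verify in turn that $\phi$ is well-defined, that it is a homomorphism, that it is injective, and that it is surjective.

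\emph{Well-definedness and homomorphism.} First I would check that each $g_x$ defined by $(x_n\cdots x_1 x)g=(x_n\cdots x_1)g_x\,(x^\sigma)$ really is an isometry of $\mathscr T$: since $g$ fixes the root and preserves adjacency, it maps the subtree $X^* x$ hanging below the vertex $x$ bijectively and isometrically onto the subtree $X^* x^\sigma$ hanging below $x^\sigma$, and $g_x$ is exactly this map read back as a self-map of $\mathscr T$ via the canonical identifications $X^* x\cong X^*\cong X^* x^\sigma$; it fixes $\emptyset$ and preserves edges, so $g_x\in W$. Then to see $\phi$ is a homomorphism, I would take $g,h\in W$ with actions $\sigma,\tau$ on $X$ and compute the action of $gh$ on a vertex $x_n\cdots x_1 x$: applying $g$ then $h$ gives $(x_n\cdots x_1)g_x h_{x^\sigma}\,(x^{\sigma\tau})$, so $(gh)_x=g_x h_{x^\sigma}$ and the top permutation is $\sigma\tau$; comparing with the wreath-product multiplication rule~\eqref{eq:wreathmult} — recalling the paper's convention that permutations act on the right and ``$\sigma\tau$'' means ``first $\sigma$, then $\tau$'' — this is precisely $\phi(g)\phi(h)=\pair<g_x:x>\sigma\cdot\pair<h_x:x>\tau$. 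This is where I would be most careful, since it is the one spot where the side conventions for the $G$-action on the base group could introduce a spurious inverse; tracking the indices $x\mapsto x^\sigma$ explicitly resolves it.

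\emph{Injectivity.} Suppose $\phi(g)=\pair<1:x\in X>\mathrm{id}$. Then $g$ acts trivially on $X^1$ and each $g_x=1$, so by the recursion $g$ acts trivially on every $X^{n+1}$ given that it acts trivially on every subtree below level $1$; a one-line induction on word length shows $g$ fixes every vertex of $X^*$, hence $g=1$.

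\emph{Surjectivity.} Given an arbitrary element $\pair<h_x:x\in X>\pi$ of $W\wr\sym(X)$, I would construct $g\in W$ with $\phi(g)=\pair<h_x:x\in X>\pi$ by defining its action on a vertex $x_n\cdots x_1$ (with $x_1\in X$ the first letter) to be $(x_n\cdots x_2)h_{x_1}$ followed by appending $x_1^\pi$, i.e. glue the isometries $h_x$ onto the $|X|$ subtrees below the root and permute those subtrees by $\pi$; this manifestly fixes the root and preserves edges, so lies in $W$, and unravelling the definition of $\phi$ on this $g$ returns exactly $\pair<h_x:x>\pi$. Thus $\phi$ is a bijective homomorphism, hence an isomorphism. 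The main obstacle, as noted, is not any single step but the bookkeeping of the right-action conventions in the homomorphism check; everything else is a routine induction on the depth of the tree.
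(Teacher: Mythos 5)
Your proof is correct and follows essentially the same approach as the paper: establish bijectivity by noting that the formula $(x_n\cdots x_1 x)g=(x_n\cdots x_1)g_x\,(x^\sigma)$ puts elements of $W$ in one-to-one correspondence with pairs $(\pair<g_x:x\in X>,\sigma)$, and establish the homomorphism property by decomposing the action into the subtrees below the root. Your version is more explicit — in particular you spell out the key identity $(gh)_x=g_x h_{x^\sigma}$ and match it against~\eqref{eq:wreathmult} via $({}^\sigma\! f')(x)=f'(x\sigma)$ — where the paper contents itself with a one-sentence geometric remark, but the substance is the same and your bookkeeping of the right-action convention is carried out correctly.
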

\begin{proof}
  Given $g_x\in W$ and $\sigma\in\sym(X)$, an element $g\in W$ may
  uniquely be defined by
  $(x_n\cdots x_1 x)g=(x_n\cdots x_1)g_x\,(x^\sigma)$. This proves
  that $\phi$ is bijective.

  To see that $\phi$ is a homomorphism, consider the $\#X$ subtrees
  below the root. They are permuted according to the permutation part
  $\sigma$ of $\phi$, and simultaneously acted upon by the decorations
  $g_x$. Composition of decorated permutations therefore coincides
  with composition of tree isometries.
\end{proof}

Let us now start with a self-similar group $G$. Its wreath recursion
$\phi\colon G\to G\wr_X\sym(X)$ then defines an action of $G$ on
$X$. Furthermore, the wreath recursion can be ``iterated'': one has
maps
\begin{align*}
  G\overset\phi\longrightarrow G\wr_X\sym(X)\overset{\phi^X}\longrightarrow&(G\wr_X\sym(X))\wr_X\sym(X)\\
  &=G\wr_{X^2}(\sym(X)\wr_X\sym(X))\longrightarrow\cdots
\end{align*}
so that $G$ acts on $X^i$ for all $i\in\N$. Furthermore, these actions
are compatible with each other, in the sense that the map
$(x_{i+1},x_i,\dots,x_1)\mapsto(x_i,\dots,x_1)$ interlaces the actions
on $X^{i+1}$ and $X^i$. Taking the inverse limit of $X^i$ under these
projection maps gives an action on the Cartesian product
$X^\infty$. Note that sequences in $X^\infty$ are infinite on the
left, namely are of the form $(\dots,x_{i+1},x_i,\dots,x_1)$.

The compatibility between the actions on $X^i$ and $X^{i+1}$ precisely
means that $G$ acts by tree isometries on the rooted regular tree
$\mathscr T$ with vertex set $X^*$.

Note that, even if the wreath recursion $\phi$ is injective, the
action of the $G$ on the tree $\mathscr T$ need not be faithful. This
is, however, the case for the examples of self-similar action of the
first Grigorchuk group (see Proposition~\ref{prop:faithful,infinite})
and of the congruence subgroup $\Gamma_0(p)$.

We introduced the self-similar structure on $\Gamma_0(p)$ and not on
$\SL_2(\Z)$ because the latter does not act on the \emph{rooted
  $p$-regular tree}, in which each vertex has degree $p+1$ except the
root which has degree $p$. If we add an edge upwards from the root,
and a rooted $p$-regular tree above it, to the rooted $p$-regular
tree, we obtain a $(p+1)$-regular tree on which the action of
$\Gamma_0(p)$ extends to an action of $\SL_2(\Z)$. In fact, this
action is already well-known, see~\cite{serre:trees}*{\S II.1}: the
$(p+1)$-regular tree is the \emph{Bruhat-Tits tree} of
$\SL_2(\Z_p)$. Its vertices are homothety classes of lattices
$\cong\Z_p^2$ in $\Q_p$, and there is an edge between classes
$\Q_p^\times\Lambda$ an $\Q_p^\times\Lambda'$ if they admit
representatives $\alpha\Lambda,\alpha'\Lambda'$ with
$\alpha\Lambda\subset\alpha'\Lambda'$ and
$[\alpha'\Lambda':\alpha\Lambda]=p$. The group $\SL_2(\Q_p)$ naturally
acts on lattices, and $\SL_2(\Z_p)$ acts as the stabilizer of the root
$\Q_p^\times\Z_p^2$. The congruence subgroup $\Gamma_0(p)$ fixes an
edge adjacent to the root, and the rooted $p$-regular tree
$\mathscr T$ is spanned by those lattices of the form
$\langle (p^n,0),(x_0+x_1p+\cdots+x_{n-1}p^{n-1},1)\rangle$ for
$n\in\N$ and $x_0,\dots,x_{n-1}\in\{0,\dots,p-1\}$.

Let us remark in passing that obtaining an action on a rooted tree is
not spectacular in itself: every countable residually-$p$ group acts
on a rooted $p$-regular tree. Indeed, choose a descending sequence
$G=G_0>G_1>\cdots$ of subgroups with $[G:G_i]=p^i$ and
$\bigcap G_i=1$. Let the vertices of $\mathscr T$ be the set of right
cosets of all $G_i$, with an edge between $G_i g$ and $G_{i+1} g$ for
all $i\in\N,g\in G$; and let $G$ act by right multiplication on
$\mathscr T$.

This action is in general not self-similar, nor is it ``economical'',
in the sense that the permutation group acting on $X^i$ may have order
comparable to $(\#X)^i$ rather than $(\#X!)^{\#X^i}$.

Finally, let us return to the construction of a self-similar group $G$
as a quotient of a self-similar group $F$ so that the wreath recursion
becomes injective. Knowing that the action of $G$ is faithful helps in
solving the word problem, in case $G$ is not finite-state:
\begin{lem}
  Let $F$ be a self-similar group with wreath recursion $\tilde\phi$
  and solvable word problem, and let $G$ be the maximal quotient of
  $F$ on which the induced wreath recursion is injective. Assume that
  the action of $G$ on the tree $\mathscr T$ is faithful. Then $G$
  also has solvable word problem.
\end{lem}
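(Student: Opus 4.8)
The plan is to show that ``$w=1$ in $G$'' is both semi-decidable and co-semi-decidable for words $w$ in the generators, and then to decide the word problem by running the two procedures in parallel. Semi-decidability will come, exactly as in the finite-state lemma above, from the description $G=F/\bigcup_i N_i$ with $N_0=1$ and $N_{i+1}=\tilde\phi^{-1}(N_i^X)$: one searches for a level $m$ witnessing $w\in N_m$. Co-semi-decidability is the new ingredient and is where the faithfulness hypothesis enters: if $w\neq1$ in $G$ then $w$ must move some vertex of $\mathscr T$, and this can be discovered in finite time.

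First I would verify that the action of $F$ on $\mathscr T$ obtained by iterating $\tilde\phi$ descends to an action of $G$, that is, that $K:=\bigcup_i N_i$ acts trivially on $\mathscr T$. The key observations are that $N_i$ fixes the $i$-th level $X^i$ of $\mathscr T$ pointwise --- an immediate induction from $N_{i+1}=\tilde\phi^{-1}(N_i^X)$, since then the top permutation is trivial and each first-level section lies in $N_i$, hence fixes $X^i$ pointwise --- and that the chain $(N_i)$ is increasing, so an element of $N_m$ lies in every later $N_j$ and therefore fixes every level of $\mathscr T$. Hence $F\to\sym(\mathscr T)$ kills $K$ and factors through $G$, and by hypothesis the induced $G$-action is faithful. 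Consequently, for $w\in S^*$, $w=1$ in $G$ if and only if $w$ fixes $X^n$ pointwise for all $n$. For a fixed $n$ this is decidable by a purely syntactic computation: iterating $\tilde\phi$ exactly $n$ times on $w$ (substituting the given recursions of the generators and composing the attached permutations of $X$) produces the explicit permutation of the finite set $X^n$ induced by $w$. Searching over $n=0,1,2,\dots$ therefore gives a procedure that halts precisely when $w\neq1$ in $G$.

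For the other half I would use that $w=1$ in $G$ if and only if $w\in N_m$ for some $m$, and that, by unwinding the recursion $N_{i+1}=\tilde\phi^{-1}(N_i^X)$, membership $w\in N_m$ amounts to the following: $w$ fixes $X^m$ pointwise, and each of the $(\#X)^m$ level-$m$ sections of $w$ --- themselves computable as explicit words in $S^*$ --- represents the identity of $F$. Both conditions are decidable for a given $m$, the second because $F$ has solvable word problem; so searching over $m=0,1,2,\dots$ gives a procedure halting precisely when $w=1$ in $G$. Dovetailing this search with the one from the previous paragraph then decides the word problem of $G$. I expect the only genuinely delicate step to be the reduction itself --- establishing that the tree action of $F$ factors through $G$, so that the faithfulness of the $G$-action may be exploited --- which rests on the monotonicity of $(N_i)$ together with the level-wise triviality of $N_i$; once that is secured, the remaining manipulations of the iterated wreath recursion are routine.
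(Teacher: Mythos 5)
Your proposal is correct and matches the paper's strategy exactly: dovetail a semi-algorithm that certifies non-triviality by finding a moved vertex of $\mathscr T$ with a semi-algorithm that certifies triviality by exhibiting membership in some $N_m$, the faithfulness hypothesis making the first one complete and the solvable word problem in $F$ making the second one effective. You are in fact slightly more careful than the paper's terse write-up on two points: your characterization of $w\in N_m$ correctly demands both that $w$ fix $X^m$ pointwise and that all $(\#X)^m$ level-$m$ sections be trivial in $F$ (the paper's phrasing mentions only the sections, which read literally would wrongly accept a word acting by a non-trivial permutation with trivial decorations), and your preliminary check that the $F$-action on $\mathscr T$ descends to $G$ --- via the monotonicity of $(N_i)$ together with each $N_i$ fixing $X^i$ pointwise --- makes explicit a step the paper leaves tacit.
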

\begin{proof}
  Let $S$ be a generating set for $F$, and consider $w\in S^*$. We
  start two semi-algorithms in parallel; the first one will stop if
  $w$ is non-trivial in $G$, and the second one will stop if $w$ is
  trivial in $G$.

  If $w$ is non-trivial in $G$, then it will act non-trivially on some
  vertex of $\mathscr T$, and this vertex may be found by enumerating
  all vertices of $\mathscr T$ and computing the action of $w$ on it
  by applying $\tilde\phi$.

  If $w$ is trivial, then it belongs to one of the normal subgroups
  $N_i$. Going through all $i=0,1,\dots$ in sequence, and iterating
  $i$ times $\tilde\phi$ on $w$ yields $\#X^i$ elements of $F$. If all
  of them are trivial in $F$, then $w$ is trivial; otherwise continue
  with the next $i$.
\end{proof}

Here are two fundamental examples of self-similar groups. Let
$\mathscr T=X^*$ be a rooted regular tree, and let $Q$ be a group
acting transitively on $X$. Consider the iterated wreath products
$Q_i=Q\wr_X Q\wr_X\cdots\wr_X Q$, with $i$ factors; these groups act
naturally on $X^i$ by the imprimitive action.

On the one hand, there is a natural map
$Q_{i+1}\twoheadrightarrow Q_i$, given by deleting the leftmost
factor, i.e.\ naturally mapping $Q_{i+1}=Q\wr_X Q_i$ to
$Q_i\cong1\wr_X Q_i$. Set then $\overline G=\varprojlim Q_i$, the
projective limit being taken along these epimorphisms. The
self-similarity structure
$\phi\colon \overline G\to \overline G\wr_X Q$ is induced by the
identity map $Q_i\overset\cong\longrightarrow Q_{i-1}\wr_X Q$. Since
it `peels off' the rightmost factor, it is compatible with the inverse
limit. It defines a profinite self-similar group $\overline G$.

On the other hand, there is a natural map $Q_i\hookrightarrow
Q_{i+1}$, given by inserting a trivial leftmost factor, i.e.\
naturally mapping $Q_i\cong1\wr_X Q_i$ to $Q_{i+1}=Q\wr_X Q_i$. Set then
$L=\varinjlim Q_i$, the union (= injective limit) being taken along
these monomorphisms. The self-similarity structure $\phi\colon L\to
L\wr_X Q$ is induced by the identity map
$Q_i\overset\cong\longrightarrow Q_{i-1}\wr_X Q$. Since it `peels off'
the rightmost factor, it is compatible with the union. It defines a
locally finite group $L$.

\begin{exse}
  Show that $L$ is a dense subgroup of $\overline G$.
\end{exse}

A \emph{law} for a group $G$ is a word $w(x_1,x_2,\dots)$ in variables
$x_1,x_2,\dots$ such that, whenever the elements $x_1,x_2,\dots$ are
replaced by group elements from $G$, the word evaluates to $1$ in
$G$. For example, abelian groups are characterised as those groups
satisfying the law $w=[x_1,x_2]$.
\begin{exse}\label{exse:nolaw}
  Show that $L$ satisfies no non-trivial law.

  Hint: it suffices to look at the case $X=\{1,2\}$. By
  Theorem~\ref{thm:kk}, every finite $2$-group imbeds in $Q_i$ for
  some $i\in\N$, and therefore in $G$. Finally, the free group is
  residually $2$.

  See~\cite{abert:nonfree} for a general result about inexistence of
  group laws, which covers the group $L$.
\end{exse}

\subsection{Similar families of groups}\label{ss:ssfam}
The notion of self-similar group may be generalised to a \emph{family}
of similar groups.
\begin{defn}
  Let $\Omega$ be a set, and let $\sigma\colon\Omega\righttoleftarrow$ be a
  map. A \emph{similar family of groups} over $\Omega$ is a family
  $(G_\omega)_{\omega\in\Omega}$ of groups and a family of homomorphisms
  \[\phi_\omega\colon G_\omega\to G_{\sigma\omega}\wr_{X_\omega}\sym(X_\omega),\]
  for a family of sets $(X_\omega)_{\omega\in\Omega}$.
\end{defn}

Just as before, each group $G_\omega$ acts on a tree $\mathscr
T_\omega$ with vertex set
$\bigsqcup_{i\ge0}X_{\sigma^{i-1}\omega}\times\cdots\times
X_\omega$. This rooted tree is now not anymore regular, but it is
still \emph{spherically homogeneous}, in that its isometry group is
transitive on the set of vertices at given distance from the root.

As before, there are two fundamental examples of similar families of
groups. Let $(X_\omega)_{\omega\in\Omega}$ be a family of sets, and
let $(Q_\omega)_{\omega\in\Omega}$ be a family of groups, with
$Q_\omega$ acting on $X_\omega$, say transitively for simplicity.

Consider the iterated wreath products
$Q_{\omega,i}=Q_{\sigma^{i-1}\omega}\wr_{X_{\sigma^{i-2}\omega}}\cdots\wr_{X_\omega}
Q_\omega$, with $i$ factors; these groups act naturally on
$X_{\sigma^{i-1}\omega}\times\cdots\times X_\omega$ by the imprimitive
action.

On the one hand, there is a natural map
$Q_{\omega,i+1}\twoheadrightarrow Q_{\omega,i}$, given by deleting the
leftmost factor, i.e.\ naturally mapping
$Q_{\omega,i+1}=Q_{\sigma^i\omega}\wr_{X_{\sigma^{i-1}\omega}}
Q_{\omega,i}$ to $Q_{\omega,i}\cong1\wr_{X_{\sigma^{i-1}\omega}}
Q_{\omega,i}$. Set then $\overline G_\omega=\varprojlim Q_{\omega,i}$,
the projective limit being taken along these epimorphisms. The
self-similarity structure $\phi_\omega\colon \overline G_\omega\to
\overline G_{\sigma\omega}\wr_{X_\omega} Q_\omega$ is induced by the
identity map $Q_{\sigma,i}\overset\cong\longrightarrow
Q_{\sigma\omega,i-1}\wr_{X_\omega} Q_\omega$. Since it `peels off' the
rightmost factor, it is compatible with the inverse limit. It defines
a profinite self-similar group $\overline G_\omega$.

On the other hand, there is a natural map $Q_{\omega,i}\hookrightarrow
Q_{\omega,i+1}$, given by inserting a trivial leftmost factor, i.e.\
naturally mapping $Q_{\omega,i}\cong1\wr_{X_{\sigma^{i-1}\omega}}
Q_{\omega,i}$ to
$Q_{\omega,i+1}=Q_{\sigma^i\omega}\wr_{X_{\sigma^{i-1}\omega}}
Q_{\omega,i}$. Set then $L_\omega=\varinjlim Q_{\omega,i}$, the
injective limit being taken along these monomorphisms. The
self-similarity structure $\phi\omega\colon L_\omega\to
L_{\sigma\omega}\wr_{X_\omega} Q_\omega$ is induced by the identity
map $Q_{\sigma,i}\overset\cong\longrightarrow
Q_{\sigma\omega,i-1}\wr_{X_\omega} Q_\omega$. Since it `peels off' the
rightmost factor, it is compatible with the union. It defines a
locally finite self-similar group $L_\omega$.

\subsection{\boldmath The Grigorchuk family $G_\omega$}\label{ss:grigorchuk}
We shall concentrate particularly on one specific example. Write
$\{0,1,2\}$ for the three non-trivial homomorphisms $C_2\times C_2\to
C_2$, identified for definiteness as follows. We view the source
$C_2\times C_2=\{1,b,c,d\}$ as a subgroup of the group $F$ given
in~\eqref{eq:F}, and the range $C_2=\{1,a\}$ in that same group
$F$. The three homomorphisms are then uniquely defined by
$\ker(0)=\langle b\rangle$ and $\ker(1)=\langle c\rangle$ and
$\ker(2)=\langle d\rangle$. Set then
\[\Omega=\{0,1,2\}^\infty,\qquad\sigma(\omega_0\omega_1\omega_2\dots)=\omega_1\omega_2\dots.\]

\noindent We start by the similar family $(F)_{\omega\in\Omega}$ with
maps $\phi_\omega\colon F\to F\wr\sym(2)$ given by
\[\tilde\phi_\omega(a)=\tikz[baseline=-2ex]{\draw[->] (0,0) -- (0.5,-0.5); \draw[->](0.5,0) -- (0,-0.5);},\quad\text{and for all }x\in\{b,c,d\}:\quad
\tilde\phi_\omega(x)=\tikz[baseline=-2ex]{\draw[->] (0,0) -- node[right=-2pt] {\small$\omega_0(x)$} +(0,-0.5); \draw[->](1.0,0) -- node[right=-2pt] {\small$x$} +(0,-0.5);},
\]
we define normal subgroups $(N_{\omega,i})_{i\in\N,\omega\in\Omega}$ of
$F$ by $N_{\omega,0}=1$ and
$N_{\omega,i+1}=\tilde\phi_\omega^{-1}(N_{\omega,i}^2)$, and set
$G_\omega=F/\bigcup_{i\in\N}N_{\omega,i}$. This is the same
construction as above, and computes $G_\omega$ as the maximal quotient
of $F$ such that the maps $\tilde\phi_\omega\colon F\to F\wr\sym(2)$
descend to injective maps $\phi_\omega\colon G_\omega\to
G_{\sigma\omega}\wr\sym(2)$.

In particular, letting $G$ denote the Grigorchuk group defined
in~\S\ref{ss:fsa}, we have isomorphisms
\begin{align*}
  &\begin{tikzpicture}
    \path [use as bounding box,red] (-2,0) rectangle (8,0);
    \node (g) at (0,0) {$G$};
    \node (g012) at (2,0) {$G_{(012)^\infty}$};
    \node (g120) at (4,0) {$G_{(120)^\infty}$};
    \node (g201) at (6,0) {$G_{(201)^\infty}$};
    \draw[double equal sign distance] (g) -- node[above] {$\sim$} (g012);
    \draw[double equal sign distance] (g012) -- node[above] {$\sim$} (g120);
    \draw[double equal sign distance] (g120) -- node[above] {$\sim$} (g201);
  \end{tikzpicture}
  \intertext{identifying the generating sets as follows:}
  &\begin{tikzpicture}
    \path [use as bounding box,red] (-2,0) rectangle (8,0);
    \node (s) at (0,0) {$\{a,b,c,d\}$};
    \node (s012) at (2,0) {$\{a,d,c,b\}$};
    \node (s120) at (4,0) {$\{a,b,d,c\}$};
    \node (s201) at (6,0) {$\{a,c,b,d\}.$};
  \end{tikzpicture}
\end{align*}

The groups $G_\omega$ all act on the binary rooted tree, and it is
easy to see that the orbit of the ray $\8^\infty$ is dense. Therefore,
the groups $G_\omega$ could just as well have been defined by their
actions on their respective orbit $\8^\infty G_\omega$. These are
naturally graphs, called \emph{Schreier graphs}, with vertex set
$\8^\infty G_\omega$, and with an edge from $\8^\infty g$ to
$\8^\infty gs$ for each generator $s\in\{a,b,c,d\}$, see
Definition~\ref{def:schreier}.

\begin{prop}
  The graph $\8^\infty G_\omega$ is isometric to the half-infinite
  line $\N$ with multiple edges and loops. Under this identification
  with $\N$, the action of $G_\omega$ is given by
  \begin{xalignat*}{2}
    a(2j)&=2j+1, & a(2j+1)&=2j,\\
    \text{and for all }x\in\{b,c,d\},\quad x(0)&=0, & x(2^i(2j+1))&=2^i(2\omega_i(j)+1).
  \end{xalignat*}
\end{prop}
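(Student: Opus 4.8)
The plan is to identify the orbit $\8^\infty G_\omega$ with $\N$ by reading a ray as the binary expansion of an integer, and then to read the action off the wreath recursions $\phi_{\sigma^i\omega}$. Write a boundary point as a sequence $\xi=\xi_1\xi_2\cdots$ with $\xi_k\in\{\9,\8\}$ the coordinate at level $k$, so that $\8^\infty$ is the constant sequence $\8$. First I would unwind the recursion on generators. Since $\phi_\omega(a)$ is the transposition of the two subtrees, $a$ merely flips $\xi_1$. For $x\in\{b,c,d\}$ one has $\phi_\omega(x)=\pair<\omega_0(x),x>$ with trivial permutation part, the second entry being read in $G_{\sigma\omega}$; iterating this, and tracking the shift $\omega\mapsto\sigma\omega$ at each level, shows that if $m$ is the least index with $\xi_m=\9$ then $x$ fixes $\xi_1,\dots,\xi_m$ and afterwards acts on the tail through $\omega_{m-1}(x)\in\{1,a\}$, so it flips $\xi_{m+1}$ when $\omega_{m-1}(x)=a$ and does nothing when $\omega_{m-1}(x)=1$; and $x$ fixes $\8^\infty$ altogether. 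In particular every generator alters at most one coordinate, so $\8^\infty G_\omega$ is contained in the set $\mathcal R$ of rays equal to $\8$ in all but finitely many positions.

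Next I would set up the bijection $\beta\colon\N\to\mathcal R$ sending $n=\sum_{k\ge0}b_k2^k$ to the ray with $\xi_{k+1}=\9$ precisely when $b_k=1$; this is well defined and bijective because binary expansions are unique and eventually zero, and $\beta(0)=\8^\infty$. Transporting the generator actions through $\beta$ is then bookkeeping: flipping $\xi_1$ toggles the unit bit, yielding $a(2j)=2j+1$ and $a(2j+1)=2j$; and writing $n=2^i(2j+1)$, so that the least set bit of $n$ is in position $i$ and hence the first $\9$ of $\beta(n)$ is at level $i+1$, the generator $x\in\{b,c,d\}$ either fixes $\beta(n)$ (if $\omega_i(x)=1$) or toggles the bit of $n$ in position $i+1$ (if $\omega_i(x)=a$) --- this is exactly $x(2^i(2j+1))=2^i(2\omega_i(j)+1)$, together with $x(0)=0$. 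This computation also shows $\mathcal R=\beta(\N)$ is $G_\omega$-invariant.

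It then remains to see that $\mathcal R$ is a single orbit, hence equal to $\8^\infty G_\omega$, and that the resulting Schreier graph is the half-line. From the formulas, the underlying simple graph has one edge at $0$, to $1$, and every $n\ge1$ has exactly two neighbours: $n\oplus1$, coming from $a$, and $n\oplus2^{i+1}$ with $i$ the $2$-adic valuation of $n$, coming from the two elements of $\{b,c,d\}$ not in $\ker\omega_i$ (the third contributes a loop). A connected graph in which one vertex has degree $1$ and all others have degree $2$ is a one-sided infinite path, so it suffices to check connectedness, i.e.\ that each $\beta(n)$ lies in $\8^\infty G_\omega$. This follows by an easy induction: since $a$ toggles bit $0$ of the current integer and $b,c,d$ toggle the bit just above its lowest set bit, one walks through all of $\N$ --- these are precisely the steps of the reflected binary Gray code $g$, and one checks that $\beta(g(k+1))$ is a neighbour of $\beta(g(k))$ for every $k$, so the whole of $\beta(\N)$ is reached from $\beta(0)=\8^\infty$. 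Putting back the loops, and the doubled edges coming from $b,c,d$, gives the stated description of $\8^\infty G_\omega$.

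The step I expect to be the main obstacle is the recursive analysis of the generators: one must consistently carry the shift $\omega\mapsto\sigma\omega$ while descending a ray (so that levels $1,2,3,\dots$ are governed by $\omega_0,\omega_1,\omega_2,\dots$), keep straight which entry of $\pair<\omega_0(x),x>$ labels which subtree, and correctly match ``first $\9$ at level $i+1$'' with the normalisation $n=2^i(2j+1)$. Once that dictionary is fixed the remaining verifications are routine, and the transitivity claim is a short self-contained combinatorial lemma.
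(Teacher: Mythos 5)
Your proof is correct, and while it reaches the same conclusion through the same combinatorial engine (the reflected binary Gray code), it is organized differently from the paper's. The paper first introduces the infinite dihedral group $D=\langle a,x\mid a^2,x^2\rangle$ with the simpler recursion $\phi(x)=\pair<a,x>$, observes that the stabiliser of $\8^\infty$ in $D$ is $\langle x\rangle$, and thus recognizes the Schreier graph of the orbit as the coset graph on $\langle x\rangle\backslash D$, which is a half-line on sight; the Gray code then merely supplies the labelling by $\N$, and the Schreier graph of $G_\omega$ is obtained by inflating each $x$-edge into a parallel pair of $\{b,c,d\}$-edges plus loops, with the split prescribed by the $\omega_i$. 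You bypass $D$ entirely: you compute the generator actions directly from the iterated recursion, observe that in the underlying simple graph the vertex $0$ has degree one and every other vertex has degree two (toggling bit $0$, and toggling the bit just above the lowest set bit), so the graph is a ray once it is connected, and you obtain connectivity by walking the Gray code. The paper's detour through $D$ buys that both the half-line shape and connectedness come for free from the orbit--stabiliser correspondence; your route dispenses with $D$ and with any stabiliser computation, at the small cost of the Gray-code walk and the local degree count. You also rightly flagged the level-shift bookkeeping as the main hazard; your indexing is consistent, with $\omega_i(j)$ in the statement to be read as $j$ or $j\oplus1$ according as $\omega_i(x)=1$ or $\omega_i(x)=a$.
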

\begin{proof}
  Consider the infinite dihedral group $D=\langle a,x\mid
  a^2,x^2\rangle$, and the wreath recursion $\phi\colon D\to
  D\wr\sym(2)$ defined by
  \[\phi(a)=\tikz[baseline=-2ex]{\draw[->] (0,0) -- (0.5,-0.5); \draw[->](0.5,0) -- (0,-0.5);},\qquad
  \tilde\phi(x)=\tikz[baseline=-2ex]{\draw[->] (0,0) --
    node[right=-2pt] {\small$a$} (0,-0.5); \draw[->](0.5,0) --
    node[right=-2pt] {\small$x$} (0.5,-0.5);}.
  \]
  It defines a faithful action of $D$ on the binary rooted tree
  $\mathscr T$ with vertex set $\{\9,\8\}^*$, and the action on the
  ray $\8^\infty$ is isomorphic to the action on the set of cosets
  $\langle x\rangle\backslash D$, since the stabilizer in $D$ of
  $\8^\infty$ is $\langle x\rangle$. We abbreviate
  $\8^\infty=:\overline\8$. The Schreier graph of the latter is a
  half-infinite line
  \[\begin{tikzpicture}
    \footnotesize
    \path (0,0) edge[loop left] node {$x$} ();
    \foreach\i/\l in {0/\overline\8,1/\overline\8\9,2/\overline\8\9\9,3/\overline\8\9\8,4/\overline\8\9\9\8,5/\overline\8\9\9\9,6/\overline\8\9\8\9,7/\overline\8\9\8\8,8/\overline\8\9\9\8\8,9/\overline\8\9\9\8\9,10/\overline\8\9\9\9\9} \fill (\i,0) circle [radius=2pt] node[rotate=-45,right=1mm] {$\l$};
    \foreach\i in {0,2,...,8} \path (\i,0) edge node[above] {$a$} (\i+1,0);
    \foreach\i in {1,3,...,9} \path (\i,0) edge node[above] {$x$} (\i+1,0);
    \path (10,0) edge[densely dotted] (10.5,0);
  \end{tikzpicture}
  \]
  Since the action of generators of $D$ change only a single symbol on
  sequences in $\{\9,\8\}^\infty$, the identification of the Schreier
  graph's vertices with $\N$ is explicit: it is the ``Gray
  code''~\cite{gray:pulse} enumeration starting from the left-infinite
  word $\overline\8$. Thus the sequence $\dots\8\8 x_i\dots x_1$ is
  identified with the integer $\sum_{j=1}^i(1-x_j)2^{j-1}$, reading
  the number in base $2$ with $0$'s and $1$'s switched.

  Now, to obtain the Schreier graph of $G_\omega$, one replaces each
  `$x$' edge by a pair of edges labeled by two letters out of
  $\{b,c,d\}$, and puts loops at the extremities of the edge labeled
  by the remaining letter.  The choice of which letter becomes a loop
  is determined by the position of the edge on the graph and the
  sequence $\omega$.
\end{proof}

For example, here is the Schreier graph of the action of the
first Grigorchuk group $G_{012}=\langle a,b,c,d\rangle$ on $\8^\infty G$:
\begin{equation}\label{eq:grigorbit}
  \begin{tikzpicture}[xscale=0.68,baseline]
    \footnotesize
    \path (0,0) edge[loop above] node {$b$} ();
    \path (0,0) edge[loop left] node {$c$} ();
    \path (0,0) edge[loop below] node {$d$} ();
    \foreach\i in {0,2,...,14} \path (\i,0) edge node[above] {$a$} (\i+1,0);
    \foreach\i/\g in {1/b,3/d,5/b,7/c,9/b,11/d,13/b,15/b} \path (\i,0) edge[bend left] node[above] {$\g$} (\i+1,0);
    \foreach\i/\g in {1/c,3/b,5/c,7/d,9/c,11/b,13/c,15/c} \path (\i,0) edge[bend right] node[below] {$\g$} (\i+1,0);
    \foreach\i/\g in {1/d,2/d,3/c,4/c,5/d,6/d,7/b,8/b,9/d,10/d,11/c,12/c,13/d,14/d,15/d,16/d} \path (\i,0) edge[loop above] node[above] {$\g$} ();
    \path (16,0) edge[densely dotted] (16.5,0);
  \end{tikzpicture}
\end{equation}

%%%%%%%%%%%%%%%%%%%%%%%%%%%%%%%%%%%%%%%%%%%%%%%%%%%%%%%%%%%%%%%% 
\section{Growth estimates for self-similar groups}\label{ss:ssgrowth}
One of the purposes of this section is to reprove the following
result. Let $\Omega'$ denote the subset of $\Omega$ consisting of
sequences containing infinitely many of each of the symbols $0,1,2$.
\begin{thm}[Grigorchuk,~\cite{grigorchuk:gdegree}]\label{thm:grigorchuk}
  If $\omega\in\Omega'$, namely if $\omega$ contains infinitely many of
  each of the symbols $0,1,2$, then $G_\omega$ has intermediate
  word growth.
\end{thm}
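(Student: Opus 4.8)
The plan is to establish the two halves of the statement separately: a subexponential upper bound on $v_{G_\omega}$, and a superpolynomial lower bound. For the upper bound, the key is the contraction property of the wreath recursion $\phi_\omega\colon G_\omega\to G_{\sigma\omega}\wr\sym(2)$. First I would show that if $g\in G_\omega$ has $\phi_\omega(g)=\pair<g_0,g_1>\pi$, then $\|g_0\|+\|g_1\|\le\|g\|+C$ for a uniform constant $C$ (this is the standard ``length shortening'' estimate obtained by writing $g$ as a word in $a,b,c,d$ and pushing the $a$'s through, which roughly halves each syllable). Then, iterating $k$ times, an element $g$ of length $n$ is described by its portrait: a permutation of $X^k$ together with $2^k$ states $g_v\in G_{\sigma^k\omega}$ whose lengths sum to at most $n+C'2^k$. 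To count elements of length $\le n$ one bounds the number of such portraits; choosing $k$ of order $\log_2 n$ and using $\omega\in\Omega'$ to ensure the ``$a$-crossings'' genuinely cause cancellation infinitely often, one gets $v_{G_\omega}(n)\le\exp(o(n))$, i.e.\ $v_{G_\omega}\precnsim\exp(R)$. The careful bookkeeping of how many times each of the symbols $0,1,2$ must appear before one is guaranteed contraction by a fixed factor is where the hypothesis $\omega\in\Omega'$ is used, and this is the more delicate of the two bounds.

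For the lower bound, I would exhibit enough elements. One clean route: show that each $G_\omega$ with $\omega\in\Omega'$ is infinite and in fact contains, for suitable $k$, a copy of a group of the form $G_{\sigma^k\omega}\times G_{\sigma^k\omega}$ sitting inside $\phi_\omega^{(k)}(\operatorname{St}(k))$ with controlled distortion — more precisely, the stabilizer of a level-$k$ vertex maps onto a direct product of copies of $G_{\sigma^k\omega}$, and elements of word-length $\le n$ in the factors lift to elements of word-length $\le Dn$ in $G_\omega$ for a constant $D$ independent of $k$ (again using the bounded-syllable-expansion estimate in reverse). Comparing $v_{G_\omega}(Dn)\ge v_{G_{\sigma^k\omega}}(n)^{2^k}/(\text{error})$ and iterating yields $v_{G_\omega}(R)\succsim\exp(R^\beta)$ for some $\beta>0$; since $\beta>0$ this already beats every polynomial. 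Alternatively one may quote that $G_\omega$ is an infinite torsion $2$-group (it satisfies $g^{2^{n(g)}}=1$, proved by the same portrait argument used for $(ad)^4=1$ in the excerpt), hence by Wolf's and Milnor's theorems it cannot have polynomial growth, being neither virtually nilpotent; combined with the subexponential upper bound this forces intermediate growth. I would prefer the quantitative argument since it is self-contained and gives an explicit exponent.

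Putting the two together: the upper bound gives $v_{G_\omega}\precnsim\exp(R)$ and the lower bound gives $v_{G_\omega}\succnsim R^d$ for every $d$, so $G_\omega$ has growth strictly between polynomial and exponential, which is the assertion of Theorem~\ref{thm:grigorchuk}. The main obstacle, as noted, is the upper bound: one must make the contraction estimate uniform along the orbit $\omega,\sigma\omega,\sigma^2\omega,\dots$ and quantify how the appearance of all three symbols $0,1,2$ infinitely often — rather than, say, a constant sequence, for which $G_\omega$ is virtually $\Z\wr(\text{something})$ or even virtually abelian and grows polynomially or exponentially — is exactly what produces subexponential (but still superpolynomial) growth. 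In the write-up I would isolate the clean statement ``there exist $\lambda<1$, $K$ and $\ell_0$ such that for every $\omega\in\Omega'$ and every $g\in G_\omega$, after passing to level $\ell_0$ the states have total length $\le\lambda\|g\|+K$'' as a lemma, prove it once by the syllable-counting computation sketched above, and then feed it into both the counting argument (upper bound) and, in the reverse direction with the embedding of the direct product, into the lower bound.
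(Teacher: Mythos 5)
Your overall architecture matches the paper's: a subexponential upper bound via contraction of the wreath recursion, and a superpolynomial lower bound either by exhibiting products of copies of $G_{\sigma^k\omega}$ inside level-$k$ stabilizers or, more cleanly, by noting that $G_\omega$ is an infinite torsion $2$-group and invoking Gromov's theorem (the paper primarily does the latter, via Theorem~\ref{thm:growth:hp}, with the embedding argument as a secondary remark that it itself flags as containing a cheat). So your lower-bound plan is sound.

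The upper bound, however, has a genuine gap. Your proposed key lemma --- ``there exist $\lambda<1$, $K$ and $\ell_0$ such that for \emph{every} $\omega\in\Omega'$ and every $g\in G_\omega$, the level-$\ell_0$ states have total length $\le\lambda\|g\|+K$'' --- is false as stated, because $\ell_0$ cannot be chosen uniformly over $\Omega'$. The set $\Omega'$ contains sequences with arbitrarily long initial runs of a single symbol: take $\omega=0^{\ell_0+1}\,012\,012\cdots\in\Omega'$. In $G_\omega$, during the first $\ell_0$ levels the recursion sends $c\mapsto\pair<a,c>$ and $d\mapsto\pair<a,d>$, so a word like $(ac)^n$ has level-$1$ states $(ca)^{n/2},(ac)^{n/2}$ of total length $2n=\|(ac)^n\|$; iterating to level $\ell_0$ produces no shortening at all. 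Thus no $\lambda<1$ can work with $K$ independent of $n$. The quantifier order matters: you can prove ``$\forall\omega\in\Omega'\ \exists\ell_0(\omega)\ldots$'', but then the induction becomes delicate, because at the next stage you face $\sigma^{\ell_0}\omega$ with its own (potentially much larger) $\ell_0$, and you must argue that these depths do not accumulate too fast relative to the $2^{\ell_0}$-fold branching.

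The paper sidesteps this entirely by replacing the unit-weight word metric with an $\omega$-adapted metric: it assigns weights $\|a\|_\omega,\|b\|_\omega,\|c\|_\omega,\|d\|_\omega$ coming from a point $p_\omega$ in the open simplex $\Delta$, chosen (via the two-sided extension of $\omega$ and the contracting projective maps $\overline M_0,\overline M_1,\overline M_2$) so that Lemma~\ref{lem:supercontract} gives a \emph{single-level} strict contraction $\|g_0\|_{\sigma\omega}+\|g_1\|_{\sigma\omega}\le\frac{2}{\eta_\omega}(\|g\|_\omega+\|a\|_\omega)$ with $\eta_\omega>2$ for every $\omega\in\Omega'$. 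The non-uniformity you would otherwise face is absorbed into the metric: for $\omega$ that dwells long near a constant sequence, $\eta_\omega$ is close to $2$ (contraction factor close to $1$), but it is never $\ge1$. Subexponential growth then follows from the fact (Proposition~\ref{prop:upperG} plus the last paragraph of \S\ref{ss:ssgrowth}) that along a subsequence $\eta_{\sigma^k\omega}\ge7/3$, so the product $\eta_\omega\cdots\eta_{\sigma^{k-1}\omega}$ dominates $2^k$. If you want to stick with the unit-weight metric as Grigorchuk originally did, you must formulate the lemma with $\ell$ depending on $\omega$ (roughly, the depth at which all three symbols $0,1,2$ have appeared) and then be careful that the resulting product of contraction factors over nested depths still goes to $0$; the adapted-metric route is cleaner and, as a bonus, yields the sharp exponent $\log2/\log\eta_+$ needed later for Theorem~\ref{thm:givengrowth}.
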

We shall in fact prove much more, in preparation for the
construction, in~\S\ref{ss:perm}, of groups with prescribed growth. We
mainly follow~\cite{bartholdi-erschler:givengrowth}.

\subsection{A lower bound via algebras}\label{ss:algebralb}
We begin by a general lower bound on growth, coming from the theory of
Hopf algebras. Recall that the lower central series of a group $G$ is
defined by $\gamma_1(G)=G$ and $\gamma_{n+1}(G)=[\gamma_n(G),G]$ for
all $n\ge1$.
\begin{thm}[Grigorchuk,~\cite{grigorchuk:hp}]\label{thm:growth:hp}
  Let $G$ be a finitely generated group, and assume that there is a
  subgroup $H<G$ such that $\gamma_n(H)\neq\gamma_{n+1}(H)$ for all
  $n\in\N$. Then $G$'s growth function satisfies
  \[\gamma_G\succsim \exp(\sqrt R).\]

  In particular, if $G$ is residually virtually nilpotent, then either
  $G$ is virtually nilpotent (in which case $\gamma_G$ is polynomial) or
  $\gamma_G\succsim \exp(\sqrt R)$.
\end{thm}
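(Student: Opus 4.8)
The plan is to connect group growth to the growth of an associated graded algebra, and then to bound that algebraic growth from below using the hypothesis that the lower central series of $H$ never stabilizes.

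\medskip

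\textbf{Step 1: reduce to $H$.} First I would observe that growth passes to subgroups up to the $\precsim$ ordering: if $H\le G$ is finitely generated then $v_H\precsim v_G$, because a finite generating set of $H$ can be expressed in terms of words in a generating set of $G$, so balls in $H$ embed (coarsely) into balls in $G$. Hence it suffices to prove $v_H\succsim\exp(\sqrt R)$, and we may assume $G=H$ from the outset. (Strictly, $H$ need only be finitely generated for this; if it is not, one replaces $H$ by a suitable finitely generated subgroup still having non-stabilizing lower central series, using that $\gamma_n(H)\neq\gamma_{n+1}(H)$ is witnessed by finitely many elements at each stage — though one must be slightly careful, since a single finitely generated subgroup need not capture all stages at once. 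The cleanest route is to recall that this is exactly the place the paper allows itself to "cheat", or to invoke that $\gamma_n(H)/\gamma_{n+1}(H)\neq1$ for all $n$ is inherited by an ascending union and pick a finitely generated subgroup whose lower central quotients are all nontrivial — this is the standard argument and I would cite Grigorchuk's original paper rather than reprove it.)

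\medskip

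\textbf{Step 2: pass to the group algebra and its filtration.} Work over $\mathbb F_p$ (say $p=2$, or any prime). Let $A=\mathbb F_p[G]$ with augmentation ideal $\mathfrak a$, and form the associated graded algebra $\operatorname{gr}A=\bigoplus_{n\ge0}\mathfrak a^n/\mathfrak a^{n+1}$. Since $G$ is generated by a finite set $S$ of size $k$, the algebra $\operatorname{gr}A$ is generated in degree $1$ by the $k$ classes $s-1$, so its Hilbert function $h(n)=\dim_{\mathbb F_p}(\mathfrak a^n/\mathfrak a^{n+1})$ satisfies $h(n)\le k^n$. The key comparison, going back to Grigorchuk (and to the general philosophy that the growth series dominates the Hilbert series of the group algebra), is
\[
v_{G,S}(R)\ \ge\ \sum_{n\le R} h(n),
\]
because a spanning set of $\mathfrak a^n/\mathfrak a^{n+1}$ can be chosen among products of $\le n$ elements of $\{s-1:s\in S\}$, and distinct group elements of norm $\le R$ are linearly independent in $A$; expanding each $(s-1)$ and using that the top-degree terms survive in the graded pieces gives that $\dim$ of the span of the ball of radius $R$ is at least $\sum_{n\le R}h(n)$. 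So it is enough to show $h(n)\neq0$ for infinitely many $n$ is far too weak — we need $\sum_{n\le R}h(n)\succsim\exp(\sqrt R)$, i.e. $h$ itself grows at least like $\exp(c\sqrt n)$ along a sequence of $n$.

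\medskip

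\textbf{Step 3: from non-stabilizing lower central series to a polynomial lower bound on $h$, then exponentiate.} The dimension subgroups $D_n(G)=G\cap(1+\mathfrak a^n)$ of the $\mathbb F_p$-group algebra contain $\gamma_n(G)$, and Quillen's theorem identifies $\operatorname{gr}A$ with the universal enveloping algebra of the restricted Lie algebra $L=\bigoplus D_n/D_{n+1}$, so that $\operatorname{gr}A\cong U(L)$ and its Hilbert series is computed from $\dim(D_n/D_{n+1})$ by a PBW-type product formula. The hypothesis $\gamma_n(H)\neq\gamma_{n+1}(H)$ for all $n$ forces $L$ to be infinite-dimensional; a more careful count (this is the technical heart) shows that $\sum_{n\le N}\dim(D_n/D_{n+1})$ grows at least linearly in $N$, hence $L$ contains a homogeneous subspace $V$ with $\dim(V\cap L_{\le N})\succsim N$. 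Plugging such a graded subspace into the PBW product formula for $\dim U(L)_n$ — essentially, counting monomials of weighted degree $n$ in infinitely many generators whose degrees are spread out — yields $h(n)=\dim U(L)_n\succsim\exp(c\sqrt n)$ for a sequence of $n\to\infty$, by the classical partition-function asymptotics ($\log p(n)\sim\pi\sqrt{2n/3}$, and more generally $\log$ of the number of partitions into parts from a set of linear density is of order $\sqrt n$). Summing over $n\le R$ via Step 2 gives $v_G(R)\succsim\exp(\sqrt R)$, which is the claim; the final "in particular" is immediate, since a residually virtually nilpotent group that is not virtually nilpotent must contain — or rather, fail to be nilpotent at every stage, so has a section, hence (after the Step 1 reduction) a finitely generated subgroup — with non-terminating lower central series, while virtually nilpotent groups have polynomial growth by the easy direction.

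\medskip

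\textbf{Main obstacle.} The delicate point is Step 3: extracting a \emph{quantitative} (at least linear) lower bound on $\sum_{n\le N}\dim(D_n/D_{n+1})$ from the purely qualitative hypothesis that no $\gamma_n(H)/\gamma_{n+1}(H)$ vanishes, and then correctly feeding this into the PBW/partition estimate to land on $\exp(\sqrt R)$ rather than something weaker like $\exp(R^{1/3})$. Controlling that the dimension subgroup filtration does not "thin out" relative to the lower central series — i.e. comparing $\gamma_n$ with $D_n$ over $\mathbb F_p$ — is exactly where one must be careful, and where I would lean on Quillen's identification $\operatorname{gr}\mathbb F_p[G]=U(\operatorname{gr}_D G)$ together with Jennings–Lazard–Zassenhaus to keep the bookkeeping honest.
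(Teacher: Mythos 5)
Your overall strategy is the paper's: compare group growth to the Hilbert function of the associated graded of $\mathbb K G$ (the paper's Proposition~\ref{prop:alg-gp}), identify that graded algebra with a universal enveloping algebra (Milnor--Moore, Proposition~\ref{prop:mm}), use the non-terminating lower central series to exhibit a primitive element in each positive degree, and then invoke PBW and the partition-function asymptotics $\pi(n)\asymp\exp(\sqrt n)$. Steps~1 and~2 are essentially right as you state them, and you correctly flag the passage to a finitely generated $H$ as a point that needs care (the paper also glosses over this).

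The genuine gap is the opening of Step~2: you write ``Work over $\mathbb F_p$ (say $p=2$, or any prime).'' This is false as stated, and the theorem would fail with this reading. The hypothesis only guarantees that each abelian group $\gamma_n(H)/\gamma_{n+1}(H)$ is nontrivial; if one of them is, say, finite of order coprime to $2$, then it vanishes after $\otimes\,\mathbb F_2$, no primitive element of that degree survives in $\overline{\mathbb F_2 H}$, and the Jennings--Lazard--Zassenhaus filtration over $\mathbb F_2$ can terminate. A priori the ``good'' primes could vary with $n$ and have empty intersection, which would break the construction of a single graded restricted Lie algebra with nontrivial pieces in all degrees. The observation you are missing --- and which is the actual content of the paper's proof --- is that the set
\[
\mathcal P(n)=\{p\text{ prime}:(\gamma_n(H)/\gamma_{n+1}(H))\otimes\mathbb F_p\neq 0\}
\]
is nonempty and \emph{nested}: $\mathcal P(n+1)\subseteq\mathcal P(n)$, because the commutator pairing makes $\gamma_{n+1}(H)/\gamma_{n+2}(H)$ a quotient of a tensor product involving $\gamma_n(H)/\gamma_{n+1}(H)$, so $p'$-torsion propagates down the series. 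This yields a single prime $p$ with $(\gamma_n(H)/\gamma_{n+1}(H))\otimes\mathbb F_p\neq 0$ for all $n$, hence $\gamma^p_n(H)\neq\gamma^p_{n+1}(H)$ for all $n$, and then one primitive element $x_n=g_n-1$ of each degree $n$. From there your PBW/partition argument closes without needing any extra ``linear growth of $\sum\dim D_n/D_{n+1}$'' lemma --- one generator per degree already gives $\pi(n)$ independent monomials in degree $n$, which is exactly $\exp(\Theta(\sqrt n))$. So the ``technical heart'' you defer is precisely this prime-selection argument; without it, the proof does not go through.
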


Before embarking on the proof, let us set up some algebraic notions.
Let $\mathbb K$ be a field, and let $G$ be a group. The group ring
$\mathscr A=\mathbb K G$ is the $\mathbb K$-vector space with basis
$G$, and multiplication extended linearly. It is a \emph{Hopf
  algebra}: it admits a \emph{coproduct}, which is an algebra
homomorphism $\Delta\colon\mathscr A\to\mathscr A\otimes\mathscr A$
defined on the basis $G$ by $g\mapsto g\otimes g$, a \emph{counit},
which is an algebra homomorphism
$\varepsilon\colon\mathscr A\twoheadrightarrow\mathbb K$ defined on
the basis by $g\mapsto1$; and an \emph{antipode}, which is an
antihomomorphism $\sigma\colon\mathscr A\to\mathscr A$ defined on the
basis by $g\mapsto g^{-1}$. Various axioms are satisfied, in
particular the coproduct is \emph{coassociative}:
$(1\otimes\Delta)\circ\Delta=(\Delta\otimes1)\circ\Delta\colon\mathscr
A\to(\mathscr A)^{\otimes3}$,
and \emph{cocommutative}: $\Delta=\tau\circ\Delta$, for
$\tau\colon\mathscr A^{\otimes2}\to\mathscr A^{\otimes2}$ the map
$x\otimes y\mapsto y\otimes x$ flipping both
factors. See~\cite{sweedler:ha} for details.

Denote by $\varpi$ the kernel of $\varepsilon$, called the
\emph{augmentation ideal}.  The \emph{associated graded} of $\mathscr
A$ is the vector space
\[\overline{\mathscr A}=\bigoplus_{n\ge0}\varpi^n/\varpi^{n+1}.\]
\begin{lem}
  The associated graded $\overline{\mathscr A}$ is a graded,
  cocommutative Hopf algebra.
\end{lem}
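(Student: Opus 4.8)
The plan is to verify directly, from the definitions just recalled, that the three structure maps of the Hopf algebra $\mathscr A=\mathbb K G$ descend to the associated graded $\overline{\mathscr A}=\bigoplus_{n\ge0}\varpi^n/\varpi^{n+1}$ and that cocommutativity is inherited. The only genuine point is that $\varpi$ is a Hopf ideal in a suitable sense, so that $\Delta$, $\varepsilon$ and $\sigma$ pass to the quotient by the filtration $\{\varpi^n\}$.

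First I would record the basic filtration compatibilities. The augmentation ideal satisfies $\varpi^m\varpi^n\subseteq\varpi^{m+n}$, so multiplication on $\mathscr A$ induces a graded product on $\overline{\mathscr A}$; this is the standard ``associated graded ring of a filtered ring'' observation and needs no group-specific input. For the coproduct, the key computation is that $\Delta(\varpi)\subseteq\varpi\otimes\mathscr A+\mathscr A\otimes\varpi$: indeed for a basis element $g$ one has $\Delta(g-1)=g\otimes g-1\otimes 1=(g-1)\otimes g+1\otimes(g-1)$, and $\varpi$ is spanned by the $g-1$. By induction, using that $\Delta$ is an algebra map, one gets $\Delta(\varpi^n)\subseteq\sum_{i+j=n}\varpi^i\otimes\varpi^j$. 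Therefore, filtering $\mathscr A\otimes\mathscr A$ by $F_n=\sum_{i+j=n}\varpi^i\otimes\varpi^j$, the map $\Delta$ is filtered, and its associated graded is a map $\overline{\mathscr A}\to\overline{\mathscr A\otimes\mathscr A}$; one then identifies $\overline{\mathscr A\otimes\mathscr A}$ with $\overline{\mathscr A}\otimes\overline{\mathscr A}$ (over a field this identification is canonical, since tensor product is exact and the filtration $F_n$ is exactly the tensor-product filtration). This yields $\overline\Delta\colon\overline{\mathscr A}\to\overline{\mathscr A}\otimes\overline{\mathscr A}$, a graded algebra homomorphism. Similarly $\varepsilon(\varpi)=0$ forces $\varepsilon(\varpi^n)=0$ for $n\ge1$, so $\varepsilon$ induces $\overline\varepsilon\colon\overline{\mathscr A}\twoheadrightarrow\mathbb K$ concentrated in degree $0$; and $\sigma(\varpi^n)\subseteq\varpi^n$ since $\sigma(g-1)=g^{-1}-1=-(g-1)^{g^{-1}}\cdot$(unit-type manipulation), more cleanly $\sigma(g-1)=-(g^{-1}-1)g$ which lies in $\varpi$, and $\sigma$ is an antihomomorphism, so it preserves the filtration and induces $\overline\sigma$.

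Next I would check that the Hopf axioms survive. Coassociativity, counit and antipode identities are equalities of filtered maps that hold on $\mathscr A$; passing to associated graded is a functor on filtered vector spaces which is monoidal (again using exactness of $\otimes$ over a field), so each commuting diagram for $\mathscr A$ maps to the corresponding commuting diagram for $\overline{\mathscr A}$. Grading: all four induced maps are homogeneous by construction, so $\overline{\mathscr A}$ is a graded Hopf algebra. Finally, cocommutativity: $\Delta=\tau\circ\Delta$ on $\mathscr A$, where $\tau$ is the flip; $\tau$ is manifestly filtered and its associated graded is the flip on $\overline{\mathscr A}\otimes\overline{\mathscr A}$ (possibly up to the Koszul sign, but here there is no sign issue because we are taking the ordinary, not super, flip — the statement claims plain cocommutativity), so $\overline\Delta=\overline\tau\circ\overline\Delta$.

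The main obstacle, such as it is, is purely bookkeeping: one must fix the filtration on $\mathscr A\otimes\mathscr A$ correctly and justify the canonical isomorphism $\operatorname{gr}(\mathscr A\otimes\mathscr A)\cong\operatorname{gr}(\mathscr A)\otimes\operatorname{gr}(\mathscr A)$, which is where the hypothesis that $\mathbb K$ is a field enters (flatness guarantees the comparison map is an isomorphism). Once that identification is in place, every remaining verification is a mechanical transfer of a diagram from $\mathscr A$ to $\overline{\mathscr A}$, and no property of $G$ beyond being a group is used. I would present the argument in exactly this order: filtration compatibility of $m,\Delta,\varepsilon,\sigma$; the $\operatorname{gr}$-of-tensor identification; functoriality of $\operatorname{gr}$ on the relevant diagrams; and the observation that homogeneity and cocommutativity are immediate from the shapes of the maps.
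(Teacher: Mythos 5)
Your proof is correct and follows essentially the same route as the paper's: show $\Delta(\varpi^n)\subseteq\sum_{i+j=n}\varpi^i\otimes\varpi^j$ and define the coproduct on $\overline{\mathscr A}$ by passing to associated graded, with coassociativity and cocommutativity inherited from $\mathscr A$. You fill in more detail than the paper does (the explicit identity $\Delta(g-1)=(g-1)\otimes g+1\otimes(g-1)$, the $\operatorname{gr}(\mathscr A\otimes\mathscr A)\cong\operatorname{gr}(\mathscr A)\otimes\operatorname{gr}(\mathscr A)$ identification, and the antipode and counit), but the underlying argument is the same.
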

\begin{proof}
  We have
  $\Delta(\varpi)\le\mathscr A\otimes\varpi+\varpi\otimes\mathscr A$;
  so $\Delta(\varpi^n)\le\sum_{i=0}^n\varpi^i\otimes\varpi^{n-i}$. Now
  given $\overline x\in\varpi^n/\varpi^{n+1}$, choose $x\in\varpi^n$
  representing it; write
  $\Delta(x)=\sum\sum_{i=0}^n y_i\otimes z_{n-i}$ with
  $y_i,z_i\in\varpi^i$; and set
  $\Delta(\overline x)=\sum\sum_{i=0}^n
  \overline{y_i}\otimes\overline{z_{n-i}}$
  where $\overline{y_i},\overline{z_i}\in\varpi^i/\varpi^{i+1}$ are
  the images of their respective representatives. It is easy to show
  that this definition does not depend on the choices of $x,y_i,z_i$;
  and it defines a coassociative and cocommutative coproduct since
  $\mathscr A$'s coproduct was already coassociative and
  cocommutative.
\end{proof}

Let $H$ be a Hopf algebra. An element $x\in H$ is called
\emph{primitive} if $\Delta(x)=x\otimes 1+1\otimes x$. The set of
primitive elements in a Hopf algebra forms a Lie subalgebra of $H$ for
the usual bracket $[x,y]=x y-y x$. Conversely, if $L$ is a Lie algebra,
then its universal enveloping algebra is a Hopf algebra whose
primitive elements are $L$.  Note that, in characteristic $p$, one
should consider \emph{restricted} Lie algebras.
\begin{prop}[\cite{milnor-moore:hopf}*{Theorem~6.11}]\label{prop:mm}
  Let $H$ be a cocommutative, primitively generated, graded Hopf
  algebra. Then it is the universal enveloping algebra of its
  primitive elements.
\end{prop}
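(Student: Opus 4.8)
The plan is to construct the canonical comparison morphism and prove it is an isomorphism of graded Hopf algebras by induction on degree. Write $L=P(H)$ for the space of primitive elements. Since $\Delta$ is a morphism of graded algebras, $L$ is a graded subspace of $H$, and it is a graded Lie subalgebra because the commutator of two primitives is again primitive and the bracket preserves degrees. Assuming, as is standard for graded Hopf algebras in this sense and as holds in the intended application to $\overline{\mathscr A}$, that $H_0=\mathbb K$, every primitive lies in the augmentation ideal $\varpi$, so $L$ is concentrated in strictly positive degrees. By the universal property of the enveloping algebra (the \emph{restricted} one in characteristic $p$), the inclusion $L\hookrightarrow H$ extends to a morphism $\phi\colon U(L)\to H$ which is the identity on $L$; here $U(L)$ is itself a connected graded Hopf algebra, and $\phi$ is automatically a coalgebra map because $\Delta_H\circ\phi$ and $(\phi\otimes\phi)\circ\Delta_{U(L)}$ are algebra maps agreeing on the generators $L$.

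Surjectivity is immediate: by hypothesis $H$ is primitively generated, i.e.\ generated as an algebra by $L$, and the image of the algebra map $\phi$ is a subalgebra containing $L$.

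The substance is injectivity. Let $K=\ker\phi$, a homogeneous two-sided ideal of $U(L)$, and suppose $K\neq0$; let $d\ge1$ be the least degree with $K_d\neq0$ (one has $K_0=0$ since $\phi(1)=1$), and pick $x\neq0$ in $K_d$. Then $x\in\varpi_{U(L)}$, so in the connected graded coalgebra $U(L)$ its reduced coproduct $\bar\Delta(x):=\Delta(x)-x\otimes1-1\otimes x$ lies in $\bigoplus_{i+j=d,\,i,j\ge1}U(L)_i\otimes U(L)_j$. Since $\phi$ is a coalgebra map and $\phi(x)=0$, the map $\phi\otimes\phi$ annihilates $\bar\Delta(x)$; but in that direct sum every index is $<d$, so by minimality of $d$ the map $\phi$ is injective on each $U(L)_i$ with $i<d$, hence $\phi\otimes\phi$ is injective there, forcing $\bar\Delta(x)=0$. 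Thus $x$ is a primitive element of $U(L)$. Now one invokes the classical description of the primitives of an enveloping algebra: $P(U(L))=L$ in characteristic $0$ (a consequence of the Poincar\'e--Birkhoff--Witt theorem), and $P(u(L))=L$ for the restricted enveloping algebra in characteristic $p$. Hence $x\in L$, so $\phi(x)=x\neq0$ in $H$, contradicting $x\in K$. Therefore $K=0$ and $\phi$ is an isomorphism.

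The main obstacle is precisely this last input, the identification $P(U(L))=L$: over a field of characteristic $0$ it is standard, but in characteristic $p$ it forces the passage to restricted Lie algebras and restricted enveloping algebras, and requires a separate argument (essentially another lemma of Milnor--Moore). Everything else is formal bookkeeping with the grading, the only delicate point being to arrange at the outset that $H$, hence $U(L)$, is connected, so that primitives occupy strictly positive degrees and the degree induction has a base.
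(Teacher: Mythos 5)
Your proof is correct and follows essentially the same route as the paper: construct $\phi\colon U(L)\to H$ by the universal property, get surjectivity from the generation hypothesis, and prove injectivity by a degree induction showing a minimal-degree kernel element would have vanishing reduced coproduct, hence be primitive, hence be nonzero in $H$. The one place you are more careful than the paper is in explicitly naming the input $P(U(L))=L$ (PBW in characteristic $0$, the restricted analogue in characteristic $p$): the paper's ``so $y=0$ and $x\in P$'' silently elides exactly this step, and you are right that it is the only non-formal ingredient.
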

\begin{proof}
  Let $P$ denote the Lie algebra of primitive elements in $H$. By the
  universal property of $U(P)$, there is a map $f:U(P)\to H$ which is
  graded, and surjective because $P$ generates $H$. We show that $f$
  is also injective. Consider a homogeneous element $x\in U(P)$, say
  of degree $n$. If $n=0$, then $x\in\ker(f)$ if and only if
  $x=0$. Assume then that $f$ is injective on elements of degree $<n$.

  We have $\Delta(x)=1\otimes x+x\otimes 1+y$ for some $y\in
  U(P)_{<n}\otimes U(P)_{<n}$. If $f(x)=0$, then $f(y)=0$; but we had
  assumed $f$ to be injective on elements of degree $<n$, so $y=0$ and
  $x\in P$. By assumption, $f$ is injective on $P$, so $x=0$ and
  therefore $f$ is injective on elements of degree $n$ as well.
\end{proof}

We apply these considerations to $\mathscr A=\mathbb K G$. First, we
identify the primitive elements in $\overline{\mathbb K G}$. Let us
define the series $\gamma^{\mathbb K}_n(G)=\{g\in G\mid
g-1\in\varpi^n\}$ of normal subgroups of $G$.
\begin{prop}
  The space of primitive elements in $\overline{\mathbb K G}$ is
  \begin{equation}\label{eq:liealg}
    \mathscr L^{\mathbb K}(G)=\bigoplus_{n\ge1}(\gamma^{\mathbb K}_n(G)/\gamma^{\mathbb K}_{n+1}(G))\otimes\mathbb K.
  \end{equation}
\end{prop}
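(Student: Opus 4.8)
The plan is to identify $\overline{\mathbb K G}$ with a universal enveloping algebra via the Milnor--Moore theorem (Proposition~\ref{prop:mm}), and then to read off its primitive elements.

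\emph{Step 1: constructing a morphism from $\mathscr L^{\mathbb K}(G)$ into the primitives.} First I would record the elementary identities $(gh-1)-(g-1)-(h-1)=(g-1)(h-1)$, then $[g,h]-1=g^{-1}h^{-1}\bigl((g-1)(h-1)-(h-1)(g-1)\bigr)$, and $g^p-1=(g-1)^p$ when $\operatorname{char}\mathbb K=p$. The first shows that $g\mapsto g-1$ induces additive maps $\gamma^{\mathbb K}_n(G)/\gamma^{\mathbb K}_{n+1}(G)\to\varpi^n/\varpi^{n+1}$; the other two give $[\gamma^{\mathbb K}_i,\gamma^{\mathbb K}_j]\subseteq\gamma^{\mathbb K}_{i+j}$ and $(\gamma^{\mathbb K}_i)^p\subseteq\gamma^{\mathbb K}_{ip}$, so that $\mathscr L^{\mathbb K}(G)$ carries a natural (restricted) Lie structure and, tensoring with $\mathbb K$, that these maps assemble into a morphism of (restricted) Lie algebras $\mathscr L^{\mathbb K}(G)\to\overline{\mathbb K G}$. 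Finally, for $g\in\gamma^{\mathbb K}_n(G)$ one has $\Delta(g-1)=(g-1)\otimes1+1\otimes(g-1)+(g-1)\otimes(g-1)$; since $g-1\in\varpi^n$, the last summand contributes $0$ to $\Delta(\overline{g-1})$ in $\overline{\mathscr A}\otimes\overline{\mathscr A}$ (present it with one tensor factor lying in $\varpi^{n-1}$, where $g-1$ already maps to $0$), so $\overline{g-1}$ is primitive. Thus the morphism lands in the (restricted) Lie algebra $P$ of primitive elements of $\overline{\mathbb K G}$.

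\emph{Step 2: applying Milnor--Moore.} The algebra $\overline{\mathbb K G}$ is generated in degree $1$, because $\varpi^n$ is spanned by products of $n$ elements of $\varpi$, so multiplication gives a surjection $(\varpi/\varpi^2)^{\otimes n}\twoheadrightarrow\varpi^n/\varpi^{n+1}$; and the coproduct computation of Step~1 with $n=1$ shows every element of $\varpi/\varpi^2$ is primitive. Together with the Lemma that $\overline{\mathbb K G}$ is a graded cocommutative Hopf algebra, Proposition~\ref{prop:mm} yields $\overline{\mathbb K G}=U(P)$. Moreover $P$ is then generated in degree $1$: if $P'\subseteq P$ denotes the (restricted) Lie subalgebra generated by $P_1$, then $U(P')$ (a subalgebra of $U(P)=\overline{\mathbb K G}$ by PBW) contains $P_1$, hence the whole subalgebra generated by $P_1$, which is $\overline{\mathbb K G}$; so $U(P')=U(P)$, and passing to primitive elements on both sides — recall these recover the given (restricted) Lie algebra — forces $P'=P$.

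\emph{Step 3: conclusion.} The morphism $\mathscr L^{\mathbb K}(G)\to P$ of Step~1 is an isomorphism in degree $1$: it is onto since $\varpi$ is $\mathbb K$-spanned by $\{g-1:g\in G\}$ and $\gamma^{\mathbb K}_1(G)=G$, and injective by the definition of $\gamma^{\mathbb K}_2(G)$. Its image is therefore a (restricted) Lie subalgebra of $P$ containing $P_1$, hence — by Step~2 — all of $P$; so $\mathscr L^{\mathbb K}(G)$ surjects onto $P$. Injectivity in each degree comes again from the definition of $\gamma^{\mathbb K}_{n+1}(G)=\{g:g-1\in\varpi^{n+1}\}$, giving $\mathscr L^{\mathbb K}(G)\cong P$, which is the assertion. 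The step I expect to be most delicate is precisely this injectivity after tensoring with $\mathbb K$: it rests on knowing that the dimension quotients $\gamma^{\mathbb K}_n(G)/\gamma^{\mathbb K}_{n+1}(G)$ are torsion-free when $\operatorname{char}\mathbb K=0$ and elementary abelian $p$-groups when $\operatorname{char}\mathbb K=p$ (the classical Jennings--Lazard--Quillen analysis of the dimension series), so that $\otimes\mathbb K$ introduces no kernel; one must also be consistently careful that ``Lie algebra'' means ``restricted Lie algebra'' in positive characteristic and that Proposition~\ref{prop:mm} is used in that form.
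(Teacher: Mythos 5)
Your proof is correct and, I believe, is exactly what the paper's one-line proof (which just asserts the isomorphism) is leaving implicit: surjectivity onto the primitives via Proposition~\ref{prop:mm} together with the fact that $\overline{\mathbb K G}$ is generated by its degree-$1$ part, and injectivity from the very definition of the dimension series $\gamma^{\mathbb K}_n$. On the one point you flag as delicate, the cleanest way to see that $\otimes\,\mathbb K$ introduces no kernel is to note that $\varpi^n_{\mathbb K}/\varpi^{n+1}_{\mathbb K}\cong(\varpi^n_{\mathbb K_0}/\varpi^{n+1}_{\mathbb K_0})\otimes_{\mathbb K_0}\mathbb K$ for $\mathbb K_0$ the prime field, that $\gamma^{\mathbb K}_n=\gamma^{\mathbb K_0}_n$, and that $\gamma_n/\gamma_{n+1}$ is automatically a $\mathbb K_0$-vector space because it injects (by definition) into the $\mathbb K_0$-vector space $\varpi^n_{\mathbb K_0}/\varpi^{n+1}_{\mathbb K_0}$; injectivity is then preserved under the flat base change $\otimes_{\mathbb K_0}\mathbb K$, with no appeal to the harder parts of the Jennings--Lazard--Quillen structure theory.
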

\begin{proof}
  The natural map $g\mapsto g-1$ from $\gamma^{\mathbb K}_n(G)$ to
  $\varpi^n/\varpi^{n+1}$ extends to a Lie algebra isomorphism from
  $\gamma^{\mathbb K}_n(G)$ onto primitive elements of
  $\overline{\mathbb K G}$.
\end{proof}
In fact, $\gamma^{\mathbb K}_n(G)$ only depends on the characteristic
$p$ of $\mathbb K$ and, up to extension of scalars,
$\mathscr L^{\mathbb K}(G)$ depends only on $p$. Furthermore, it may
be identified directly within $G$, and is a variant of the lower
central series~\cites{jennings:gpring,jennings:gpringnilp}. Indeed one
has $\gamma^p_1(G)=G$, and if $p=0$ then
\[\gamma_{n+1}^0(G)=\langle g\in G\mid g^t\in[G,\gamma_n^0(G)]\text{ for some }t>0\rangle,\]
while if $p>0$ then
\[\gamma_{n+1}^p(G)=\langle[G,\gamma_n^p(G)]\{x^p\mid x\in\gamma_{\lceil n/p\rceil}(G)\}.\]

\begin{prop}\label{prop:alg-gp}
  Let $S$ be such that $S\cup S^{-1}$ generates $G$, and let
  $v_{G,S}$ denote the corresponding growth function. Let
  $w(n)=\dim_{\mathbb K}\varpi^n/\varpi^{n+1}$ denote the growth
  function of $\overline{\mathbb K G}$. Then
  \[v_{G,S}(n)\ge w(0)+w(1)+\dots+w(n)\text{ for all }n\in\N.\]
\end{prop}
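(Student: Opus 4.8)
The plan is to exhibit $\mathscr A/\varpi^{n+1}$, with $\mathscr A=\mathbb K G$ and $\varpi$ the augmentation ideal, as a quotient of the span of the short group elements. For $n\in\N$ let $V_n\subseteq\mathscr A$ be the $\mathbb K$-span of $\{g\in G:\|g\|\le n\}$, the norm taken with respect to $S\cup S^{-1}$. Since distinct group elements are $\mathbb K$-linearly independent in $\mathbb K G$, we have $\dim_{\mathbb K}V_n=v_{G,S}(n)$. I would then reduce everything to the single identity $\mathscr A=V_n+\varpi^{n+1}$: granting it, the projection $\mathscr A\twoheadrightarrow\mathscr A/\varpi^{n+1}$ restricts to a surjection from $V_n$, so $\mathscr A/\varpi^{n+1}$ is finite-dimensional and $v_{G,S}(n)=\dim_{\mathbb K}V_n\ge\dim_{\mathbb K}\mathscr A/\varpi^{n+1}=\sum_{i=0}^{n}\dim_{\mathbb K}\varpi^i/\varpi^{i+1}=\sum_{i=0}^{n}w(i)$, the middle equality being the telescoping of the finite chain $\mathscr A=\varpi^0\supseteq\varpi^1\supseteq\cdots\supseteq\varpi^{n+1}$.

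The key step is the claim that $\varpi^k\subseteq V_k+\varpi^{k+1}$ for every $k\ge0$. First I would use that $\varpi$ is $\mathbb K$-spanned by $\{g-1:g\in G\}$, so $\varpi^k$ is $\mathbb K$-spanned by products $(g_1-1)\cdots(g_k-1)$ with $g_i\in G$. Writing each $g_i$ as a word $s_{i,1}\cdots s_{i,\ell_i}$ over $S\cup S^{-1}$ and telescoping, $g_i-1=\sum_{j}(s_{i,1}\cdots s_{i,j-1})(s_{i,j}-1)$; and since $(s_{i,1}\cdots s_{i,j-1})(s_{i,j}-1)=(s_{i,j}-1)+(s_{i,1}\cdots s_{i,j-1}-1)(s_{i,j}-1)$, we may write $g_i-1=p_i+r_i$ with $p_i\in\operatorname{span}_{\mathbb K}\{s-1:s\in S\cup S^{-1}\}$ and $r_i\in\varpi^2$. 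Expanding $\prod_{i=1}^{k}(p_i+r_i)$, every term involving at least one factor $r_i$ lies in $\varpi^2\cdot\varpi^{k-1}\subseteq\varpi^{k+1}$ because the remaining $k-1$ factors lie in $\varpi$; and the surviving term $p_1\cdots p_k$ is a $\mathbb K$-combination of products $(s_1-1)\cdots(s_k-1)$, each of which, expanded over subsets $I\subseteq\{1,\dots,k\}$ as $\sum_I(-1)^{k-|I|}\prod_{i\in I}s_i$, is a $\mathbb K$-combination of group elements of norm $\le k$, hence lies in $V_k$. This proves the claim.

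Finally I would run a downward induction on $k$ from $n+1$ to $0$ to get $\varpi^k\subseteq V_n+\varpi^{n+1}$: the case $k=n+1$ is trivial, and for $k\le n$ the claim together with $V_k\subseteq V_n$ and the inductive hypothesis gives $\varpi^k\subseteq V_k+\varpi^{k+1}\subseteq V_n+(V_n+\varpi^{n+1})=V_n+\varpi^{n+1}$; taking $k=0$ yields $\mathscr A=V_n+\varpi^{n+1}$, as required. The hard part is the claim of the second paragraph: the bookkeeping that isolates the $\varpi^{k+1}$-part of the expanded product from the part living in $V_k$; once this filtration estimate is in hand the rest is formal linear algebra. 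It is worth noting that this argument uses nothing about the Hopf structure, only the $\varpi$-adic filtration of $\mathbb K G$; the Hopf-algebraic input of the section enters only afterwards, when $w(n)$ is estimated via Proposition~\ref{prop:mm} and the description~\eqref{eq:liealg}.
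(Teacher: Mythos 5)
Your proof is correct and rests on the same key observation as the paper's: modulo $\varpi^{k+1}$, the ideal $\varpi^k$ is spanned by products $(1-s_1)\cdots(1-s_k)$ with $s_i$ generators, and these lie in the span of group elements of norm $\le k$. The paper packages this as a telescoping bound $w(i)\le\dim(\varpi^i\cap\mathbb K B(n))-\dim(\varpi^{i+1}\cap\mathbb K B(n))$ summed from $0$ to $n$, while you turn it into the surjection $V_n\twoheadrightarrow\mathscr A/\varpi^{n+1}$ via a downward induction; these are two equivalent ways of extracting the same dimension count, and your write-up is if anything a bit more explicit about the final step that the paper compresses into ``the claim follows.''
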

\begin{proof}
  Consider first an element $x=1-g\in\varpi$, and write
  $g=s_1^{\epsilon_1}\cdots s_\ell^{\epsilon_\ell}$ as a product of
  generators and inverses. Using the identities
  \begin{align*}
    1-g h &=(1-g)+(1-h)-(1-g)(1-h),\\
    1-g^{-1}&=-(1-g)+(1-g)(1-g^{-1}),
  \end{align*}
  we get $x\equiv\sum_{i=1}^\ell\epsilon_i(1-s_i)$ modulo $\varpi^2$.

  The ideal $\varpi^n$ is generated, qua ideal, by all
  $x=(1-g_1)\cdots(1-g_n)$ with $g_i\in G$. By the above,
  $\varpi^n/\varpi^{n+1}$ is generated, again qua ideal, by all
  $x=(1-s_1)\cdots(1-s_n)$ with $s_i\in S$. Now
  $\varpi^n/\varpi^{n+1}$ has trivial multiplication, so the
  $(1-s_1)\cdots(1-s_n)$ also generate $\varpi^n/\varpi^{n+1}$ as a
  vector space.

  This generating set is contained in the linear span of
  $B_{G,S}(n)\subseteq\mathbb K G$; so for $0\le i\le n$ we have
  $w(i)\le\dim(\varpi^i\cap\mathbb
  KB_{G,S}(n))-\dim(\varpi^{i+1}\cap\mathbb KB_{G,S}(n))$ and the
  claim follows.
\end{proof}

\begin{proof}[Proof of Theorem~\ref{thm:growth:hp}]
  Consider a subgroup $H$ such that $\gamma_n(H)\neq\gamma_{n+1}(H)$
  for all $n\in\N$. Without loss of generality, suppose $H$ is
  finitely generated. For $n\in\N$, let $\mathcal P(n)$ be the set of
  prime numbers $p$ such that
  $(\gamma_n(H)/\gamma_{n+1}(H))\otimes\mathbb F_p\neq0$. Each
  $\mathcal P(n)$ is non-empty because $\gamma_n(H)/\gamma_{n+1}(H)$
  is a finitely generated abelian group, and
  $\mathcal P(n+1)\subseteq\mathcal P(n)$ because the commutator map
  $(\gamma_n(H)/\gamma_{n+1}(H))\times
  H\to\gamma_{n+1}(H)/\gamma_{n+2}(H)$
  is onto. There exists therefore a prime number $p$ such that
  $\gamma_n(H)/\gamma_{n+1}(H)\otimes\mathbb F_p\neq0$ for all
  $n\in\N$.  In particular, $\gamma_n^p(H)\neq\gamma_{n+1}^p(H)$ for
  all $n\in\N$.

  Let $\mathbb K$ be a field of characteristic $p$. It follows that
  $\overline{\mathbb K H}$ contains a primitive element $x_n$ of
  degree $n$ for all $n\in\N$, namely $x_n=g_n-1$ for some
  $g_n\in\gamma_n^p(H)\setminus\gamma_{n+1}^p(H)$; so
  $\overline{\mathbb K H}$ contains $\pi(n)$ linearly independent
  elements of degree $n$, where $\pi(n)$ denotes the partition
  function. Indeed to every partition $n=i_1+\dots+i_k$ with
  $i_1\le\cdots\le i_k$ associate the element $x_{i_1}\cdots x_{i_k}$;
  these elements are linearly independent by
  Proposition~\ref{prop:mm}. Therefore, the growth function of
  $\overline{\mathbb K H}$ is at least $\pi(n)$.

  It now follows from Proposition~\ref{prop:alg-gp} that
  $v_{H,S}(R)\ge \pi(R)$ holds for any generating set $S$ of $H$.
  Classical results on partitions~\cite{hardy-r:asymptotic} tell us that
  $\pi(n)\propto \exp(\sqrt n)$; so \emph{a fortiori}
  $v_G(R)\succsim \exp(\sqrt R)$.
\end{proof}

Note that, for Grigorchuk's group $G=G_{012}$, the quotients
$\gamma_n(G)/\gamma_{n+1}(G)$ have bounded rank, in fact $1$ or $2$,
see~\cite{rozhkov:lcs}; so that no improvement on the lower bound can
be obtained using Proposition~\ref{prop:alg-gp}.

\subsection{\boldmath Metrics on $G_\omega$}
We already saw that the groups $G_\omega$ are contracting, namely if
$\phi(g)=\pair<g_0,g_1>\pi$ then $g_0$ and $g_1$ are shorter than
$g$. We shall need a strengthening of this property: we assign norms
$\|\cdot\|_\omega$ to the groups $G_\omega$ to obtain relations of the
form
\begin{equation}\label{eq:supercontract}
  \|g_0\|_{\sigma\omega}+\|g_1\|_{\sigma\omega}\le\frac2{\eta_\omega}\big(\|g\|_\omega+\|a\|_\omega\big)
\end{equation}
with $\eta_\omega>2$ as large as possible.

We do this by assigning norms $\in\R_+$ to the generators $a,b,c,d$ of
$G_\omega$, and extend $\|\cdot\|_\omega$ to $G_\omega$ by the
triangular inequality:
\[\|g\|_\omega=\min\{\|s_1\|+\cdots+\|s_n\|:g=s_1\cdots s_n,\;s_i\in S\}.
\]
For this purpose, consider the open $2$-simplex
\[\Delta=\{(\beta,\gamma,\delta)\in\R^3:\max\{\beta,\gamma,\delta\}<\frac12,\beta+\gamma+\delta=1\}.
\]
Its extremal points are $(\frac12,\frac12,0)$ and its permutations. A
choice of $p_\omega=(\beta,\gamma,\delta)\in\Delta$ defines the
following norm on the generators of $G_\omega$:
\[\|a\|_\omega=1-2\max\{\beta,\gamma,\delta\},\qquad\|b\|_\omega=\beta-\|a\|_\omega,\;\|c\|_\omega=\gamma-\|a\|_\omega,\;\|d\|_\omega=\delta-\|a\|_\omega.
\]
In particular, note that the triangular inequality
$\|x\|_\omega+\|y\|_\omega\le\|xy\|_\omega$ is sharp for $x,y\in\{b,c,d\}$.
\[\begin{tikzpicture}
  \coordinate (A0) at (9,4);
  \coordinate (A1) at (3,-4);
  \coordinate (A2) at (-3,4);
  \coordinate (M01) at ($(A0)!0.5!(A1)$);
  \coordinate (M02) at ($(A0)!0.5!(A2)$);
  \coordinate (M12) at ($(A1)!0.5!(A2)$);
  \coordinate (C012) at ($(A0)!0.6667!(M12)$);
  \draw[thick] (M12) node[below left] {$(0,\frac12,\frac12)$} --
  (M01) node[below right] {$(\frac12,\frac12,0)$} --
  (M02) node[above] {$(\frac12,0,\frac12)$} -- cycle;
  \node[text width=3.5cm] at (8,2) {The simplex $\Delta$, and a 3-cycle preserved by $\overline M_2\overline M_1\overline M_0$};
  \draw[thin] (M01) -- (C012);
  \draw[thin] (M02) -- (C012);
  \draw[thin] (M12) -- (C012);
  \node at (4.0,1.4) {$\overline M_0(\Delta)$};
  \node at (2.8,0.4) {$\overline M_1(\Delta)$};
  \node at (2.2,1.8) {$\overline M_2(\Delta)$};
  \node at ($0.4053*(A0)+0.3285*(A2)+0.2662*(A1)$) {$\bullet$};
  \node at ($0.4053*(A1)+0.3285*(A0)+0.2662*(A2)$) {$\bullet$};
  \node at ($0.4053*(A2)+0.3285*(A1)+0.2662*(A0)$) {$\bullet$};
\end{tikzpicture}\]

We extend the family of groups $(G_\omega)_{\omega\in\{0,1,2\}^\N}$ to
a family $(G_\omega)_{\omega\in\{0,1,2\}^\Z}$; namely, the parameter
space is now $\Omega=\{0,1,2\}^\Z$ with the two-sided shift map
$\sigma\colon\Omega\righttoleftarrow$. The group $G_\omega$ itself only
depends on the restriction of $\omega$ to $\N$, but the norm on
$G_\omega$ depends on the restriction of $\omega$ to $-\N$.

Analogously to before, we denote by $\Omega'$ the subset of $\Omega$
consisting of sequences that contain infinitely many $0,1,2$ in both
directions. For $\omega\in\Omega'$, we construct the point
$p_\omega\in\Delta$ as follows. Consider the matrices
\[M_0=\begin{pmatrix}1 & 1 & 1\\ 0 & 2 & 0\\ 0 & 0 & 2\end{pmatrix},\qquad
M_1=\begin{pmatrix}2 & 0 & 0\\ 1 & 1 & 1\\ 0 & 0 & 2\end{pmatrix},\qquad
M_2=\begin{pmatrix}2 & 0 & 0\\ 0 & 2 & 0\\ 1 & 1 & 1\end{pmatrix}.\]
Define then
\[\eta\colon\Delta\times\{0,1,2\}\to(2,3],\qquad\overline M_\lambda\colon\Delta\righttoleftarrow,\qquad\mu\colon\Delta\to(0,\tfrac13)\]
by setting, for all $\lambda\in\{0,1,2\}$ and all $p\in\Delta$,
\begin{align*}
  \eta(p,\lambda)&=(1\;1\;1)\cdot M_\lambda(p)&&\text{the $\ell^1$-norm of }M_\lambda(p),\\
  \overline M_\lambda(p)&=\frac{M_\lambda(p)}{\eta(p,\lambda)}&&\text{the projection of }M_\lambda(p)\text{ to }\Delta,\\
  \mu(p)&=\min\{\beta,\gamma,\delta\}&&\text{the minimal distance of $p$ to a vertex of }\Delta.
\end{align*}

Endow $\Delta$ with the Hilbert metric
$d_\Delta(V_1,V_2)=\log(V_1,V_2;V_-,V_+)$, computed using the
cross-ratio of the points $V_1,V_2$ and the intersections $V_-,V_+$ of
the line containing $V_1,V_2$ with the boundary of $\Delta$. The
transformations $\overline M_\lambda$ are projective transformations
of $\Delta$, and are therefore contracting:

\begin{lem}[Essentially~\cite{birkhoff:jentzsch}]\label{lem:birkhoff}
  Let $K$ be a convex subset of affine space, and let $A\colon
  K\righttoleftarrow$ be a projective map. Then $A$ contracts the
  Hilbert metric.

  If furthermore $A(K)$ contains no lines from $K$ (that is, $A(K)\cap
  \ell\neq K\cap \ell$ for every line $\ell$ intersecting $K$), then
  $A$ is strictly contracting.
\end{lem}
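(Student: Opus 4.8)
The plan is to reduce the statement to two elementary facts about cross-ratios: their invariance under projective maps, and their monotonicity in the two ``outer'' arguments. Recall that for distinct points $x,y$ in the interior of $K$, writing $p_-,p_+$ for the two points where the line $\ell$ through $x$ and $y$ meets $\partial K$, ordered so that $p_-,x,y,p_+$ occur in this order along $\ell$, one has $d_K(x,y)=\log(x,y;p_-,p_+)\ge0$. The two ingredients I would isolate are: (i) a projective map sends $\ell$ either to a line $A\ell$, preserving the cross-ratio of any four collinear points, or collapses $\ell$ to a point; and (ii) if $q_-,q_+$ lie on a line with $q_-,p'_-,x',y',p'_+,q_+$ in this order, then $(x',y';q_-,q_+)\le(x',y';p'_-,p'_+)$, with strict inequality unless $q_-=p'_-$ and $q_+=p'_+$. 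Fact (ii) is a one-variable computation: parametrising the line affinely with $x'=0$, $y'=t>0$, $p'_-=-u<0$ and $p'_+=s>t$, the quantity $(x',y';p'_-,p'_+)$ equals $\frac{s(t+u)}{u(s-t)}$, which is visibly decreasing in $s$ and decreasing in $u$.

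Granting (i) and (ii), fix distinct $x,y\in K$. If $Ax=Ay$ there is nothing to prove, so assume $Ax\neq Ay$; then $A$ does not collapse $\ell$, and $A$ restricts to a homeomorphism of the closed chord $K\cap\ell$ onto a segment of $A\ell$, namely $A(K\cap\ell)$, with endpoints $Ap_-,Ap_+$. Since $A(K)\subseteq K$, this segment lies inside the chord $K\cap A\ell$, whose endpoints I call $r_-,r_+$; hence $r_-,Ap_-,Ax,Ay,Ap_+,r_+$ occur in this order along $A\ell$. Applying (ii) with $q_\pm=r_\pm$ and then (i),
\[
d_K(Ax,Ay)=\log(Ax,Ay;r_-,r_+)\le\log(Ax,Ay;Ap_-,Ap_+)=\log(x,y;p_-,p_+)=d_K(x,y),
\]
which is the first assertion. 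For the second, apply the hypothesis $A(K)\cap\ell'\neq K\cap\ell'$ to the line $\ell'=A\ell$: then $A(K)\cap A\ell$ is a proper subsegment of $K\cap A\ell$, so $A(K\cap\ell)\subseteq A(K)\cap A\ell$ is properly contained in $[r_-,r_+]$ and at least one of $Ap_-,Ap_+$ is interior to $[r_-,r_+]$; the strict form of (ii) then upgrades the display to $d_K(Ax,Ay)<d_K(x,y)$.

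The computation itself is routine, so the only points needing care are the combinatorics of the ordering $r_-,Ap_-,Ax,Ay,Ap_+,r_+$ along $A\ell$ and getting the direction of the monotonicity in (ii) right. I would emphasise that pointwise strict contraction — which is all the lemma asserts — does not of itself yield a \emph{uniform} Lipschitz constant; obtaining one requires the stronger hypothesis that the image $A(K)$ has finite Hilbert diameter $D:=\operatorname{diam}_{d_K}A(K)$ in $K$, in which case a quantitative reworking of (ii), controlling how deep inside $[r_-,r_+]$ the points $Ap_\pm$ must lie in terms of $D$, produces the classical Birkhoff contraction constant $\tanh(\tfrac{1}{4}D)<1$. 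That quantitative refinement is the point I expect to require the most genuine work, and it is the form one would invoke downstream.
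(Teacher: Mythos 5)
Your proof is correct and follows essentially the same route as the paper's: projective invariance of the cross-ratio together with the monotonicity of the Hilbert cross-ratio as the chord endpoints are pushed outward. You merely make explicit the one-variable monotonicity computation and the ordering of the six collinear points, which the paper leaves implicit, and your closing remark about the uniform Birkhoff constant $\tanh(\tfrac14 D)$ is a correct aside beyond the scope of the lemma.
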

\begin{proof}
  Since $A$ is projective, it preserves the cross-ratio on lines, so
  we have $d_{A(K)}(A(V_1),A(V_2))=d_K(V_1,V_2)$ for all
  $V_1,V_2\in K$. Furthermore, on the line $\ell$ through $V_1,V_2$,
  the intersection points $\ell\cap\partial A(K)$ are not further from
  $V_1,V_2$ than $\{V_+,V_-\}=\ell\cap\partial K$; the Hilbert metric
  decreases as $V_\pm$ are moved further apart from $V_1,V_2$, and
  this gives strict contraction under the condition
  $A(K)\cap \ell\neq K\cap \ell$.
\end{proof}

We are ready to define the points $p_\omega\in\Delta$, and therefore
the metrics $\|\cdot\|_\omega$. Choose an arbitrary point
$p\in\Delta$, and set
\begin{equation}\label{eq:pomega}
  p_\omega=\lim_{n\to\infty} \overline M_{\omega_{-1}}\circ\overline
  M_{\omega_{-2}}\circ\cdots\circ \overline M_{\omega_{-n}}(p).
\end{equation}
Note that, since the transformations $\overline M_\lambda$ are
contracting, the limit $p_\omega$ is independent of the choice of
$p$. Note also that, by our assumption that the negative part of
$\omega$ contains infinitely many $0$, $1$ and $2$, the limit
$p_\omega$ does not belong to the boundary of $\Delta$. From now on, we write
\[\eta_\omega:=\eta(p_\omega,\omega_0).\]

\begin{lem}\label{lem:supercontract}
  For all $g\in G_\omega$ with $\phi(g)=\pair<g_0,g_1>\pi$, we have
  the inequality~\eqref{eq:supercontract}
  \[\|g_0\|_{\sigma\omega}+\|g_1\|_{\sigma\omega}\le\frac2{\eta_\omega}\big(\|g\|_\omega+\|a\|_\omega\big);\]
  and furthermore, if $g\not\in\{b,c,d\}$ then up to replacing $g$
  with a conjugate we have
  \[\|g_0\|_{\sigma\omega}+\|g_1\|_{\sigma\omega}\le\frac2{\eta_\omega}\|g\|_\omega.\]
\end{lem}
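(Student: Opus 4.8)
The plan is to reduce the inequality to a purely local computation on words in the generators, using the wreath recursion $\phi$ and the explicit norms $\|a\|_\omega,\|b\|_\omega,\|c\|_\omega,\|d\|_\omega$ attached to the point $p_\omega=(\beta,\gamma,\delta)$. First I would fix a geodesic word $g=s_1\cdots s_n$ with $s_i\in\{a,b,c,d\}$ realizing $\|g\|_\omega=\sum\|s_i\|_\omega$, and apply $\phi$ letter by letter. Each occurrence of $a$ swaps the two coordinates and contributes nothing to either $g_0$ or $g_1$; each occurrence of $x\in\{b,c,d\}$ contributes $x$ to one of $g_0,g_1$ and $\omega_0(x)$ to the other, where $\omega_0(x)\in\{1,a\}$. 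So if we write $g$ (after possibly cyclically conjugating, which only affects the statement by the $\|a\|_\omega$ term) as an alternating product $a x_1 a x_2 \cdots$, the syllables $x_i\in\{b,c,d\}$ get distributed into $g_0$ and $g_1$ according to the parity of their position, with the ``other half'' of each syllable becoming either trivial or a letter $a$. The total contribution to $\|g_0\|_{\sigma\omega}+\|g_1\|_{\sigma\omega}$ is then bounded by
\[
\sum_{x_i} \big(\|x_i\|_{\sigma\omega} + \|\omega_0(x_i)\|_{\sigma\omega}\big) \le \sum_{x_i}\big(\|x_i\|_{\sigma\omega}+\|a\|_{\sigma\omega}\big),
\]
plus at most one extra $\|a\|_{\sigma\omega}$ coming from a possible leftover $a$ or from the cyclic conjugation. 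The key arithmetic identity I would then invoke is that, with $\eta_\omega=\eta(p_\omega,\omega_0)=(1\,1\,1)M_{\omega_0}(p_\omega)$ and $p_{\sigma\omega}=\overline M_{\omega_0}(p_\omega)$, one has precisely
\[
\eta_\omega\,\|x\|_{\sigma\omega} = 2\|x\|_\omega \quad\text{for }x\in\{b,c,d\}\setminus\{\text{the one killed by }\omega_0\},
\]
while the generator $x$ with $\omega_0(x)=1$ satisfies $\eta_\omega(\|x\|_{\sigma\omega}+\|a\|_{\sigma\omega}) \le 2\|x\|_\omega$ after accounting for the $a$-contribution — this is exactly what the matrices $M_\lambda$ encode, column $j$ of $M_{\omega_0}$ recording how the $j$-th generator norm is replicated. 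Chasing these identities through gives $\|g_0\|_{\sigma\omega}+\|g_1\|_{\sigma\omega}\le \frac{2}{\eta_\omega}(\|g\|_\omega+\|a\|_\omega)$.

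For the sharper bound when $g\notin\{b,c,d\}$, I would observe that in this case $g$ has a geodesic representative of syllable length at least $2$ in the alternating form, hence a cyclic conjugate of $g$ of the form $x_1 a x_2 a\cdots x_k a$ with $k\ge 1$ and no ``dangling'' $a$ — equivalently, the number of $a$'s among $s_1,\dots,s_n$ equals the number of $\{b,c,d\}$-syllables — so the ``extra $\|a\|_\omega$'' term is not needed. One has to be slightly careful: if $g$ itself begins and ends with $a$, one uses conjugation by $a$ (legitimate since conjugate elements have the same pair of section-norm sums up to swapping $g_0,g_1$) to move to a representative with matched $a$'s. The case $g\in\{b,c,d\}$ is genuinely excluded: there $\phi(x)=\pair<\omega_0(x),x>$ or $\pair<x,\omega_0(x)>$, and $\|x\|_{\sigma\omega}+\|\omega_0(x)\|_{\sigma\omega}$ can exceed $\frac{2}{\eta_\omega}\|x\|_\omega$, which is why the $+\|a\|_\omega$ slack is indispensable in the general inequality.

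\textbf{Main obstacle.} The bookkeeping of how many $a$'s appear relative to $\{b,c,d\}$-syllables, and the verification that cyclic conjugation costs at most one $\|a\|_\omega$, is the delicate point; more fundamentally, the heart of the matter is checking the linear-algebra identity relating $\|\cdot\|_\omega$ and $\|\cdot\|_{\sigma\omega}$ via $M_{\omega_0}$ — i.e.\ that the specific formula $\|a\|_\omega = 1-2\max\{\beta,\gamma,\delta\}$, $\|b\|_\omega=\beta-\|a\|_\omega$, etc., is exactly the one that makes $\eta_\omega$ the optimal contraction constant and makes the two sections' norms ``add up'' correctly. I expect that once the alternating-word normal form is in hand, the inequality follows by a direct (if slightly tedious) case analysis on which of $0,1,2$ the symbol $\omega_0$ is, using in each case that $\phi$ kills exactly one of $b,c,d$ in the first coordinate and that the surviving generators have their $\omega$-norm exactly halved by $\eta_\omega$.
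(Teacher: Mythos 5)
Your overall plan---write $g$ in alternating syllable form $a^? x_1 a x_2 \cdots a x_\ell a^?$, observe that each $a$ contributes nothing to the sections and each $x_i\in\{b,c,d\}$ contributes $x_i$ to one section and $\omega_0(x_i)\in\{1,a\}$ to the other, then sum the per-syllable contributions and relate them to $\|g\|_\omega$---is exactly the paper's. The second half (conjugate to restore the balance between the number of $a$'s and the number of $\{b,c,d\}$-letters) is also the right idea, though your parenthetical has the cases inverted: the problematic configuration is $g$ beginning \emph{and} ending with a letter in $\{b,c,d\}$ (giving $\ell$ syllables but only $\ell-1$ copies of $a$), and the fix is to conjugate by the last letter $x_\ell$, not by $a$. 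If $g$ begins and ends with $a$ there is an \emph{extra} $a$ and nothing needs fixing.

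The real gap is the claimed key identity, which you single out yourself as the heart of the matter but whose stated form is incorrect. You propose $\eta_\omega\|x\|_{\sigma\omega}=2\|x\|_\omega$ for the surviving generators and $\eta_\omega(\|x\|_{\sigma\omega}+\|a\|_{\sigma\omega})\le2\|x\|_\omega$ for the killed one. Neither holds, and the second even has the $\|a\|_{\sigma\omega}$ on the wrong generator (the \emph{killed} $x$ contributes only $\|x\|_{\sigma\omega}$; the surviving ones contribute $\|x\|_{\sigma\omega}+\|a\|_{\sigma\omega}$). The correct identity, which is what the entire normalisation $\|a\|_\omega=1-2\max$, $\|x\|_\omega=x\text{-coord}-\|a\|_\omega$ was designed to produce, is uniform across all three generators and features the \emph{syllable} norm $\|ax\|_\omega$ (i.e.\ the bare coordinate of $p_\omega$), not $\|x\|_\omega$ alone:
\[
\|x\|_{\sigma\omega}+\|\omega_0(x)\|_{\sigma\omega}=\frac{2}{\eta_\omega}\,\|ax\|_\omega\qquad\text{for all }x\in\{b,c,d\}.
\]
With (say) $\omega_0=0$, $p_\omega=(\beta,\gamma,\delta)$, $\eta_\omega=3-2\beta$ and $p_{\sigma\omega}=(1,2\gamma,2\delta)/\eta_\omega$, one checks $\|b\|_{\sigma\omega}+0=2\beta/\eta_\omega=2\|ab\|_\omega/\eta_\omega$ and $\|c\|_{\sigma\omega}+\|a\|_{\sigma\omega}=2\gamma/\eta_\omega=2\|ac\|_\omega/\eta_\omega$, etc. Summing over syllables and comparing $\sum_i\|ax_i\|_\omega$ with $\|g\|_\omega$ (they differ by at most one $\|a\|_\omega$, the sign depending on the boundary letters of $g$) gives both assertions. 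Your version of the identity would not close the argument: for instance $\eta_\omega(\|b\|_{\sigma\omega}+\|a\|_{\sigma\omega})=1$, which is generically \emph{larger} than $2\|b\|_\omega$, so the inequality you want it to supply goes the wrong way.
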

\begin{proof}
  Without loss of generality, we suppose $\omega_0=0$, and write
  $p_\omega=(\beta,\gamma,\delta)$ and $p_{\sigma\omega}=\overline
  M_{\omega_0}p_\omega=(\beta',\gamma',\delta')$. We have
  \[\eta_\omega=3-2\beta,\quad(\beta',\gamma',\delta')=\frac{(\beta+\gamma+\delta,2\gamma,2\delta)}{\eta_\omega}=\frac{(1,2\gamma,2\delta)}{\eta_\omega}.\]
  Thus
  \begin{align*}
    \|b\|_{\sigma\omega}+\|\omega_0(a)\|_{\sigma\omega} &= \|b\|_{\sigma\omega} = \beta'-\|a\|_{\sigma\omega} = \beta'-(1-2\beta')=3\beta'-1\\
    & =\frac3{\eta_\omega}-1=2\frac\beta{\eta_\omega}=\frac2{\eta_\omega}\|ab\|_\omega,\\
    \|c\|_{\sigma\omega}+\|\omega_0(a)\|_{\sigma\omega} &= \frac{2\gamma}{\eta_\omega}=\frac2{\eta_\omega}\|ac\|_\omega,\\
    \|d\|_{\sigma\omega}+\|\omega_0(a)\|_{\sigma\omega} &= \frac{2\delta}{\eta_\omega}=\frac2{\eta_\omega}\|ad\|_\omega.
  \end{align*}
  Now given $g\in G_\omega$, write it as a word of minimal norm as
  $g=a^?x_1\cdots a x_\ell a^?$, with $x_i\in\{b,c,d\}$, and the $a^?$
  mean that the initial and final `$a$' may be present or absent. Thus
  $\|g\|_\omega\ge\|ax_1\|_\omega+\cdots+\|ax_\ell\|_\omega-\|a\|_\omega$. On
  the other hand, each `$a$' in the expression of $g$ contributes
  nothing to $g_0$ and $g_1$, while each `$x_i$' contributes an
  `$x_i$' and a `$\omega_0(x_i)$' to $g_0$ and $g_1$, in some
  order. Summing together the inequalities above gives the
  claimed~\eqref{eq:supercontract}.

  The second claim follows, because the extra `$\|a\|_\omega$' term
  occurs only if $g$ both starts and ends with a letter in
  $\{b,c,d\}$, and this case can be prevented by conjugating $g$ by
  its last letter.
\end{proof}  

Lemma~\ref{lem:supercontract} can be used to prove statements on
$G_\omega$ by induction. For example,
\begin{prop}\label{prop:faithful,infinite}
  If $\omega\in\Omega'$, then the action of $G_\omega$ on the tree
  $\mathscr T=X^*$ is faithful, and it is transitive on each orbit
  $X^i$. In particular, $G_\omega$ is infinite.
\end{prop}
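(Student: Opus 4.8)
To prove transitivity on each level $X^i$, I would argue by induction on $i$, the case $i=0$ being trivial. The group $G_\omega$ contains the element $a$, which acts as the transposition of the two subtrees $X^{i-1}\9$ and $X^{i-1}\8$; so it suffices to show that the first-level stabilizer in $G_\omega$ projects (in both coordinates of the wreath recursion $\phi\colon G_\omega\to G_{\sigma\omega}\wr\sym(2)$) onto groups that act transitively on $X^{i-1}$. By the inductive hypothesis $G_{\sigma\omega}$ acts transitively on $X^{i-1}$ (note $\sigma\omega\in\Omega'$ as well, since $\Omega'$ is shift-invariant), so I only need to exhibit enough elements of $G_\omega$ fixing the first level whose images under $\phi$ cover $G_{\sigma\omega}$ in, say, the first coordinate. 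This is standard for the Grigorchuk family: from $\phi(b)=\pair<\omega_0(b),b>$, $\phi(c)=\pair<\omega_0(c),c>$, $\phi(d)=\pair<\omega_0(d),d>$ one gets the three non-trivial elements of $\{1,b,c,d\}$ appearing as first coordinates, hence all of $\langle b,c,d\rangle$; and from $\phi(aba)=\pair<b,\omega_0(b)>$ etc.\ (conjugating by $a$) one gets the generators $b,c,d$ — and $a$ directly as $\phi(ada\cdot\text{something})$ or more simply by noting $\phi(\langle b,c,d\rangle^{G_\omega}\cap\ker)$ already surjects; in any case one checks that the first coordinates of level-$1$-stabilizing elements generate all of $G_{\sigma\omega}=\langle a,b,c,d\rangle$. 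Transitivity on $X^i$ then follows: any two vertices either lie in the same subtree (use the inductive hypothesis applied through $\phi$) or in different subtrees (first apply $a$, then reduce to the previous case). Infiniteness is then immediate, since a group acting transitively on every finite set $X^i$ with $\#X^i\to\infty$ is infinite.

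For faithfulness, suppose $g\in G_\omega$ acts trivially on the whole tree $\mathscr T$; I must show $g=1$. The plan is to run an induction on $\|g\|_\omega$ using the supercontraction inequality of Lemma~\ref{lem:supercontract}. If $g$ acts trivially on $\mathscr T$ then in particular it fixes the first level, so $\phi(g)=\pair<g_0,g_1>$ with trivial permutation part, and both $g_0\in G_{\sigma\omega}$ and $g_1\in G_{\sigma\omega}$ act trivially on $X^*$ (since $g$ does). By Lemma~\ref{lem:supercontract}, after possibly replacing $g$ by a conjugate (which changes neither the property of acting trivially — conjugation permutes the action but triviality is conjugation-invariant — nor, up to bounded error, the norm), we have $\|g_0\|_{\sigma\omega}+\|g_1\|_{\sigma\omega}\le\frac{2}{\eta_\omega}\|g\|_\omega$ with $\eta_\omega>2$, hence each $\|g_i\|_{\sigma\omega}<\|g\|_\omega$ once $\|g\|_\omega$ is large enough. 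An inductive descent then forces, eventually, all iterated sections of $g$ to have bounded norm, i.e.\ to lie in a fixed finite set of short elements; and a short element acting trivially on the tree must be trivial, because one checks directly (as in the computation of $(ad)^4$ in the excerpt) that each nontrivial word of small norm in $a,b,c,d$ moves some vertex at a bounded level.

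**The main obstacle.** The genuine subtlety is making the descent argument airtight at the boundary between "small norm" and "large norm": Lemma~\ref{lem:supercontract} gives strict contraction only for $\|g\|_\omega\gg 0$, and the conjugation trick needed to remove the additive $\|a\|_\omega$ term must be applied uniformly down the tree without the norms drifting. The clean way to handle this is to fix a threshold $N$ (depending on $\inf_\omega\eta_\omega>2$, which is positive because $p_\omega$ stays in a compact part of $\Delta$ by the contraction in~\eqref{eq:pomega}) such that $\|g\|_\omega>N$ implies $\max_i\|g_i\|_{\sigma\omega}<\|g\|_\omega$, and then verify by a finite (in principle machine-checkable) computation that every element of $G_\omega$ of norm $\le N$ that is not the identity acts nontrivially on some bounded-depth level of $\mathscr T$ — uniformly in $\omega$, which is legitimate since only finitely many "prefixes" $\omega_0\dots\omega_k$ are relevant to bounded-depth behaviour and the relevant norm data $p_\omega$ varies continuously. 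I expect this bookkeeping — rather than any conceptual difficulty — to be where the real work lies; the transitivity half is routine.
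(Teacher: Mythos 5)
Your plan uses the same two ingredients as the paper --- supercontraction descent for faithfulness, and generation of the level-one stabilizer for transitivity --- and the transitivity half is essentially fine (modulo a small slip: the first entries of $\phi(b),\phi(c),\phi(d)$ lie in $\{1,a\}$, not in $\{b,c,d\}$; one gets $b,c,d$ from the $a$-conjugates and $a$ from whichever $y$ has $\omega_0(y)=a$, both of which you do mention). The faithfulness half, however, rests on a misreading of Lemma~\ref{lem:supercontract}. You write that it ``gives strict contraction only for $\|g\|_\omega\gg 0$''. It does not: its second inequality $\|g_0\|_{\sigma\omega}+\|g_1\|_{\sigma\omega}\le\frac{2}{\eta_\omega}\|g\|_\omega$ holds for \emph{every} $g\notin\{b,c,d\}$ after replacing $g$ by a conjugate, and the conjugate in question is a cyclic rotation by the last letter of a minimal word, which does not increase $\|g\|_\omega$ and preserves the property of acting trivially on $\mathscr T$. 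Since $\eta_\omega>2$, both sections are strictly shorter than $g$, with no threshold, no drift under iterated conjugation, and no finite residual set: the only elements for which the descent stalls are $b,c,d$ themselves, and they are the base case.

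This matters because your proposed substitute for that base case --- a finite, $\omega$-uniform, bounded-depth check of all elements with $\|g\|_\omega\le N$ --- is not merely superfluous but wrong. Already $b$ has $\|b\|_\omega<\tfrac12$ for every $\omega$, yet $b$ fixes $X^1,\dots,X^{k+1}$ whenever $\omega_0=\cdots=\omega_{k-1}=0$, and this $k$ is unbounded over $\Omega'$; so no bounded depth can suffice uniformly in $\omega$. The paper's base case is what replaces your scheme: for each $x\in\{b,c,d\}$ and each fixed $\omega\in\Omega'$, the least $k$ with $\omega_k(x)\neq1$ exists \emph{because} $\omega$ contains infinitely many of each symbol (this is precisely where the hypothesis $\omega\in\Omega'$ enters the faithfulness argument), and $x$ then moves a vertex at an $\omega$-dependent depth determined by that $k$. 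Swap your threshold argument for this base case, and your induction becomes the paper's.
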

\begin{proof}
  We first use induction on $\|\cdot\|_\omega$, simultaneously on all
  $\omega\in\Omega'$, to show that if $g\in G_\omega$ acts trivially
  on $\mathscr T$ then $g=1$.

  The induction starts by noting that the generators $b,c,d$ act
  non-trivially by our assumption that $\omega$ contains infinitely
  many $0,1,2$. Indeed, without loss of generality consider $b$; let
  $k\in\N$ be minimal such that $\omega_k(b)\neq1$; then $b$ acts
  non-trivially on $X^{k+1}$.

  Consider then $g\in G_\omega$ acting trivially on $\mathscr T$. In
  particular, $g$ fixes $X$, so $\phi_\omega(g)=\pair<g_0,g_1>$.  By
  Lemma~\ref{lem:supercontract}, both $g_0$ and $g_1$ are shorter, so
  by induction they are trivial; thus $g=1$ because $\phi_\omega$ is
  injective.

  To check that $G_\omega$ acts transitively on $X^i$, it suffices to
  show that the stabilizer $H$ of $\8$ acts transitively on
  $X^{i-1}\8$; because then $X^{i-1}\8 G_\omega=X^{i-1}\8\langle
  a\rangle=X^i$. Now $H$ contains $b,c,d,y^a$ for a letter
  $y\in\{b,c,d\}$ such that $\omega(y)=a$; and the action of
  $b,c,d,y^a$ on $w\8$ is $w b\8,w c\8,w d\8,w a\8$ respectively, so that
  the $H$-orbit of $w\8$ is $W\8$ for a $G_{\sigma\omega}$-orbit
  $W\subseteq X^{i-1}$. Again we are done by induction.
\end{proof}

Note that the action of $G_\omega$ is still faithful if $\omega$ only
contains infinitely many of two symbols; it is \emph{not} faithful if
$\omega$ contains finitely many of two symbols.

\subsection{\boldmath The $G_\omega$ are infinite torsion groups}
One of Burnside's questions~\cite{burnside:question} asks whether there
exist infinite, finitely generated groups in which every element has
finite order. The first such examples were constructed by
Golod~\cite{golod:nil}; here, we show that the groups $G_\omega$ are
other examples:
\begin{thm}\label{thm:torsion}
  If $\omega\in\Omega'$, then $G_\omega$ is an infinite torsion
  $2$-group.
\end{thm}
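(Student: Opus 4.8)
The plan is to prove the two assertions separately. Infiniteness is already established: by Proposition~\ref{prop:faithful,infinite}, for $\omega\in\Omega'$ the action of $G_\omega$ on $X^i$ is transitive for every $i$, so $|G_\omega|\ge 2^i$ for all $i$. It remains to show that every element has finite order, and moreover that its order is a power of $2$.

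The natural strategy is induction on $\|g\|_\omega$, carried out simultaneously over all $\omega\in\Omega'$, exactly in the spirit of how Proposition~\ref{prop:faithful,infinite} was proved, using the super-contraction Lemma~\ref{lem:supercontract} as the engine. First I would dispose of the base cases: the generators $a,b,c,d$ all have order $2$ (they are images of order-$2$ elements of $F$). For the inductive step, take $g\in G_\omega$ with $\|g\|_\omega$ large. If $g$ acts nontrivially on $X=\{\mathbf 0,\mathbf 1\}$, i.e.\ the permutation part of $\phi_\omega(g)$ is the transposition, then $g^2$ fixes $X$ and $\phi_\omega(g^2)=\pair<g_0 g_1,\, g_1 g_0>$; the two entries are conjugate, hence have the same order, and by Lemma~\ref{lem:supercontract} (applied to $g^2$, or more carefully to $g$ itself and tracking how the two sections recombine) they are strictly shorter than $g$ once $\|g\|_\omega$ exceeds the additive constant $\|a\|_\omega$. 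So by induction $g_0g_1$ has order a power of $2$, say $2^k$, whence $g$ has order dividing $2^{k+1}$. If instead $g$ fixes $X$, then $\phi_\omega(g)=\pair<g_0,g_1>$ with both $g_0\in G_{\sigma\omega}$ and $g_1\in G_{\sigma\omega}$ strictly shorter (after replacing $g$ by a conjugate to shave off the boundary $\|a\|_\omega$ term, using the second clause of Lemma~\ref{lem:supercontract}); by induction each $g_i$ has $2$-power order, and since $\phi_\omega$ is injective, $g$ has order the least common multiple of the two, again a power of $2$.

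The one genuine subtlety — and the step I expect to be the main obstacle — is making the induction actually decrease the norm in the case where $g$ is itself short, so that the additive constant $\|a\|_\omega$ in $\|g_0\|_{\sigma\omega}+\|g_1\|_{\sigma\omega}\le \tfrac{2}{\eta_\omega}(\|g\|_\omega+\|a\|_\omega)$ is not swallowed by the multiplicative gain $2/\eta_\omega<1$. Since $\eta_\omega$ ranges over $(2,3]$ but can be arbitrarily close to $2$ for $\omega\in\Omega'$, one cannot get a uniform contraction constant bounded away from $1$; however, for a \emph{fixed} $\omega$ one has $\eta_{\sigma^j\omega}$ bounded below by some $\eta_\omega^{\min}>2$ along the orbit (because $p_\omega$ stays in a compact subset of the open simplex $\Delta$, as the negative tail of $\omega$ hits all three symbols infinitely often), so the contraction is genuine once $\|g\|_\omega$ is above a threshold depending only on $\omega$. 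One then checks the finitely many elements below that threshold directly — or, more cleanly, phrases the induction as: the set of elements of $G_\omega$ that are torsion of $2$-power order is closed under the operations "extract sections'' and "square'', contains the generators, and exhausts $G_\omega$ because iterating $\phi_\omega$ drives every element's sections below any threshold. I would present it in the latter form to keep the bookkeeping honest. The finiteness of the order is automatic from the argument; the content is purely that the order is $2$-power and (vacuously, from infiniteness) that such elements can have unbounded order, so the group is not of bounded exponent — though the theorem as stated only asks for torsion.
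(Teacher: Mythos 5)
Your argument is the same as the paper's: induction on $\|g\|_\omega$ simultaneously over $\omega\in\Omega'$, the base case being the order-$2$ generators, and the inductive step splitting on whether $\pi$ is trivial (pass to $g_0,g_1$) or the transposition (pass to $g_0g_1$), with Lemma~\ref{lem:supercontract}---specifically its second clause, up to conjugation---providing the strict decrease. The paper's proof is essentially the three sentences you wrote minus the discussion of subtleties.

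One caveat about your discussion of well-foundedness: the parenthetical claim that $\eta_{\sigma^j\omega}$ stays bounded away from $2$ along the forward orbit because $p_{\sigma^j\omega}$ stays in a compact subset of the open simplex is \emph{not} correct for a general $\omega\in\Omega'$. The point $p_{\sigma^j\omega}$ is governed by the recent past $\omega|_{[j-N,j-1]}$; if the positive part of $\omega$ contains arbitrarily long runs of a single symbol (which $\Omega'$ permits), then along those runs $p_{\sigma^j\omega}$ tends to $\partial\Delta$ and $\eta_{\sigma^j\omega}\to2$. This does not sink the proof: your alternative ``closure property'' phrasing is sound, and in fact the cleanest way to ground the induction is to observe that the plain word length $|g|$ in $\{a,b,c,d\}$ roughly halves under the wreath recursion ($|g_i|\le\lceil(|g|+1)/2\rceil$), so the recursion reaches the generating set in finitely many steps regardless of how $\eta$ behaves; the norm $\|\cdot\|_\omega$ is only needed in this section for the sharp quantitative bounds, not for termination.
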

\begin{proof}
  The group $G_\omega$ is infinite by
  Proposition~\ref{prop:faithful,infinite}. We prove the claim by
  induction on $\|g\|$. It is easy to check that the generators
  $a,b,c,d$ all have order $2$. Consider $g\in G_\omega$, with
  $\phi_\omega(g)=\pair<g_0,g_1>\pi$. If $\pi=()$, then $g_0,g_1$ are
  shorter than $g$ by Lemma~\ref{lem:supercontract} so have finite
  order, say $2^{n_0},2^{n_1}$ respectively. Then $g$ has order
  $2^{\max\{n_0,n_1\}}$. If $\pi=(\9,\8)$, then
  $g^2=\pair<g_0g_1,g_1g_0>$, and $g_0g_1$ is shorter than $g$ again
  by Lemma~\ref{lem:supercontract}, so has finite order, say
  $2^n$. Then $g$ has order $2^{n+1}$.
\end{proof}

Note that, if $\omega$ contains finitely many copies of a symbol, then
$G_\omega$ is not a torsion group anymore. In fact, suppose that
$\omega$ contains no $0$; then $\omega_i(b)=a$ for all $i$, and the
element $ab$ has infinite order, since
$\phi_\omega((ab)^{2n})=\pair<(b a)^n,(ab)^n>$ for all $n\in\Z$.

We also note that the groups $G_\omega$ resemble very much infinitely
iterated wreath products; namely the map $\phi_\omega\colon
G_\omega\to G_{\sigma\omega}\wr\sym(2)$ is almost an isomorphism:
\begin{defn}
  Let $(G_\omega)_{\omega\in\Omega}$ be a similar sequence of
  groups. It is called \emph{branched} if every $G_\omega$ has a
  finite-index subgroup $K_\omega$ such that
  \[K_{\sigma\omega}^{X_\omega}\le\phi_\omega(K_\omega).
  \]
  If the subgroups $K_\omega$ are merely required to be non-trivial,
  then $(G_\omega)$ is called \emph{weakly branched}.

  Every group in a (weakly) branched family of groups is also called
  \emph{(weakly) branched}.
\end{defn}

\begin{prop}\label{prop:Gbranched}
  The groups $G_\omega$ are branched for all $\omega\in\Omega'$.
\end{prop}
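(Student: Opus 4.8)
The plan is to show that $(G_\omega)_{\omega\in\Omega'}$ is \emph{regular branch}, i.e.\ to produce in each $G_\omega$ a finite-index subgroup $K_\omega$ with $K_{\sigma\omega}\times K_{\sigma\omega}\le\phi_\omega(K_\omega)$ (recall $X_\omega=\{\9,\8\}$ has two elements). I would lean on three facts established above: the contraction estimate of Lemma~\ref{lem:supercontract} (to run inductions on $\|\cdot\|_\omega$), the transitivity of the action on the levels $X^i$ from Proposition~\ref{prop:faithful,infinite}, and — crucially — that every $G_\omega$ with $\omega\in\Omega'$ is a torsion group, Theorem~\ref{thm:torsion}; the torsion hypothesis is exactly what forces the relevant quotients to be finite.

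First I would record the \textbf{self-replicating property}. Write $\mathrm{St}_\omega:=\phi_\omega^{-1}(G_{\sigma\omega}\times G_{\sigma\omega})$ for the first-level stabiliser; it has index $2$ in $G_\omega$ and, by a Reidemeister–Schreier computation with transversal $\{1,a\}$, is generated by $b,c,d,b^a,c^a,d^a$. From the wreath recursion $\phi_\omega(x)=\pair<\omega_0(x),x>$ and $\phi_\omega(x^a)=\pair<x,\omega_0(x)>$ for $x\in\{b,c,d\}$, where $\omega_0\colon\{b,c,d\}\to\{1,a\}$ is the surjection with kernel of order $2$. Reading off the coordinates, both projections $\psi_0,\psi_1\colon\mathrm{St}_\omega\to G_{\sigma\omega}$ are onto: each image contains $\{b,c,d\}$ and, since two of $\omega_0(b),\omega_0(c),\omega_0(d)$ equal $a$, also $a$, and $\langle a,b,c,d\rangle=G_{\sigma\omega}$. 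Moreover $\mathrm{St}_\omega\trianglelefteq G_\omega$ and $\phi_\omega(a)=\pair<1,1>\sigma$ is the coordinate transposition, so $\phi_\omega(\mathrm{St}_\omega)$ is invariant under that transposition.

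Next, an \textbf{isolated element}. Let $x_0\in\{b,c,d\}$ be the generator with $\omega_0(x_0)=1$; then $\phi_\omega(x_0^a)=\pair<x_0,1>$, so $\pair<x_0,1>\in\phi_\omega(\mathrm{St}_\omega)$ and, by the coordinate symmetry, $\pair<1,x_0>$ as well. Conjugating $\pair<x_0,1>$ by the elements $\phi_\omega(g)$, $g\in\mathrm{St}_\omega$, keeps the second coordinate trivial, and since $\psi_0$ is onto one picks up every conjugate $x_0^h$, $h\in G_{\sigma\omega}$, in the first coordinate; likewise in the second. Hence $N_\omega\times N_\omega\le\phi_\omega(\mathrm{St}_\omega)$, where $N_\omega$ is the normal closure of $x_0$ in $G_{\sigma\omega}$. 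Finally $N_\omega$ has \emph{finite index}: killing any one of $b,c,d$ in $F\cong C_2*(C_2\times C_2)$ leaves $C_2*C_2$, the infinite dihedral group, so $G_{\sigma\omega}/N_\omega$ is a quotient of $C_2*C_2$; being also a quotient of the torsion group $G_{\sigma\omega}$ it is a finite dihedral group. (Using the contraction lemma one even sees that these dihedral quotients have order bounded independently of $\omega$, as the relevant length-$2$ element has bounded order.)

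It remains to \textbf{assemble} the $K_\omega$. Since $x_0$ is always one of the three fixed generators $b,c,d$, the $\omega_0$-independent finite-index normal subgroup $C_\eta:=\bigcap_{x\in\{b,c,d\}}(\text{normal closure of }x\text{ in }G_\eta)$ satisfies $C_{\sigma\omega}\le N_\omega$, hence $C_{\sigma\omega}\times C_{\sigma\omega}\le\phi_\omega(\mathrm{St}_\omega)$ for every $\omega\in\Omega'$. One then takes $K_\omega$ to be $\phi_\omega^{-1}$ of a finite-index subgroup of $G_{\sigma\omega}\times G_{\sigma\omega}$ squeezed between $C_{\sigma\omega}\times C_{\sigma\omega}$ and $\phi_\omega(\mathrm{St}_\omega)$, chosen so that the $K_\omega$ match across the shift. \textbf{This coherence is the main obstacle}: a single recursion $\phi_\omega$ only ``sees'' the letter $\omega_0$, so to make the $K_\omega$ fit together one must first pass to a level $k=k(\omega)$ at which all three letters $0,1,2$ have occurred among $\omega_0,\dots,\omega_{k-1}$ — possible exactly because $\omega\in\Omega'$ — run the above analysis $k$ levels down (so that in each coordinate one isolates not just one generator but all of $\langle b,c,d\rangle$, whose normal closure is the index-$2$ subgroup $\mathrm{St}$), and then descend, the torsion hypothesis again keeping all indices finite. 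Granting this, $K_\omega$ is the required branch subgroup, so $(G_\omega)_{\omega\in\Omega'}$, and hence each $G_\omega$, is branched.
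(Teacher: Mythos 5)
The scaffolding of your proof is correct and close in spirit to the paper's: the self-replicating stabiliser (both coordinate projections $\psi_0,\psi_1\colon\mathrm{St}_\omega\to G_{\sigma\omega}$ onto), the identification of the letter $x_\omega$ with $\omega_0(x_\omega)=1$ and the computation $\phi_\omega(x_\omega^a)=\pair<x_\omega,1>$, the normal-closure step giving $N_\omega\times N_\omega\le\phi_\omega(\mathrm{St}_\omega)$, and the torsion argument showing $G_{\sigma\omega}/N_\omega$ is a finite dihedral group. But the final assembly has a genuine gap, and you diagnose it yourself. What you have proved is an inclusion $N_\omega\times N_\omega\le\phi_\omega(\mathrm{St}_\omega)$ in which $N_\omega\le G_{\sigma\omega}$ and $\mathrm{St}_\omega\le G_\omega$ are two unrelated subgroups; the definition of \emph{branched} requires one family $(K_\omega)$ satisfying $K_{\sigma\omega}\times K_{\sigma\omega}\le\phi_\omega(K_\omega)$, and neither $(\mathrm{St}_\omega)$ nor the intersection $C_\eta$ gives that. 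With $C_\eta$ you get $C_{\sigma\omega}\times C_{\sigma\omega}\le\phi_\omega(\mathrm{St}_\omega)$, not $\le\phi_\omega(C_\omega)$. The ``descend to level $k(\omega)$ where all of $0,1,2$ have occurred'' remark at the end is a gesture, not an argument: it is unclear what family the conclusion at level $k$ would assemble into, and nothing like it is needed.

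The paper closes the gap by a cleaner choice of subgroup: $K_\omega=\langle[x_\omega,a]\rangle^{G_\omega}$, the normal closure of a single \emph{commutator} rather than of the generator $x_\omega$ itself. The branching relation is then witnessed by an explicit one-line identity. Since $\phi_\omega(x_\omega)=\pair<1,x_\omega>$ and $\phi_\omega(y_\omega^a)=\pair<y_\omega,a>$ for $y_\omega\in\{b,c,d\}\setminus\{x_\omega\}$, one computes
\[\phi_\omega\bigl([x_\omega,y_\omega^a]\bigr)=\pair<1,[x_\omega,a]>,\]
and $[x_\omega,y_\omega^a]=[x_\omega,a]\cdot[x_\omega,a]^{a y_\omega}$ manifestly lies in $K_\omega$; taking the normal closure under $\phi_\omega(\mathrm{St}_\omega)$, using exactly the surjectivity of the projections that you established, and then swapping coordinates via $a$, yields the branching inclusion for the same family $(K_\omega)$ with no multi-level descent. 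The finite-index step is essentially yours, sharpened: $G_\omega/\langle x_\omega\rangle^{G_\omega}$ is a finite dihedral group by torsion, and $\langle x_\omega\rangle^{G_\omega}/K_\omega$ has order at most $2$ because $x_\omega$ is central modulo $K_\omega$. The one idea you are missing, then, is to pass from the generator $x_\omega$ to the commutator $[x_\omega,a]$: it is this that shrinks the subgroup down to something the recursion can carry along.
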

\begin{proof}
  For each $\omega\in\Omega'$, let $x_\omega\in\{b,c,d\}$ be such that
  $\omega(x_\omega)=1$, and set
  $K_\omega=\langle[x_\omega,a]\rangle^{G_\omega}$. Choose also
  $y_\omega\in\{b,c,d\}\setminus\{x_\omega\}$. Then $1\times
  K_{\sigma\omega}$ is normally generated by
  $\pair<1,[x_{\sigma\omega},a]>$, and
  \[\pair<1,[x_{\sigma\omega},a]>=\phi_\omega([x_\omega,y_\omega^a])=\phi_\omega([x_\omega,a][x_\omega,a]^{a y_\omega})\in \phi_\omega(K_\omega);\]
  the same computation holds for $K_{\sigma\omega}\times1$.

  We now show that the groups $K_\omega$ have finite index.  Consider
  first the quotient $G_\omega/\langle
  x_\omega\rangle^{G_\omega}$. This group is generated by two
  involutions $a$ and $y_\omega$, so is a finite dihedral group,
  because $G_\omega$ is torsion. It follows that $\langle
  x_\omega\rangle^{G_\omega}$ has finite index in $G_\omega$. Then
  $\langle x_\omega\rangle^{G_\omega}/K_\omega=\langle x_\omega\rangle
  K_\omega$ has order $2$, so $K_\omega$ also has finite index.
\end{proof}

\begin{prop}\label{prop:branched=>fullwreath}
  If $G$ is a $p$-torsion weakly branched group, then it contains
  $L=\bigcup\wr^i C_p$ as a subgroup.
\end{prop}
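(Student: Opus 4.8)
The plan is to build $L=\bigcup\wr^i C_p$ inside $G$ as an increasing union $\bigcup_i Q_i$ of the finite iterated wreath products $Q_i$, where $Q_i\cong Q_{i-1}\wr_{C_p}C_p\cong Q_{i-1}^p\rtimes C_p$ with $C_p$ cyclically permuting the $p$ direct factors; the real task is therefore to produce a \emph{coherent} system of embeddings $Q_i\hookrightarrow G$. Since $G$ is weakly branched it lies in a similar family with non-trivial branching subgroups, and transporting these down the tree $\mathscr T$ on which $G$ acts gives, for every vertex $v$, a non-trivial subgroup $K_v\le G$ supported on the subtree $\mathscr T_v$, such that $K_v$ and $K_{v'}$ commute for distinct $v,v'$ at the same level, and $K_v\supseteq\prod_w K_w$ over the children $w$ of $v$; iterating this last inclusion, $K_v$ contains $K_w$ for every descendant $w$ of $v$. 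I also use that the action of $G$ on $\mathscr T$ is faithful, as holds for the groups $G_\omega$ with $\omega\in\Omega'$ by Proposition~\ref{prop:faithful,infinite}.

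First I would fix, once and for all, some data at each vertex $v$. Choose $1\ne k\in K_v$; since $G$ is $p$-torsion, a suitable power of $k$ has order exactly $p$ and still lies in $K_v$, so assume $k$ has order $p$. By faithfulness $k$ acts non-trivially on $\mathscr T$; let $m$ be the shallowest level it moves, so $k$ fixes every vertex of level $<m$, and there is a vertex $u_v$ of level $m-1$, necessarily inside $\mathscr T_v$, on which $k$ induces a non-trivial permutation of the children. Since $k^p=1$ and $p$ is prime, this permutation is a product of disjoint $p$-cycles and fixed points; fix one $p$-cycle, on children $w^v_0,\dots,w^v_{p-1}$ of $u_v$, so that $k=:k_v$ cyclically permutes the pairwise disjoint subtrees $\mathscr T_{w^v_0},\dots,\mathscr T_{w^v_{p-1}}$.

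Next, define $Q_i(v)\le K_v$ by recursion on $i$: put $Q_0(v)=1$ and
\[Q_i(v)=\langle k_v\rangle\cdot\prod_{j=0}^{p-1}k_v^{-j}\,Q_{i-1}(w^v_0)\,k_v^{j}.\]
Here $Q_i(v)\le K_v$ because $k_v\in K_v$ and $Q_{i-1}(w^v_0)\le K_{w^v_0}\le K_v$. By induction the $p$ conjugates $k_v^{-j}Q_{i-1}(w^v_0)k_v^{j}$ are supported on the disjoint subtrees $\mathscr T_{w^v_j}$, hence commute and generate their direct product, a copy of $Q_{i-1}^p$, on which $k_v$ acts by cyclically permuting the $p$ factors; moreover $\langle k_v\rangle$ meets this product trivially, since every element of the product fixes $u_v$ and all its children whereas $k_v$ does not, and $\langle k_v\rangle\cong C_p$ has prime order. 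Therefore $Q_i(v)\cong Q_{i-1}^p\rtimes C_p=Q_i$.

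Finally, coherence: by induction on $i$ one gets $Q_i(v)\le Q_{i+1}(v)$ for all $v$, the inductive step reducing to $Q_{i-1}(w^v_0)\le Q_i(w^v_0)$ one level down, with base case $Q_0(v)=1\le Q_1(v)$. Thus $\bigcup_i Q_i(\text{root})$, with these inclusions, is a copy of $\varinjlim Q_i=L$, and it lies in $K_{\text{root}}\le G$, which proves the claim. The only genuinely delicate point is exactly this coherence, which is why the cycling elements $k_v$ and their $p$-cycles are pinned down in advance, so that $Q_i(v)$ visibly extends $Q_{i-1}(v)$; the complementation $\langle k_v\rangle\cap\prod_j(\cdots)=1$ is where the choice of $u_v$ as a vertex carrying a non-trivial local action of $k_v$ is used, and the passage to an element of order exactly $p$ is the sole place the $p$-torsion hypothesis enters.
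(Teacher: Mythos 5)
Your argument follows the same strategy as the paper's sketch of proof: at each vertex pick an order-$p$ element of the transported branching subgroup, locate a $p$-cycle of disjoint subtrees it permutes, and nest these cyclic actions to build $L$. Your version is in fact the more careful one: you replace the paper's circular ``define simultaneously and recursively $L_\omega=\langle L_{\sigma^{n(\omega)}\omega}@v_\omega,g_\omega\rangle$'' by an explicit increasing chain $Q_i(v)$ whose coherence you verify by induction, and you rightly flag that choosing an order-$p$ element acting non-trivially tacitly uses faithfulness of the tree action (true for $G_\omega$ with $\omega\in\Omega'$ by Proposition~\ref{prop:faithful,infinite}, but not a formal consequence of the definition of weakly branched).
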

\begin{proof}[Sketch of proof]
  Let $(G_\omega)_{\omega\in\Omega}$ be a similar family of groups,
  with $G=G_\omega$, and let $K_\omega\le G_\omega$ be the subgroups
  given by the condition that $(G_\omega)_{\omega\in\Omega}$ is
  branched. For each $\omega\in\Omega$, let $g_\omega\in K_\omega$ be
  an element of order $p$, and let $n(\omega)\in\N$ be such that
  $g_\omega$ acts non-trivially on
  $X_{\sigma^{n(\omega)-1}\omega}\times\cdots\times X_\omega$; let
  $v_\omega\in X_{\sigma^{n(\omega)-1}\omega}\times\cdots\times
  X_\omega$ be a point on a non-trivial orbit.

  Define then simultaneously and recursively $L_\omega=\langle
  L_{\sigma^{n(\omega)}\omega}@v_\omega, g_\omega\rangle$. It contains
  an element $g_\omega$ permuting $p$ copies of
  $L_{\sigma^{n(\omega)}\omega}$, so is isomorphic to $L$.
\end{proof}

If $G$ contains no torsion, or torsion of different primes, analogous
(but harder-to-state) results hold. In particular, by
Exercise~\ref{exse:nolaw}, branched groups satisfy no law. This
recovers a result by Ab\'ert~\cite{abert:nonfree}.

\subsection{\boldmath Lower growth estimates for $G_\omega$}
The proof of Theorem~\ref{thm:grigorchuk} requires upper and lower
bounds on the growth function of $G_\omega$. A lower bound is easily
provided by Theorem~\ref{thm:growth:hp}: the group $G_\omega$ is
infinite (Proposition~\ref{prop:faithful,infinite}) and torsion
(Theorem~\ref{thm:torsion}), so cannot be virtually nilpotent, since
nilpotent groups with finite-order generators are finite. However, in
some cases a direct argument also gives the bound
$v_{G_\omega}(R)\succsim\exp(\sqrt R)$.

Define indeed maps $\tilde\theta_\omega\colon F\to F$ by
\begin{equation}\label{eq:theta}
  \begin{cases}
    \tilde\theta_\omega(a) &=a y a\text{ if }\omega_0=\cdots=\omega_{k-1}\neq\omega_k,\text{ and }\omega_0(y)=a,\;\omega_k(y)=1,\\
    \tilde\theta_\omega(x)&=x\text{ for all }x\in\{b,c,d\}.
  \end{cases}
\end{equation}
A direct calculation shows that $\tilde\theta_\omega$ induces a
map $\theta_\omega\colon G_{\sigma\omega}\to G_\omega$, with
\[\phi_\omega(\theta_\omega(g))=\tikz[baseline=-2ex]{\draw[->] (0,0) --
    node[right=-2pt] {\small$*$} (0,-0.5); \draw[->](0.5,0) --
    node[right=-2pt] {\small$g$} (0.5,-0.5);}
\]
for some $*\in\langle a,y\rangle$ using the notation introduced
in~\eqref{eq:theta}. Furthermore, $\langle a,y\rangle$ is a dihedral
group of order $2^{k+2}$.
\begin{exse}
  Prove that $\theta_\omega$ is a group homomorphism.
\end{exse}

Let us now \textbf{cheat}, and assume that the element $*$ is always
trivial, rather than an element of a finite group of order
$2^{k+2}$. If $k$ is bounded, this is unimportant; however, if $k$ is
unbounded then an additional argument is really required. 

Denote by $B_\omega(R)$ the ball of radius $R$ in $G_\omega$, and
abbreviate $v_\omega(R)=v_{G_\omega}(R)$. Consider the map
$G_{\sigma\omega}^2\to G_\omega$ given by
$(g_0,g_1)\mapsto\theta_\omega(g_0)^a\cdot\theta_\omega(g_1)$. For
$R_0,R_1$ even, it defines an injective map
$B_{\sigma\omega}(R_0)\times B_{\sigma\omega}(R_1)\to
B_\omega(2(R_0+R_1))$, hence
$v_{\sigma\omega}(R)^2\le v_\omega(4R)$ for all $R$ even. We
conclude
$v_\omega(2\cdot4^k)\ge v_{\sigma^k\omega}(2)^{2^k}\ge5^{2^k}$,
so $v_\omega(R)\ge \exp(\frac12\log5\sqrt R)$.

\subsection{\boldmath Upper growth estimates for $G_\omega$}
We are ready to give an upper bound on the growth of $G_\omega$. As
before, we abbreviate $v_\omega(R)=v_{G_\omega}(R)$ for the growth
function of $G_\omega$ We start by a
\begin{lem}\label{lem:gamma(Amu)}
  For every $A>0$ there exists $B\in\N$ such that, for all
  $\omega\in\Omega'$, we have $v_\omega(A\mu_\omega)\le B$.
\end{lem}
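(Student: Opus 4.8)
The plan is to bound $v_\omega(A\mu_\omega)$ by a crude word count, exploiting that in the metric $\|\cdot\|_\omega$ every ``syllable'' of a reduced word has length at least $\mu_\omega$, while $\mu_\omega$ is exactly the scale appearing in the statement.

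First I would fix a normal form. Call a word $s_1\cdots s_n$ over the generators $a,b,c,d$ \emph{reduced} if it contains no factor $ss$ and no two consecutive letters from $\{b,c,d\}$. Since $a,b,c,d$ are involutions and $b,c,d$ pairwise commute with $bc=d$, $bd=c$, $cd=b$ in $G_\omega$, every word can be turned into a reduced one representing the same element without increasing its $\|\cdot\|_\omega$-weight $\sum_i\|s_i\|_\omega$: deleting a factor $ss$ only lowers the weight, and replacing a $\{b,c,d\}$-digram $xy$ by the single letter $z=xy$ changes the weight by $\|z\|_\omega-\|x\|_\omega-\|y\|_\omega\le0$ (triangle inequality). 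A reduced word necessarily alternates between $a$ and $\{b,c,d\}$, so it has the shape $[\,x_0\,]\,a\,x_1\,a\,x_2\cdots a\,x_\ell\,[\,a\,]$ with $x_1,\dots,x_\ell\in\{b,c,d\}$ and the bracketed letters optionally present.

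Second, I would use the defining formulas for the norms: $\|a\|_\omega+\|b\|_\omega=\beta$, $\|a\|_\omega+\|c\|_\omega=\gamma$, $\|a\|_\omega+\|d\|_\omega=\delta$, where $p_\omega=(\beta,\gamma,\delta)$, so each of these sums is $\ge\mu_\omega=\min\{\beta,\gamma,\delta\}$. Grouping a reduced word into its $\ell$ syllables $a\,x_i$ gives
\[
\text{(weight of the word)}\ \ge\ \sum_{i=1}^{\ell}\bigl(\|a\|_\omega+\|x_i\|_\omega\bigr)\ \ge\ \ell\,\mu_\omega .
\]
Now take any $g\in G_\omega$ with $\|g\|_\omega\le A\mu_\omega$ and pick a word for $g$ of weight $<(A+1)\mu_\omega$; reducing it yields a reduced word for $g$ of weight $<(A+1)\mu_\omega$, whence $\ell<A+1$ and the word has fewer than $2A+4$ letters. (This is the only place $\omega\in\Omega'$ is used: it guarantees $p_\omega$ lies in the \emph{open} simplex $\Delta$, hence $\mu_\omega>0$.) Therefore every element of $B_\omega(A\mu_\omega)$ is represented by a word of length $\le\lceil 2A\rceil+3$ over $\{a,b,c,d\}$, and there are at most $B:=4^{\lceil 2A\rceil+4}$ of those; this $B$ depends only on $A$ and works simultaneously for all $\omega\in\Omega'$.

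I do not expect a real obstacle; the one point needing a little care is the reduction to alternating words in the presence of generators of zero $\|\cdot\|_\omega$-length (this genuinely happens, e.g.\ whenever two coordinates of $p_\omega$ coincide, making $d$ — or $b$, or $c$ — ``free''). The reduction moves above handle this because each of them strictly decreases the number of letters while never increasing the weight, so iterating them terminates at a reduced word of no larger weight, and the syllable bound then applies verbatim.
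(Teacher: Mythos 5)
Your argument is correct and is essentially the paper's proof: the paper likewise reduces to an alternating normal form $[x_0]\,a x_1\cdots a x_\ell\,[a]$, observes that the $\omega$-weight of the core $a x_1\cdots a x_\ell$ is $\sum_i(\|a\|_\omega+\|x_i\|_\omega)\ge\ell\mu_\omega$, bounds $\ell\le A$, and then counts the finitely many such words (the paper gets the slightly sharper $B=8\cdot3^A$ by counting only the $x_i$'s). Your added care about the reduction never increasing weight and about generators of zero $\omega$-weight is a worthwhile expansion of what the paper compresses into ``minimal expression,'' but it does not change the route.
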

\begin{proof}
  It suffices to bound the number of elements of $G_\omega$ whose
  minimal expression has length $\le A\mu_\omega$ and has the form
  $g=a x_1\cdots a x_\ell$ for some $x_i\in\{b,c,d\}$, since there are
  at most $8$ times more elements of norm $\le A\mu_\omega$, namely
  all $\{1,b,c,d\}g\{1,a\}$. Now by definition $g$ has norm at least
  $\ell\mu_\omega$, and there are at most $3^\ell$ such $g$'s, so one
  may take $B=8\cdot3^A$.
\end{proof}

\begin{lem}\label{lem:etamu}
  For all $\omega\in\Omega'$ we have
  $\eta_\omega\mu_{\sigma\omega}\ge\mu_\omega+\|a\|_\omega$.
\end{lem}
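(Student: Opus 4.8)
The plan is to reduce the statement to an explicit computation with the barycentric coordinates of $p_\omega$. By the symmetry of the whole construction under permuting the symbols $\{0,1,2\}$ (and correspondingly the generators $\{b,c,d\}$, the matrices $M_0,M_1,M_2$, and the coordinates on $\Delta$), all of the quantities $\eta_\omega,\mu_\omega,\mu_{\sigma\omega},\|a\|_\omega$ transform equivariantly, so it suffices to treat the case $\omega_0=0$. Write $p_\omega=(\beta,\gamma,\delta)$; since $\omega\in\Omega'$ its negative part contains infinitely many of each symbol, so $p_\omega$ lies in the interior of $\Delta$ and $0<\beta,\gamma,\delta<\tfrac12$. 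The formulas already extracted in the proof of Lemma~\ref{lem:supercontract} give $\eta_\omega=3-2\beta$ and $p_{\sigma\omega}=\overline M_0(p_\omega)=(1,2\gamma,2\delta)/(3-2\beta)$. Since $2\gamma,2\delta<1$, the smallest coordinate of $p_{\sigma\omega}$ is $\min\{2\gamma,2\delta\}/(3-2\beta)$, whence $\mu_{\sigma\omega}=2\min\{\gamma,\delta\}/(3-2\beta)$ and therefore $\eta_\omega\mu_{\sigma\omega}=2\min\{\gamma,\delta\}$.

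It then remains to verify the elementary inequality $2\min\{\gamma,\delta\}\ge\mu_\omega+\|a\|_\omega$, i.e.
\[2\min\{\gamma,\delta\}\ \ge\ \min\{\beta,\gamma,\delta\}+1-2\max\{\beta,\gamma,\delta\}.\]
Here I would use $\beta+\gamma+\delta=1$ to simplify the right-hand side: letting $x_1\le x_2\le x_3$ be the values $\beta,\gamma,\delta$ sorted increasingly, we have $1-2x_3=x_1+x_2-x_3$, so the claim becomes $2\min\{\gamma,\delta\}\ge 2x_1+x_2-x_3$.

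Finally I would split into two cases. If the overall minimum $x_1$ is attained at $\gamma$ or at $\delta$, then $\min\{\gamma,\delta\}=x_1$ and the inequality reduces to $0\ge x_2-x_3$, which is immediate. If instead $x_1=\beta$, say with $\gamma\le\delta$ (the opposite case is symmetric under $\gamma\leftrightarrow\delta$), then $x_2=\gamma=\min\{\gamma,\delta\}$ and $x_3=\delta$, so the inequality reads $2\gamma\ge 2\beta+\gamma-\delta$, i.e. $\gamma+\delta\ge2\beta$, which holds since $\gamma\ge\beta$ and $\delta\ge\beta$. I do not expect a genuine obstacle: the only ``content'' is the identity $\eta_\omega\mu_{\sigma\omega}=2\min\{\gamma,\delta\}$, which reflects the fact that $M_0$ doubles the two coordinates other than the one indexed by $\omega_0$ while leaving the coordinate sum unchanged; the remainder is the min/middle/max bookkeeping above, and the only care needed is to state the symmetry reduction cleanly.
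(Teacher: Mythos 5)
Your proof is correct and follows essentially the same path as the paper: reduce by symmetry to $\omega_0=0$, compute $\eta_\omega\mu_{\sigma\omega}=2\min\{\gamma,\delta\}$ from the explicit formula for $\overline M_0(p_\omega)$, and then verify the resulting three-variable inequality $2\min\{\gamma,\delta\}\ge\min\{\beta,\gamma,\delta\}+1-2\max\{\beta,\gamma,\delta\}$. Where the paper simply says to check the six orderings of $\beta,\gamma,\delta$, you first use $\beta+\gamma+\delta=1$ to rewrite the right-hand side as $2x_1+x_2-x_3$, which collapses the check to two cases (whether or not $\beta$ is the minimum) — a slight streamlining, but not a different argument.
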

\begin{proof}
  Write $p_\omega=(\beta,\gamma,\delta)$, and assume without loss of
  generality $\omega_0=0$ so
  $\mu_{\sigma\omega}=2/\eta_\omega\,\min\{\gamma,\delta\}$. Consider
  the six orderings $\beta<\gamma<\delta$ etc.\ in turn to check the
  inequalities
  \[\eta_\omega\mu_{\sigma\omega}=2\min\{\gamma,\delta\}\ge\min\{\beta,\gamma,\delta\}+(1-\max\{\beta,\gamma,\delta\})=\mu_\omega+\|a\|_\omega.\qedhere\]
\end{proof}

\begin{lem}\label{lem:concavehull}
  Let $f$ be a positive sublinear function, namely $f(n)/n\to 0$ as
  $n\to\infty$. Then $f$ is bounded from above by a concave sublinear
  function.
\end{lem}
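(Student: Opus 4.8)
The plan is to construct the majorant $g$ by hand, as a concave, piecewise-linear function whose slopes decrease to $0$ along well-chosen breakpoints.

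First I would reduce to the case that $f$ is non-decreasing, replacing $f$ by $n\mapsto\sup_{m\le n}f(m)$; this only enlarges $f$, keeps it positive, and keeps it sublinear (if $f(m)\le\frac\varepsilon2 m$ for all $m\ge N$ and $n\ge\frac2\varepsilon\sup_{m\le N}f(m)$, then $f(m)\le\frac\varepsilon2 n$ for every $m\le n$). This step silently uses that $f$ is bounded on bounded sets, which is automatic for the growth functions to which the lemma is applied and is in any case necessary for a finite concave majorant to exist at all. Using $f(n)/n\to0$, I would then pick $N_0=0$ and a \emph{strictly} increasing sequence $N_1<N_2<\cdots$ with $f(n)\le 2^{-k}n$ for all $n\ge N_k$. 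Define $g\colon\R_+\to\R_+$ to be the continuous piecewise-linear function with $g(0)=f(N_1)$ and constant slope $2^{-k}$ on each interval $[N_k,N_{k+1}]$ (so slope $1$ on $[0,N_1]$). Since the slopes $1>\frac12>\frac14>\cdots$ are decreasing, $g$ is concave.

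It then remains to check that $g$ dominates $f$ and is sublinear. For domination, a telescoping bound gives $g(N_k)\ge\sum_{j=0}^{k-1}2^{-j}(N_{j+1}-N_j)\ge 2^{-k}N_k$, so for $n\in[N_k,N_{k+1}]$ with $k\ge1$ one has $g(n)=g(N_k)+2^{-k}(n-N_k)\ge 2^{-k}n\ge f(n)$ by the choice of $N_k$; and for $n\in[0,N_1]$ one has $g(n)\ge g(0)=f(N_1)\ge f(n)$ since $f$ is non-decreasing. For sublinearity, concavity forces the slope of $g$ on $[N_k,\infty)$ to be at most $2^{-k}$, hence $g(n)\le g(N_k)+2^{-k}n$ and $\limsup_{n\to\infty}g(n)/n\le 2^{-k}$; as $k$ is arbitrary, $g(n)/n\to0$.

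I do not anticipate a real obstacle; the only points needing a little care are the reduction to a non-decreasing $f$ (which tacitly requires local boundedness of $f$) and the insistence that the breakpoints $N_k$ be strictly increasing, since it is precisely the fact that slope $2^{-k}$ is attained arbitrarily far out — equivalently $N_k\to\infty$ — that makes both the concavity bookkeeping and the final sublinearity estimate go through.
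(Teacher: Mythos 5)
Your proof is correct, and it takes a genuinely different route from the paper's. The paper constructs what is essentially the concave hull of $f$: for each slope $\theta\in(0,1)$ it picks a point $n_\theta$ maximizing $f(n)-\theta n$, takes those $(n_\theta,f(n_\theta))$ as breakpoints, interpolates linearly between them, and then observes that the resulting concave $\overline f$ satisfies $\overline f(n)/n$ decreasing and touching $f(n)/n$ infinitely often, hence tending to $0$. This yields the \emph{minimal} concave majorant, but the argument leaves several verifications implicit (that the $n_\theta$ exist, that they are monotone in $\theta$, that the interpolation actually dominates $f$ between consecutive touch points). Your construction is cruder — it forgoes minimality by fixing dyadic slopes $2^{-k}$ on explicitly chosen intervals $[N_k,N_{k+1}]$ — but in exchange every step (concavity from decreasing slopes, domination from the telescoping estimate $g(N_k)\ge 2^{-k}N_k$, sublinearity from the uniform slope bound on $[N_k,\infty)$) is a one-line check. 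You also correctly flag the two places where a little care is needed: the reduction to non-decreasing $f$ tacitly uses local boundedness (harmless here, since in the paper $f$ is a growth function defined on $\N$), and one must ensure $N_k\to\infty$, which your insistence on strictly increasing breakpoints supplies provided the $N_k$ are taken in $\N$ (or, more robustly, one should simply impose $N_{k+1}\ge N_k+1$). With those provisos the argument is complete.
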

\begin{proof}
  For every $\theta\in(0,1)$, let $n_\theta$ be such that $f(n)-\theta
  n$ is maximal. Given $n\in\R$, let $\zeta<\theta$ be such that
  $n\in[n_\theta,n_\zeta]$ with maximal $\zeta$ and minimal $\theta$,
  and define $\overline f(n)$ on $[n_\theta,n_\zeta]$ by linear
  interpolation between $(n_\theta,f(n_\theta))$ and
  $(n_\zeta,f(n_\zeta))$. Clearly $\overline f\ge f$, and $\overline
  f(n)/n$ is decreasing and coincides infinitely often with $f(n)/n$,
  so it converges to $0$.
\end{proof}

\begin{prop}\label{prop:upperG}
  There is an absolute constant $B$ such that, for all
  $\omega\in\Omega'$ and all $k\in\N$,
  \[v_\omega(\eta_\omega\cdots\eta_{\sigma^{k-1}\omega}\mu_{\sigma^k\omega})\le B^{2^k}.\]
\end{prop}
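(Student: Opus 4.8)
The plan is to prove the estimate by induction on $k$, peeling off one level of the rooted tree at a time via the wreath recursion and using the super-contraction of Lemma~\ref{lem:supercontract}, with Lemma~\ref{lem:etamu} to absorb the lower-order terms $\|a\|_\omega$ into the scale and Lemma~\ref{lem:gamma(Amu)} furnishing the base case. For $k=0$ the claim reads $v_\omega(\mu_\omega)\le B$, which is Lemma~\ref{lem:gamma(Amu)} applied with $A=1$; this fixes a first candidate value of $B$, and in the course of the proof we shall only ever enlarge it.

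For the inductive step fix $\omega\in\Omega'$ and abbreviate $R=\eta_\omega\cdots\eta_{\sigma^{k-1}\omega}\mu_{\sigma^k\omega}$, so that $R=\eta_\omega\cdot R'$ with $R'=\eta_{\sigma\omega}\cdots\eta_{\sigma^{k-1}\omega}\mu_{\sigma^k\omega}$ the corresponding radius for $\sigma\omega$ at level $k-1$. Iterating Lemma~\ref{lem:etamu} gives $R'\ge\mu_{\sigma\omega}$ and $R\ge\mu_\omega+\|a\|_\omega$. Given $g\in G_\omega$ with $\|g\|_\omega\le R$, write $\phi_\omega(g)=\pair<g_0,g_1>\pi$. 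First I set aside the at most four elements $g\in\{1,b,c,d\}$ lying in the ball (a bounded contribution), so that the sharp form of Lemma~\ref{lem:supercontract} applies: after replacing $g$ by the conjugate $g'$ of $g$ by the last letter of a geodesic word for it — which does not increase the norm, since that letter has order two — we get $\|g'_0\|_{\sigma\omega}+\|g'_1\|_{\sigma\omega}\le\tfrac2{\eta_\omega}\|g'\|_\omega\le\tfrac2{\eta_\omega}R=2R'$. Since $\phi_\omega$ is injective and both $\pi$ and the conjugating letter range over sets of bounded size, the assignment $g\mapsto(g'_0,g'_1,\pi,\text{conjugating letter})$ is injective, whence
\[v_\omega(R)\le O(1)+O(1)\cdot\#\bigl\{(h_0,h_1)\in G_{\sigma\omega}^2:\|h_0\|_{\sigma\omega}+\|h_1\|_{\sigma\omega}\le 2R'\bigr\}.\]
It remains to bound the number of such pairs: partitioning according to which coordinate carries the larger norm and using that the word growth function of any group is submultiplicative (up to a bounded additive shift), one reduces balls of radius $2R'$ to products of balls of radius $R'$, and then invokes the inductive hypothesis $v_{\sigma\omega}(R')\le B^{2^{k-1}}$; the count is then at most an absolute constant times a fixed power of $B^{2^{k-1}}$. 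Choosing $B$ large enough at the outset that all accumulated additive and multiplicative constants are swallowed by the square $B^{2^k}=(B^{2^{k-1}})^2$ closes the induction.

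The main obstacle is exactly this bookkeeping of constants, and it is genuinely delicate: one must use the full strength of the factor $\tfrac2{\eta_\omega}<1$ in Lemma~\ref{lem:supercontract} — the bare contraction $\|g_i\|<\|g\|$ would not do — and one must keep the scale matching tight, so that passing from radius $R$ on $G_\omega$ to a radius comparable to $R'$ (and not to a substantially larger radius) on $G_{\sigma\omega}$ does not compound the submultiplicativity loss over the $k$ steps. An equivalent, perhaps cleaner, organization iterates the contraction all $k$ levels at once: every $g$ with $\|g\|_\omega\le R$ has a well-defined portrait $\bigl((g_v)_{v\in X^k},\tau\bigr)$, where $\tau$ is a tree automorphism of the first $k$ levels (at most $2^{2^k-1}$ possibilities) and, telescoping the inequality~\eqref{eq:supercontract} and using Lemma~\ref{lem:etamu} to bound the accumulated $\|a\|$-corrections by $\tfrac13\cdot2^k$ together with $\mu_{\sigma^k\omega}<\tfrac13$, one obtains $\sum_{v\in X^k}\|g_v\|_{\sigma^k\omega}<2^k$; the injectivity of $\phi_\omega$ iterated then reduces the problem to counting tuples of total norm below $2^k$, which one does by induction on $k$ by splitting the tuple into its two halves, noting that at least one half has total norm below $2^{k-1}$, and feeding the halves back through the inductive bound. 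In either presentation, the technical heart is ensuring that the combinatorial count of bounded-total-norm tuples (resp.\ pairs) can be expressed through the inductive hypothesis without inflating the exponent beyond $2^k$.
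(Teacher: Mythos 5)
Your overall strategy is the paper's: induct on $k$, peel one level via the wreath recursion, use Lemma~\ref{lem:supercontract} (in the sharp form, after conjugating by the last letter) to scale the norm by $2/\eta_\omega$, use Lemma~\ref{lem:etamu} to absorb the additive $\|a\|_\omega$ correction, and anchor the induction with Lemma~\ref{lem:gamma(Amu)}. The telescoped reformulation $\sum_{v\in X^k}\|g_v\|_{\sigma^k\omega}<2^k$ is also correct and is a clean way to package the contraction.

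However, there is a genuine gap at the counting step, and it is precisely where the paper admits to a ``cheat'' that your argument does not supply. After contracting you must bound $\#\{(h_0,h_1)\in G_{\sigma\omega}^2:\|h_0\|_{\sigma\omega}+\|h_1\|_{\sigma\omega}\le 2R'\}$ by a controlled multiple of $v_{\sigma\omega}(R')^2$. You invoke ``submultiplicativity'' for this, but submultiplicativity, $v(m+n)\le v(m)v(n)$, runs the wrong way: it bounds a large ball by a product of smaller ones, whereas here one must bound a sum over a split budget by the \emph{square of the half-budget ball}. What is actually needed is the reverse estimate $v(R_0)v(R_1)\le v\bigl((R_0+R_1)/2\bigr)^2$, i.e.\ log-concavity of the growth function. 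That is false in general, which is exactly why the paper explicitly flags it as a \textbf{cheat} and patches it by replacing $v$ with a log-concave majorand (Lemma~\ref{lem:concavehull}, with details referred to \cite{bartholdi-erschler:givengrowth}*{\S3.1}). Using only submultiplicativity and the ``larger coordinate'' split gives at best $2\,v_{\sigma\omega}(R')\,v_{\sigma\omega}(2R')\le 2\,v_{\sigma\omega}(R')^3$, a cube rather than a square.

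This cannot be repaired by ``choosing $B$ large enough.'' A multiplicative constant accumulates benignly, since $C\cdot B^{2^k}\le(CB)^{2^k}$ and $C^{1+2+\cdots+2^{k-1}}\le C^{2^k}$; but a cube in the one-step recursion iterates to an exponent $3^k$, and no enlargement of $B$ turns $B^{3^k}$ into $B^{2^k}$. Your second, all-at-once formulation runs into the same wall: ``at least one half has total norm below $2^{k-1}$'' controls one half, but leaves the other with budget up to $2^k$, which is again outside the reach of any induction pegged to the half-budget. So the proposal is missing the log-concavity input, which is the one nontrivial technical ingredient the paper uses (and signposts) to close the exponent at $2^k$.
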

\begin{proof}
  For all $i\in\N$, set
  $\alpha_i(R)=18(R+2)v_{\sigma^i\omega}(R+\mu_{\sigma^i\omega})$. We
  \textbf{cheat}, in assuming that the functions $\alpha_i$ are
  log-concave, i.e.\ satisfy
  $\alpha_i(R_0)\alpha_i(R_1)\le\alpha_i((R_0+R_1)/2)^2$. This
  assumption is in fact harmless, since each function $\alpha_i$ can
  be replaced by its log-concave majorand: the smallest log-concave
  function that is pointwise larger than $\alpha_i$, given by
  Lemma~\ref{lem:concavehull}. For details,
  see~\cite{bartholdi-erschler:givengrowth}*{\S3.1}.

  The proposition will follow from the inequalities $\alpha_i(R)\le
  \alpha_{i+1}(R/\eta_{\sigma^i\omega})^2$ for all $i,R$; because then
  \begin{align*}
    v_\omega(\eta_\omega\cdots\eta_{\sigma^{k-1}\omega}\mu_{\sigma^k\omega}) &\le \alpha_0(\eta_\omega\cdots\eta_{\sigma^{k-1}\omega}\mu_{\sigma^k\omega})\\
    &\le\alpha_1(\eta_{\sigma\omega}\cdots\eta_{\sigma^{k-1}\omega}\mu_{\sigma^k\omega})^2\\
    \le\cdots &\le\alpha_k(\mu_k)^{2^k}\le B^{2^k}\text{ by Lemma~\ref{lem:gamma(Amu)}}.
  \end{align*}

  To simplify notation, we consider only the case $i=0$, since all
  cases are the same. We have
  \begin{align*}
    \alpha_0(R) = 18(R+2)v_\omega(R+\mu_\omega) &\le
    18(R+2)\hspace{-2em minus 1fil}\sum_{R_0+R_1\le \tfrac2{\eta_\omega}
      (R+\mu_\omega+\|a\|_\omega)}\hspace{-2em minus 1fil}2v_{\sigma\omega}(R_0)v_{\sigma\omega}(R_1)\\
    &\le 36(R+2)^2 \hspace{-2em minus 1fil}\max_{R_0+R_1\le \tfrac2{\eta_\omega}
      (R+\mu_\omega+\|a\|_\omega)}\hspace{-2em minus 1fil}v_{\sigma\omega}(R_0)v_{\sigma\omega}(R_1)\\
    &\le
    6^2(3/\eta_\omega)^2(R+2)^2v_{\sigma\omega}((R+\mu_\omega+\|a\|_\omega)/\eta_\omega)^2\\
    &\le \big(18(R+2)/\eta_\omega\,v_{\sigma\omega}(R/\eta_\omega+\mu_{\sigma\omega})\big)^2\\
    &\le\alpha_1(R/\eta_\omega)^2.\qedhere
  \end{align*}
\end{proof}

We are now ready to conclude the proof of
Theorem~\ref{thm:grigorchuk}, by showing that the groups $G_\omega$
have subexponential growth. There are in fact different methods for
this. Let $\lambda_\omega$ be the exponential growth rate of
$G_\omega$, as in~\eqref{eq:lambda}; we are to show
$\lambda_\omega=1$.

Since by assumption $\omega$ contains infinitely many $0,1,2$, there
are infinitely many positions $k\in\N$ with $\omega_k=0$ and that are
separated by $1$ and $2$ in $\omega$. For these $k$, the point
$p_{\sigma^k\omega}\in\Delta$ belongs to the subsimplex
$\{\beta<\gamma\wedge\beta<\delta\}$. Thus $\beta<\tfrac13$ on that
subsimplex, $\eta_{\sigma^k\omega}=3-2\beta>7/3$ is uniformly bounded
away from $2$, and $\gamma,\delta>\tfrac16$ so
$\mu_{\sigma^{k+1}\omega}>\frac1{18}$. Thus by Proposition~\ref{prop:upperG}
\[\log\lambda_\omega=\lim\frac{\log v_\omega(R)}{R}\le\liminf\frac{2^k\log B}{\eta_\omega\cdots\eta_{\sigma^{k-1}\omega}\mu_{\sigma^k\omega}}=0\]
since on a subsequence the $\mu_{\sigma^k\omega}$ are bounded away
from $0$, all terms $2/\eta_{\sigma^i\omega}$ are bounded by $1$, and
infinitely many of them are bounded by $6/7$.

A more ``abstract'' proof may be obtained by noting that the map
$\omega\mapsto\lambda_\omega$ is continuous and bounded by $3$, and
that the proof of Proposition~\ref{prop:upperG} gives
$\log\lambda_\omega\le2/\eta_\omega\log\lambda_{\sigma\omega}$. Since
the action of $\sigma$ on $\Omega'$ is ergodic, we must have
$\log\lambda_\omega=0$ for all $\omega\in\Omega'$.

Let us compute more precisely an upper bound for the growth of the
first Grigorchuk group $G_{(012)^\infty}$. Since the sequence
$\omega=(012)^\infty$ is $3$-periodic, we can find $p_\omega\in\Delta$
explicitly. The calculation is made even simpler by noting that
$p_{\sigma\omega}$ and $p_{\sigma^2\omega}$ are cyclic permutations of
$p_\omega$; thus $p_\omega$ is the normalised eigenvector of
$M_0\cdot(\begin{smallmatrix}0&0&1\\1&0&0\\0&1&0\end{smallmatrix})$,
and its spectral radius is $\eta_+\approx2.46$, the positive root of the
characteristic polynomial $T^3-T^2-2T-4$. Thus for the first
Grigorchuk group we get $v_\omega(\eta_+^k\mu_\omega)\le
B^{2^k}$ for all $k$, and therefore
\[v_\omega(R)\precsim\exp(R^{\log 2/\log\eta_+})\approx\exp(R^{0.76}).\]

%%%%%%%%%%%%%%%%%%%%%%%%%%%%%%%%%%%%%%%%%%%%%%%%%%%%%%%%%%%%%%%%
\section{Growth of permutational wreath products}\label{ss:perm}
The upper and lower bounds on the growth of $G_\omega$ are both of
intermediate type $\exp(R^\alpha)$, but do not match. We consider, in
this section, permutational wreath products based on the groups
$G_\omega$.

Choose a sequence $\omega\in\Omega'$ and a ray
$\xi\in\{\9,\8\}^\infty$, and consider the ray's orbit $X=\xi
G_\omega$. Choose a group $H$. Set then
\[W_\omega(H):=H\wr_X G_\omega.
\]
(Even though the notation does not make it clear, the group
$W_\omega(H)$ depends on $\xi$.) We shall show, in this section:

\begin{thm}\label{thm:wreathgrowth}
  If $H$ has subexponential growth, then so does $W_\omega(H)$.
\end{thm}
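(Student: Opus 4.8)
The plan is to count elements of $W_\omega(H) = H\wr_X G_\omega$ by separately accounting for the "$G_\omega$-part" and the "$H$-decoration part", using the contraction machinery of Section~\ref{ss:ssgrowth} to control the first and the subexponential growth hypothesis on $H$ to control the second. Concretely, an element $w\in W_\omega(H)$ is a pair: a function $f\colon X\to H$ of finite support together with an element $g\in G_\omega$, and following the discussion in~\S\ref{ss:genwreath} a word $w = (h_0@\xi)s_1(h_1@\xi)\cdots s_\ell(h_\ell@\xi)$ of length $\sum\|h_i\|_H + \sum\|s_i\|$ realizes $g = s_1\cdots s_\ell$ and supports $f$ on the set of points $\xi,\xi s_1^{-1},\dots,\xi(s_1\cdots s_\ell)^{-1}$ visited along the Schreier walk. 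Thus $\|w\|$ is comparable to $\|g\|_{\text{Schreier}} + \sum_{x\in\operatorname{supp}(f)}\|f(x)\|_H$ where the first term is the length of a shortest walk in the Schreier graph $X$ that starts at $\xi$ and visits every point of $\operatorname{supp}(f)$, then ends at $\xi g$. Since $X$ is (by the Proposition of~\S\ref{ss:grigorchuk}) isometric to a ray $\mathbb N$ with multiple edges and loops, a walk of length $R$ visits at most $O(R)$ points and remains within distance $R$ of $\xi$, so the support has size $\le C R$ and lies in an initial segment $\{0,1,\dots,CR\}$.

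The counting then proceeds as follows. To build all $w$ with $\|w\|\le R$: first choose the number of "syllables" and the underlying word $g$ in $G_\omega$, which by Theorem~\ref{thm:grigorchuk} (subexponential growth of $G_\omega$) and, more quantitatively, by Proposition~\ref{prop:upperG} costs only $\exp(o(R))$ choices — actually $\exp(R^{0.76})$-type for $\omega=(012)^\infty$, and $\exp(o(R))$ uniformly for $\omega\in\Omega'$; second, choose the subset $T\subseteq\{0,\dots,CR\}$ of support points, costing at most $\binom{CR}{k}$ for $|T|=k$; third, assign to each chosen support point $x$ a nontrivial element $f(x)\in H$, with the total $H$-length budget $\sum\|f(x)\|_H \le R$. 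The last step is the crux: the number of ways to write $R$ as an ordered sum $R = \sum_{x\in T}\rho_x$ and choose $h_x\in H$ with $\|h_x\|_H\le\rho_x$ is at most $\sum_{\rho_1+\dots+\rho_k\le R}\prod_i v_H(\rho_i)$. Here I would invoke the standard convexity/Tauberian fact that since $\log v_H(n)/n\to 0$ (subexponential growth), for every $\varepsilon>0$ one has $v_H(n)\le C_\varepsilon\exp(\varepsilon n)$, hence $\prod_i v_H(\rho_i)\le C_\varepsilon^k\exp(\varepsilon\sum\rho_i)\le C_\varepsilon^k\exp(\varepsilon R)$, and the number of compositions $\binom{R+k}{k}$ together with the binomial $\binom{CR}{k}$ over $k\le CR$ contributes only $\exp(O(\log R)\cdot CR \cdot \text{something})$ — this needs care, so I balance: $\sum_{k\le CR}\binom{CR}{k}\binom{R+k}{k}C_\varepsilon^k \le 4^{CR}C_\varepsilon^{CR}$, which is $\exp(O(R))$ with a constant that I cannot make small.

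That forces a refinement: the naive bound gives exponential growth, so the key is to exploit that the $G_\omega$-walk of length $\le R$ cannot be long enough to visit many faraway points \emph{and} carry much $H$-mass — the budgets interact. The right move is a joint induction on the self-similar structure, mirroring Proposition~\ref{prop:upperG}: decompose $w\in W_\omega(H)$ via $\phi_\omega$ acting on the first tree level; the orbit $X = \xi G_\omega$ splits (since $G_\omega$ is transitive on levels but the stabilizer of the first coordinate decomposes the walk) so that $W_\omega(H)$ contains a finite-index-controlled image inside $W_{\sigma\omega}(H)\times W_{\sigma\omega}(H)$ extended by the finite group $\sym(2)$, with the length almost-multiplied by $2/\eta_\omega$ as in Lemma~\ref{lem:supercontract}. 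Running the same $\alpha_i$-recursion as in Proposition~\ref{prop:upperG}, with $\alpha_i(R)$ replaced by (a log-concave majorant of) $18(R+2)v_{W_{\sigma^i\omega}(H)}(R+\mu_{\sigma^i\omega})$, and terminating the recursion at scale $\mu$ where Lemma~\ref{lem:gamma(Amu)} must be upgraded to bound $v_{W_\omega(H)}(A\mu_\omega)$ — here the bounded piece of $G_\omega$ carries a bounded number of $H$-labels, so this ball injects into a bounded product of copies of balls in $H$, giving $v_{W_\omega(H)}(A\mu_\omega)\le v_H(A\mu_\omega)^{B}\le$ (subexponential in $\mu_\omega$, uniformly bounded on the relevant subsequence). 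The main obstacle, and where I expect to spend the most effort, is precisely this interleaving: showing that the $H$-decorations do not destroy the contraction estimate, i.e.\ that the finitely many "escaped" $H$-labels at each recursion step contribute only a multiplicative $\operatorname{poly}(R)$ and a subexponential-in-$\mu$ base, so that the final product $B^{2^k}$ over the subexponentially-growing $G_\omega$-scales still yields $\log v_{W_\omega(H)}(R)/R\to 0$ along the good subsequence of positions $k$ with $\omega_k=0$ separated by $1,2$, exactly as in the concluding argument of~\S\ref{ss:ssgrowth}. The ergodicity/continuity shortcut ($\log\lambda^{W}_\omega\le(2/\eta_\omega)\log\lambda^{W}_{\sigma\omega}$, hence $\equiv 0$ on $\Omega'$) should again apply verbatim once the one-step inequality is established.
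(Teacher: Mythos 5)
Your opening direct count is essentially the paper's Proposition~\ref{prop:growthW}: parameterize $w=cg\in W_\omega(H)$ by its image $g\in G_\omega$, the support of $c$ in $X$, and the $H$-values on that support. What undoes your first attempt is the crude bound $\binom{CR}{k}$ on the number of possible supports and $CR$ on their cardinality. The paper replaces both by two invariants of the $G_\omega$-action on $X$ alone, involving $H$ not at all: the inverted orbit growth $\Delta(R)$ (Definition~\ref{def:invorbit}), which bounds the support size, and the inverted orbit choice growth $\Sigma(R)$ (defined just before Proposition~\ref{prop:invchoices}), which counts the admissible supports. These \emph{are} estimated by the contraction recursion, but run on $G_\omega$ alone: $\Delta$ is sublinear (Proposition~\ref{prop:invgrowth}), $\Sigma$ is subexponential (Proposition~\ref{prop:invchoices}). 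Proposition~\ref{prop:growthW} then gives $v_W(R)\le v_G(R)\,v_H(R/\Delta(R))^{\Delta(R)}(2R)^{\Delta(R)}\Sigma(R)$, whose logarithm is manifestly $o(R)$, and the paper's proof is the one-line citation of Corollary~\ref{cor:growthW}.

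Your fallback plan --- running the $\alpha_i$-recursion of Proposition~\ref{prop:upperG} directly on $v_{W_{\sigma^i\omega}(H)}$ --- hinges on a one-step contraction that does not hold, and which you correctly flag as unproved. Writing $\phi_\omega(w)=\pair<w_0,w_1>\pi$, only the $G_\omega$-part of $\|w\|$ contracts by $2/\eta_\omega$ (Lemma~\ref{lem:supercontract}); each $T@\xi$-generator merely drops one level with its $H$-norm unchanged, so at best $\|w_0\|+\|w_1\|\le\tfrac2{\eta_\omega}\big(\|w\|_G+\|a\|_\omega\big)+\|w\|_H$. Already $w=h@\xi$ with $\|h\|_H=R$ gives $\|w_0\|+\|w_1\|=R$, no contraction at all, and the worst-case factor over all $w$ of length $R$ is $\tfrac1{\eta_\omega}+\tfrac12>\tfrac12$. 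Iterating $\alpha_i(R)\le\alpha_{i+1}(\lambda R)^2$ with $\lambda>\tfrac12$ terminates only at a level $k$ with $2^k\gg R$, so the recursion yields an exponential or worse bound --- no better than the trivial estimate. The paper's device of isolating $\Delta$ and $\Sigma$ is exactly what keeps the $H$-budget out of the recursion; it is not a convenience but the ingredient that makes the recursion close.
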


\begin{thm}\label{thm:givengrowth}
  Let $\eta_+\approx2.46$ be the positive root of $T^3-T^2-2T-4$. Let
  $f\colon\R_+\to\R_+$ be a function satisfying
  \begin{equation}\label{eq:gdoubling}
    f(2R)\le f(R)^2 \le f(\eta_+R)\text{ for all $R$ large enough}.
  \end{equation}
  Then there exists $\omega\in\Omega'$ such that
  $v_{W_\omega(C_2)}\sim f$.
\end{thm}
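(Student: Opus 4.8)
The plan is to reduce Theorem~\ref{thm:givengrowth} to two ingredients: \textbf{(A)} a recursive two-sided estimate for the growth function $v_\omega:=v_{W_\omega(C_2)}$ whose $\sim$-class is pinned down by the sequence of contraction constants $(\eta_{\sigma^k\omega})_{k\ge0}$; and \textbf{(B)} a construction of a bi-infinite sequence $\omega\in\Omega'$ for which this sequence of $\eta$'s produces exactly the prescribed $f$. The hypothesis $f(2R)\le f(R)^2\le f(\eta_+R)$ is precisely the translation, through this dictionary, of the requirement that the $\eta$-values one needs to realize all lie in the achievable interval $(2,\eta_+]$: writing $g=\log f$, the conditions read $g(2R)\le 2g(R)\le g(\eta_+R)$.

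For \textbf{(A)}, I would exploit the wreath recursion $\phi_\omega\colon G_\omega\to G_{\sigma\omega}\wr\sym(2)$. Decomposing the orbit $X=\xi G_\omega$ according to the first tree coordinate exhibits $X=X_\9\sqcup X_\8$ with each half isomorphic, as a $G_{\sigma\omega}$-set, to $\xi G_{\sigma\omega}$; together with the identity of \S\ref{ss:wreath} relating $(I\wr_Y H)\wr_X G$ to $I\wr_{Y\times X}(H\wr_X G)$, this embeds $W_\omega(C_2)$ into $W_{\sigma\omega}(C_2)\wr\sym(2)\cong W_{\sigma\omega}(C_2)^2\rtimes\sym(2)$. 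Conversely, the maps $\theta_\omega$ of \S\ref{ss:ssgrowth}, applied so as to act on a single half of $X$ (their ``$*$''-noise being confined to a bounded dihedral group, hence harmless), embed two commuting copies of $W_{\sigma\omega}(C_2)$ into $W_\omega(C_2)$. Equipping $W_\omega(C_2)$ with weighted norms extending those on $G_\omega$ so that Lemma~\ref{lem:supercontract} supplies a contraction factor exactly $2/\eta_\omega$, and replacing $v_\omega$ by its log-concave majorant (Lemma~\ref{lem:concavehull}) so that the decomposition may be concentrated on the balanced split, these two embeddings produce a recursion of the form
\[v_{\sigma\omega}(R)^2\ \le\ v_\omega\bigl(\eta_\omega R+O(1)\bigr)\ \le\ C\,v_{\sigma\omega}\bigl(R+O(1)\bigr)^2\]
uniformly in $\omega\in\Omega'$. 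Iterating $k$ times, evaluating $v_{\sigma^k\omega}$ at a bounded argument (via Lemma~\ref{lem:gamma(Amu)}, using that the $\mu_{\sigma^k\omega}$ stay bounded below along a subsequence — which the $\omega$ we build will guarantee) and lining up the scales with Lemma~\ref{lem:etamu}, gives $\log v_\omega(P_k\cdot O(1))\asymp 2^k$ with $P_k=\eta_\omega\cdots\eta_{\sigma^{k-1}\omega}$; the accumulated factors $C^{2^k}$ and the $O(1)$-shifts alter $v_\omega$ only up to $\sim$, since at the intermediate rate $\exp(\Theta(R^\alpha))$, $\alpha\le1$, they are absorbed into a rescaling of the radius. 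Thus the $\sim$-class of $v_\omega$ depends only on $(\eta_{\sigma^k\omega})_k$.

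For \textbf{(B)}, the estimate above forces $g(\eta_{\sigma^k\omega}\,R)\approx 2g(R)$ at the scales $R\approx P_k$. Since $\lambda\mapsto g(\lambda R)$ is increasing and $g(2R)\le 2g(R)\le g(\eta_+R)$, the value $\eta(R)$ solving $g(\eta(R)R)=2g(R)$ lies in $[2,\eta_+]$, hence is an admissible target, and it suffices to construct $\omega$ so that $\eta_{\sigma^k\omega}\approx\eta(P_k)$ for all $k$. Here I would use the projective dynamics on $\Delta$: a long run of a single symbol $\lambda$ immediately before a position drives the relevant $p_{\sigma^k\omega}$ toward the corresponding edge of $\partial\Delta$ and $\eta_{\sigma^k\omega}$ toward $2^+$, while a block following the $(012)$-pattern drives it toward the $3$-cycle fixed by $\overline M_2\overline M_1\overline M_0$ and $\eta_{\sigma^k\omega}$ toward $\eta_+$; intermediate values are obtained by tuning block lengths, and by scattering $012$-blocks throughout one keeps all three symbols appearing infinitely often, so $\omega\in\Omega'$ and the $\mu_{\sigma^k\omega}$ remain bounded below along a subsequence. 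Because the matching scales $P_k$ themselves depend on the $\eta$'s being chosen, the construction is carried out inductively, building $\omega$ as a limit of finite approximations and committing $\omega_k$ at each stage so that the current partial product $P_k$ and the current finite $\eta$-sequence stay synchronized with $g$; convergence follows from the strict contraction of the $\overline M_\lambda$ (Lemma~\ref{lem:birkhoff}) and continuity of $\eta$.

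The main obstacle is the first half of \textbf{(A)}: obtaining \emph{matching} upper and lower bounds in the recursion. For $G_\omega$ alone these bounds do not meet (as observed at the start of this section), and the entire point of passing to the permutational wreath product $W_\omega(C_2)$ is that the abelian lamp group $C_2$ supplies exactly the extra flexibility that closes the gap — making the two-sided estimate, and hence the $\sim$-class, sharp. Keeping that match tight through the $k$-fold iteration, so that the unavoidable $O(1)$-slop in hitting $\eta(P_k)$ accumulates only to a $\sim$-negligible error, is where the doubling hypothesis on $f$ and the log-concavity reduction do the real work.
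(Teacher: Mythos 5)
Your overall strategy---a two-sided estimate $\log v_\omega(P_k)\asymp 2^k$ with $P_k=\eta_\omega\cdots\eta_{\sigma^{k-1}\omega}$, followed by a greedy construction of $\omega$ synchronizing $P_k$ with $g=\log f$---coincides with the paper's, and your part~(B) is essentially the paper's argument once the initial ``target $\eta_{\sigma^k\omega}\approx\eta(P_k)$ at each $k$'' is read, as you yourself correct a few lines later, as ``build $\omega$ out of syllables $(012)^i$ and $2^j$ and use Lemmas~\ref{lem:expconv} and~\ref{lem:mu_k bounded} to keep the partial products on track.'' The genuine gap is in (A).

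You propose to get $v_{\sigma\omega}(R)^2\le v_\omega(\eta_\omega R+O(1))\le C\,v_{\sigma\omega}(R+O(1))^2$ by pushing a weighted norm on $W_\omega(C_2)$ through the wreath recursion, with Lemma~\ref{lem:supercontract} supplying the factor $2/\eta_\omega$. But that lemma contracts only the $G_\omega$-letters. The lamp generator $s@\xi$ does \emph{not} contract under $\phi_\omega$: it maps to $s@\xi'$ in exactly one coordinate, with the same weight. Since $\eta_\omega>2$, a word of weighted length $\le\eta_\omega R$ with a large proportion of lamp letters can have $\|w_0\|_{\sigma\omega}+\|w_1\|_{\sigma\omega}$ as large as $\eta_\omega R$ rather than $\le 2R$, so the upper half of your recursion fails as stated. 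What rescues it is the \emph{inverted orbit growth}: only lamps sitting on $\mathcal O(w)$ survive cancellation, $\#\mathcal O(w)\le\Delta_\omega(R)\ll R$, and $\Delta_\omega$ enjoys its own $\eta$-contraction (Proposition~\ref{prop:invgrowth}). The paper therefore counts the three contributions separately---$v_{G_\omega}$ via Proposition~\ref{prop:upperG}, the support choices $\Sigma$ via Proposition~\ref{prop:invchoices}, and the $(\#H)^{\Delta}$ lamp values---and multiplies them in Proposition~\ref{prop:growthW} to obtain Corollary~\ref{cor:global}. Your lower bound has the same latent dependence: the $\theta_\omega$-maps merely multiply word lengths by a bounded factor, so iterating them only gives an exponent $\tfrac{\log 2}{\log 4}=\tfrac12$ rather than $\tfrac{\log 2}{\log\eta_+}$; matching the correct exponent requires the substitutions $\zeta_x$ of Proposition~\ref{prop:invgrowth}, calibrated so that a word of $\sigma\omega$-weight $\mu$ becomes one of $\omega$-weight exactly $\eta_\omega\mu$. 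In short, (A) cannot be obtained by a na\"ive recursion on $v_\omega$ alone; the cleanest fix is to invoke Corollary~\ref{cor:global} (already proved in the text) and devote the proof entirely to (B), which is precisely what the paper does.
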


We shall give more illustrations of the growth functions that may occur
in~\S\ref{ss:illustrations}. We content ourselves with the following:
\begin{thm}\label{thm:grigWgrowth}
  For any finite group $H$, the group $W_{012}(H)$ has growth
  \[v_{W_{012}(H)}\sim\exp(R^{\log2/\log\eta_+}).\]
\end{thm}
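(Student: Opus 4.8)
The plan is to sandwich $v_{W_{012}(H)}$ between two functions equivalent to $\exp(R^{\log2/\log\eta_+})$ by running, for $W_\omega(H)$, the same contraction/expansion machinery that §\ref{ss:ssgrowth} runs for $G_\omega$. Write $\omega=(012)^\infty$ and $W_i:=W_{\sigma^i\omega}(H)=H\wr_{X_i}G_{\sigma^i\omega}$, where $X_i$ is the orbit of the chosen ray. First I would promote the wreath recursion. Read one level down, the $G_\omega$-orbit $X=\xi G_\omega$ — a Schreier line — splits into its vertices at even, resp.\ odd, distance from the extremity, and each half is $G_{\sigma\omega}$-equivariantly isomorphic to $X_1$; hence $\phi_\omega\colon G_\omega\to G_{\sigma\omega}\wr\sym(2)$ extends to a homomorphism
\[\Psi_\omega\colon W_0\longrightarrow W_1\wr\sym(2),\qquad f g\longmapsto\pair<f_0 g_0,\,f_1 g_1>\pi,\]
where $\phi_\omega(g)=\pair<g_0,g_1>\pi$ and $f=(f_0,f_1)$ is the corresponding splitting of $f\colon X\to H$. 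This $\Psi_\omega$ is injective and its image has finite index: it contains $\phi_\omega(G_\omega)$, of finite index in $G_{\sigma\omega}\wr\sym(2)$ by Proposition~\ref{prop:Gbranched}, together with the whole base group of $W_1\wr\sym(2)$. I equip each $W_i$ with the word metric for the generators $\{a,b,c,d\}\sqcup(T@\xi)$, weighting $a,b,c,d$ by the $\|\cdot\|_{\sigma^i\omega}$-weights coming from $p_{\sigma^i\omega}$ and each $t@\xi$ by $1$; since $H$ is finite this is bi-Lipschitz to an ordinary word metric.

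The crux is the analogue of Lemma~\ref{lem:supercontract}: if $\Psi_\omega(w)=\pair<w_0,w_1>\pi$ then
\[\|w_0\|_{\sigma\omega}+\|w_1\|_{\sigma\omega}\le\tfrac2{\eta_\omega}\bigl(\|w\|_\omega+C\bigr),\]
with $C$ depending only on $\omega$ and $\operatorname{diam}(H)$. For the $G_\omega$-part this is exactly Lemma~\ref{lem:supercontract}; what must be added is that the lamplighter labels split between the two halves $X_1\sqcup X_1$ at no cost beyond the additive $C$ — concretely, that the cost in $W_0$ of installing a configuration $f\colon X\to H$ is, up to bounded error, what the geometry of the Schreier line predicts and is sub-additive under the even/odd splitting. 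Granting this, the upper bound follows by repeating the iteration of Proposition~\ref{prop:upperG} verbatim with $G_{\sigma^i\omega}$ replaced by $W_i$; the one extra input, $v_{W_i}(A\mu_{\sigma^i\omega})\le B$ uniformly, is immediate because $H$ is finite and $\omega$ has only three distinct shifts. One gets $v_{W_0}(\eta_\omega\cdots\eta_{\sigma^{k-1}\omega}\,\mu_{\sigma^k\omega})\le B^{2^k}$, and since $\omega$ is $3$-periodic the per-step factor $\eta_{\sigma^i\omega}$ equals $\eta_+$ for every $i$ and $\mu_{\sigma^i\omega}$ takes finitely many positive values (as in the explicit computation at the end of §\ref{ss:ssgrowth}), whence $v_{W_{012}(H)}(R)\precsim\exp(R^{\log2/\log\eta_+})$.

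For the matching lower bound I may assume $H\neq1$ (for $H=1$ the assertion is the still-open conjectural growth of the Grigorchuk group itself). I would reverse the recursion using the maps $\theta_\omega$ of~\eqref{eq:theta}: extend $\theta_\omega\colon G_{\sigma\omega}\to G_\omega$ to a homomorphism $\Theta_\omega\colon W_1\to W_0$ with $\Psi_\omega\circ\Theta_\omega(w)=\pair<*,w>$ for a bounded element $*$ of a fixed finite group — here I would \textbf{cheat} exactly as in §\ref{ss:ssgrowth} and take $*=1$, the honest argument merely carrying a bounded factor along — satisfying $\|\Theta_\omega(w)\|_\omega\le\tfrac{\eta_\omega}2\|w\|_{\sigma\omega}+C'$. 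Then $(w_0,w_1)\mapsto\Theta_\omega(w_0)^a\cdot\Theta_\omega(w_1)$ is an injective map $W_1\times W_1\to W_0$ carrying $B_{W_1}(R)\times B_{W_1}(R)$ into $B_{W_0}(\eta_\omega R+C'')$, so $v_{W_1}(R)^2\le v_{W_0}(\eta_\omega R+C'')$. Iterating once around the period, and using $v_{W_{012}(H)}(R_0)\ge2$ for some $R_0$ (true since $H\neq1$), gives $v_{W_{012}(H)}(\eta_+^{3k}R_0+C''')\ge2^{2^{3k}}$, i.e.\ $v_{W_{012}(H)}(R)\succsim\exp(R^{\log2/\log\eta_+})$. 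Together with the upper bound this proves the theorem.

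The main obstacle is the metric inequality of the second paragraph: showing that the lamplighter part of $W_\omega(H)$ obeys the \emph{same} contraction constant $\eta_\omega$ as $G_\omega$, with only bounded additive loss per level — equivalently, that realising a label configuration along the Schreier line costs, up to a constant, what that line's geometry dictates. Once this is in hand, both directions are the routine iteration already carried out for $G_\omega$ in §\ref{ss:ssgrowth}, and it is precisely the $3$-periodicity of $(012)^\infty$ — collapsing the telescoping products $\eta_\omega\cdots\eta_{\sigma^{k-1}\omega}$ to $\eta_+^k$ in both the upper and the lower estimate — that forces the two bounds to coincide up to $\sim$. (One could also obtain the lower bound for $H=C_2$ from Theorem~\ref{thm:givengrowth} applied to $f(R)=\exp(R^{\log2/\log\eta_+})$, which satisfies~\eqref{eq:gdoubling} with equality on the right, and reduce general finite $H$ to a subgroup $C_p\le H$; but that only yields \emph{some} $\omega$, so pinning the sequence to $(012)^\infty$ still needs the periodicity argument.)
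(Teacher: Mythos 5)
You attempt a unified contraction/expansion argument directly on $W_\omega(H)$ via a wreath recursion $\Psi_\omega\colon W_0\to W_1\wr\sym(2)$; the paper instead never contracts $W_\omega(H)$ as a single object, but writes $v_W$ in terms of $v_{G_\omega}$, the inverted orbit growth $\Delta$, and the orbit-choice count $\Sigma$ (Proposition~\ref{prop:growthW}), bounds each separately (Propositions~\ref{prop:upperG}, \ref{prop:invgrowth}, \ref{prop:invchoices}), and assembles them in Corollary~\ref{cor:global}. Your map $\Psi_\omega$ does exist and is injective with finite-index image, so the structural setup is fine; the two metric estimates you flag as the ``crux'' are both gaps, and the second one is fatal.

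The upper-bound inequality $\|w_0\|_{\sigma\omega}+\|w_1\|_{\sigma\omega}\le\frac2{\eta_\omega}(\|w\|_\omega+C)$ cannot hold with a uniform $C$: writing $\|w\|_\omega\approx R_G+R_H$ with $R_H$ the cost of the nontrivial labels, $\Psi_\omega$ contracts $R_G$ by $\frac2{\eta_\omega}$ but leaves $R_H$ unchanged (precisely your ``labels split at no cost''), so $(\|w_0\|+\|w_1\|)-\frac2{\eta_\omega}\|w\|\approx(1-\frac2{\eta_\omega})R_H$, and $R_H$ can be as large as $\Delta(\|w\|)$, which is unbounded. Salvaging the iteration means tracking this sublinear-but-unbounded per-level loss across $\sim\log R/\log\eta_+$ levels, which is exactly the $\Delta$-bookkeeping that Proposition~\ref{prop:growthW} isolates; it does not vanish into an additive constant.

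The lower bound is where the approach breaks. Your estimate $\|\Theta_\omega(w)\|_\omega\le\frac{\eta_\omega}2\|w\|_{\sigma\omega}+C'$, specialised to $w=(1,g)$ with $g\in G_{\sigma\omega}$ (minimal words for such elements gain nothing by using label generators, so $\|(1,g)\|_{W_1}=\|g\|_{G_{\sigma\omega}}$ and likewise on the target), asserts $\|\theta_\omega(g)\|_{G_\omega}\le\frac{\eta_\omega}2\|g\|_{G_{\sigma\omega}}+C'$. Iterating this with the upper bound of Proposition~\ref{prop:upperG} would give $v_{G_{012}}\sim\exp(R^{\log2/\log\eta_+})$ --- precisely the open problem on the growth of the first Grigorchuk group stated in the paper's list of open problems and in \S\ref{ss:asymptotic}. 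The paper's own expansion estimate for $\theta_\omega$ is only a factor-$2$ bound in the unweighted metric, giving the much weaker $\exp(\sqrt R)$. The entire point of passing from $G_\omega$ to $W_\omega(H)$ is that the matching lower bound has a different source: the entropy of the $H$-labels scattered along the inverted orbit, i.e.\ $\#H\ge2$ combined with $\Delta(R)\succsim R^{\log2/\log\eta_+}$ (the lower half of Proposition~\ref{prop:invgrowth}). Your $\Theta_\omega$ disregards the labels and thereby demands a sharp expansion estimate on $G_\omega$ alone that nobody has. (Your periodicity remark --- that $(012)^\infty$ collapses $\eta_\omega\cdots\eta_{\sigma^{k-1}\omega}$ to $\eta_+^k$ --- is correct and is exactly the paper's one-line derivation of the theorem from Corollary~\ref{cor:global}; and you are right that the alternative route through Theorem~\ref{thm:givengrowth} does not pin $\omega$ to $(012)^\infty$.)
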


The proofs of Theorems~\ref{thm:wreathgrowth}
and~\ref{thm:givengrowth} rely on estimates of the \emph{support}
$\subset X$ of an element of $W_\omega(H)$ of norm $\le R$. Recall
that every element of $W_\omega(H)$ may be written in the form $cg$
with $c\colon X\to H$ and $g\in G_\omega$; its support is
$\{x\in X:c(x)\neq1\}$. To better understand the support of elements
of $W_\omega(H)$, let us introduce the following
\begin{defn}\label{def:invorbit}
  Let $G$ be a group acting on the right on a set $X$ with basepoint
  $\xi$. For a word $w=w_1\dots w_\ell\in G^*$, its \emph{inverted
    orbit} is the set
  \[\mathcal O(w)=\{\xi w_{i+1}\cdots w_\ell:0\le i\le \ell\}.
  \]
  If furthermore $G$ is given with a metric $\|\cdot\|$, then its
  \emph{inverted orbit growth} is the function $\Delta\colon\R_+\to\N$
  given by
  \[\Delta(R)=\max\{\#\mathcal O(w):\|w\|\le R\}.\qedhere
  \]
\end{defn}

We write $\mathcal O_\omega(w)$ and $\Delta_\omega(R)$ in the case of
$G=G_\omega$ with its metric $\|\cdot\|_\omega$. Thus, for example,
taking $\xi=\8^\infty$, the inverted orbit of $acadab$ is
\[\mathcal O(acadab)=\{\xi acadab,\xi cadab,\xi adab,\xi dab,\xi ab,\xi
b,\xi\}=\{\overline\8\9\8\9,\overline\8,\overline\8\9\9\},
\]
see the Schreier graph at the end of~\S\ref{ss:ss}.

\noindent Note that a basepoint $\xi$ is implicit in the definitions;
yet,
\begin{exse}\label{exse:invgrowth}
  Assume that $G$ acts transitively on $X$. Show that $\Delta(R)$ depends
  only mildly of the choice of $\xi$, in the following sense: if
  $\xi,\xi'\in X$ are two choices of a basepoint and
  $\Delta(R),\Delta'(R)$ are the corresponding inverted orbit growth
  functions, then there exists a constant $C\in\R$ such that
  $\Delta(R)\le\Delta'(R+C)$ and $\Delta'(R)\le\Delta(R+C)$ for all $R$.
\end{exse}

\begin{prop}\label{prop:invgrowth}
  There exists a universal constant $C$ such that, for all
  $\omega\in\Omega'$ and all $k\in\N$,
  \[2^k\le\Delta_\omega(\eta_\omega\cdots\eta_{\sigma^{k-1}\omega}\mu_{\sigma^k\omega})\le C\,2^k.\]
\end{prop}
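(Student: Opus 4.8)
The plan is to establish, in the spirit of Proposition~\ref{prop:upperG}, a ``folding'' identity that expresses the inverted orbit of a word in $G_\omega$ through the inverted orbits of its two sections in $G_{\sigma\omega}$, and then to run this recursion in both directions.

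\emph{The folding lemma.} Fix the basepoint $\xi=\8^\infty$; by Exercise~\ref{exse:invgrowth} this changes $\Delta_\omega$ only by a bounded additive shift of the argument. The orbit is $X=\{\9,\8\}\times X_{\sigma\omega}$, with the imprimitive action of $G_{\sigma\omega}\wr\sym(2)$. Write $w\in G_\omega$ in alternating form $w=a^{e_0}x_1 a x_2\cdots a x_m a^{e_m}$ with $x_i\in\{b,c,d\}$, and associate to it its two ``folded section words'' in $G_{\sigma\omega}$, namely (when $e_0=0$) $w_\8=x_1\,\omega_0(x_2)\,x_3\,\omega_0(x_4)\cdots$ and $w_\9=\omega_0(x_1)\,x_2\,\omega_0(x_3)\,x_4\cdots$, obtained by tracking the current first coordinate while reading $w$. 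Reading $w$ from the right and splitting $\mathcal O_\omega(w)$ according to the first coordinate, one checks that the two halves are, up to a bounded number of endpoint vertices (coming from $e_0,e_m$ and from whether $w$ fixes $X^1$), exactly $\{\9\}\times\mathcal O_{\sigma\omega}(w_\9)$ and $\{\8\}\times\mathcal O_{\sigma\omega}(w_\8)$. Hence
\[\big|\,\#\mathcal O_\omega(w)-\#\mathcal O_{\sigma\omega}(w_\9)-\#\mathcal O_{\sigma\omega}(w_\8)\,\big|\le c_0\]
for an absolute constant $c_0$; and — this is where the metric enters — the computation in the proof of Lemma~\ref{lem:supercontract} (each $x_i$ contributes $x_i$ to one folded word and $\omega_0(x_i)$ to the other) gives $\|w_\9\|_{\sigma\omega}+\|w_\8\|_{\sigma\omega}\le\frac2{\eta_\omega}(\|w\|_\omega+\|a\|_\omega)$, with equality up to $O(\mu_\omega)$ for a suitably chosen lift.

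\emph{Upper bound.} The folding lemma gives $\Delta_\omega(R)\le\max\{\Delta_{\sigma\omega}(r_0)+\Delta_{\sigma\omega}(r_1):r_0+r_1\le\tfrac2{\eta_\omega}(R+\|a\|_\omega)\}+c_0$. I would replace each $\Delta_{\sigma^i\omega}$ by its least concave majorant $\overline\Delta_{\sigma^i\omega}$; this is legitimate because the argument of Lemma~\ref{lem:gamma(Amu)} gives $\Delta_\omega(A\mu_\omega)\le2A+3$, i.e.\ $\Delta_\omega$ lies below the affine function $2R/\mu_\omega+3$. Concavity then forces the maximum at the balanced split, so $\overline\Delta_\omega(R)\le2\,\overline\Delta_{\sigma\omega}\big(\tfrac{R+\|a\|_\omega}{\eta_\omega}\big)+c_0$. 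Setting $\gamma_i(R)=\overline\Delta_{\sigma^i\omega}(R+\mu_{\sigma^i\omega})$ and using Lemma~\ref{lem:etamu} in the form $\mu_{\sigma^i\omega}+\|a\|_{\sigma^i\omega}\le\eta_{\sigma^i\omega}\mu_{\sigma^{i+1}\omega}$ turns this into $\gamma_i(R)\le2\gamma_{i+1}(R/\eta_{\sigma^i\omega})+c_0$. Iterating $k$ times and evaluating at $R=R_k:=\eta_\omega\cdots\eta_{\sigma^{k-1}\omega}\mu_{\sigma^k\omega}$ yields $\Delta_\omega(R_k)\le\gamma_0(R_k)\le2^k\gamma_k(\mu_{\sigma^k\omega})+(2^k-1)c_0=2^k\,\overline\Delta_{\sigma^k\omega}(2\mu_{\sigma^k\omega})+(2^k-1)c_0$, and since $\overline\Delta_{\sigma^k\omega}(2\mu_{\sigma^k\omega})\le4+3=7$ by the same affine majorant, this is $\le C\,2^k$. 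The base case $k=0$ is $\Delta_\omega(\mu_\omega)\le\overline\Delta_\omega(2\mu_\omega)\le7$.

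\emph{Lower bound, and the main obstacle.} For $2^k\le\Delta_\omega(R_k)$ I would construct, by induction on $k$ and simultaneously over all $\omega\in\Omega'$, a word in $G_\omega$ of $\|\cdot\|_\omega$-norm $\le R_k$ whose inverted orbit has $\ge2^k$ elements ($k=0$ being the empty word). The doubling step is the folding lemma read backwards: from a word $v$ for $\sigma\omega$ with $\#\mathcal O_{\sigma\omega}(v)\ge2^{k-1}$ one wants a word $w$ for $\omega$ whose folded words $w_\9,w_\8$ both have inverted orbit $\ge2^{k-1}$, so that $\#\mathcal O_\omega(w)=\#\mathcal O_{\sigma\omega}(w_\9)+\#\mathcal O_{\sigma\omega}(w_\8)\ge2^k$ (the two halves being automatically disjoint), while $\|w\|_\omega\approx\frac{\eta_\omega}2(\|w_\9\|_{\sigma\omega}+\|w_\8\|_{\sigma\omega})$ stays $\le R_k$. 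Here is the genuine difficulty: the structural constraint on folded words — they must alternate between $\{b,c,d\}$-letters and letters of $\omega_0(\{b,c,d\})=\{1,a\}$, and with opposite offsets for $w_\9$ and $w_\8$ — makes $w_\9=w_\8=v$ impossible, so that $v$ determines a unique ``foldable partner'' $v'$ which must be kept under control as well. I would therefore strengthen the induction to produce $v$ \emph{together with} a foldable partner $v'$, both with inverted orbit $\ge2^{k-1}$ and norm $\le R'_{k-1}$, using the freedom in choosing which of the two generators in $\omega_0^{-1}(a)$ to insert, and the symmetry of the Grigorchuk family under permutations of $\{b,c,d\}$, to propagate this property up the tower; the length bookkeeping then closes because the overhead at level $i$ is $O(\mu_{\sigma^i\omega})$, which telescopes against the $\eta$-product exactly via Lemma~\ref{lem:etamu}, as in the upper bound. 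Checking that this propagation can be arranged uniformly in $\omega\in\Omega'$ — equivalently, that the bounded-order dihedral error terms hidden in the ``cheat'' of~\S\ref{ss:ssgrowth} do no harm to the inverted orbit counts — is the one delicate point, carried out in detail in~\cite{bartholdi-erschler:givengrowth}.
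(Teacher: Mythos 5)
Your \emph{upper bound} is essentially the argument in the paper: the folding inclusion $\mathcal O_\omega(w)\subseteq\mathcal O_{\sigma\omega}(w_0)\9\sqcup\mathcal O_{\sigma\omega}(w_1)\8$, the linear bound $\Delta_\omega(A\mu_\omega)=O(A)$, and the balanced-split recursion $\Delta_\omega(R)\le\max\{\Delta_{\sigma\omega}(R_0)+\Delta_{\sigma\omega}(R_1):R_0+R_1\le\tfrac2{\eta_\omega}(R+\|a\|_\omega)\}$ telescoped against Lemma~\ref{lem:etamu}. One detail the paper takes care to settle, and which you pass over, is that the inverted orbit of a word in $S^*$ equals that of its reduction in the monoid $S^*/\{b^2,c^2,d^2,bcd\}$ --- this is what justifies applying Lemma~\ref{lem:supercontract} to a word that may not be geodesic. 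Your ``folding lemma'' phrasing implicitly assumes $w$ is already in alternating form, so this reduction step should not be skipped.

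On the \emph{lower bound} the proposal has a genuine gap. You correctly diagnose that inverting the folding forces one to produce, from a word $v$ in $G_{\sigma\omega}$, a word $w$ in $G_\omega$ whose two sections both have large inverted orbit, and you correctly observe that the sections cannot both equal $v$. But the ``strengthened induction carrying a foldable partner $v'$'' is exactly the point where the argument needs an idea, and you leave it to the reference. The resolution is in fact much simpler than the machinery you set up suggests: there are explicit self-substitutions $\zeta_x$ of $\{ab,ac,ad\}^*$ (for instance $\zeta_0:\ ab\mapsto adabac,\ ac\mapsto acac,\ ad\mapsto adad$) with the property that the two sections of $\zeta_{\omega_0}(w)$ are $wa$ and $aw$ --- \emph{conjugates} of $w$ by $a$, which automatically have inverted orbit size $\#\mathcal O_{\sigma\omega}(w)\pm O(1)$, so no separate partner needs to be propagated. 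Moreover, the abelianised count of $ab,ac,ad$ transforms under $\zeta_x$ by $Z\mapsto M_x^{\mathrm t}Z$, which is what makes the norm $\|\zeta_{\omega_0}\cdots\zeta_{\omega_{k-1}}(as)\|_\omega$ come out to exactly $\eta_\omega\cdots\eta_{\sigma^{k-1}\omega}\mu_{\sigma^k\omega}$ when $as$ is chosen of minimal $\|\cdot\|_{\sigma^k\omega}$-norm. You also worry about the ``dihedral error terms hidden in the cheat of \S5''; that cheat concerns the map $\theta_\omega$ used for the growth lower bound of $G_\omega$, and plays no role in the inverted orbit estimate --- the $\zeta_x$ substitutions are honest, exact. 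So the obstruction you flag is not the real one, and the real construction (the substitutions) is missing.
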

\begin{proof}
  For the upper bound, we note that Lemma~\ref{lem:supercontract}
  applies just as well to the group $G_\omega$ as to the monoid
  \[R:=S^*/\{b^2=c^2=d^2=1,b c=d,c d=b,db=c\},\]
  see Equation~\eqref{eq:F}. Indeed, in a minimal-length
  representative of an element of $R$, the number of `$a$' is at least
  the number of $b,c,d$-letters minus one, and this is the only
  property required for Lemma~\ref{lem:supercontract}. Now given a
  word $w\in S^*$, its inverted orbit may be read from the image of
  $w$ in $R$. Every element of $R$ has a unique \emph{reduced form}:
  the reduced form of a word $w\in S^*$ is the word
  $\overline w\in S^*$ obtained by replacing every subword equal to a
  left-hand side of a relation by the corresponding right-hand side.

  Without loss of generality and merely at the cost of increasing the
  constant $C$, we may suppose $\xi=\8^\infty$. We claim that the
  inverted orbit of a word $w\in S^*$ coincides with the inverted
  orbit of its reduction $\overline w$. To see this, consider
  $w=w_1\dots w_\ell\in S^*$, a subword $w_j w_{j+1}$ equal to a
  left-hand side of a relation, and the word $w'$ obtained by
  replacing $w_j w_{j+1}$ by the right-hand side of the relation. All
  terms $\xi w_{i+1}\dots w_\ell$ with $i\neq j$ clearly appear both
  in $\mathcal O(w)$ and $\mathcal O(w')$. For the remaining term in
  $\mathcal O(w)$, we have
  $\xi w_{i+1}\dots w_\ell=\xi w_{i+2}\dots w_\ell$ because $w_{i+1}$
  fixes $\xi$, so this term also belongs to $\mathcal O(w')$.

  If $w\in F$ satisfies $\tilde\phi_\omega(w)=\pair<w_0,w_1>\pi$ and
  $\xi=\xi'\9$, then
  \[\mathcal O_\omega(w)\subseteq\mathcal
  O_{\sigma\omega}(w_0)\9\sqcup\mathcal O_{\sigma\omega}(w_1)\8,
  \]
  where the inverted orbits $\mathcal O_{\sigma\omega}$ are computed
  with respect to the basepoint $\xi'$; and similarly if
  $\xi=\xi'\8$. We therefore get
  \[\Delta_\omega(R)\le\max_{R_0+R_1\le2/\eta_\omega(R+\|a\|_\omega)}\big(\Delta_{\sigma\omega}(R_0)+\Delta_{\sigma\omega}(R_1)\big).\]
  The same argument as in Proposition~\ref{prop:upperG} finishes the
  proof of the upper bound.

  For the lower bound, it suffices to exhibit for all $k\in\N$ a word
  of length at most
  $\eta_\omega\cdots\eta_{\sigma^{k-1}\omega}\mu_{\sigma^k\omega}$ and
  inverted orbit of size at least $2^k$. For that purpose, define
  self-substitutions $\zeta_x$ of $\{ab,ac,ad\}^*$, for
  $x\in\{0,1,2\}$, by
  \[\begin{array}{rlll}
    \zeta_0:&ab\mapsto adabac,&ac\mapsto acac,&ad\mapsto adad,\\
    \zeta_1:&ab\mapsto abab,&ac\mapsto abacad,&ad\mapsto adad,\\
    \zeta_2:&ab\mapsto abab,&ac\mapsto acac,&ad\mapsto acadab,
  \end{array}
  \]
  and note that for any word $w\in\{ab,ac,ad\}^*$ representing an
  element of $F$ we have
  \[\tilde\phi_\omega(\zeta_{\omega_0}(w))=\begin{cases}
    \tikz[baseline=1.5mm]{\draw[->] (0,0) -- node[above=-1pt] {\small$w$} +(1.0,0); \draw[->] (0,0.5) -- node[above=-1pt] {\small$a w a$} +(1.0,0);} & \text{ if $\zeta_{\omega_0}(w)$ contains an even number of `$a$',}\\
    \tikz[baseline=1.5mm]{\draw[->] (0,0) -- node[near start,below] {\small$w$} node[near end,above=-1pt] {\small$a$} +(1.0,0.5); \draw[->] (0,0.5) -- node[near start,above=-1pt] {\small$a$} node[near end,below] {\small$w$} +(1.0,-0.5);} & \text{ if $\zeta_{\omega_0}(w)$ contains an odd number of `$a$'}.
  \end{cases}
  \]
  In particular, $\zeta_{\omega_0}$ induces a homomorphism
  $G_{\sigma\omega}\to G_\omega$.

  By induction, we see that for any non-trivial $w\in\{ab,ac,ad\}^*$
  (representing an element of $G_{\sigma^k\omega}$) we have
  \[\Delta_\omega(\zeta_{\omega_{0}}\cdots\zeta_{\omega_{k-1}}(w))\ge2^k.\]

  Note then that, if $Z\in\N^3$ count the numbers of $ab,ac,ad$
  respectively in $w$, then $M_x^{\mathrm t}Z$ counts the numbers of
  $ab,ac,ad$ respectively in $\zeta_x(w)$. Indeed without loss of
  generality consider $x=0$; then every $ab$ in $w$ contributes one
  each of $ab,ac,ad$ to $\zeta_x(w)$, while every $ac$ and $ad$ in $w$
  contributes two copies of itself to $\zeta_x(w)$.

  Let $as\in\{ab,ac,ad\}$ be such that $\|as\|_{\sigma^k\omega}$ is
  minimal --- recall the notation $p_\omega$ from~\eqref{eq:pomega}:
  if $p_{\sigma^k\omega}=(\beta,\gamma,\delta)$ and
  $\beta\le\gamma,\delta$ then $s=b$, etc. Let $W$ be the basis vector
  in $\R^3$ with a `$1$' at the position which $as$ has in
  $\{ab,ac,ad\}$: if $\beta\le\gamma,\delta$ then $W=(1,0,0)^{\mathrm
    t}$, etc. Set
  $w=\zeta_{\omega_{0}}\cdots\zeta_{\omega_{k-1}}(as)$. We have
  $\Delta_\omega(w)\ge2^k$, and
  \begin{align*}
    \|w\|_\omega&= Z^t p_\omega = W^{\mathrm t}M_{\omega_{k-1}}\cdots M_{\omega_{0}}p_\omega\\
    &= \eta_\omega\cdots\eta_{\sigma^{k-1}\omega}W^{\mathrm t}p_{\sigma^k\omega} = \eta_\omega\cdots\eta_{\sigma^{k-1}\omega}\mu_{\sigma^k\omega}.\qedhere
  \end{align*}
\end{proof}

Finally, let us introduce the ``choice of inverted orbits growth''
function, first generally for a group $G$, with given metric
$\|\cdot\|$, acting on a set $X$ with basepoint $\xi$:
\[\Sigma(R):=\#\{\mathcal O(w):\|w\|\le R\}.\]
This function counts the number of subsets that may occur as inverted
orbit of a word of length at most $R$. Since $\mathcal O(w)$ is a
subset of cardinality at most $R+1$ of the Schreier graph $X$, and
furthermore lies in the ball of radius $R$ about $\xi$ in $X$, we get
the crude estimate $\Sigma(R)\le\binom{v_{X,\xi}(R)+R}{R}$ based on
the growth function $v_{X,\xi}$ of balls centered at $\xi$ in the
graph $X$. However, in the particular case of the groups $G_\omega$,
we can do better:
\begin{prop}\label{prop:invchoices}
  There is an absolute constant $D$ such that for all
  $\omega\in\Omega'$ and all $k\in\N$ we have
  \[\Sigma_\omega(\eta_\omega\cdots\eta_{\sigma^{k-1}\omega}\mu_{\sigma^k\omega})\le D^{2^k}.\]
\end{prop}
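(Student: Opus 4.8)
The plan is to mimic the recursive estimate used in Proposition~\ref{prop:upperG} and in the upper bound of Proposition~\ref{prop:invgrowth}, but now applied to the function $\Sigma_\omega$ counting \emph{choices} of inverted orbits rather than their sizes. The key structural input is the same branching relation: if $\xi=\xi'\9$ and $w\in F$ satisfies $\tilde\phi_\omega(w)=\pair<w_0,w_1>\pi$, then $\mathcal O_\omega(w)\subseteq\mathcal O_{\sigma\omega}(w_0)\9\sqcup\mathcal O_{\sigma\omega}(w_1)\8$ (and symmetrically if $\xi=\xi'\8$). Thus the inverted orbit of $w$ is determined by the ordered pair of inverted orbits of $w_0$ and $w_1$ together with the single bit recording whether $\pi$ is trivial. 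By Lemma~\ref{lem:supercontract}, $\|w_0\|_{\sigma\omega}+\|w_1\|_{\sigma\omega}\le\frac2{\eta_\omega}(\|w\|_\omega+\|a\|_\omega)$, so every inverted-orbit-choice of a word of norm $\le R$ arises from a pair of choices for words of norms $R_0,R_1$ with $R_0+R_1\le\frac2{\eta_\omega}(R+\|a\|_\omega)$. This gives the fundamental inequality
\[
\Sigma_\omega(R)\le 2\!\!\!\sum_{R_0+R_1\le\frac2{\eta_\omega}(R+\|a\|_\omega)}\!\!\!\Sigma_{\sigma\omega}(R_0)\,\Sigma_{\sigma\omega}(R_1),
\]
where the extra factor $2$ absorbs the choice of the permutation bit.

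Next I would convert the sum into a product bound exactly as in the proof of Proposition~\ref{prop:upperG}: introduce $\beta_i(R)=c(R+2)\,\Sigma_{\sigma^i\omega}(R+\mu_{\sigma^i\omega})$ for a suitable absolute constant $c$, pass (via Lemma~\ref{lem:concavehull}, i.e.\ the log-concave majorand trick, which the paper already permits one to invoke) to log-concave majorants so that $\beta_i(R_0)\beta_i(R_1)\le\beta_i((R_0+R_1)/2)^2$, and use Lemma~\ref{lem:etamu} ($\eta_\omega\mu_{\sigma\omega}\ge\mu_\omega+\|a\|_\omega$) to check the arithmetic that $\beta_i(R)\le\beta_{i+1}(R/\eta_{\sigma^i\omega})^2$. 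The polynomial prefactor $(R+2)$ is harmless because it at most squares under each step while we are dividing the argument by $\eta_{\sigma^i\omega}>2$, and it is swallowed into the constant after finitely many iterations; this is the same bookkeeping already carried out in Proposition~\ref{prop:upperG}. Iterating $k$ times gives
\[
\Sigma_\omega(\eta_\omega\cdots\eta_{\sigma^{k-1}\omega}\mu_{\sigma^k\omega})\le\beta_0(\eta_\omega\cdots\eta_{\sigma^{k-1}\omega}\mu_{\sigma^k\omega})\le\beta_k(\mu_{\sigma^k\omega})^{2^k}.
\]

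Finally I would bound the base term $\beta_k(\mu_{\sigma^k\omega})$, i.e.\ $\Sigma_{\sigma^k\omega}(2\mu_{\sigma^k\omega})$ up to the polynomial factor, by an absolute constant. This follows from Lemma~\ref{lem:gamma(Amu)} applied with $A=2$: there are boundedly many group elements of norm $\le 2\mu_{\sigma^k\omega}$ in $G_{\sigma^k\omega}$, hence boundedly many words up to that norm realizing distinct inverted orbits (each inverted orbit is determined by the group elements $\xi w_{i+1}\cdots w_\ell$, all of which lie within a bounded ball), so $\Sigma_{\sigma^k\omega}(2\mu_{\sigma^k\omega})\le B'$ uniformly in $\omega,k$; absorbing the polynomial factor in $\mu_{\sigma^k\omega}<\tfrac13$ changes nothing. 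Taking $D$ large enough that $\beta_k(\mu_{\sigma^k\omega})\le D$ for all $\omega\in\Omega'$ and all $k$ yields the claimed bound $\Sigma_\omega(\eta_\omega\cdots\eta_{\sigma^{k-1}\omega}\mu_{\sigma^k\omega})\le D^{2^k}$. The only genuinely delicate point is the same one flagged in Proposition~\ref{prop:invgrowth}: one should phrase the recursion over the monoid $R=S^*/\{b^2=c^2=d^2=1,bc=d,\dots\}$ rather than over $G_\omega$, since inverted orbits are a function of the word only through its image in $R$, and verify that reduction in $R$ does not change the inverted orbit — exactly the lemma already proved there. With that in hand the argument is a routine repetition of the Proposition~\ref{prop:upperG} machinery, and I expect no further obstacle.
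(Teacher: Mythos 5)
Your proposal follows essentially the same route as the paper's proof, which is itself extremely terse: establish the recursive inequality
\[\Sigma_\omega(\eta_\omega R)\le\sum_{R_0+R_1\le \tfrac2{\eta_\omega}(R+\|a\|_\omega)}\Sigma_{\sigma\omega}(R_0)\,\Sigma_{\sigma\omega}(R_1)\]
by combining the branching containment of inverted orbits with Lemma~\ref{lem:supercontract}, and then invoke ``the same argument as in Proposition~\ref{prop:upperG}.'' You have spelled out the bookkeeping that the paper leaves implicit (the log-concave majorand via Lemma~\ref{lem:concavehull}, the base case via Lemma~\ref{lem:gamma(Amu)}, the remark that inverted orbits factor through the monoid $R$), and this is a faithful and correct unpacking; the extra factor $2$ you insert for the permutation bit is harmless, since the constant prefactor $18(R+2)$ in the $\alpha_i$/$\beta_i$ already absorbs bounded multiplicative constants. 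One point worth noting for your own benefit: the statement that the inverted orbit of $w$ is ``determined'' by the ordered pair $(\mathcal O_{\sigma\omega}(w_0),\mathcal O_{\sigma\omega}(w_1))$ together with a permutation bit is inherited from the paper and is a slight simplification --- the $\9$- and $\8$-slices of $\mathcal O_\omega(w)$ need not literally equal $\mathcal O_{\sigma\omega}(w_0)$ and $\mathcal O_{\sigma\omega}(w_1)$; they are inverted orbits about a basepoint translated by a bounded amount, so one again only gets the recursion up to an absolute constant factor, which the machinery tolerates. This imprecision is present in the source, not introduced by you, and does not change the conclusion.
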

\begin{proof}
  Consider $w\in G_\omega^*$ with
  $\tilde\phi_\omega(w)=\pair<w_0,w_1>$. The inverted orbit of $w$ is
  determined by the inverted orbits of $w_0$ and $w_1$, two words of
  total $\sigma\omega$-length at most
  $2/\eta_\omega(\|w\|_\omega+\|a\|_\omega)$ by
  Lemma~\ref{lem:supercontract}. Therefore,
  \[\Sigma_\omega(\eta_\omega R)\le\sum_{R_0+R_1\le \tfrac2{\eta_\omega}
    (R+\|a\|_\omega)}\Sigma_{\sigma\omega}(R_0)\Sigma_{\sigma\omega}(R_1),\]
  and the same argument as in Proposition~\ref{prop:upperG} applies.
\end{proof}

\subsection{\boldmath The growth of $W_\omega(H)$}
We start by general estimates on the growth of a permutational wreath
product:
\begin{prop}\label{prop:growthW}
  Let $H$ be a group with growth function $v_H$, and suppose that
  $v_H$ is log-concave.

  Let $G$ be a group acting transitively on a set $X$ with basepoint
  $\xi$, and let $v_G$ denote the growth function of $G$.  Denote the
  inverted orbit growth of $G$ on $(X,\xi)$ by $\Delta$, and denote
  its inverted orbit choice growth by $\Sigma$.

  Consider the wreath product $W=H\wr_X G$, generated by $S\cup T@\xi$
  for the generating sets $S,T$ of $G,H$ respectively. Then
  \begin{align*}
    & v_G(R) v_H(R/\Delta(R))^{\Delta(R)}\le v_W(3R),\\
    v_W(R) \le & v_G(R) v_H(R/\Delta(R))^{\Delta(R)}(2R)^{\Delta(R)}\Sigma(R),\\
    v_W(R) \le & v_G(R) (\#H)^{\Delta(R)}\Sigma(R)\text{ if $H$ is finite}.
  \end{align*}
\end{prop}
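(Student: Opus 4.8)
The plan is to work throughout with the description of an element $w\in W$ as a pair $(c,g)$, $w=cg$ with $c\colon X\to H$ finitely supported and $g\in G_\omega$, together with the ``lamplighter'' dictionary of~\S\ref{ss:genwreath}: if $w$ is spelled as a word $\tau_0\sigma_1\tau_1\cdots\sigma_\ell\tau_\ell$ over $S\sqcup T@\xi$, with $\sigma_i\in S$ and each $\tau_i$ a word over $T@\xi$ spelling an element $h_i\in H$, then $g$ is the image of $u:=\sigma_1\cdots\sigma_\ell$, the support of $c$ lies in the inverted orbit $\mathcal O(u)$, and $c(p)$ is the product of those $h_i$ for which $\xi\,\sigma_{i+1}\cdots\sigma_\ell=p$. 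Consequently, for a \emph{geodesic} spelling one has $\|u\|\le\|w\|$ and $\sum_{p}\|c(p)\|_H\le\|w\|$, where $\|\cdot\|$ without subscript denotes word length in $W$.

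For the two upper bounds I would fix a geodesic spelling of each $w$ with $\|w\|\le R$ and map $w\mapsto(g,\mathcal O(u),c)$, which is injective because $(g,c)$ already recovers $w=cg$. Then $g$ ranges over at most $v_G(R)$ elements; $\mathcal O(u)$ is the inverted orbit of a word of length $\le R$, hence one of at most $\Sigma(R)$ subsets, of size $m\le\Delta(R)$; and, for a fixed such $\mathcal O$, the admissible $c$ are the maps $\mathcal O\to H$ with $\sum_{p\in\mathcal O}\|c(p)\|_H\le R$. If $H$ is finite there are at most $(\#H)^m\le(\#H)^{\Delta(R)}$ of them, which gives the third inequality at once. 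In general I would first choose the integer norms $(\|c(p)\|_H)_{p\in\mathcal O}$ — at most $(R+1)^m\le(2R)^{\Delta(R)}$ ways — and then the values: for fixed norms $r_p$ there are at most $\prod_{p}v_H(r_p)$ maps, and padding $\mathcal O$ with $\Delta(R)-m$ fictitious points of norm $0$ (harmless since $v_H(0)=1$) and applying Jensen's inequality to the concave function $\log v_H$ bounds $\prod_p v_H(r_p)$ by $v_H\big(\tfrac1{\Delta(R)}\sum_p r_p\big)^{\Delta(R)}\le v_H(R/\Delta(R))^{\Delta(R)}$, using that $v_H$ is nondecreasing. Multiplying the three bounds gives the second inequality; this is the only place log-concavity of $v_H$ is used.

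For the lower bound I would proceed in two stages. First, choose a word $u=\sigma_1\cdots\sigma_\ell$ of length $\le R$ with $\#\mathcal O(u)=\Delta(R)$, write $\mathcal O(u)=\{p_1,\dots,p_m\}$ with $m=\Delta(R)$, and pick for each $j$ an index $i_j$ with $\xi\,\sigma_{i_j+1}\cdots\sigma_\ell=p_j$ (the $i_j$ are pairwise distinct since the $p_j$ are). For any $h_1,\dots,h_m\in B_{H,T}(R/\Delta(R))$, inserting a geodesic word for $h_j$ into $u$ right after $\sigma_{i_j}$ produces, by the dictionary above, an element $w=c\,g_0$ with $g_0$ the image of $u$, $c(p_j)=h_j$, $c\equiv1$ off $\{p_1,\dots,p_m\}$, and $\|w\|\le\|u\|+\sum_j\|h_j\|_H\le 2R$; these $v_H(R/\Delta(R))^{\Delta(R)}$ elements all have the same $G$-part $g_0$. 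Second, right-translate each such $w$ by an arbitrary $g\in B_{G,S}(R)$: then $wg=c\,(g_0g)$ has length $\le 3R$, still has lamp configuration $c$, and has $G$-part $g_0g$, so — $g_0$ being fixed — the assignment $(c,g)\mapsto wg$ is injective and its image, of size $v_G(R)\,v_H(R/\Delta(R))^{\Delta(R)}$, lies in $B_W(3R)$.

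The bookkeeping in the two upper-bound displays is routine. The two places that genuinely require attention are: (i) that the lamps sit at the points of the \emph{inverted} orbit of $u$ rather than its forward orbit — precisely the warning issued in~\S\ref{ss:genwreath}, and the reason the inverted orbit growth $\Delta$ and its choice function $\Sigma$ are the right invariants; and (ii) the padding-and-Jensen manoeuvre, which is exactly what produces the exponent $\Delta(R)$ and the argument $R/\Delta(R)$ — rather than $m$ and $R/m$ — in the second inequality.
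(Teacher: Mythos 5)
Your proof is correct and follows the same decomposition as the paper's: an element of norm $R$ is split into its $G$-part, the support of its lamp configuration (a subset of the inverted orbit of the $S$-subword of a geodesic spelling, hence counted by $\Sigma$ and of size $\le\Delta(R)$), and the $H$-values on that support; for the lower bound one fixes a word realizing $\Delta(R)$, inserts $H$-generators along it, and right-translates by $B_{G,S}(R)$. The only cosmetic difference is that you pad the support up to size $\Delta(R)$ before applying Jensen, whereas the paper bounds $\prod v_H(\ell_i)\le v_H(R/k)^k$ for the actual support size $k\le\Delta(R)$ and then notes that $k\mapsto v_H(R/k)^k$ is nondecreasing (another consequence of log-concavity together with $v_H(0)=1$); both manoeuvres are correct and equivalent.
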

Note that the assumption that $v_H$ be log-concave is mild, thanks to
Lemma~\ref{lem:concavehull}.
\begin{proof}
  We begin by the lower bound. For every $R\in\N$, consider a word
  $w\in G^*$ of norm $\le R$ realizing the maximum $\Delta(R)$; write
  $\mathcal O(w)=\{x_1,\dots,x_k\}$ for $k=\Delta(R)$. Choose then $k$
  elements $a_1,\dots,a_k$ of norm $\le R/k$ in $A$. Define $f\in
  \sum_X H$ by $f(x_i)=a_i$, all unspecified values being $1$. Then
  $w f\in W$ may be expressed as a word of norm $R+|a_1|+\dots+|a_k|\le
  2R$ in the standard generators of $W$, by inserting
  $a_1@\xi,\dots,a_k@\xi$ appropriately into the word $w$.

  Furthermore, different choices of $a_i$ yield different elements of
  $W$. Finally multiplying $w f$ with an arbitrary $g\in G$ of length
  at most $R$, we obtain $v_G(R) v_H(R/k)^k$ elements in the ball of
  radius $3R$ in $W$.

  For the upper bound, consider a word $w$ of norm $R$ in $W$, and
  let $f\in\sum_X H$ denote its value in the base of the wreath
  product. The support of $f$ has cardinality at most $\Delta(R)$, and
  may take at most $\Sigma(R)$ values.

  Write then $\sup(f)=\{x_1,\dots,x_k\}$ for some $k\le\Delta(R)$, and
  let $a_1,\dots,a_k\in H$ be the values of $w$ at its support; write
  $\ell_i=\|a_i\|$.

  Since $\sum\ell_i\le R$, the norms of the different elements on the
  support of $f$ define a composition of a number not greater than $R$
  into at most $k$ summands; such a composition is determined by $k$
  ``marked positions'' among $R+k$, so there are at most
  $\binom{R+k-1}{k}$ possibilities, which we bound crudely by
  $(2R)^k$. Each of the $a_i$ is then chosen among $v_H(\ell_i)$
  elements, and (by the assumption that $v_H$ is log-concave) there
  are $\prod v_H(\ell_i)\le v_H(R/k)^k$ total choices for the elements
  in $H$. 

  We have now decomposed $w$ into data that specify it uniquely, and
  we multiply the different possibilities for each of the pieces of
  data. Counting the possibilities for the value of $w$ in $G$, the
  possibilities for its support in $X$, and the possibilities for the
  elements in $H$ on its support, we get
  \[v_W(R)\precsim v_G(R)v_A(R/k)^k(2R)^k\Sigma(R),\]
  which is maximised by $k=\Delta(R)$.

  Finally, if $H$ is finite then we may more simply bound the possible
  values of $f$ by $(\#H)^k$.
\end{proof}

\begin{cor}\label{cor:growthW}
  Let $G,H$ be groups of subexponential growth. Let $G$ act
  transitively on a set $X$ with basepoint $\xi$, with sublinear
  inverted orbit growth and subexponential inverted orbit choice
  growth. Then the wreath product $W=H\wr_X G$ has subexponential
  growth.\qed
\end{cor}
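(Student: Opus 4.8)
The plan is to deduce the corollary directly from the upper bound in Proposition~\ref{prop:growthW}, by taking logarithms, dividing by $R$, and letting $R\to\infty$; showing that the resulting expression tends to $0$ is precisely the assertion that $v_W$ is subexponential.

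First I would arrange that the hypotheses of Proposition~\ref{prop:growthW} apply: if $v_H$ is not log-concave, replace it by the log-concave majorant of $\log v_H$ furnished by Lemma~\ref{lem:concavehull} (applicable since $\log v_H$ is sublinear, $H$ having subexponential growth); this majorant is pointwise $\ge v_H$ and still subexponential, and enlarging $v_H$ only weakens the upper bound, so we may assume $v_H$ log-concave. Writing $k=\Delta_{}(R)$ and keeping the genuine binomial count of compositions that appears inside the proof of Proposition~\ref{prop:growthW} (rather than its crude majorant $(2R)^k$), we get
\[\log v_W(R)\le \log v_G(R)+k\log v_H(R/k)+\log\tbinom{R+k-1}{k}+\log\Sigma_{}(R).\]
I would then handle the four terms after dividing by $R$. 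The term $\tfrac1R\log v_G(R)\to 0$ because $G$ has subexponential growth, and $\tfrac1R\log\Sigma_{}(R)\to 0$ because the inverted orbit choice growth is subexponential by hypothesis. For $\tfrac{k}{R}\log v_H(R/k)$: sublinearity of $\Delta$ gives $k/R\to 0$, hence $R/k\to\infty$; writing $\log v_H(n)=\varepsilon(n)\,n$ with $\varepsilon(n)\to 0$, this term equals $\varepsilon(R/k)\to 0$. For $\tfrac1R\log\binom{R+k-1}{k}$: using $\binom{R+k}{k}\le(2eR/k)^{k}$ (valid since $k\le R$), this is at most $x\log(2e/x)$ with $x=k/R$, which tends to $0$ as $x\to 0^+$. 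Summing, $\limsup_{R\to\infty}\tfrac1R\log v_W(R)\le 0$, i.e.\ $v_W\precnsim\exp(R)$.

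The only point requiring any care — and the reason I would unfold the proof of Proposition~\ref{prop:growthW} slightly rather than quote its displayed inequality verbatim — is the composition-counting factor: the crude bound $(2R)^{\Delta(R)}$ contributes $\tfrac{\Delta(R)}{R}\log(2R)$, which need not vanish for a general sublinear $\Delta$ (e.g.\ $\Delta(R)=R/\sqrt{\log R}$), whereas the actual count $\binom{R+\Delta(R)-1}{\Delta(R)}$ is subexponential in $R$ exactly because $x\log(1/x)\to 0$. Everything else is a routine assembly of the subexponential-growth hypotheses, so I expect no further obstacle.
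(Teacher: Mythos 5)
Your argument is correct, and it fills a real (if small) gap: the paper gives no proof of this corollary (it is marked with a terminal \qed), and the displayed inequality of Proposition~\ref{prop:growthW} --- with its coarse factor $(2R)^{\Delta(R)}$ --- does not by itself yield the conclusion under the mere hypothesis that $\Delta$ is sublinear, exactly as you point out: with $\Delta(R)=R/\sqrt{\log R}$ (or $R/\log\log R$) the quantity $\tfrac{\Delta(R)}{R}\log(2R)$ diverges. Reinstating the binomial $\binom{R+\Delta(R)-1}{\Delta(R)}$ that appears inside the proof of the proposition and bounding $\tfrac1R\log\binom{R+k}{k}\le x\log(2e/x)$ with $x=\Delta(R)/R\to0$ (using $k\le R$, which holds since the inverted orbit of a word of length $\le R$ has at most $R+1$ points) is the right way to exploit sublinearity at its stated generality. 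The treatment of the other three terms, and the passage to a log-concave majorant of $v_H$ via Lemma~\ref{lem:concavehull} to legitimise the use of the proposition, are both correct and in line with the paper's conventions.
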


\begin{proof}[Proof of Theorem~\ref{thm:wreathgrowth}]
  Assume that $H$ has subexponential growth; then by
  Lemma~\ref{lem:concavehull} there exists a log-concave
  subexponentially growing function $v_H$ bounding the growth of $H$
  from above.

  By Propositions~\ref{prop:invgrowth} and~\ref{prop:invgrowth}, the
  function $\Delta$ is sublinear and the function $\Sigma$ is
  subexponential. By Proposition~\ref{prop:upperG}, the growth of
  $G_\omega$ is subexponential. Corollary~\ref{cor:growthW} then
  shows that $W_\omega(H)$ has subexponential growth.
\end{proof}

\noindent In the special case of $H$ finite and $G=G_\omega$,
Proposition~\ref{prop:growthW} gives the
\begin{cor}\label{cor:global}
  Let $H$ be a non-trivial finite group. There are then two absolute
  constants $F,E>1$ such that the growth function $v$ of
  $W_\omega(H)=H\wr_X G_\omega$ satisfies
  \[E^{2^k}\le v(\eta_\omega\cdots\eta_{\sigma^{k-1}\omega}\mu_{\sigma^k\omega})\le F^{2^k}.\]
\end{cor}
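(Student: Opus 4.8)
The plan is to read off both bounds from Proposition~\ref{prop:growthW} applied to $G=G_\omega$ acting on $X=\xi G_\omega$, evaluated at the scale $R_k:=\eta_\omega\cdots\eta_{\sigma^{k-1}\omega}\mu_{\sigma^k\omega}$, using the three estimates that are already available precisely at this scale: $v_{G_\omega}(R_k)\le B^{2^k}$ (Proposition~\ref{prop:upperG}), $2^k\le\Delta_\omega(R_k)\le C\,2^k$ (Proposition~\ref{prop:invgrowth}), and $\Sigma_\omega(R_k)\le D^{2^k}$ (Proposition~\ref{prop:invchoices}). Since $H$ is finite and non-trivial, $v_H$ may be taken log-concave (replace it by its log-concave majorant, Lemma~\ref{lem:concavehull}), so Proposition~\ref{prop:growthW} is applicable.

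For the upper bound I would simply substitute $R=R_k$ into the third (``$H$ finite'') inequality of Proposition~\ref{prop:growthW}:
\[v(R_k)\le v_{G_\omega}(R_k)\,(\#H)^{\Delta_\omega(R_k)}\,\Sigma_\omega(R_k)\le B^{2^k}\cdot(\#H)^{C\,2^k}\cdot D^{2^k}=\bigl(B\,(\#H)^{C}\,D\bigr)^{2^k},\]
so one takes $F:=B\,(\#H)^{C}\,D$. This half is a pure computation.

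For the lower bound I would follow the proof of the first inequality of Proposition~\ref{prop:growthW} rather than its statement. Fix $h\neq1$ in $H$, which I may assume to be a generator. Take the word $w\in G_\omega^{*}$ of $\omega$-length $\le R_k$ with $\#\mathcal O_\omega(w)\ge2^k$ constructed in the proof of the lower bound of Proposition~\ref{prop:invgrowth}; attaching, independently to each of these $\ge2^k$ inverted-orbit points, either $1$ or $h$, and inserting these decorations into $w$, produces at least $2^{2^k}$ pairwise distinct elements of $W_\omega(H)$. Their $\omega$-length is at most $R_k$ plus the total cost of the decorations, which is at most $\Delta_\omega(R_k)\le C\,2^k$ times the bounded cost of one insertion; and this overhead is controlled because the construction word has at least $2^k$ syllables --- each application of the substitutions $\zeta_{\omega_i}$ at least doubles the number of $ab,ac,ad$ blocks, by the matrix estimates in the proof of Proposition~\ref{prop:invgrowth} --- each of $\omega$-weight $\ge\mu_\omega$, whence $R_k\ge2^k\mu_\omega$. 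Thus $2^{2^k}$ elements lie in a ball of $\omega$-radius $C_\omega R_k$ for some $C_\omega$ depending only on $\omega$, and the remaining task is to bring this radius back down to $R_k$ itself so as to obtain an absolute $E>1$ with $v(R_k)\ge E^{2^k}$.

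The main obstacle is exactly that last reduction in the lower bound: the $\Theta(2^k)$ cost of the decorations is of the same order as $R_k$ (the bound $R_k\ge2^k\mu_\omega$ cannot in general be improved to make $R_k$ of larger order than $2^k$, for suitable $\omega\in\Omega'$), so one cannot simply absorb it into a multiplicative change of radius without also using how the scales $R_k$ are spaced; carrying this out carefully --- e.g.\ by decorating only as many orbit points as fit inside the target radius, and choosing the index so that the bound is met at $R_k$ --- is where the detailed bookkeeping of~\cite{bartholdi-erschler:givengrowth}*{\S3} is needed. Everything else, including the upper bound and the production of exponentially many decorated elements, is routine once Propositions~\ref{prop:upperG}, \ref{prop:invgrowth} and~\ref{prop:invchoices} are available.
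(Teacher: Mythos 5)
Your upper bound is exactly the paper's argument: substitute $R_k:=\eta_\omega\cdots\eta_{\sigma^{k-1}\omega}\mu_{\sigma^k\omega}$ into the ``$H$ finite'' inequality of Proposition~\ref{prop:growthW} and use Propositions~\ref{prop:upperG}, \ref{prop:invgrowth} and~\ref{prop:invchoices} at that scale. That half is complete and correct.

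For the lower bound, you are also on the paper's route --- the paper's one-line proof (``take together \dots\ the conclusion follows from Proposition~\ref{prop:growthW}'') implicitly invokes the first inequality $v_{G_\omega}(R)\,v_H(R/\Delta(R))^{\Delta(R)}\le v_W(3R)$ at $R=R_k$ --- but the gap you flag is genuine and is not resolved by the paper's terse statement either. To get $v_H(R_k/\Delta_\omega(R_k))\ge2$ one needs $R_k\ge\Delta_\omega(R_k)\ge2^k$, and indeed the $\Theta(2^k)$ decoration cost is of the same order as $R_k$ only when $\mu_{\sigma^k\omega}$ is bounded away from $0$; for general $\omega\in\Omega'$ the paper provides no such uniform bound (Lemma~\ref{lem:mu_k bounded} only supplies it for the sequences of the special form~\eqref{eq:omega} used in the proof of Theorem~\ref{thm:givengrowth}, and it is automatic in Theorem~\ref{thm:grigWgrowth} where $\omega=(012)^\infty$). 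There is also the factor $3$ in the radius of Proposition~\ref{prop:growthW}'s lower bound, which must be absorbed by comparing $3R_k$ to a nearby $R_{k'}$; this again needs the scales $R_k$ to grow geometrically at a rate bounded away from $1$, which follows from $\eta>2$ only if the ratios $\mu_{\sigma^k\omega}/\mu_{\sigma^{k-1}\omega}$ are controlled. So your assessment is accurate: the upper bound is routine; the lower bound requires a lower bound on the $\mu$'s (or a restriction on $\omega$) that the corollary's statement does not make explicit, and which neither your sketch nor the paper's proof supplies. In the paper's actual applications this missing hypothesis is always satisfied, so the corollary is used correctly even though its standalone proof is incomplete at exactly the point you identify.
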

\begin{proof}
  Take together the upper bound on the growth of $G_\omega$ from
  Proposition~\ref{prop:upperG}, the bounds on the inverted orbit
  growth from Proposition~\ref{prop:invgrowth}, and the choices for
  the inverted orbits from Proposition~\ref{prop:invchoices}. The
  conclusion follows from Proposition~\ref{prop:growthW}.
\end{proof}

\begin{proof}[Proof of Theorem~\ref{thm:grigWgrowth}]
  This follows directly from Corollary~\ref{cor:global}, using the
  fact that $\eta_{\sigma^i\omega}=\eta_+$ for all $i\in\N$.
\end{proof}

\subsection{Proof of Theorem~\ref{thm:givengrowth}}
Our approach will be construct, out of the function $f$
satisfying~\eqref{eq:gdoubling}, a sequence
$\omega\in\{0,012\}^\infty$ with long stretches of $0$ when $f$ grows
fast, and long stretches of $012$ when $f$ grows slowly.

We start by introducing some shorthand notation.  For a finite
sequence $\omega=\omega_0\dots\omega_{n-1}\in\{0,1,2\}^n$ and
$p\in\Delta$, we write by extension
\[\overline M_\omega=\overline M_{\omega_0}\cdots\overline M_{\omega_{n-1}}\colon\Delta\righttoleftarrow\]
and
\[\eta(p,\omega_0\dots\omega_{n-1})=\eta(p,\omega_0)\eta(\overline M_{\omega_0}p,\omega_1)\cdots\eta(\overline M_{\omega_0\dots\omega_{n-2}}p,\omega_{n-1}).\]

For $\omega\in\{0,1,2\}^\Z$, recall the construction of
$p_\omega\in\Delta$ from~\eqref{eq:pomega}, and
$\eta_\omega=\eta(p_\omega,\omega_0)$ and
$\mu_\omega=\mu(p_\omega)$. Assume that a sequence
$\omega\in\{0,1,2\}^\Z$ is under construction, and that there exists
$k\in\N$ such that $\omega_i$ has been determined for all $i\le
k$.
Then $p_\omega$, $\eta_\omega$ and $\mu_\omega$ are determined, and so
are $p_{\sigma^i\omega},\eta_{\sigma^i\omega},\mu_{\sigma^i\omega}$
for all $i\le k$. We abbreviate
\[p_i=p_{\sigma^i\omega},\qquad \eta_i=\eta_{\sigma^i\omega},\qquad\mu_i=mu_{\sigma^i\omega}.\]

\begin{lem}\label{lem:mu_k bounded}
  If the restriction to $\N$ of the sequence $\omega$ has the form
  \begin{equation}\label{eq:omega}
    \omega=(012)^{i_1}2^{j_1}(012)^{i_2}2^{j_2}(012)^{i_3}2^{j_3}\dots,
  \end{equation}
  with $i_1,j_1,i_2,j_2,\dots\ge1$, then the $\mu_k$ are all bounded
  away from $0$.
\end{lem}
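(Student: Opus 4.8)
The plan is to unravel the recursion $p_{\sigma^{k+1}\omega}=\overline M_{\omega_k}(p_{\sigma^k\omega})$: reading $\omega|_\N$ as in~\eqref{eq:omega} from left to right, the sequence $(p_k)_{k\ge0}$ (abbreviating $p_k:=p_{\sigma^k\omega}$, so $\mu_k=\mu(p_k)$) is produced by applying $\overline M_0,\overline M_1,\overline M_2$ in exactly the prescribed order, starting from $p_0=p_\omega$. Since $\mu(p)>0$ exactly when no coordinate of $p$ vanishes, it suffices to produce a positive lower bound, independent of $k$, for $\min(\beta,\gamma,\delta)$ along this orbit.

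The technical heart --- and the step I expect to be the main obstacle --- is the long stretches $2^{j_m}$, i.e.\ the iterates $\overline M_2^n(p)$. A naive appeal to the strict contraction of Lemma~\ref{lem:birkhoff} will not do, because $\overline M_2$ pushes points towards the boundary face $\{\delta=\tfrac12\}$, so the iterates do \emph{not} stay in a compact subset of the open simplex. What rescues us is the explicit form $\overline M_2(\beta,\gamma,\delta)=\tfrac{1}{3-2\delta}(2\beta,2\gamma,1)$: the ratio $\beta:\gamma$ is invariant, while the last coordinate evolves by $\delta\mapsto\tfrac1{3-2\delta}$, a self-map of $[0,\tfrac12)$ that increases monotonically to its attracting fixed point $\tfrac12$. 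Writing $\overline M_2^n(p)=(\beta_n,\gamma_n,\delta_n)$, I would read off $\beta_n=\tfrac{\beta}{\beta+\gamma}(1-\delta_n)$, $\gamma_n=\tfrac{\gamma}{\beta+\gamma}(1-\delta_n)$ and $\delta_0\le\delta_n<\tfrac12$ with $\delta_n\ge\tfrac13$ for $n\ge1$; consequently every coordinate of $\overline M_2^n(p)$ is at least $\tfrac12\min(\beta,\gamma,\delta)$, for all $n\ge0$. In words: no run of $2$'s, however long, can more than halve $\min(\beta,\gamma,\delta)$.

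Next I would record two elementary facts about single steps: (i) if every coordinate of $p$ is $\ge m$, then every coordinate of $\overline M_\lambda(p)$ is $\ge\tfrac23m$, since the normalising factor $3-2\cdot(\text{coord})$ lies in $[2,3]$; and (ii) tracking coordinates through $\overline M_0$, then $\overline M_1$, then $\overline M_2$ --- each of these resets one coordinate to a value $\ge\tfrac13$ --- shows that every coordinate of $\overline M_2\overline M_1\overline M_0(p)$ is $\ge\tfrac{4}{27}$, for every $p\in\overline\Delta$. Now because every $i_m\ge1$, for $k\ge 3i_1$ the point $p_k$ is obtained from a point $q$ all of whose coordinates are $\ge\tfrac{4}{27}$ --- namely the state just after the last complete factor $012$ preceding position $k$, which has this property by (ii) --- by reading a suffix of $\omega|_\N$ that contains no complete $012$, i.e.\ a word of the form $2^a$, $2^a0$ or $2^a01$ with $a\ge0$. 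Feeding $q$ through $\overline M_2^a$ keeps all coordinates $\ge\tfrac12\cdot\tfrac{4}{27}$ by the previous paragraph, and each subsequent $\overline M_0$ or $\overline M_1$ multiplies this bound by $\ge\tfrac23$ by (i); hence $\mu_k\ge\tfrac12\cdot\bigl(\tfrac23\bigr)^2\cdot\tfrac{4}{27}=\tfrac{8}{243}$ for all $k\ge 3i_1$.

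Finally I would dispose of the finitely many indices $0\le k<3i_1$: there $p_k=\overline M_{\omega_{k-1}}\cdots\overline M_{\omega_0}(p_\omega)$, and since $p_\omega$ lies in the open simplex (as $\omega\in\Omega'$, by the remark after~\eqref{eq:pomega}) and each $\overline M_\lambda$ maps the open simplex into itself, each such $\mu_k$ is a fixed positive number; the asserted lower bound is then the minimum of these finitely many values together with $\tfrac{8}{243}$. (If one wants the constant uniform over all admissible $\omega$ --- the natural requirement in the construction behind Theorem~\ref{thm:givengrowth} --- one additionally takes $\omega|_{-\N}$ built from $012$-patterns, e.g.\ periodic, so that $p_\omega$ already has all coordinates $\ge\tfrac{4}{27}$ by (ii) and no exceptional indices remain.)
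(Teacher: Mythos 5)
Your proof is correct, and it is organised around a genuinely different mechanism than the one the paper uses, even though both hinge on the same two structural observations: that $\overline M_2\overline M_1\overline M_0$ sends the whole simplex into a compact region of $\Delta$, and that iterating $\overline M_2$ cannot push $\mu$ to zero because it preserves the ratio $\beta:\gamma$ while driving $\delta$ monotonically towards $\tfrac12$. The paper exploits the \emph{projectivity} of the maps: it computes $\overline M_{012}(\Delta)$ as the explicit triangle with vertices $(\tfrac13,\tfrac13,\tfrac13)$, $(\tfrac27,\tfrac27,\tfrac37)$, $(\tfrac4{17},\tfrac6{17},\tfrac7{17})$, observes that $\bigcup_{i\ge0}\overline M_{2^i}$ of that triangle lies in an explicit quadrilateral, and reads off $\mu_k\in(\tfrac15,\tfrac13)$ by checking the concave function $\mu$ at the extreme points. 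You instead track \emph{coordinatewise} lower bounds, using the closed-form $\overline M_2(\beta,\gamma,\delta)=\tfrac1{3-2\delta}(2\beta,2\gamma,1)$ and the crude one-step estimate that $\overline M_\lambda$ degrades $\min(\beta,\gamma,\delta)$ by at most a factor $\tfrac23$. What the paper's approach buys is a sharp constant ($\tfrac15$ versus your $\tfrac8{243}$) and a clean picture; what yours buys is that every index $k$ is handled explicitly. Indeed the paper's written argument literally only bounds $\mu_k$ when $k$ falls at the end of a $012$ or somewhere inside a $2^{j_m}$ stretch, and then asserts the bound ``for all $k$''; positions in the middle of a $012$ block (suffix $2^a0$ or $2^a01$ in your notation) are left unverified there. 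Your decomposition of the suffix after the last complete $012$ into $2^a$, $2^a0$, $2^a01$, together with step (i), closes exactly that gap, so your version is in fact more complete on this point. One tiny inaccuracy: the bound $\tfrac8{243}$ already applies for all $k\ge3$, not merely $k\ge3i_1$, since $p_3=\overline M_2\overline M_1\overline M_0(p_0)$ already has all coordinates $\ge\tfrac4{27}$; the exceptional indices are only $k\in\{0,1,2\}$. This is harmless for the statement, which only asserts a positive $\omega$-dependent lower bound, and your closing parenthetical about choosing $\omega|_{-\N}$ to be $012$-periodic correctly observes that even these exceptional indices disappear in the setting where the lemma is actually applied.
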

\begin{proof}
  In fact, the image of
  $\overline M_{012}$ is the open triangle spanned by
  $(\frac13,\frac13,\frac13)$, $(\frac27,\frac27,\frac37)$ and
  $(\frac4{17},\frac6{17},\frac7{17})$, so after each $012$
  the $\mu_k$ belongs to $(\frac4{17},\frac13)$.

  The image of that triangle under $\overline M_{2^i}$ is
  contained in the convex quadrilateral spanned by
  $(\frac13,\frac13,\frac13)$, $(\frac4{17},\frac6{17},\frac7{17})$,
  $(\frac14,\frac14,\frac12)$ and $(\frac15,\frac3{10},\frac12)$, so
  $\mu_k\in(\frac15,\frac13)$ for all $k$.
\end{proof}

The heart of the argument is the following lemma, which shows that
$\eta$ approaches very quickly its limiting values $2$ and $\eta_+$ as
the sequence $\omega$ contains long segments of $2$ or of $012$:
\begin{lem}\label{lem:expconv}
  There exist constants $A'\le1$, $B'\ge1$ such that, 
  \begin{enumerate}
  \item\label{lem:expconv1} For all $p\in\Delta$ and all $n\in\N$,
    \[\eta(p,(012)^n)\ge\eta_+^{3n}A';\]
  \item\label{lem:expconv2} For all $p\in\Delta$ and all $n\in\N$,
    \[\eta(p,2^n)\le2^n B'.\]
  \end{enumerate}
\end{lem}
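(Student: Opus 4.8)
The plan is to regard both statements as estimates on the multiplicative cocycle
\[\eta(p,\omega_0\cdots\omega_{n-1})=\prod_{j=0}^{n-1}\eta\bigl(\overline M_{\omega_0\cdots\omega_{j-1}}p,\ \omega_j\bigr),\]
so that $\log\eta(p,\cdot)$ is a Birkhoff-type sum of the bounded, real-analytic function $q\mapsto\log\eta(q,\lambda)$ along the orbit of $p$ under the projective maps $\overline M_{\omega_j}$. Using the cocycle identity $\eta(p,uv)=\eta(p,u)\,\eta(\overline M_u p,v)$ one groups the product into blocks, reducing each claim to a single map iterated: for \ref{lem:expconv1} the relevant map is $\overline M_{012}$, a strict contraction with a fixed point $p_*$ at which the per-period factor equals $\eta_+^3$; for \ref{lem:expconv2} the map is $\overline M_2$, whose last barycentric coordinate obeys a one-dimensional recursion solvable in closed form.

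I would dispose of \ref{lem:expconv2} first, as it is purely computational. Writing $\overline M_2^{\,i}p=(\beta_i,\gamma_i,\delta_i)$, the matrix $M_2$ gives at once $\eta(\overline M_2^{\,i}p,2)=\lVert M_2\overline M_2^{\,i}p\rVert_1=3-2\delta_i$ and $\delta_{i+1}=1/(3-2\delta_i)$, hence $\eta(p,2^n)=\prod_{i=0}^{n-1}(3-2\delta_i)=1/(\delta_1\cdots\delta_n)$. Since $p\in\Delta$ forces $\delta_0\in(0,\tfrac12)$, the substitution $\varepsilon_i=\tfrac12-\delta_i$ turns the recursion into $\varepsilon_{i+1}=\varepsilon_i/\bigl(2(1+\varepsilon_i)\bigr)\le\varepsilon_i/2$, so $\varepsilon_i\le 2^{-i}\varepsilon_0$ and
\[2^n\,\delta_1\cdots\delta_n=\prod_{i=0}^{n-1}\frac1{1+\varepsilon_i}\ \ge\ \exp\Bigl(-\sum_{i\ge0}\varepsilon_i\Bigr)\ \ge\ e^{-2\varepsilon_0}\ >\ e^{-1}.\]
Therefore $\eta(p,2^n)<e\cdot 2^n$, and one may take $B'=e$, uniformly in $p$ and $n$.

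For \ref{lem:expconv1} put $g(q)=\eta(q,012)$; each of its three factors lies in $(2,3)$ on $\Delta$, so $g$ takes values in $(8,27)$ and $\log g$ is bounded above and below, with $\log\eta(p,(012)^n)=\sum_{m=0}^{n-1}\log g(\overline M_{012}^{\,m}p)$. By Lemma~\ref{lem:birkhoff} (in its quantitative, Birkhoff form), $\overline M_{012}$ contracts the Hilbert metric on the compact triangle $K\subset\Delta$ equal to the closure of $\overline M_{012}(\Delta)$ — whose vertices are computed in the proof of Lemma~\ref{lem:mu_k bounded} — by a uniform ratio $\lambda<1$; its fixed point $p_*$ satisfies $g(p_*)=\eta_+^3$, since the three factors of $\eta(p_*,012)$ are the values $\eta_{\sigma^i(012)^\infty}=\eta_+$ ($i=0,1,2$) already identified at the end of \S\ref{ss:ssgrowth}. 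For $m\ge1$ the iterate $\overline M_{012}^{\,m}p$ lies in $K$ and is within $\lambda^{m-1}\operatorname{diam}K$ of $p_*$, and since $\log g$ is Lipschitz on $K$ for the Hilbert metric this yields $\bigl|\log g(\overline M_{012}^{\,m}p)-\log\eta_+^3\bigr|\le C\lambda^{m-1}$, while the $m=0$ term is at least $\log(8/27)$. Summing the geometric series,
\[\log\eta(p,(012)^n)\ \ge\ 3n\log\eta_+-\Bigl(\log\tfrac{27}{8}+\tfrac{C}{1-\lambda}\Bigr),\]
so $A':=\min\bigl\{1,\ \tfrac{8}{27}\,e^{-C/(1-\lambda)}\bigr\}$ works, uniformly in $p$ and $n$ (the cases $n=0$ being trivial).

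The main obstacle is the uniformity in \ref{lem:expconv1}: one must know that $\overline M_{012}$ pushes all of $\Delta$ into a fixed compact subset of the open simplex and contracts the Hilbert metric there by a ratio $\lambda<1$ independent of the basepoint, so that the orbit $\overline M_{012}^{\,m}p$ approaches $p_*$ at a \emph{geometric} rate — mere convergence, as used in~\eqref{eq:pomega}, would not make the error series summable. This is precisely the quantitative strengthening of Lemma~\ref{lem:birkhoff}; once it and the Lipschitz bound for $\log g$ on $K$ are in place, the rest is bookkeeping with convergent geometric series, and \ref{lem:expconv2} presents no difficulty beyond reading off the scalar recursion for $\delta_i$.
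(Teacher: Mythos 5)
Your argument is correct. For part~\eqref{lem:expconv1} it is essentially the same proof as the paper's: pick a compact $K\supseteq\overline M_{012}(\Delta)$, use that $\overline M_{012}$ contracts the Hilbert metric on $K$ by some uniform ratio (the paper takes $\rho$-Lipschitz on $\mathcal U:=\overline M_{012}(\Delta)$, you take $\lambda$-contraction on $\overline{\mathcal U}$), that $\log\eta(\cdot,012)$ is Lipschitz on $K$, and that the fixed point carries the value $\eta_+^3$; the remaining error is a convergent geometric series, while the $m=0$ term is absorbed by the crude bound $\eta(p,012)\in(2^3,3^3)$. Both you and the paper quote the uniform contraction rate without proof; it is indeed the quantitative refinement of Lemma~\ref{lem:birkhoff} and follows from compactness of $\overline{\mathcal U}\subset\Delta$, so you are not importing anything extra.

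For part~\eqref{lem:expconv2} your route is genuinely different and, I think, preferable. The paper treats it exactly like~\eqref{lem:expconv1}: $\overline M_{2^k}(p)$ converges geometrically to a limit $p_\infty$ on the face $\{\delta=\tfrac12\}$, where $\eta(\cdot,2)=2$, and the derivative of $\log\eta(\cdot,2)$ is bounded near that face; this gives an implicit constant $B'=3\exp(D/(1-\rho))$. You instead observe that the $\overline M_2$-dynamics collapses to the one-dimensional recursion $\delta_{i+1}=1/(3-2\delta_i)$, so that $\eta(p,2^n)=1/(\delta_1\cdots\delta_n)$ identically, and then the substitution $\varepsilon_i=\tfrac12-\delta_i$ linearises the problem and yields the closed-form bound $\eta(p,2^n)<e\cdot 2^n$ with $B'=e$. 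This is more elementary (no Lipschitz constants or compactness arguments needed), it produces an explicit uniform constant, and it sidesteps the mildly awkward fact that $p_\infty$ depends on $p$ and lies on $\partial\Delta$, where the Hilbert metric degenerates.
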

\begin{proof}
  Let $\mathcal U$ denote the image of $\overline M_{012}$, and let
  $p_+\in\mathcal U$ denote the fixed point of $\overline
  M_{012}$. Note first that $\eta(-,012)$ is differentiable at $p_+$,
  and that $\overline M_{012}$ is uniformly contracting on $\mathcal
  U$; let $\rho<1$ be such that $\overline M_{012}$ is
  $\rho$-Lipschitz on $\mathcal U$, and let $D$ be an upper bound for
  the derivative of $\log\eta(-,012)$ on $\mathcal U$. Recall that
  $\eta_+^3=\eta(p_+,012)$.

  For all $k\in\N$, write $p_k=\overline M_{(012)^k}(p)$. For $k\ge1$
  we have $\|p_k-p_+\|\le\rho^{k-1}$, so
  $|\log\eta(p_k,012)-3\log\eta_+|<D\rho^{k-1}$, while for $k=0$ we
  write $|\log\eta(p,012)-3\log\eta_+|<3\log3$. Therefore,
  \[|\log\eta(p,(012)^n)-3n\log\eta_+|\le
  3\log3+\sum_{k=1}^{n-1}|\log\eta(p_k,012)-3\log\eta_+| \le
  3\log3+D/(1-\rho)
  \]
  is bounded over all $n$ and $p$. The estimate~\eqref{lem:expconv1}
  follows, with $A'=3^3\exp(D/(1-\rho))$.

  For the second part, consider $p_k=\overline M_{2^k}(p)$, and note
  that $p_k$ converges at exponential (Euclidean) speed to a point
  $p_\infty$ on the side $\{\delta=\frac12\}$ of $\partial\Delta$,
  since $\partial\overline
  M_2(\beta,\gamma,\delta)/\partial\delta(*,*,\frac12)=\frac12$; so we
  have $\|p_k-p_\infty\|<\rho^{k-1}$ for some $\rho<1$. As above,
  $\eta(-,2)$ is differentiable in a neighbourhood $\mathcal U$ of
  $\{\delta=\frac12\}$, and the derivative of $\log\eta(-,2)$ is
  bounded on $\mathcal U$, say by $D$. Recall that $\eta(p,2)=2$ for
  all $p\in\partial\Delta$. The same computation as above
  yields~\eqref{lem:expconv2}, with $B'=3\exp(D/(1-\rho))$.
\end{proof}

We now reformulate the statement of Theorem~\ref{thm:givengrowth} as
follows. Set $g(R)=\log f(R)$, so that we have
\begin{equation}\label{eq:g}
  g(2R)\le 2g(R)\le g(\eta_+R)
\end{equation}
for all $R$ large enough. For simplicity (since growth is only an
asymptotic property) we assume that~\eqref{eq:g} holds for all $R$.
Without loss of generality (since we are allowed to replace $g$ by an
equivalent function), we also assume that $g$ is increasing and
satisfies $g(1)=1$.

We are ready to construct $\omega$. Fix arbitrarily the value of
$\omega$ on its negative part, say
$\omega_{|-\N}=(012)^{-\infty}$. This determines an initial metric
$p_0\in\Delta$. Out of the function $g$, we will construct a sequence
$\omega$ such that, for constants $A,B$, we have
\begin{equation}\label{eq:AB}
  A\le \frac{g(\eta(p_0,\omega_0\dots\omega_{k-1}))}{2^k}\le B\text{ for
  all }k;
\end{equation}
in fact, it will suffice to obtain this inequality for a set of values
$k_0,k_1,\dots$ of $k$ such that $\sup_i(k_{i+1}-k_i)<\infty$. Indeed,
the orbit $p_i$ of $p_0$ in $\Delta'$ will remain bounded, so we will
have $\mu(p_i)\in[C,1]$ for some $C>0$. By Corollary~\ref{cor:global},
\begin{align*}
  E^{2^k}\le v(\eta(p_0,\omega_0\dots\omega_{k-1})\mu_k)\le &v(B g^{-1}(2^k)),\\
  &v(A C g^{-1}(2^k))\le v(\eta(p_0,\omega_0\dots\omega_{k-1})\mu_k)\le F^{2^k}
\end{align*}
and therefore $v(R)\sim \exp(g(R))=f(R)$.

\begin{proof}[Proof of~\eqref{eq:AB}]
  We will extend the sequence $\omega$ to be, on the positive
  integers, of the form~\eqref{eq:omega},
  \[\omega=(012)^{i_1}2^{j_1}(012)^{i_2}2^{j_2}(012)^{i_3}2^{j_3}\dots,
  \]
  with $i_1,j_1,i_2,j_2,\dots\ge1$. The $\mu_k$ are bounded away from
  $0$ by Lemma~\ref{lem:mu_k bounded}.

  Assuming by induction that $\omega'=\omega_0\dots\omega_{k-1}$ has
  been constructed, we repeat the following:
  \begin{itemize}
  \item while $g(\eta(p_0,\omega'))<2^k$, we append $012$ to
    $\omega'$;
  \item while $g(\eta(p_0,\omega'))>2^k$, we append $2$ to $\omega'$.
  \end{itemize}
  For our induction hypothesis, we assume that the stronger condition
  \[\frac122^k\le g(\eta(p_0,\omega'))\le 2^k\]
  holds for each $k$ of the form $i_1+j_1+\dots+i_m+j_m$, and that
  \[2^k\le g(\eta(p_0,\omega'))\le3^32^k\] holds for each $k$ of the form
  $i_1+j_1+\dots+i_m$; these conditions apply whenever $\omega$ is a
  product of `syllables' $(012)^{i_t}$ and $2^{j_t}$.

  Consider first the case $\frac122^k\le g(\eta(p_0,\omega'))\le2^k$;
  and let $n$ be minimal such that
  $g(\eta(p_0,\omega'(012)^n))>2^{k+3n}$. Then, for all
  $i\in\{1,\dots,n\}$, Lemma~\ref{lem:expconv}\eqref{lem:expconv1}
  gives
  $\eta(p_0,\omega'(012)^i)\ge\eta(p_0,\omega')\eta_+^{3i}A'$.
  Let $u\in\N$ be minimal such that $A'\ge\eta_+^{-u}$; this, combined
  with $g(\eta_+R)\ge2g(R)$, gives
  \[g(\eta(p_0,\omega'(012)^i))\ge g(\eta(p_0,\omega')\eta_+^{3i-u})\ge
  2^{-1-u}2^{k+3i}.
  \]
  By minimality of $n$, we have
  $g(\eta(p_0,\omega'(012)^{n-1}))\le2^{k+3(n-1)}$; since $g$
  is sublinear and $\eta\le3$, we get
  \[g(\eta(p_0,\omega'(012)^n))\le3^32^{k+3n}.\]

  Consider then the case $2^k\le g(\eta(p_0,\omega'))\le3^32^k$, which
  is similar, and let $n$ be minimal such that
  $g(\eta(p_0,\omega'2^n))<2^{k+n}$. Then, for all
  $i\in\{1,\dots,n\}$, Lemma~\ref{lem:expconv}\eqref{lem:expconv2}
  gives $\eta(p_0,\omega'2^i)\le\eta(p_0,\omega')2^i B'$; this,
  combined with $g(2R)\le2g(R)$, gives
  \[g(\eta(p_0,\omega'2^i))\le3^32^{k+i}B'.
  \]
  By minimality of $n$, we have
  $g(\eta(p_0,\omega'2^{n-1}))\ge2^{k+n-1}$; since $g$ is
  increasing, we get
  \[g(\eta(p_0,\omega'2^n))\ge\frac122^{k+n}.
  \]
  We have proved the claim~\eqref{eq:AB}, with
  $A=2^{-1-u}$ and $B=3^3B'$.
\end{proof}

\begin{rem}\label{rem:solvablewp}
  The construction of $\omega$ from $f$ is algorithmic, in the
  following sense. The initial point $p_0$ may be computed to
  arbitrary precision by an algorithm. If there exists an algorithm
  that computes values of $f$, then there exists an algorithm that,
  with $k\in\N$ as input, computes
  $g(\eta(p_0,\omega_0\dots\omega_{k-1}))$ to arbitrary precision; so
  there exists an algorithm that computes the digits of $\omega$.

  It then follows via Theorem~\ref{thm:cornulier} that the groups
  $G_\omega$ and $W_\omega(H)=H\wr_X G_\omega$ are recursively
  presented, for recursively presented $H$.
\end{rem}

\subsection{Illustrations}\label{ss:illustrations}
We now consider illustrations of Theorem~\ref{thm:givengrowth}, and
examples of growth functions that may occur for groups
$W_\omega(C_2)$. In fact, the examples can be constructed in both
directions: either choose a ``nice'' function $f$ that
satisfies~\eqref{eq:gdoubling}, or choose a ``nice'' sequence $\omega$
and estimate the corresponding growth using
Corollary~\ref{cor:global}, Lemma~\ref{lem:mu_k bounded} and
Lemma~\ref{lem:expconv}. We follow both approaches.

\begin{itemize}
\item For every $\alpha\in[\log2/\log\eta_+,1]$, there exists a group
  of growth $\sim\exp(R^\alpha)$. Furthermore, for a dense set of
  $\alpha$ in that interval, there exists a periodic sequence $\omega$
  such that $W_\omega(C_2)$ has growth $\sim\exp(R^\alpha)$.

  For every $\alpha\le\beta\in[\log2/\log\eta_+,1)$, one may construct
  a function $f$ satisfying~\eqref{eq:gdoubling} that coincides, on
  arbitrarily large intervals, sometimes with the function
  $\exp(R^\alpha)$ and sometimes with the function
  $\exp(R^\beta)$. Therefore, there exists a group whose growth
  function accumulates both at $\exp(R^\alpha)$ and at
  $\exp(R^\beta)$. This recovers a result by Brieussel,
  see~\cite{brieussel:growth}.

\item there exists groups of growth $\sim\exp(R/\log R)$, of growth
  $\sim\exp(R/\log\log R)$, of growth $\exp(R/\log\cdots\log R)$.

\item Consider conversely the sequence
  $\omega=(012)2^1(012)2^2(012)2^3(012)2^4\dots$. Among the first $k$
  entries, approximately $\sqrt k$ instances of $012$ will have been
  seen; therefore
  $\eta(p_0,\omega_0\dots\omega_{k-1})\approx 2^{k+\mathcal O(1)\sqrt
    k}$. This gives a growth function of the order of
  \[\exp\big(R/\exp(\mathcal O(1)\sqrt{\log R})\big).\]

  Consider next the sequence
  $\omega=(012)2^1(012)2^2(012)2^4(012)2^8\dots$. Among the first $k$
  entries, approximately $\log k$ instances of $012$ will have been
  seen; therefore $\eta(p_0,\omega_0\dots\omega_{k-1})\approx
  2^{k+\mathcal O(1)\log k}$. This gives a growth function of the
  order of
  \[\exp\big(R/(\log R)^{\mathcal O(1)}\big).\]
  
  Consider further the sequence
  $\omega=(012)2^{2^1}(012)2^{2^2}(012)2^{2^4}(012)2^{2^8}\dots$.
  Among the first $k$ entries, approximately $\log\log k$ instances of
  $012$ will have been seen; therefore
  $\eta(p_0,\omega_0\dots\omega_{k-1})\approx 2^{k+\mathcal
    O(1)\log\log k}$. This gives a growth function of the order of
  \[\exp\big(R/(\log\log R)^{\mathcal O(1)}\big).\]

  These constructions generalise easily to give ``nice'' sequences
  $\omega$ such that $W_\omega(C_2)$ has growth of the order of
  $\exp(R/(\log\cdots\log R)^{\mathcal O(1)})$.

\item Consider the Ackermann function
  \[A(m,n)=\begin{cases}
    n+1 & \text{ if }m=0,\\
    A(m-1,1) & \text{ if $m>0$ and }n=0,\\
    A(m-1,A(m,n-1)) & \text{ if $m>0$ and }n>0,
  \end{cases}
  \]
  and set $B(n)=A(n,n)$.  Then there exists a group of growth
  $\sim\exp(R/B(R)^{-1})$; this last growth function is faster than
  any subexponential primitive-recursive function.

  Considering
  $\omega=(012)2^{A(0,0)}(012)2^{A(1,1)}(012)2^{A(2,2)}\dots$ gives a
  group $W_\omega$ whose growth function is subexponential, but at
  least as fast as $\exp(R/B(R)^{-1})$;
\end{itemize}

%%%%%%%%%%%%%%%%%%%%%%%%%%%%%%%%%%%%%%%%%%%%%%%%%%%%%%%%%%%%%%%%
\section{Imbeddings and subgroups}\label{ss:imbed}
A classical result by Higman, Neumann and
Neumann~\cite{higman-n-n:embed} states that every countable group
imbeds in a finitely generated group. It was then shown that many
properties of the group can be inherited by the imbedding: in
particular, solvability (Neumann-Neumann~\cite{neumann-n:embed}),
torsion (Phillips~\cite{phillips:embedding}), residual finiteness
(Wilson~\cite{wilson:embeddingrf}), and amenability
(Olshansky-Osin~\cite{olshanskii-osin:qiembedding}).

Seen the other way round, these results show that there is little
restriction, apart from being countable, on the subgroups of a
finitely generated group; even if that group is furthermore assumed to
be residually finite, amenable, or solvable.

On the other hand, very strong restrictions exist on the subgroups of
a virtually nilpotent group: they are all finitely generated, for
example.

Since by Gromov's Theorem~\ref{thm:gromov} the finitely generated
virtually nilpotent groups are precisely the groups of polynomial
growth, we naturally ask what conditions are imposed on the subgroups
of a group of subexponential word growth. As we shall see, there are
essentially none.

In particular, there are torsion branched groups of subexponential
growth such as the group $G_\omega$. They contain iterated wreath
products, by Proposition~\ref{prop:branched=>fullwreath}, so contain
infinitely generated subgroups.

Let us say that a group has \emph{locally subexponential growth} if
all of its finitely generated subgroups have subexponential growth.
Clearly, if $G$ has subexponential growth then all its subgroups have
locally subexponential growth. This is the only restriction, and the
objective of this section is to prove the following result:
\begin{thm}[\cite{bartholdi-erschler:imbeddings}]\label{thm:imbed}
  Let $B$ be a group. Then there exists a finitely generated group of
  subexponential growth in which $B$ imbeds as a subgroup if and only
  if $B$ is countable and locally of subexponential growth.
\end{thm}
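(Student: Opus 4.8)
The ``only if'' direction is immediate: a finitely generated group of subexponential growth is countable, and any finitely generated subgroup of a subgroup of it is a finitely generated subgroup of the ambient group, hence of subexponential growth; so $B$ is countable and locally of subexponential growth. All the content is in the converse. Given such a $B$, fix an enumeration and set $B_n=\langle b_1,\dots,b_n\rangle$, so that $B=\bigcup_{n\ge1}B_n$ is an increasing union of finitely generated subgroups, each of subexponential growth; using Lemma~\ref{lem:concavehull} fix log-concave subexponential functions $v_n$ dominating the growth of $B_n$. When $B$ is \emph{finitely} generated there is nothing to do beyond Theorem~\ref{thm:wreathgrowth}: for any $\omega\in\Omega'$ the group $W_\omega(B)=B\wr_X G_\omega$ is finitely generated, has subexponential growth, and contains a copy of $B$ as $B@\xi$ inside its base group. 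So the whole difficulty --- and the main obstacle --- is to handle an \emph{infinitely} generated $B$ while keeping the ambient group finitely generated, since a wreath product $B\wr_X\Gamma$ is never finitely generated once $B$ is not.

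The plan is to replace the single wreath product by a finitely generated self-similar group $W$ in which the generators $b_1,b_2,\dots$ of $B$ are spread along the tree: one works with a sequence $\omega$ obtained from a Grigorchuk-type sequence over $\{0,1,2\}$ by inserting, at a sparse increasing set of positions, marks that trigger a modification of the wreath recursion, of the shape (long Grigorchuk block)(insert $b_1$)(long Grigorchuk block)(insert $b_2$)$\cdots$; $W$ is then generated by the Grigorchuk-type generators together with one further generator whose decorations deep in the tree realize, at the marked levels, the elements $b_n$. Two features must be checked. First, $W$ contains $B$: here the point is not only that the finite generating set of $W$ unfolds, via the recursion, into all the $b_n$, but that the branched structure of $W$ (in the spirit of Proposition~\ref{prop:Gbranched}) lets one superimpose these scattered copies so that the planted elements satisfy the relations of $B$ itself and not merely those of $\bigoplus_n B_n$; thus $B$ sits faithfully inside $W$. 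Verifying this non-degeneracy is the delicate bookkeeping core of the argument, and is where one must consult~\cite{bartholdi-erschler:imbeddings} for the precise recipe.

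Second, and this is the main quantitative task, $W$ must have subexponential growth, which is carried out with the machinery of \S\ref{ss:perm}. One checks that $W$ is still contracting, with a supercontraction inequality in the spirit of Lemma~\ref{lem:supercontract}, that its inverted-orbit growth $\Delta$ on the (line-like) Schreier graph $X=\xi W$ is sublinear and its inverted-orbit-choice growth $\Sigma$ subexponential --- exactly as in Propositions~\ref{prop:invgrowth} and~\ref{prop:invchoices} --- and then applies Proposition~\ref{prop:growthW} in the version where the fibre group attached to the $n$-th zone is the finitely generated subexponential group $B_n$ with growth majorant $v_n$. For the resulting estimate to come out subexponential the insertions must be sufficiently sparse: one inserts $b_{n+1}$ only after enough pure ``$012$'' blocks have accumulated --- these have contraction factor $\eta_+>2$, so they dilute the earlier contributions geometrically --- and the required block lengths are chosen in terms of the majorants $v_n$ by the same interpolation scheme as in the proof of Theorem~\ref{thm:givengrowth}. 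Combining the faithful imbedding with the growth bound yields a finitely generated group $W$ of subexponential growth containing $B$, which is the assertion.
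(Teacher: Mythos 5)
Your general instinct --- plant the generators $b_n$ at sparse locations so that the ambient group stays finitely generated and the growth stays subexponential --- is the right spirit, but the proposal has two genuine gaps, and the step you defer to~\cite{bartholdi-erschler:imbeddings} is precisely where the argument lives. The paper does \emph{not} modify the wreath recursion of a self-similar group. It fixes the Grigorchuk group $P=G_\omega$ once and for all, and plants the $b_i$ as values of a single function $f\colon X\to G$ at sparse points $x_{n(i)}$ of the Schreier orbit $X=\xi P$, taking $W_{(n)}=\langle P,f\rangle$ inside the \emph{unrestricted} wreath product $G\wrwr_X P$. The mechanism that turns the scattered fragments of $f$ into a genuine copy of a group is not branching but \emph{rectifiability} (Definition~\ref{def:rectifiable}): for suitable $g_i,g_j\in P$ the commutator $[f^{g_i},f^{g_j}]$ has singleton support $\{x_0\}$ with value $[b_i,b_j]$ there (Lemma~\ref{lem:contains [G,G]}). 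Crucially this isolates only $[G,G]$, not $G$, so one must first embed $B$ into the derived subgroup of a countable group $G$ that is \emph{still} locally of subexponential growth; that is Proposition~\ref{prop:imbed'}, and it is a substantial piece of work in its own right, built on finite-valued permutational wreath products $H\wrf_X G$, the ``hyper-$B$'' machinery (Lemmas~\ref{lem:hyperhyper} and~\ref{lem:hyperB}), and a careful set-theoretic section $\sigma\colon T\to B$ (Lemma~\ref{lem:section}). Your proposal contains no analogue of this reduction, and without it the planted generators would give you a restricted direct sum rather than $B$.

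The growth bound is also not obtained from a version of Proposition~\ref{prop:growthW} with $n$-dependent fibres --- no such statement exists in the paper. What is actually used (Proposition~\ref{prop:Wconvergence}) is that, if $(x_i)$ is spreading and locally stabilizing and the planted generators are normalised to have a common order, then the positions $n(i)$ can be chosen so sparse that the ball of radius $m(i)$ in $W_{(n)}$ \emph{coincides} with the ball of radius $m(i)$ in the finitely supported truncation $W_{n(1),\dots,n(i)}$, which is a finitely generated subgroup of $G\wr_X P$ and therefore has subexponential growth by Corollary~\ref{cor:growthW}; subexponentiality of $W_{(n)}$ then follows by a diagonal limit. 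This approximation mechanism, rather than an $\eta_+$-contraction estimate as in the proof of Theorem~\ref{thm:givengrowth}, is what makes the sparsity pay off. It is far from clear that either the faithfulness of the imbedding or the growth estimate would survive your proposed level-dependent modification of the wreath recursion, whose basic structural properties (self-similarity, contraction, branching) you do not reverify.
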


For example, this implies that there exists a group of subexponential
growth containing $\Q$ as a subgroup. I do not know any explicit such
example of group.

\subsection{Neumann's proof}
By way of motivation, we start with the classical result by Higman and
the Neumanns:
\begin{thm}[Higman, B.H. Neumann and H. Neumann~\cite{higman-n-n:embed}]\label{thm:hnn}
  Every countable group imbeds in a finitely generated group.
\end{thm}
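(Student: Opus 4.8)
The plan is to use the classical "Higman–Neumann–Neumann" trick, which embeds a countable group $B=\{b_0=1,b_1,b_2,\dots\}$ into a $2$-generated group via an HNN-type construction, or more concretely via wreath products, which fits the spirit of these notes. First I would take the restricted wreath product $B\wr\Z=\big({\prod_\Z}'B\big)\rtimes\langle s\rangle$, where $s$ is a generator of $\Z$, and inside it consider the elements that will generate a finitely generated overgroup. Writing $\beta=b_1@0\cdot b_2@1\cdot b_3@2\cdots$ — careful: this is an infinite product and not an element of the \emph{restricted} wreath product, so instead one works the other way: embed $B$ diagonally and use $s$ to "spread out" the copies.

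The cleaner approach I would carry out: let $W=B\wr\Z$ with base group $\bigoplus_{n\in\Z}B_n$, each $B_n\cong B$, and let $B$ sit as $B_0$. Set $u$ to be the shift $s$. Then $\langle B_0,s\rangle$ is already all of $W$ is false — rather, $\langle B_0, s\rangle$ generates $W$ only if $B$ is finitely generated. To handle a \emph{countably infinite} generating set of $B$, I would use a second $\Z$-direction: form $G=(B\wr\Z)\wr\Z$, or more efficiently use the standard presentation trick. Concretely, take the free product $B*\Z*\Z = B*\langle s\rangle *\langle t\rangle$ and quotient by relations forcing $s^{-n}b_n s^{n}$ (for an enumeration $b_1,b_2,\dots$ of $B$) to all lie in the subgroup $\langle t \rangle$-conjugates appropriately — the key combinatorial point, due to Higman–Neumann–Neumann, is that one can arrange a finitely generated group $G=\langle x,y\rangle$ in which the infinitely many elements $b_1,b_2,\dots$ become $\{y^{-n}xy^{n}\}$-conjugates of a single element, so that all of $B$ is generated by finitely many elements of $G$ together with the relations already holding in $B$.

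The concrete steps, in order: (1) enumerate $B=\{1,b_1,b_2,\dots\}$; (2) build the permutational wreath product $B\wr_{\Z}\Z$ (or iterate once more) and locate inside it a copy of $B$ together with two elements $x,y$; (3) check, using the normal form for elements of a wreath product from~\eqref{eq:wreathmult}, that the subgroup $H=\langle x,y\rangle$ contains every $b_n$ (because conjugating the "seed" element by powers of the shift $y$ produces the successive $b_n@n$, and a second generator allows collapsing these back onto a common coordinate), hence $B\subseteq H$; (4) conversely observe $H$ is by construction $2$-generated, so $B$ imbeds in the finitely generated group $H$. The verification in step (3) is where the work lies: one must show no unintended collapses occur, i.e.\ that the natural map $B\to H$ is injective, which follows because $B$ embeds as a base-group coordinate in the ambient wreath product and the wreath product projects onto that coordinate.

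The main obstacle is step (3): arranging the two generators $x,y$ so that $\langle x,y\rangle$ simultaneously (a) contains a full isomorphic copy of the infinitely generated group $B$ and (b) does not acquire extra relations that would make the copy of $B$ degenerate. The wreath-product framework makes this manageable because injectivity of $B\hookrightarrow B\wr_X\Z$ into the base is automatic, and the ``spreading'' element (the shift) has infinite order with all its powers acting freely on the relevant coordinate set; so the only real content is the bookkeeping that shows the two chosen elements generate a subgroup large enough to recover each $b_n$. Once that is in place, finite generation is immediate and the theorem follows. I would remark in passing that stronger forms — preserving solvability, torsion, amenability, or subexponential growth, the last being exactly Theorem~\ref{thm:imbed} — require replacing the ambient wreath product by a more carefully engineered one, which is the subject of the rest of this section.
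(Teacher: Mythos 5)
Your proposal has the right general shape — embed $B$ in a wreath product with $\Z$, and use the shift to compress an infinite generating set of $B$ into finitely many generators — and it correctly locates the hard part (your step (3)). But the crucial mechanism is missing, and the one mechanism you do sketch is incorrect. You write that ``conjugating the `seed' element by powers of the shift $y$ produces the successive $b_n@n$''; this cannot happen, since conjugating a fixed element by $y^n$ merely translates its support and never changes the value from $b$ to $b_n$. So the two generators you propose do not recover all of $B$, and you never specify what makes them do so.

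The paper's proof (following Neumann--Neumann) fills this hole with two ideas you do not mention. First, the whole construction factors through the derived subgroup: one shows that every countable $B$ embeds in $[G,G]$ for some countable $G$, and then separately that $[G,G]$ embeds in $[W,W]$ for a $2$-generated $W$. The first embedding uses the function $f_b\colon\Z\to B$, $f_b(m)=b^m$, inside the \emph{unrestricted} wreath product $B\wrwr\Z$; the point is that $[t,f_b]$ is then the constant function $b$, placing $B$ inside $[t,G]$ where $G=\langle t,(f_b)_{b\in B}\rangle$. Second, the passage to a $2$-generated group takes a \emph{single} infinite-support function $f\colon\Z\to G$ with $f(x_i)=b_i$ at a \emph{sparse} sequence $(x_i)$, and verifies that the sparsity condition ($x_i\neq0$, distinct, $x_i+x_j\notin\{0,x_k\}$) forces $[f,f^{u^{-x_i}}]$ to be supported at the single point $0$, with value $[t,b_i]$. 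This is how individual $b_i$ are recovered from $\langle f,u\rangle$: not by conjugating a seed, but by forming commutators of two translates of $f$ whose supports, by design, overlap only at $0$. Note also that both stages live in the unrestricted wreath product $\wrwr$, precisely because $f_b$ and $f$ have infinite support — so your initial instinct to work around the infinite-product obstruction by switching to a ``diagonal'' embedding abandons the very object the argument relies on. Without the commutator-subgroup factorization and the sparse-support commutator trick, the claim that $\langle x,y\rangle$ contains a faithful copy of $B$ remains unproved, and the proposal does not go through as written.
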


We shall not follow the original proof (which proceeds by a sequence
of ``HNN extensions''), but rather that by the two
Neumanns~\cite{neumann-n:embed}, which uses wreath products. It
follows immediately from combining the following two propositions:
\begin{prop}\label{prop:neumann-n:1}
  Every countable group $B$ imbeds in the commutator subgroup
  $[G,G]$ of a countable group $G$.
\end{prop}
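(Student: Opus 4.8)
*Every countable group $B$ imbeds in the commutator subgroup $[G,G]$ of a countable group $G$.*

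The plan is to realize $B$ inside a wreath product, where commutators are abundant and easy to produce, and then to cut down to a countable group. First I would set $G_0 = B \wr_{\Z} \Z$, the restricted wreath product of $B$ with the infinite cyclic group $\langle s\rangle$ acting on itself by translation; this is countable as soon as $B$ is. Using the notation $b@n$ from \S\ref{ss:genwreath} for the function $\Z\to B$ supported at $n\in\Z$ with value $b$, the point is that $[\,b@0,\,s\,] = (b@0)^{-1}\,s^{-1}\,(b@0)\,s = (b^{-1}@0)(b@s^{-1}\cdot 0) = (b^{-1}@0)(b@(-1))$ lies in the base group, but this is not yet an element of the shape $b@0$. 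The standard trick is instead to use two ``displaced copies'': observe that if $f_1 = b@0$ and $f_2$ is $f_1$ translated far away, then $[f_1 t, f_2 t']$-type manipulations inside a slightly larger wreath product produce a single $b@0$. Concretely, I would pass to $G = (B \wr_{\Z}\Z) \wr_{\Z} \Z$, or more economically use the classical computation: in $B\wr\Z$, the element $b@0$ is a commutator $[\,b@0,\; s^{-1}\,]\cdot(\text{correction})$, and the corrections telescope. The cleanest route is the observation (going back to the Neumanns) that in $B\wr(\Z\times\Z)$ one has $b@(0,0) = [\,b@(0,0)\;\text{conjugated by }s_1\,,\; s_2\,]$ for the two commuting generators $s_1,s_2$ of $\Z^2$, because conjugating a finitely supported function by $s_1$ and then taking a commutator with $s_2$ reproduces the original support with the desired value after cancellation along a ``staircase''.

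So the concrete steps are: (1) take $G = B \wr_{\Z^2} \Z^2$, which is countable; (2) identify $B$ with the subgroup $\{\,b@(0,0) : b\in B\,\}$ of the base group — this is an injective homomorphism since $(b@x)(b'@x) = (bb')@x$ and the action on the rest is trivial; (3) exhibit each $b@(0,0)$ as an explicit commutator in $G$. For step (3), let $s_1,s_2$ generate $\Z^2$ and consider $c = \prod_{n\ge 0} b@(n,0)$ — this is \emph{not} finitely supported, so instead one works with finite truncations and lets the wreath product be over a larger index set, or uses the following honest finite identity: $[\,c\,,\,s_2\,]$ where $c = (b@(0,0))^{s_1^{-1}}(b@(0,0))^{s_1^{-2}}\cdots$ again fails finiteness. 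The genuinely finite fix, which I expect to be the technical heart, is to use three generators: in $B\wr(\Z\times\Z)$ set $g = b@(0,0)$ and compute $[\,g^{s_1},\,s_2\,]^{-1}\,[\,g,\,s_2\,] \cdot(\dots)$ so that all but the support at $(0,0)$ cancels; here one must be careful about the side of the action (the warning in \S\ref{ss:genwreath}: when the lamplighter moves, the lamps move the other way), since otherwise stray inverses appear.

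The main obstacle is exactly this bookkeeping: producing $b@(0,0)$ \emph{on the nose} as a single commutator (or a short product of commutators) in a wreath product whose base is still the restricted — hence countable — product. The resolution is that one does not need $b@(0,0)$ itself to be a commutator; it suffices that $B$ imbed into $[G,G]$, so I would instead arrange that the image of $b$ is a commutator by choosing the imbedding $B\to G$ to land in a displaced position that \emph{is} manifestly a commutator. Precisely: in $G = B\wr_{\Z}\Z$ with generator $s$, the element $h_b := [\,b@0,\,s\,] = (b^{-1}@0)(b@(-1))$ satisfies $b\mapsto h_b$ being a homomorphism $B\to[G,G]$ (it is the composite of $b\mapsto b@0$ with the injective map $x\mapsto x^{-1}\cdot{}^{s^{-1}}\!x$ on the base group, which one checks is injective because reading the coordinate at $0$ recovers $b^{-1}$), and its image visibly lies in $[G,G]$. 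Thus take $G = B\wr\Z$, verify $h_b h_{b'} = h_{bb'}$ and that $h_b=1\Rightarrow b=1$ by evaluating at the coordinate $0$, and the proposition follows; the only delicate point, which I would spell out with the explicit multiplication formula \eqref{eq:wreathmult}, is the injectivity of $b\mapsto h_b$ and the verification that it is a homomorphism despite the twisting by $s$.
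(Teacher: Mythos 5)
Your final construction does not work for nonabelian $B$: the map $b\mapsto h_b:=[\,b@0,\,s\,]=(b^{-1}@0)(b@(-1))$ is \emph{not} a homomorphism. Indeed $h_b h_{b'}=(b^{-1}@0)(b@(-1))(b'^{-1}@0)(b'@(-1))=((b^{-1}b'^{-1})@0)((bb')@(-1))$ (the middle factors commute because their supports $\{-1\}$ and $\{0\}$ are disjoint), whereas $h_{bb'}=((b'^{-1}b^{-1})@0)((bb')@(-1))$; these agree only when $b^{-1}b'^{-1}=b'^{-1}b^{-1}$, i.e.\ only when $B$ is abelian. Equivalently, your map ``$x\mapsto x^{-1}\cdot{}^{s^{-1}}\!x$'' on the base group is injective but not a homomorphism, so its restriction to $\{b@0\}\cong B$ does not carry the group structure, and the set $\{h_b\}$ is not a subgroup of $[G,G]$.

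The idea you abandoned early --- functions with infinite support --- is exactly what the paper's proof uses, and it is not a problem: one just works inside the \emph{unrestricted} wreath product $B\wrwr\Z$ (which is uncountable) and takes $G$ to be the countable subgroup generated by $\Z=\langle t\rangle$ and, for each $b\in B$, the function $f_b\colon\Z\to B$ with $f_b(m)=b^m$. A direct computation using~\eqref{eq:wreathmult} shows $[t,f_b]$ is the constant function with value $b$ (the two powers $b^{1-m}$ and $b^m$ commute even when $B$ does not). The constant functions manifestly form a subgroup isomorphic to $B$, and each is a single commutator in $G$; note that one never needs $b\mapsto f_b$ to be a homomorphism. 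You do not need $G$ to be a full wreath product nor the $f_b$ to be finitely supported; you only need $G$ countable, which holds because it is generated by the countable set $\{t\}\cup\{f_b:b\in B\}$.
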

\begin{prop}\label{prop:neumann-n:2}
  For every countable group $G$, there exists a $2$-generated group
  $W$ such that $[G,G]$ imbeds in $[W,W]$.
\end{prop}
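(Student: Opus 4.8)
Proposition~\ref{prop:neumann-n:2} (plan)

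The plan is to realize $[W,W]$ as containing a copy of the *regular* restricted wreath product $G\wr\Z$, since the base group $\prod'_\Z G$ then contains a diagonally-embedded copy of $G$ whose commutators land, after a suitable conjugation trick, inside $[W,W]$; more efficiently, one embeds $[G,G]$ directly. First I would take the classical construction: let $W$ be generated by two elements $u,v$ where $v$ plays the role of a shift of infinite order and $u$ encodes a generating family of $G$ via its ``coordinates'' along the orbit of $v$. Concretely, enumerate $G=\{g_0=1,g_1,g_2,\dots\}$ (allowing repetitions if $G$ is finite, so the list is infinite) and consider the restricted wreath product $G\wr\Z=\big(\prod'_\Z G\big)\rtimes\langle v\rangle$. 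Inside it put $u=\prod_{i\ge1} g_i@i$ — wait, this is not finitely supported; instead one uses the standard device of letting $u=g_1@1$ carry a single generator and using *two* shifts, or equivalently works inside $G\wr F_2$. So: let $F_2=\langle s,t\rangle$ act on itself, form $G\wr F_2$, and observe $G\wr F_2$ is generated by $\{g@1:g\in S\}\cup\{s,t\}$ for a generating set $S$ of $G$; but to get down to $2$ generators when $G$ is infinitely generated, use that $G$ embeds in $G_0:=G\wr\Z$ which is *recursively* presented over a nicer base, and that $G\wr\Z$ is generated by $v$ together with a single element $u$ whose conjugates $u^{v^n}$ range over the coordinate copies — this is the key reduction and is exactly where Proposition~\ref{prop:neumann-n:1}'s ``inside the commutator subgroup'' hypothesis gets used.

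The key steps, in order: (1) Replace $G$ by $G_0=G\wr\Z$; by Proposition~\ref{prop:neumann-n:1}'s philosophy and the formula $(h@x)^g=h@xg$ from \S\ref{ss:genwreath}, $[G,G]$ embeds in $[G_0,G_0]$, and $G_0$ has the feature that it is generated by the shift $v$ together with one further element. (2) Build $W$ as a $2$-generated group: take $W=\langle a,b\rangle$ where $a$ has infinite order and $b$ generates, together with all its conjugates $b^{a^n}$, a copy of the base group $\prod'_\Z G$; the element $a$ provides the $\Z$-action. A clean way is $W=(G*\Z)\,$ suitably quotiented, or better, use the \emph{marked} embedding $W\hookrightarrow G\wr(\Z\wr\Z)$ and verify $W$ is $2$-generated by direct inspection of decorated permutations. (3) Check that the embedded $[G,G]\le\prod'_\Z G\le W$ actually lies in $[W,W]$: here one uses that each coordinate element $g@n$ with $g\in[G,G]$ is a product of commutators $[x@n,y@n]=[x,y]@n$ in $W$, and these are genuine commutators in $W$, so $[G,G]@0\subseteq[W,W]$. (4) Conclude by composing the embeddings $[G,G]\hookrightarrow[G_0,G_0]\hookrightarrow[W,W]$.

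The main obstacle will be step (2): arranging that $W$ is \emph{exactly} $2$-generated while still containing all of $\prod'_\Z G$ — naively $G\wr\Z$ needs $1+|S|$ generators where $S$ generates $G$, and if $G$ is infinitely generated this is infinite. The standard trick (going back to B.H.\ and H.\ Neumann) is to use a \emph{second} wreathing by $\Z$, i.e.\ to work in $W\le G\wr(\Z\wr\Z)$ or to use $\Z^2$ as the acting group: one generator $a$ moves along one $\Z$-direction to ``address'' which copy of $G$ one is editing, encoded so that the conjugates $b^{a^n}$ cycle through a fixed generating element of $G$ placed at position $n$, and the group $\langle b\rangle^{\langle a\rangle}$ recovers all of $\prod'_\Z G$ because $G$ itself, inside $G_0=G\wr\Z$, is ``spread out'' along the orbit. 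I would present this by giving the two generators explicitly as decorated permutations of $\Z\times\Z$ (or of $\Z$ with values in $G\wr\Z$), checking by the multiplication rule \eqref{eq:wreathmult} that they generate the required subgroup, and checking that commutators of coordinate elements stay coordinate-wise so that $[G,G]$ lands in $[W,W]$. The verification that nothing \emph{extra} collapses — i.e.\ that the map $[G,G]\to[W,W]$ really is injective — follows because the restricted wreath product is a genuine semidirect product, so projecting to a single coordinate recovers the original element of $[G,G]$.
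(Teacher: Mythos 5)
Your plan correctly identifies the central obstacle — a single element of a \emph{restricted} wreath product has finite support, so it cannot encode infinitely many generators of $G$ — but your proposed fix does not resolve it. Passing to $G\wr F_2$ or to $G\wr(\Z\wr\Z)$ does not help: the base group of any restricted permutational wreath product still consists only of finitely supported functions, so the same objection applies. There is also no reason that ``$G\wr\Z$ is generated by $v$ together with a single element $u$''; you still need one element $s@0$ per generator $s$ of $G$, which is infinitely many if $G$ is infinitely generated. The device you want is exactly the one you ruled out: pass to the \emph{unrestricted} wreath product $G\wrwr\Z$, where a single function $f\colon\Z\to G$ with $f(x_i)=b_i$ for an enumeration $b_1,b_2,\dots$ of generators of $G$ is a legitimate element, and set $W=\langle f,u\rangle$ with $u$ a generator of $\Z$.

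Your step~(3) is also not justified: the group $W=\langle f,u\rangle$ need not contain $x@n$ for arbitrary $x\in G$, so you cannot produce $[x,y]@n$ as a commutator $[x@n,y@n]$ of elements of $W$. What one does instead is impose a Sidon-type condition on the positions $x_i$ (all distinct, $\ne 0$, and $x_i+x_j\notin\{0,x_k\}$), which guarantees that the single commutator $[f,\,f^{u^{-x_i}}]$ has support only at $0$, with value $[f(0),b_i]$ there. Combined with Proposition~\ref{prop:neumann-n:1}, which lets you put $f(0)=t$ and embed $[t,G]$ rather than all of $[G,G]$, this shows $[t,G]@0\le[W,W]$ and finishes the argument. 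So the gap is twofold: you need the unrestricted wreath product, and you need the sparsity condition to isolate the desired commutators — neither a second wreathing nor the coordinatewise-commutator claim substitutes for these.
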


\begin{proof}[Proof of Proposition~\ref{prop:neumann-n:1}]
  Consider the following subgroup $G$ of the unrestricted wreath
  product $B\wrwr\Z$. The group $G$ is generated by $\Z$ and, for all
  $b\in B$, the function $f_b\colon\Z\to B$ defined by
  $f_b(m)=b^m$. Denoting by $t$ the generator of $\Z$, we see that
  $[t,f_b]$ is the constant function $b$; so $B$ is in fact imbedded
  in $[t,G]$.
\end{proof}

\begin{exse}
  Could we also have defined $f_b(m)=b$ for $m\ge0$ and $f_b(m)=1$ for
  $m<0$? What would be the advantages and disadvantages of this
  alternative construction?
\end{exse}

\begin{proof}[Proof of Proposition~\ref{prop:neumann-n:2}]
  Consider the following subgroup $W$ of the unrestricted wreath
  product $G\wrwr\Z$. Denote by $u$ a generator of $\Z$, and by
  $\{b_1,b_2,\dots\}$ a generating set of $G$. Choose a sparse-enough
  sequence of elements $x_1,x_2,\dots$ of $\Z$, and define $f\colon
  \Z\to G$ by $f(x_i)=b_i$, all other values being trivial. The group
  $W$ is then $W=\langle f,u\rangle$.

  Let us spell out below what it means to be ``sparse enough''. We
  write $\Z$ additively. Since in fact by
  Proposition~\ref{prop:neumann-n:1} we only need to imbed $[t,G]$ in
  $W$, we may set $f(0)=t$ and a sufficient condition on the
  $x_i\in\Z$ is that $x_i\neq0$ for all $i$; all $x_i$ are distinct;
  and $x_i+x_j\not\in\{0,x_k\}$ for all $i,j,k\in\N$. One then sees
  that $[f,f^{u^{-x_i}}]$ is a function supported only at $0$, with
  value $[t,b_i]$ there. This defines the imbedding of $[t,G]$ in $W$.
\end{proof}

Note that the group $W$ contains the standard wreath product $B\wr\Z$,
so always has exponential growth.

The above construction shows that every countable solvable group
imbeds in a $2$-generated solvable group.

\begin{exse}
  Show first that \emph{not} every countable nilpotent group imbeds
  in a $2$-generated nilpotent group.

  Show then that every finitely generated nilpotent group imbeds in
  a $2$-generated nilpotent group.
\end{exse}

\noindent Here is a useful, small improvement on Theorem~\ref{thm:hnn}:
\begin{thm}\label{thm:hnntorsion}
  Let $p\ge5$ be an integer. Then every countable group imbeds in a
  $2$-generated group both of whose generators have order $p$.
\end{thm}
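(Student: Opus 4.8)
The plan is to combine Theorem~\ref{thm:hnn}—or rather the sharper statement extracted from the Neumann construction—with a small-cancellation or surjection trick that forces the two generators to have order exactly $p$. First I would recall that, by Theorem~\ref{thm:hnn}, it suffices to imbed the free product $C_p * C_p$ in a $2$-generated group whose two generators have order $p$, \emph{provided} $C_p*C_p$ already contains a copy of the given countable group $B$; but that is not automatic, so instead I would redo Neumann's argument keeping track of orders. Concretely, one starts from a countable group $B$, imbeds it first in $B \wrf \Z$ (or in $[t,G]$ as in Proposition~\ref{prop:neumann-n:1}), and then wants to realize the ambient group inside $\langle f, u\rangle \le G \wrwr \Z$ where now, instead of an infinite cyclic $\Z$, one uses a group generated by two torsion elements of order $p$.

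The key steps, in order, would be: (1) Replace $\Z$ by the group $\Gamma = C_p * C_p = \langle a, b \mid a^p, b^p\rangle$, which contains a free subgroup (for $p \ge 5$, in fact $\Gamma$ is non-elementary hyperbolic), hence contains elements $s = ab$ of infinite order whose powers $\{s^i\}$ are all distinct; this gives room to run the sparseness argument of Proposition~\ref{prop:neumann-n:2}. (2) Form $W_0 = G \wrwr \Gamma$, pick the function $f \colon \Gamma \to G$ supported on a sparse orbit $\{s^{x_i} g_0\}$ of a suitable basepoint with $f(g_0) = t$ and $f(s^{x_i} g_0) = b_i$, so that the subgroup $\langle f, a, b\rangle$ contains $[t, G] \supseteq B$ by the same commutator computation, $[f, f^{a^{-1}b^{-1}\,\text{(translate)}}]$ being supported at one point. (3) Now this subgroup is generated by three elements $f, a, b$ with $a, b$ of order $p$ but $f$ of possibly infinite order; to get down to two generators of order $p$, I would use a further wreath-product or amalgam layer: replace $f$ by conjugates of $a$ inside an even larger wreath product, i.e. observe that $f$ itself can be expressed as a product of $p$-torsion conjugates, or alternatively imbed $\langle f, a, b\rangle$ into $H \wr C_p$ for a $2$-generated $p$-torsion $H$ obtained by induction on the number of generators. (4) Finally, pass to the subgroup generated by exactly two elements of order $p$ (e.g. $a$ and a conjugate of $b$ by $f$) and check it still contains $B$, by tracking the commutator that produced $B$ through all the imbeddings.

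The main obstacle I expect is step (3)–(4): making sure that, after forcing \emph{both} generators to be torsion of order $p$, the generated subgroup has not collapsed and still contains the distinguished commutator realizing $B$. Neumann's original two-generator group uses $\langle f, u\rangle$ with $u$ of infinite order precisely because $u$ supplies ``enough translations''; with $u$ replaced by torsion elements one needs the infinite orbit to come from the free subgroup of $C_p * C_p$, and one must verify that the sparseness conditions $x_i \ne 0$, all $x_i$ distinct, $x_i + x_j \notin \{0, x_k\}$ can still be met along that orbit—which they can, since $\{s^i : i \in \Z\}$ is an infinite set on which $\Gamma$ acts with the relevant non-degeneracy. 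The cleanest route is probably an induction: show that any $n$-generated $p$-torsion group imbeds in an $(n-1)$-generated $p$-torsion group, using the wreath product $\,\cdot\, \wr C_p$ and the transitive action of $C_p$ to merge one generator into a conjugate, and then invoke the $3$-generated intermediate group above as the base case; each imbedding is visibly injective on the distinguished commutator, so $B$ survives to the end.
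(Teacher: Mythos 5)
Your proposal correctly identifies the right obstacle---forcing \emph{both} generators to have order $p$---but it does not actually overcome it, and the machinery you set up in steps (1)--(2) is considerably heavier than what the problem requires. The paper does not use $C_p * C_p$, sparse orbits, or any infinite translation group at all: after reducing (via Propositions~\ref{prop:neumann-n:1} and~\ref{prop:neumann-n:2}) to imbedding $[G,G]$ for a $2$-generated group $G=\langle x,y\rangle$, it works entirely inside $G\wr C_p$. One chooses a decoration $f\colon C_p\to G$ whose support has exactly three points and whose values, read cyclically around $C_p$, multiply to the identity (a rotation of $x\cdot y\cdot y^{-1}x^{-1}$). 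This guarantees $(ft)^p=1$, so both $t$ and $ft$ have order $p$; moreover, for $p\ge5$ the support of $f$ meets the support of $f^t$ in a single point, so $[f,f^t]$ is a function with singleton support and value a commutator $[x,y]$, whence $\langle t,ft\rangle\supseteq[G,G]@1$. The finite shift on $C_p$ supplies all the ``translation'' that is needed; no free subgroup and no sparseness condition is required.

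The genuine gap in your argument is at steps (3)--(4). You propose two escape routes (``express $f$ as a product of $p$-torsion conjugates'' and ``imbed into $H\wr C_p$ and induct on the number of generators''), but neither is carried out, and both are problematic as stated. The induction ``every $n$-generated $p$-torsion group imbeds in an $(n-1)$-generated $p$-torsion group'' conflates two different conditions: the theorem asks only that the two \emph{generators} have order $p$, not that the group be a $p$-torsion group (indeed a general countable group cannot possibly imbed in a $p$-torsion group). Even reading it as ``generated by elements of order $p$,'' the induction has no base case independent of the very reduction you are trying to perform, and the step itself is unjustified. What is actually needed is the single concrete observation about $G\wr C_p$ above: the decoration $f$ can be chosen so that its orbit product vanishes. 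That idea is absent from your proposal.
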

\begin{proof}
  Piggybacking on Propositions~\ref{prop:neumann-n:1}
  and~\ref{prop:neumann-n:2}, it suffices to consider a $2$-generated
  group $G=\langle x,y\rangle$, and to imbed $[G,G]$ into a
  $2$-generated group $W$ with generators of order $p$. Write
  $C_p=\langle t|t^p\rangle$, and define $f\colon C_p\to G$ by
  \[f(1)=x,\qquad f(t^{-1})=y,\qquad f(t^2)=y^{-1}x^{-1},\qquad f(g)=1\text{ for all other }g\in C_p.
  \]
  Consider then the group $W=\langle t,ft\rangle$. It is clearly
  generated by two elements of order $p$, since in $(ft)^p$ all the
  co\"ordinates contain some cyclic permutation of the product $x\cdot
  y\cdot y^{-1}x^{-1}$. Now $W$ contains $f$, so it also contains
  $[f,f^t]$, which is the function $C_p\to G$ taking value $[x,y]$ at
  $1$ and $1$ elsewhere. Furthermore conjugating $[f,f^t]$ by an
  arbitrary word in $f$ and $f^t$, one obtains the function taking
  value an arbitrary conjugate of $[x,y]$ at $1$; so $W$ contains
  $[G,G]@1$.
\end{proof}

\begin{exse}
  Where have we used the assumption that $p\ge5$? Can you improve the
  above result to arbitrary $p\ge3$? Can you imbed any countable group
  into a group generated by an involution and an element of order $p$?
\end{exse}

\subsection{Finite-valued permutational wreath products}
Our goal is, starting from a countable group $B$ locally of
subexponential growth, to construct a finitely generated group $W$ of
subexponential growth containing $B$. We take inspiration from
Neumann's proof given above, with two modifications: first, we
consider permutational wreath products rather than standard ones;
secondly, we consider \emph{finite-valued permutational wreath
  products}:
\begin{defn}
  Let $H$ be a group acting on a set $X$, and let $H$ be a
  group. Their \emph{finite-valued permutational wreath product} is
  the group $H\wrf_X G$, defined as the extension of functions $X\to
  H$ with finite image by $G$:
  \[H\wrf_X G=\{(\phi,g)\in H^X\times G\colon\#\phi(X)<\infty\}.\]
\end{defn}
Note that it is a subgroup, because if
$(\phi,g)^{-1}(\phi',g')=(\phi'',g^{-1}g')$ then
$\phi''(X)\subseteq\phi(X)^{-1}\phi'(X)$ is finite. Clearly, we have
\[H\wr_X G\le H\wrf_X G\le H\wrwr G.\]
We also introduce a condition on imbeddings that guarantees control on
growth:
\begin{defn}
  Let $B$ be a group. A group $G$ is called \emph{hyper-$B$} if it is
  a directed union of finite extensions of finite powers of $B$.
\end{defn}
Clearly, if a group $B$ is locally of subexponential growth and a
group $G$ is hyper-$B$, then $G$ is also locally of subexponential
growth. Indeed, for every finite subset $S$ of $G$ there exists a
finite extension of a finite power of $B$ that contains $S$.

\begin{lem}\label{lem:hyperhyper}
  Let $G$ be a hyper-$B$ group, and let $H$ be a hyper-$G$ group. Then
  $H$ is hyper-$B$.
\end{lem}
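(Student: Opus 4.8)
The plan is to prove Lemma~\ref{lem:hyperhyper} by peeling off, one operation at a time, the three constructions (``finite power of'', ``finite extension of'', ``directed union of'') out of which the property of being hyper-$B$ is built. Unwinding the hypotheses: $G=\bigcup_{i\in I}G_i$ is a directed union in which each $G_i$ has a normal subgroup isomorphic to a finite power $B^{k_i}$ of finite index, and $H=\bigcup_{j\in J}H_j$ is a directed union in which each $H_j$ has a normal subgroup isomorphic to a finite power $G^{m_j}$ of finite index. So it is enough to prove three auxiliary facts: (i) a finite power of a hyper-$B$ group is hyper-$B$; (ii) a finite extension of a hyper-$B$ group is hyper-$B$; (iii) a directed union of hyper-$B$ groups is hyper-$B$. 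Granting these, each $H_j$ is a finite extension of $G^{m_j}$, which is hyper-$B$ by~(i), hence is hyper-$B$ by~(ii), and therefore $H=\bigcup_j H_j$ is hyper-$B$ by~(iii).

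Fact~(i) is immediate, since $G^m=\bigcup_{i\in I}G_i^m$ is again a directed union and $G_i^m$ contains $(B^{k_i})^m=B^{k_im}$ as a normal subgroup of index $[G_i:B^{k_i}]^m$. Fact~(iii) is a routine flattening of a double directed union: take, over $j\in J$, the union of the directed families of subgroups witnessing that the $H_j$ are hyper-$B$, and use that $J$ is directed to see the combined family is still directed. The content is in~(ii). To prove it, let $N=\bigcup_i N_i$ be hyper-$B$ with $N_i$ a finite extension of $B^{k_i}$, and let $L\ge N$ with $[L:N]=n<\infty$. I would fix a transversal $T=\{1=t_1,\dots,t_n\}$ of $N$ in $L$ and let $C\subseteq N$ be the finite set of ``cocycle values'' $t_at_b\,\tau(t_a,t_b)^{-1}$, where $t_at_b=c(t_a,t_b)\,\tau(t_a,t_b)$ with $\tau(t_a,t_b)\in T$. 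For each index $i$ with $N_i\supseteq C$ --- and such indices are cofinal in $I$ --- set $\widetilde N_i:=\langle t^{-1}N_it:t\in T\rangle\le N$. The key observation is that $\widetilde N_i$ is normalized by $T$: for $s\in T$ one has $s^{-1}(t^{-1}N_it)s=(ts)^{-1}N_i(ts)=\tau(t,s)^{-1}c(t,s)^{-1}N_ic(t,s)\tau(t,s)=\tau(t,s)^{-1}N_i\tau(t,s)$ because $c(t,s)\in C\subseteq N_i$, and $t\mapsto\tau(t,s)$ permutes $T$; hence $s^{-1}\widetilde N_is=\widetilde N_i$. Consequently $L_i:=\widetilde N_iT$ is a subgroup of $L$ with $L_i\cap N=\widetilde N_i$ and $[L_i:\widetilde N_i]=n$, the $L_i$ form a directed family, and $\bigcup_i L_i=NT=L$.

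The hard part will be the remaining check that each $L_i$ is genuinely a finite extension of a finite power of $B$, i.e., since $L_i$ is already a finite extension of $\widetilde N_i$, that $\widetilde N_i$ is. Here $\widetilde N_i$ is generated by the $n$ conjugate copies $t^{-1}N_it$ of the group $N_i$ (a finite extension of $B^{k_i}$), all lying inside the hyper-$B$ group $N$, and a priori a group generated by finitely many copies of a finite extension of $B^k$ need not be of that shape. I would dispose of this by iterating the construction: absorb those finitely many conjugate copies, together with the finitely many further structure constants describing $L_i$, into a single later member $N_{i'}$ of the filtration of $N$, and repeat, exploiting at each stage that the relevant finite data is eventually contained in the filtration and that $\widetilde N_i$ sits inside the hyper-$B$ group $N$; the resulting bookkeeping is fussy but routine. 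Once~(ii) is secured in this form, the reduction recorded at the end of the first paragraph finishes the proof.
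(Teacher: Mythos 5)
Your reduction to (i)--(iii) is natural, and your construction $\widetilde N_i=\langle t^{-1}N_it:t\in T\rangle$, $L_i=\widetilde N_iT$ with cocycle-absorption to make $\widetilde N_i$ a $T$-invariant subgroup and $L_i$ a subgroup of $L$ is fine as far as it goes. But the gap you then flag yourself --- that $\widetilde N_i$ need not be a finite extension of a finite power of $B$ --- is genuine, and the iteration you sketch does not close it. The ``absorb into a later member $N_{i'}$'' step requires each conjugate $t^{-1}N_it$ to lie in some member of the filtration of $N$; that holds automatically when the $N_i$ are finitely generated, i.e.\ when $B$ is, but in the paper's application $B$ is an arbitrary countable group (for instance $\Q$), the $N_i$ are then not finitely generated, and a conjugate $t^{-1}N_it$ is an infinite object that need not sit inside any $N_{i'}$. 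Even granting the absorption, it would only tell you that $\widetilde N_i$ is a \emph{subgroup} of some $N_{i'}$, and the class of finite extensions of finite powers of $B$ is not closed under passing to subgroups --- so you would still have no control over the shape of $\widetilde N_i$, and the iteration has no reason to terminate. Step~(iii) quietly runs into the same issue: given $K\in\mathscr F_j$, $K'\in\mathscr F_{j'}$ and $j''\ge j,j'$, the union $K\cup K'$ sits inside $H_{j''}=\bigcup_{i''}K_{j'',i''}$, but for infinitely generated $K,K'$ nothing forces them into a single $K_{j'',i''}$, so the combined family is not obviously directed.

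The paper's proof takes a genuinely different route that sidesteps this combinatorics entirely, and the ingredient it uses is exactly what your step~(ii) lacks. By Kaloujnine--Krasner, a finite extension of $G^m$ sits inside a regular wreath product $G\wr F$ with $F$ finite, and likewise a finite extension of $B^k$ sits inside $B\wr E$ with $E$ finite. The composition is then done in one stroke by associativity of wreath products, $(B\wr E)\wr F\cong B\wr_{E\times F}(E\wr F)$, a finite extension of $B^{\#E\cdot\#F}$. The reason this helps is structural: inside $B\wr E$ the finite top group acts on $B^E$ by permuting coordinates, so the filtration coming from $B$'s finite powers is automatically preserved by the conjugations you are fighting with --- there is no need to argue that a join of several conjugate copies of $N_i$ is again of the right shape, because one enlargement to the ``universal'' finite-index overgroup $B\wr E$ handles all the conjugates at once. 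Without that uniform enlargement I do not see how to make the bookkeeping in your last paragraph go through, and I would not call it routine.
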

\begin{proof}
  Consider $h\in H$; then $h$ belongs to a finite extension of a
  finite power of $G$, which may be assumed of the form $G\wr F$ for a
  finite group $F$. Let us write $h=\phi f$ with $\phi\colon F\to G$
  and $f\in F$; then $\phi(f)$ belongs for all $f\in F$ to a finite
  extension of a finite power of $B$, which can be assumed to be the
  same for all $f$. This extension may be assumed to be of the form
  $B\wr E$ for a finite group $E$. It follows that $h$ belongs to
  $B\wr_{E\times F}(E\wr F)$, a finite extension of a finite power of
  $B$; so $H$ is hyper-$B$.
\end{proof}

\begin{lem}\label{lem:hyperB}
  If $H$ is a hyper-$B$ group and $U$ is locally finite, then $H\wrf
  U$ is a hyper-$B$ group.
\end{lem}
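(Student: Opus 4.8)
The plan is to realise $H\wrf U$ as a directed union of ``block subgroups'' of the shape $K\wr_\Xi V$, where $V$ runs over the finite subgroups of $U$, $\Xi$ is a finite $V$-set obtained as a quotient of $U$, and $K\le H$ is a finite extension of a finite power of $B$; each such block subgroup is again a finite extension of a finite power of $B$, which gives the lemma. First I would fix notation. Since $U$ is locally finite it is the directed union of its finite subgroups, each of which acts on $U$ by right translation. Given a finite subgroup $V\le U$ and a $V$-invariant partition $\mathcal Q$ of $U$ with only finitely many classes, set $\Xi=U/\mathcal Q$; it is a finite set carrying a right $V$-action, and for any subgroup $K\le H$ the pullback of functions along $U\twoheadrightarrow\Xi$ identifies the imprimitive wreath product $K\wr_\Xi V$ with the subset of $H\wrf U$ consisting of those $(\phi,v)$ with $v\in V$ and $\phi\colon U\to K\subseteq H$ constant on the classes of $\mathcal Q$. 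That this subset is a subgroup, isomorphic to $K\wr_\Xi V$, is an immediate check from the multiplication formula~\eqref{eq:wreathmult}: $V$-invariance of $\mathcal Q$ means that $v\in V$ permutes the classes, so the $V$-translate of a function constant on classes is again constant on classes (with the same finite image in $K$), and the induced $V$-action on $\Xi$ matches the one on $U$.

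Next I would verify that every such $K\wr_\Xi V$ is a finite extension of a finite power of $B$ provided $K$ is. If $B^r\le K$ is a normal subgroup of finite index, then $(B^r)^\Xi\cong B^{r\cdot\#\Xi}$ is a normal subgroup of $K^\Xi$ that is preserved by the coordinate-permutation action of $V$; hence it is normal in $K\wr_\Xi V=K^\Xi\rtimes V$, with quotient $(K/B^r)^\Xi\rtimes V$, which is finite. So $K\wr_\Xi V$ is a finite extension of the finite power $B^{r\cdot\#\Xi}$ of $B$.

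Then I would show these block subgroups exhaust $H\wrf U$ and form a directed family. For a given element $(\phi,u)\in H\wrf U$, choose a finite subgroup $V\le U$ with $u\in V$; the image $\phi(U)$ is finite by definition of $\wrf$, so by the hyper-$B$ hypothesis (writing $H$ as a directed union of finite extensions of finite powers of $B$) there is a subgroup $K\le H$ with $\phi(U)\subseteq K$ and $K$ a finite extension of a finite power of $B$; and the relation ``$x\sim y$ iff $\phi(xv)=\phi(yv)$ for all $v\in V$'' is a $V$-invariant equivalence relation on $U$ with finitely many classes, because it is refined by the map $x\mapsto(\phi(xv))_{v\in V}$ into the finite set $\phi(U)^V$. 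Taking $\mathcal Q$ to be its partition places $(\phi,u)$ in $K\wr_\Xi V$. For directedness: given two block subgroups, enlarge $V$ to a finite subgroup of $U$ containing both index groups, enlarge $K$ to a common $K'$ (possible since $H$ is a directed union of such subgroups), and let $\mathcal Q$ be the coarsest common refinement of the two partitions that is invariant under the larger group (again obtained by the ``$\phi$ on all translates'' trick); the resulting $V$-equivariant surjections of the $\Xi$'s, together with $K\hookrightarrow K'$ and inclusion of the $V$'s, embed both original block subgroups into $K'\wr_{U/\mathcal Q}(\text{larger }V)$ inside $H\wrf U$. This presents $H\wrf U$ as a directed union of finite extensions of finite powers of $B$, i.e.\ as a hyper-$B$ group.

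The point that requires genuine care is this descent from the finite-valued wreath product (whose base group would itself be far too big to be a finite extension of a finite power of $B$) to honest \emph{finite} block subgroups: one must manufacture, from each finite packet of data, a single finite $V$-invariant partition of $U$ on whose classes all the functions involved \emph{and their finitely many $V$-translates} are constant, and then confirm that the attendant $K\wr_\Xi V$ is closed under the group operations of $H\wrf U$. The remainder is routine bookkeeping with the semidirect-product structure, and the same argument will feed into the companion lemmas about hyper-$B$ groups used later.
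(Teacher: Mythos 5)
Your proof is correct and follows essentially the same route as the paper's: exhaust $H\wrf U$ by a directed family of block subgroups built from finite subgroups $V\le U$ and finite $V$-invariant partitions of $U$, the existence of which comes from averaging the level-set partition of $\phi$ over $V$. The only cosmetic difference is that you inline the content of Lemma~\ref{lem:hyperhyper}: the paper first shows $H\wrf U$ is hyper-$H$ by taking the block subgroups $H^{\mathscr P}\rtimes E$, and then invokes that lemma, whereas you descend directly to a finite extension $K\le H$ of a finite power of $B$ at each stage and observe $K\wr_\Xi V$ is already of the required form.
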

\begin{proof}
  We first show that $H\wrf U$ is hyper-$H$. By hypothesis, $U$ is a
  directed union of finite subgroups $E$. The partitions $\mathscr
  P_0$ of $U$ into finitely many parts also form a directed poset; and
  for every such partition $\mathscr P_0$ and every finite subgroup $E\le U$
  there exists a finite partition $\mathscr P$ of $U$ that is
  invariant under $E$ and refines $\mathscr P_0$, namely the wedge (=
  least upper bound) of all $E$-images of $\mathscr P_0$.

  Consider now the directed poset of pairs $(E,\mathscr P)$ consisting
  of finite subgroups $E\le U$ and $E$-invariant partitions of $U$.
  Consider the corresponding subgroups $H^{\mathscr P}\rtimes E$ of
  $H\wrf U$. If $(E,\mathscr P)\le(E',\mathscr P')$ then $H^{\mathscr
    P}\rtimes E$ is naturally contained in $H^{\mathscr P'}\rtimes
  E'$, so these subgroups of $H\wrf U$ form a directed poset, which
  exhausts $H\wrf U$.

  It follows that $H\wrf U$ is a hyper-$H$ group, and we are done by
  Lemma~\ref{lem:hyperhyper}.
\end{proof}

\subsection{Imbedding in the derived subgroup}\label{ss:imbedG'}
Our main goal, in this section, is to prove the following proposition,
which replaces Proposition~\ref{prop:neumann-n:1}.
\begin{prop}\label{prop:imbed'}
  Let $B$ be a group. Then there exists a hyper-$B$ group $G$ such
  that $[G,G]$ contains $B$ as a subgroup.

  If $B$ is infinite, then $G$ may furthermore be supposed to have the
  same cardinality as $B$.
\end{prop}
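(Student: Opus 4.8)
The plan is to mimic Neumann's construction (Proposition~\ref{prop:neumann-n:1}) but to replace the unrestricted wreath product by a \emph{finite-valued permutational} one over a locally finite base, so that the resulting group stays hyper-$B$. First I would fix a locally finite group $U$ admitting a ``nice'' generating element, for instance $U=\bigoplus_{\N}C_2$ or, more conveniently, the locally finite group $L=\varinjlim Q_i$ built in~\S\ref{ss:ss} from the cyclic group $C_2$ acting on a two-point set $X$; what matters is that $L$ is countable, locally finite, and that its self-similar/wreath structure gives us, for every $n$, an element $t_n\in L$ whose orbit on the relevant action is large. Then I would form $G=B\wrf_Y L$ for an appropriate $L$-set $Y$ (e.g.\ an orbit of the action of $L$ on $X^\infty$, or simply $Y=L$ with the regular action), and show $B$ imbeds into $[G,G]$ by exhibiting, for each $b\in B$, a function $f_b\colon Y\to B$ with \emph{finite} image together with an element $t\in L$ such that the commutator $[t,f_b]$ equals the single generator $b$ placed at the basepoint. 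The finite-image condition is exactly what forces us to use $\wrf$ rather than $\wr\wr$: a function of the form $f_b(y)=b^{m(y)}$ as in Neumann's proof only has finite image if the ``exponent'' $m$ takes finitely many values, which can be arranged since $L$ is locally finite — on each finitely generated (hence finite) piece the orbit is finite, so there are only finitely many possible exponents.

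The key steps, in order, are: (1) pick $U=L$ locally finite and record an explicit element $t\in L$ and an action $Y$ for which conjugation by $t$ implements a ``shift-like'' permutation with the property that one can solve $[t,f]=b@{y_0}$ inside $B^Y$; (2) define $f_b$ and verify by the wreath multiplication formula~\eqref{eq:wreathmult} that $f_b$ has finite image and that $[t,f_b]$ is the constant-support function with value $b$ at the basepoint, so that $b\mapsto[t,f_b]$ is an injective homomorphism $B\to[G,G]$ where $G:=\langle t, f_b:b\in B\rangle\le B\wrf_Y L$; (3) verify that $G$ is hyper-$B$: since $L$ is locally finite, $G$ is contained in $B\wrf_Y L$, and by Lemma~\ref{lem:hyperB} (with $H=B$, which is trivially hyper-$B$, and $U=L$) the group $B\wrf_Y L$ is hyper-$B$, hence so is its subgroup $G$; (4) the cardinality claim: if $B$ is infinite then $|B\wrf_Y L|\le |B|^{\aleph_0}$ a priori, but $G$ is generated by the countably-indexed-or-fewer set $\{t\}\cup\{f_b\}$, so $|G|\le\max(|B|,\aleph_0)=|B|$, and $G\supseteq B$ gives $|G|=|B|$.

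The main obstacle I expect is step~(1)–(2): engineering the base $L$-set $Y$, the element $t$, and the functions $f_b$ so that \emph{simultaneously} (a) $[t,f_b]$ lands in $[G,G]$ and equals a single ``$b@y_0$'', and (b) every $f_b$ has finite image. In the Neumann construction over $\Z$ one uses $f_b(m)=b^m$, exploiting that $\Z$ is torsion-free and infinite; over a locally finite $U$ one cannot take unbounded powers, so the ``finite image'' requirement is in genuine tension with wanting $[t,f_b]$ supported at one point. The resolution is to exploit that $L$ has elements of every finite order and to choose, within a single finite subgroup $E\le L$ large enough, a partial assignment as in the proof of Theorem~\ref{thm:hnntorsion} (e.g.\ $f_b$ taking values $b$, $b^{-1}$, $1$ along a short segment of an $E$-orbit, arranged so the telescoping product collapses); one then lets $b$ range over $B$ using orbits inside larger and larger finite $E$'s, keeping each individual $f_b$ finitely-valued. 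Once the right $t, Y, f_b$ are in hand, steps (3)–(4) are immediate from Lemmas~\ref{lem:hyperhyper} and~\ref{lem:hyperB} and a cardinality count, so essentially all the real content is in producing the imbedding into $[G,G]$ with the finite-image property.
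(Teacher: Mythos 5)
Your overall architecture is the right one — finite-valued permutational wreath products over a locally finite acting group, so that hyper-$B$-ness is preserved by Lemmas~\ref{lem:hyperhyper}--\ref{lem:hyperB}, and then a cardinality reduction by passing to a subgroup — and in fact matches the skeleton of the paper's proof. But the real mathematical content is in your steps (1)--(2), and there the plan as stated does not close.

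The obstruction is a conservation law over finite orbits. Over a locally finite $L$, every $t\in L$ has finite order, so the orbit $\{y_0, y_0t,\dots,y_0t^{k-1}\}$ is a $k$-cycle. Writing $[t,f](y)=f(yt^{-1})^{-1}f(y)$ and multiplying the values of $[t,f]$ around that cycle, the product telescopes to $1$ modulo $[B,B]$. Thus $[t,f]=b@y_0$ forces $b\in[B,B]$, and $[t,f]=\text{const }b$ forces $b^k\in[B,B]$. In particular no Neumann-style commutator $[t,f]$ with $t$ in a locally finite group can produce an element $b$ whose image in $B^{\mathrm{ab}}$ has infinite order. (This is exactly where Neumann's original proof uses $\Z$: an element of infinite order has no finite orbit, so the conservation law never engages.) Your proposed patch via the telescoping of Theorem~\ref{thm:hnntorsion} also does not help: there the single-point commutator $[f,f^t]$ has value $[f(1),f(t^{-1})]$, i.e.\ a \emph{commutator} of $B$, not an arbitrary element; it cannot reach elements outside $[B,B]$ any more than the cyclic-orbit computation can.

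What the paper actually does is split $B$ according to how far it is from torsion. Lemma~\ref{lem:CB} produces $C\le B$ with $[B,B]\le C$, $B/C$ torsion and $C/[B,B]$ free abelian; the free-abelian direction is the genuine difficulty, and it is handled by the ``dual-basis'' homomorphisms $\theta_x\colon C\to\langle b_x\rangle$ together with the section $\sigma\colon B/C\to B$ of Lemma~\ref{lem:section}, which is needed precisely so that the function in display~\eqref{eq:Phi_0} has \emph{finite image} (well-definedness in Lemma~\ref{lem:Phi_0}). The map $\Phi_0\colon B\to G_0=B\wrf(T\times F)$ then places $b_x$ on the ``$f=1$'' slice and simultaneously $b_x^{-1}$ on the ``$f=x$'' slice, which is what kills the obstruction in $B^{\mathrm{ab}}$ and makes $\Phi_0(C)\le[G_0,G_0]$ (Lemma~\ref{lem:C imbeds}). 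Only after that, a second finite-valued wreath product with $\Q/\Z$ handles the remaining torsion quotient $B/C$: for $b\in B$ with $b^n\in C$, the element $[(1,1/n),g]\cdot\Psi_n(b^n)$ of Lemma~\ref{lem:B imbeds} uses the density of finite orders in $\Q/\Z$ to take an ``$n$-th root'' modulo commutators. None of this structure — the decomposition of Lemma~\ref{lem:CB}, the section $\sigma$, the maps $\theta_x$, the two-stage construction over $T\times F$ and then $\Q/\Z$ — appears in your plan, and without it the central step ``$B$ imbeds into $[G,G]$'' is out of reach. (Minor additional point: a subgroup of a hyper-$B$ group is not automatically hyper-$B$ from the definition; the paper's cardinality argument is careful to generate $H$ by an explicit directed family of virtually-finite-powers-of-$B$ rather than by appealing to heredity.)
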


\begin{lem}\label{lem:CB}
  Let $B$ be a group. Then there exists a subgroup $C$ of $B$,
  containing $[B,B]$, such that $B/C$ is torsion and $C/[B,B]$ is free
  abelian.
\end{lem}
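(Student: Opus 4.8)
The plan is to reduce everything to a statement about abelian groups. Write $A = B/[B,B]$ for the abelianization and $q\colon B\twoheadrightarrow A$ for the quotient map. It suffices to find a subgroup $\bar C\le A$ which is free abelian and for which $A/\bar C$ is torsion; one then takes $C = q^{-1}(\bar C)$. Indeed, $C$ then contains $q^{-1}(\{1\}) = [B,B]$; the surjection $q$ restricts to an isomorphism $C/[B,B]\xrightarrow{\cong}\bar C$, which is free abelian; and it induces an isomorphism $B/C\xrightarrow{\cong} A/\bar C$, which is torsion.

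So it remains to produce $\bar C$. Recall that a subset $I$ of an abelian group $A$ is \emph{$\Z$-independent} if the only integer relation $\sum_i n_i x_i = 0$ among finitely many distinct $x_i\in I$ is the trivial one. First I would invoke Zorn's lemma to pick a maximal $\Z$-independent subset $I\subseteq A$: the union of a chain of independent subsets is again independent, since any relation involves only finitely many elements and therefore already lies in one member of the chain, so Zorn's lemma applies.

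Set $\bar C = \langle I\rangle\le A$. The natural homomorphism $\bigoplus_{i\in I}\Z\to\bar C$ sending the $i$-th standard generator to $i$ is surjective by definition of $\bar C$, and injective precisely because $I$ is $\Z$-independent; hence $\bar C$ is free abelian with basis $I$. Finally $A/\bar C$ is torsion: given $a\in A$, we may assume $a\notin\bar C$ (otherwise $a$ is already trivial modulo $\bar C$), so $I\cup\{a\}$ strictly contains $I$ and, by maximality of $I$, is $\Z$-dependent; thus there is a nontrivial relation $n a + \sum_i n_i x_i = 0$ with $x_i\in I$. Here $n\neq 0$, for otherwise $\sum_i n_i x_i = 0$ would be a nontrivial relation among elements of $I$, contradicting independence; hence $n a = -\sum_i n_i x_i\in\bar C$, so $a$ has finite order modulo $\bar C$. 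Pulling $\bar C$ back to $C = q^{-1}(\bar C)\le B$ finishes the proof.

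The argument is essentially routine; the only genuine subtlety is the appeal to Zorn's lemma, which is unavoidable when $A$ has infinite rank, together with the observation that a \emph{maximal} independent set — as opposed to merely a large free subgroup — is exactly what forces the quotient to be torsion.
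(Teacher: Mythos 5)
Your argument is correct. The paper takes a superficially different route: it tensors the abelianization $A=B/[B,B]$ with $\Q$, picks a basis $X$ of the resulting $\Q$-vector space, and lets $C$ be the preimage in $B$ of the subgroup of $A$ ``generated by $X$'', observing that $(B/C)\otimes_\Z\Q=0$ forces $B/C$ to be torsion. Your maximal-$\Z$-independent-set argument via Zorn is the elementary, self-contained version of the same idea: a maximal $\Z$-independent subset of $A$ is (after clearing denominators) exactly a $\Q$-basis of $A\otimes\Q$, and the maximality condition is what encodes ``the quotient is torsion'' in both proofs. Your version has the small advantage of staying entirely inside $A$ and of making explicit both uses of choice and the verification that $\bar C$ is free; the paper's version is more compact but is slightly glib about the fact that the basis $X$ lives in $A\otimes\Q$ rather than in $A$ itself, so one must scale and lift before speaking of ``$\Z X$ within $B/[B,B]$''. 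Both proofs are fine; yours fills in a step the paper elides.
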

\begin{proof}
  $B/[B,B]\otimes_\Z\Q$ is a $\Q$-vector space, hence has a basis,
  call it $X$. It generates a free abelian group $\Z X$ within
  $B/[B,B]$, whose full preimage in $B$ we call $C$. Then
  $B/C\otimes_\Z\Q=0$ so $B/C$ is torsion.
\end{proof}

We set up the following notation for the proof of
Proposition~\ref{prop:imbed'}. We choose a subgroup $C\le B$ as in
Lemma~\ref{lem:CB} and write $T:=B/C$. We choose a basis $X$ of
$C/[B,B]$, for every $x\in X$ we choose an element $b_x\in C$
representing it, and we define a homomorphism $\theta_x\colon
C\to\langle b_x\rangle\subseteq B$, trivial on $[B,B]$, by
$\theta_x(b_x)=b_x^{-1}$ and $\theta_x(b_y)=1$ for all $y\neq x\in
X$. In particular, we have for all $b\in C$
\[b\cdot\prod_{x\in X}\theta_x(b)\in[B,B]
\]
and the product is finite.

We write $\pi\colon B\to T$ the natural projection, and define a
section $\sigma$ of $\pi$ with the following property, which we single
out as a lemma:
\begin{lem}\label{lem:section}
  There exists a set-theoretic section $\sigma\colon T\to B$ such
  that, for every $t\in T$, the subset
  $\{\sigma(t u)\sigma(u)^{-1}:u\in T\}[B,B]$ of $B/[B,B]$ is finite.
\end{lem}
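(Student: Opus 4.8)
The plan is to reduce the statement to a problem about abelian groups and then solve it by taking ``fractional parts'' of coordinates, after first passing to the torsion-free quotient of the abelianisation.

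First I would reduce to the following claim: there is a set-theoretic section $s\colon T\to A$ of the natural surjection $A:=B/[B,B]\to B/C=T$ such that for every $t\in T$ the ``cocycle'' values $c(t,u):=s(tu)s(u)^{-1}s(t)^{-1}$, which lie in $\bar C:=C/[B,B]$, form a finite set as $u$ ranges over $T$. Given such an $s$, choose any pointwise lift $\sigma\colon T\to B$ (so that $B\to B/[B,B]$ carries $\sigma(t)$ to $s(t)$); since $B\to T$ factors through $B\to A$, the map $\sigma$ is a section of $\pi$, and the image in $B/[B,B]$ of $\sigma(tu)\sigma(u)^{-1}$ is $s(tu)s(u)^{-1}=s(t)\,c(t,u)$, so $\{\sigma(tu)\sigma(u)^{-1}:u\in T\}[B,B]$ is finite if and only if $\{c(t,u):u\in T\}$ is.

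Next I would analyse the structure of $A$. By Lemma~\ref{lem:CB}, $\bar C$ is free abelian with basis $X$ and $A/\bar C\cong T$ is torsion. Let $N\le A$ be the torsion subgroup; since $\bar C$ is free, $N\cap\bar C=1$, so $\bar C$ embeds into $\bar A:=A/N$, which is torsion-free, and $\bar A/\bar C$ is a torsion group. Embedding $\bar A$ in $V:=\bar A\otimes\Q$ and using that $\bar A/\bar C$ is torsion, one gets $V=\bar C\otimes\Q=\bigoplus_{x\in X}\Q x$; thus every $a\in\bar A$ has well-defined rational coordinates $(a_x)_{x\in X}$ with finite support, and $a\in\bar C$ if and only if all $a_x\in\Z$. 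Define $\bar s\colon\bar A/\bar C\to\bar A$ by sending a coset to the unique representative all of whose coordinates lie in $[0,1)$; this is well defined and lands in $\bar A$ because two representatives differ by an element of $\bar C$, i.e.\ by an integer vector of coordinates. A short computation then shows $\bar s(\xi)\bar s(\zeta)\bar s(\xi\zeta)^{-1}=\sum_{x\in X}(\{a_x\}+\{b_x\}-\{a_x+b_x\})x$ for representatives $a,b$ of $\xi,\zeta$, where each coefficient equals $0$ or $1$ and vanishes whenever $a_x\in\Z$; hence this element lies in the finite set $\{\sum_{x\in F_\xi}\varepsilon_x x:\varepsilon_x\in\{0,1\}\}$, with $F_\xi$ the finite support of $\bar s(\xi)$. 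In particular, for each fixed $\xi$ it takes only finitely many values.

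Finally I would lift $\bar s$ to a section $s$ of $A\to T$. Writing $N'\le T$ for the image of $N$, one has $\bar A/\bar C\cong T/N'$; for $t\in T$ with image $\bar t$, the preimage of $t$ in $A$ (a coset of $\bar C$) and the preimage of $\bar s(\bar t)$ in $A$ (a coset of $N$) both lie in a single coset of $\bar C+N$, and two cosets of subgroups of an abelian group contained in one coset of their sum always intersect; pick $s(t)$ in the intersection. Then $s$ is a section of $A\to T$, the map $A\to\bar A$ carries $c(t,u)$ to $\bar s(\bar t\bar u)\bar s(\bar u)^{-1}\bar s(\bar t)^{-1}$, which by the previous step is finite-valued for fixed $t$, and $A\to\bar A$ is injective on $\bar C$ (because $N\cap\bar C=1$); hence $\{c(t,u):u\in T\}$ is finite, as required. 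The expected main obstacle is the presence of torsion in the abelianisation $A$: one cannot take fractional parts directly in $A$, so the argument must detour through $\bar A=A/N$, and the only slightly delicate point is the coset–intersection fact used to lift $\bar s$ back to $A$; the ``carry'' computation for $\bar s$ itself is routine.
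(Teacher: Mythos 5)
Your argument is correct, and it takes a genuinely different route from the paper's. The paper works directly inside the torsion group $T$: it reduces to $p$-groups, filters $T$ by the subgroups $\Omega_n(T)=\{t:t^{p^n}=1\}$, chooses bases $X_n$ of the $\mathbb F_p$-vector spaces $\Omega_n(T)/\Omega_{n-1}(T)$ compatibly with the $p$-th power map, defines $\sigma$ multiplicatively on the resulting normal forms $x_1^{\alpha_1}\cdots x_n^{\alpha_n}$, and traces the finiteness of $\{\sigma(tu)\sigma(u)^{-1}\}[B,B]$ to a finite subgroup $T'$ of $T$. You instead work entirely in the abelianisation $A=B/[B,B]$: after quotienting by the torsion subgroup $N$ (which meets $\bar C$ trivially because $\bar C$ is free abelian), you embed $\bar A=A/N$ into the $\Q$-vector space $\bar C\otimes\Q=\bigoplus_{x\in X}\Q x$ and define a section by taking coordinate-wise fractional parts in $[0,1)$; the cocycle is then a literal binary ``carry'', supported in the finite support of $\bar s(\xi)$, hence finite-valued for fixed $\xi$. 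The coset-intersection observation that lifts $\bar s$ from $T/N'$ back to a section $T\to A$, together with the injectivity of $A\to\bar A$ on $\bar C$, then transfers the finiteness. Both proofs are valid; yours makes the carrying structure that is somewhat implicit in the paper's finite subgroup $T'$ completely explicit and sidesteps the $p$-primary decomposition and the induction on $\Omega_n$, at the modest cost of the detour through $A/N$ and the tensor with $\Q$, while the paper's route is more elementary and hands-on, never leaving $T$.
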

\begin{proof}
  Since every abelian torsion group is the direct sum of its
  $p$-subgroups, we may first define the section on each of $T$'s
  $p$-components, and then extend it to $T$ multiplicatively (in any
  order).

  We therefore suppose that $T$ is a $p$-group.  Recall the notation
  $\Omega_n(T)=\{t\in T:t^{p^n}=1\}$. Each quotient
  $V_n:=\Omega_n(T)/\Omega_{n-1}(T)$ is a vector space over $\mathbb F_p$,
  and the homomorphism $t\mapsto t^p$ induces an injective linear map
  $V_n\to V_{n-1}$. Choose inductively subsets $X_1,X_2,\dots$ of $T$
  such that $X_n$ maps to a basis of $V_n$ and such that $t\mapsto
  t^p$ induces an injective map $X_n\to X_{n-1}$. Set $X=\bigcup_n
  X_n$, and give an arbitrary total order to $X$. Choose for each
  $x\in X$ a $\pi$-preimage $\sigma(x)\in B$.

  Since $T$ is torsion, every element $t\in T$ belongs to
  $\Omega_n(T)$ for some $n\in\N$, so can be uniquely written as a
  product $t=x_1^{\alpha_1}\cdots x_n^{\alpha_n}$ with ordered
  $x_i\in X$ and $0<\alpha_i<p$ for all $i$. Extend then $\sigma$ by
  $\sigma(t)=\sigma(x_1)^{\alpha_1}\cdots \sigma(x_n)^{\alpha_n}$.

  Consider now $t\in T$, and write it in the form
  $t=x_1^{\alpha_1}\cdots x_n^{\alpha_n}$ as above. Extend
  $\{x_1,\dots,x_n\}$ to a finite subset
  $Y=\{x_1,\dots,x_n,\dots,x_s\}$ of $X$, by adding all $p^j$-th
  powers of all $x_i$ to it. The set
  \[T':=\{x_1^{\gamma_1}\cdots x_s^{\gamma_s}:0\le\gamma_i<p\}
  \]
  is then a finite subgroup of $T$. Consider next $u\in T$, and note
  that it may be written uniquely in the form $u=y_1^{\beta_1}\cdots
  y_m^{\beta_m}z$ with $y_i\in X\setminus Y$ and $z\in T'$. Then
  $t u=y_1^{\beta_1}\cdots y_m^{\beta_m}(z t)$ and this representation
  is unique; so $\sigma(t u)\sigma(u)^{-1}[B,B]$ belongs to the finite
  set $\{\sigma(t')[B,B]:t'\in T'\}$.
\end{proof}

\begin{exse}
  Rephrase Lemma~\ref{lem:section} in terms of the cohomology of $T$
  with co\"efficients in $\Z X$.
\end{exse}

Let $F$ be a locally finite group of cardinality $>\#X$, and fix an
imbedding of $X$ in $F\setminus\{1\}$.  As a first step, we consider
the group $G_0=B\wrf(T\times F)$, and define a map $\Phi_0\colon B\to
G_0$ as follows:
\begin{equation}\label{eq:Phi_0}
  \Phi_0(b)=(\phi,\pi(b),1)\text{ with }\phi(t,f)=\begin{cases}
    b & \text{ if }f=1,\\
    \theta_f(\sigma(t)b\sigma(t\pi(b))^{-1}) & \text{ if }f\in X,\\
    1 & \text{ otherwise.}
  \end{cases}
\end{equation}

\begin{lem}\label{lem:Phi_0}
  The map $\Phi_0$ is well-defined and is an injective homomorphism.
\end{lem}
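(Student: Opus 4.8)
The plan is to check three things, in order: that $\Phi_0$ actually lands in $G_0=B\wrf(T\times F)$ (i.e.\ that the function $\phi$ of \eqref{eq:Phi_0} is finitely valued), that $\Phi_0$ is multiplicative, and that it is injective. Only the first point requires any thought; it is where Lemma~\ref{lem:section} enters.

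For \textbf{well-definedness}, fix $b\in B$ and put $t_0=\pi(b)$. The function $\phi$ equals $b$ on $T\times\{1\}$, is trivial off $T\times(X\cup\{1\})$, and on $(t,f)\in T\times X$ equals $\theta_f(\sigma(t)b\sigma(tt_0)^{-1})$. First I would note $\pi(\sigma(t)b\sigma(tt_0)^{-1})=t\,t_0\,(tt_0)^{-1}=1$, so this argument lies in $C=\ker\pi$, where $\theta_f$ is defined. Working in the abelian group $B/[B,B]$ and using $tt_0=t_0t$, I would rewrite it as $b\cdot(\sigma(t_0t)\sigma(t)^{-1})^{-1}$ modulo $[B,B]$, and apply Lemma~\ref{lem:section} (with $t_0$ for ``$t$'' and $t$ for ``$u$'') to conclude that $\{\sigma(t)b\sigma(tt_0)^{-1}[B,B]:t\in T\}$ is a finite subset of $C/[B,B]$, say $\{u_1[B,B],\dots,u_N[B,B]\}$ with $u_i\in C$. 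Since $\theta_f$ kills $[B,B]$, the value $\theta_f(\sigma(t)b\sigma(tt_0)^{-1})$ depends only on which coset occurs, and equals $\theta_f(u_i)$ for the relevant $i$. Now I would use that $C/[B,B]$ is \emph{free} abelian on the classes of the $b_x$: each $u_i$ is a \emph{finite} product of powers of the $b_x$, so only finitely many $x\in X$ — call this set $X_0$ — occur in $u_1,\dots,u_N$; for $x\notin X_0$ we get $\theta_x(u_i)=1$ for every $i$, hence $\phi$ vanishes on $T\times\{x\}$, while for $x\in X_0$ there are at most $N$ possible values. So $\phi$ takes finitely many values and $\Phi_0(b)\in G_0$.

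For \textbf{multiplicativity}, write $b_1,b_2\in B$, $t_i=\pi(b_i)$, and let $\phi_b$ denote the base function of $\Phi_0(b)$. By the product rule \eqref{eq:wreathmult} for the regular wreath product over $T\times F$, the element $\Phi_0(b_1)\Phi_0(b_2)$ has $G$-component $(t_1,1)(t_2,1)=(t_1t_2,1)=(\pi(b_1b_2),1)$, which already matches, and base component $(t,f)\mapsto\phi_{b_1}(t,f)\,\phi_{b_2}(tt_1,f)$, where ${}^{(t_1,1)}\phi_{b_2}$ is the shift $(t,f)\mapsto\phi_{b_2}(tt_1,f)$. Comparing this with $\phi_{b_1b_2}$ case by case: for $f=1$ it reads $b_1b_2=b_1b_2$; for $f\notin X\cup\{1\}$ it reads $1=1$; and for $f=x\in X$ both left-hand factors lie in $C$ and, since $\theta_x$ is a homomorphism,
\[\theta_x\!\big(\sigma(t)b_1\sigma(tt_1)^{-1}\big)\theta_x\!\big(\sigma(tt_1)b_2\sigma(tt_1t_2)^{-1}\big)=\theta_x\!\big(\sigma(t)b_1b_2\sigma(tt_1t_2)^{-1}\big),\]
the inner $\sigma(tt_1)^{-1}\sigma(tt_1)$ cancelling; the right-hand side is exactly $\phi_{b_1b_2}(t,x)$. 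Hence $\Phi_0$ is a homomorphism.

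\textbf{Injectivity} is then immediate: if $\Phi_0(b)$ is trivial its base function is identically $1$, and evaluating at any $(t,1)$ yields $b=1$. I do not expect any obstacle beyond bookkeeping; the one delicate point is the finiteness argument in the second paragraph, where both the triviality of $\theta_f$ on $[B,B]$ and the freeness of $C/[B,B]$ are genuinely used — this is precisely why the construction needs the finite-valued wreath product $\wrf$ rather than the restricted one $\wr$.
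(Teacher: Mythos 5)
Your proof is correct and follows the same route as the paper's: well-definedness via Lemma~\ref{lem:section} (first showing the argument lies in $C$, then finiteness of the coset set, then that only finitely many $\theta_x$ are nontrivial on those cosets), multiplicativity from the homomorphism property of each $\theta_f$ combined with the telescoping $\sigma(tt_1)^{-1}\sigma(tt_1)$, and injectivity by evaluating the base function at $f=1$. The paper's proof is simply more terse — it compresses the multiplicativity check to ``because all $\theta_f$ are homomorphisms'' and does not spell out the freeness/finite-support step — so your write-up is a faithful expansion of it rather than a different argument.
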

\begin{proof}
  To see that $\Phi_0$ is well-defined, note that the argument
  $\sigma(t)b\sigma(t\pi(b))^{-1}$ belongs to $\ker(\pi)=C$, so that
  $\theta_f$ may be applied to it. Note then that, by
  Lemma~\ref{lem:section}, the expression
  $\sigma(t)b\sigma(t\pi(b))^{-1}$ takes finitely many values in
  $C/[B,B]$, so that $\phi(t,f)$ takes finitely many values for
  varying $t$ and fixed $f$. Finally,
  $\theta_f(\sigma(t)b\sigma(t\pi(b))^{-1})=1$ except for finitely
  many values of $f\in X$. In summary, the function $\phi\in
  B^{T\times F}$ is such that $\phi(t,f)$ takes only finitely many
  values.

  It is clear that $\Phi_0$ is injective: if $b\neq1$ and
  $\Phi_0(b)=(\phi,\pi(b),1)$ then $\phi(1,1)=b\neq1$. It is a
  homomorphism because all $\theta_f$ are homomorphisms.
\end{proof}

\begin{lem}\label{lem:C imbeds}
  We have $\Phi_0(C)\le[G_0,G_0]$.
\end{lem}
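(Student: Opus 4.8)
The plan is to verify the inclusion on a generating set of $C$. By Lemma~\ref{lem:CB} the abelian group $C/[B,B]$ is free on the classes of the chosen elements $b_x$, so $C$ is generated by $\{b_x:x\in X\}$ together with $[B,B]$; since $\Phi_0$ is a homomorphism (Lemma~\ref{lem:Phi_0}) and $[G_0,G_0]$ is a subgroup, it suffices to show $\Phi_0(b_x)\in[G_0,G_0]$ for every $x\in X$, and $\Phi_0(z)\in[G_0,G_0]$ for every $z\in[B,B]$.

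The key preliminary step is to simplify \eqref{eq:Phi_0} on $C$. For $c\in C$ one has $\pi(c)=1$, so $\Phi_0(c)=(\phi,1,1)$ with $\phi(t,1)=c$ and $\phi(t,f)=\theta_f(\sigma(t)c\sigma(t)^{-1})$ for $f\in X$, all remaining values trivial. Now $\sigma(t)c\sigma(t)^{-1}c^{-1}$ lies in $[B,B]$ and $\theta_f$ is trivial on $[B,B]$, so $\theta_f(\sigma(t)c\sigma(t)^{-1})=\theta_f(c)$, which no longer depends on $t$. Writing, for $a\in B$, $\Psi(a)\in B^{T\times F}$ for the finite-valued function equal to $a$ on $T\times\{1\}$ and trivial elsewhere (so $(\Psi(a),1,1)\in G_0$), this says: $\Phi_0(c)$ has trivial top part, equals $c$ on $T\times\{1\}$, equals $\theta_x(c)$ on $T\times\{x\}$ for each $x\in X$, and is trivial elsewhere.

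For $z\in[B,B]$ all $\theta_x(z)$ vanish, so $\Phi_0(z)=(\Psi(z),1,1)$; writing $z=\prod_i[a_i,c_i]$ and computing in the base group, where commutators are pointwise, one gets $[(\Psi(a_i),1,1),(\Psi(c_i),1,1)]=(\Psi([a_i,c_i]),1,1)$, hence $\Phi_0(z)\in[G_0,G_0]$. For $b_x$, the description above gives $\Phi_0(b_x)=(\phi_x,1,1)$ with $\phi_x$ equal to $b_x$ on $T\times\{1\}$, to $b_x^{-1}$ on $T\times\{x\}$, and trivial elsewhere (since $\theta_f(b_x)$ equals $b_x^{-1}$ when $f=x$ and $1$ when $x\neq f\in X$). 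Because $X$ was imbedded into $F\setminus\{1\}$, the slices $T\times\{1\}$ and $T\times\{x\}$ are disjoint. Conjugating $(\Psi(b_x),1,1)$ by the top element $q_x:=(1,1_T,x)\in G_0$ replaces $\Psi(b_x)$ by the function ${}^{(1_T,x^{-1})}\Psi(b_x)$, whose value at $(t,f)$ is $\Psi(b_x)(t,fx^{-1})$, i.e.\ $b_x$ on $T\times\{x\}$ and trivial elsewhere. Therefore $[(\Psi(b_x),1,1),q_x]=\Phi_0(b_x)^{-1}\in[G_0,G_0]$, so $\Phi_0(b_x)\in[G_0,G_0]$. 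Combining the two cases with the generating set of $C$ above completes the proof.

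The whole argument is bookkeeping; the two points to get right are the wreath-product action convention ${}^g\phi(y)=\phi(yg)$ in the computation of $[(\Psi(b_x),1,1),q_x]$, and the cancellation $\theta_f(\sigma(t)c\sigma(t)^{-1})=\theta_f(c)$, which is valid precisely because $C$ is normal in $B$ and $\theta_f$ kills $[B,B]$. I do not foresee any genuine obstacle.
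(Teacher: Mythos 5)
Your proof is correct and follows essentially the same approach as the paper's: reduce to the generators $[B,B]\cup\{b_x\}_{x\in X}$ of $C$, and realize $\Phi_0(b_x)$ (up to inversion, which is harmless) as a commutator of the constant-on-$T\times\{1\}$ function $\Psi(b_x)$ with the top-group element $(1,1_T,x)$. Your bookkeeping of the wreath action is in fact slightly more careful than the paper's displayed formula, which (with the convention ${}^g\phi(y)=\phi(yg)$) yields a function supported on $T\times\{x^{-1}\}$ rather than $T\times\{x\}$; your variant $[(\Psi(b_x),1,1),q_x]=\Phi_0(b_x)^{-1}$ lands on the correct element, and the inversion costs nothing since $[G_0,G_0]$ is a subgroup.
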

\begin{proof}
  If $b\in[B,B]$ then clearly $\Phi_0(b)\in[G_0,G_0]$. Since $C$ is
  generated by $[B,B]\cup\{b_x\}_{x\in X}$, it suffices to consider
  $b=b_x$.

  We define $g\in G_0$ by
  \[g=(\psi,1,1)\text{ with }\psi(t,f)=\begin{cases}b_x & \text{ if }f=1,\\ 1 & \text{ otherwise}.\end{cases}
  \]
  Then $\Phi_0(b_x)=(\phi,1,1)$ with $\phi(t,1)=b_x$ and
  $\phi(t,x)=b_x^{-1}$, all other values being trivial, according
  to~\eqref{eq:Phi_0}; so, as was to be shown,
  \[\Phi_0(b_x)=(\phi,1,1)=({}^x\!\psi^{-1}\cdot\psi,1,1)=[(1,1,x^{-1}),g]\in[G_0,G_0].\qedhere\]
\end{proof}

\noindent We finally define
\[G=G_0\wrf(\Q/\Z)\]
and a map $\Phi\colon B\to G$ by
\[\Phi(b)=(\phi,0)\text{ with }\phi(r)=\Phi_0(b)\text{ for all }r\in\Q/\Z.\]
\begin{lem}\label{lem:B imbeds}
  The map $\Phi$ is an injective homomorphism, and
  $\Phi(B)\le[G,G]$.
\end{lem}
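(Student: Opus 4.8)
The plan is to factor $\Phi$ through $\Phi_0$. Write $\kappa\colon G_0\to G=G_0\wrf(\Q/\Z)$ for the ``diagonal'' map sending $g$ to the constant function $r\mapsto g$ together with trivial $\Q/\Z$-component, so that $\Phi=\kappa\circ\Phi_0$. A constant function is finite-valued, so $\kappa$ does land in $G_0\wrf(\Q/\Z)$; since the $\Q/\Z$-components are always trivial, the semidirect-product multiplication in $G$ restricts on $\kappa(G_0)$ to pointwise multiplication, i.e.\ to the multiplication of $G_0$, whence $\kappa$ — and therefore $\Phi$ — is a homomorphism, injective because $\Phi_0$ is (Lemma~\ref{lem:Phi_0}) and $\kappa_g=1$ forces $g=1$. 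It then remains to prove $\Phi(B)\le[G,G]$; as $[G,G]$ is a subgroup and $\Phi(B)$ consists of the elements $\Phi(b)=\kappa_{\Phi_0(b)}$, it suffices to show $\kappa_{\Phi_0(b)}\in[G,G]$ for each $b\in B$.

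For $b\in C$ this is immediate: $\Phi_0(b)\in[G_0,G_0]$ by Lemma~\ref{lem:C imbeds}, and $\kappa$, being a homomorphism, sends a product of commutators to a product of commutators. For general $b$, set $n=\mathrm{ord}(\pi(b))$, which is finite because $T=B/C$ is torsion; then $b^n\in C$, so $\Phi_0(b)^n=\Phi_0(b^n)\in[G_0,G_0]$ by the case already treated. The key point is that $\Q/\Z$ contains an element $\rho$ of order exactly $n$. Fix a transversal $R$ of $\langle\rho\rangle$ in $\Q/\Z$, put $h:=\Phi_0(b)$, and define $\psi$ in the base $G_0^{\Q/\Z}$ of $G$ by $\psi(r_0+j\rho)=h^{j}$ for $r_0\in R$ and $0\le j<n$. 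Then $\psi$ takes only the finitely many values $1,h,\dots,h^{n-1}$, so $\psi\in G_0\wrf(\Q/\Z)$, and computing $[\psi,\rho]$ (again a base element, $\psi^{-1}$ times a $\rho$-translate of $\psi$, using the convention of \S\ref{ss:wreath}) one finds that, up to orientation conventions, $[\psi,\rho]$ agrees with the constant function $\kappa_{h^{-1}}$ off the transversal $R$ and equals $\kappa_{h^{-1}}\cdot c$, where $c$ is the base function equal to $h^{n}$ on $R$ and to $1$ elsewhere. Since $h^{n}=\Phi_0(b^n)\in[G_0,G_0]$, write $h^n=\prod_i[x_i,y_i]$; then $c=\prod_i[c_i',c_i'']$, where $c_i'$ (resp.\ $c_i''$) is the base function equal to $x_i$ (resp.\ $y_i$) on $R$ and to $1$ elsewhere, because commutators of base elements are computed coordinatewise. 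Hence $c\in[G,G]$, so $\kappa_{h^{-1}}=[\psi,\rho]\,c^{-1}\in[G,G]$, and therefore $\kappa_{\Phi_0(b)}=\kappa_h=\kappa_{h^{-1}}^{-1}\in[G,G]$.

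The hard part will not be any individual step but getting the commutator computation in the second paragraph exactly right: one must respect the convention from \S\ref{ss:wreath} that $\Q/\Z$ acts on its base by ${}^{\rho}\psi(r)=\psi(r\rho)$, which pins down on which transversal of $\langle\rho\rangle$ the ``defect'' value $h^{n}$ appears — a different convention moves this transversal, but it stays a transversal, which is all that the argument needs. One must also check at each step that the auxiliary functions $\psi$ and $c$ (and the $c_i'$, $c_i''$) are genuinely finite-valued, so that they lie in $G_0\wrf(\Q/\Z)$ rather than merely in the unrestricted wreath product; this is clear, since they take at most $n$, respectively two, values. Everything else — that $\kappa$ is a homomorphism, that commutators of base elements are evaluated coordinatewise, and that $h^{n}=\Phi_0(b^n)$ — is routine manipulation in the semidirect product.
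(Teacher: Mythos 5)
Your proof is correct and follows essentially the same route as the paper's: the key move in both is a ``ramp'' function $\psi$ whose commutator with a rotation of order $n$ equals the target constant function up to a defect supported on a transversal, the defect lying in $[G,G]$ because $h^n=\Phi_0(b^n)\in[G_0,G_0]$ and base commutators are coordinatewise. The paper simply specialises your $\rho$ to $1/n$ and your transversal $R$ to $[0,1/n)\cap\Q$, writing $\psi(r)=h^{\lfloor rn\rfloor}$ and packaging the defect as $\Psi_n(b^n)$ via a homomorphism $\Psi_n\colon B\to G$ with $\Psi_n(C)\le[G,G]$; your sign conventions produce $\kappa_{h^{-1}}$ where the paper lands on $\kappa_h$ directly, a harmless difference.
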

\begin{proof}
  Clearly $\Phi$ is an injective homomorphism, since $\Phi_0$ is an
  injective homomorphism by Lemma~\ref{lem:Phi_0}.

  We identify $\Q/\Z$ with $\Q\cap[0,1)$. For every $n\in\N$, consider
  the map $\Psi_n\colon B\to G$ defined by
  \[\Psi_n(b)=(\phi,0)\text{ with }\phi(r)=\begin{cases}
    \Phi_0(b) & \text{ if }r\in[0,1/n),\\
    1 & \text{ otherwise;}
  \end{cases}
  \]
  so $\Phi=\Psi_1$.  We know from Lemma~\ref{lem:C imbeds} that
  $\Psi_n(C)$ is contained in $[G,G]$.

  Consider now $b\in B$. Since $B/C$ is torsion, there exists $n\in\N$
  such that $b^n\in C$. We define $g\in G$ by
  \[g=(\psi,0)\text{ with }\psi(r)=\Phi_0(b)^{\lfloor r n\rfloor}\text{
    for }r\in[0,1)\cap\Q.
  \]
  Let us write $h=\Phi_0(b)$, and consider the element
  $[(1,1/n),g]\cdot\Psi_n(b^n)=(\phi,0)$.  If $r\in[0,1/n)$ then
  $\phi(r)=\psi(r-1/n)^{-1}\psi(r)h^n=h$, while if $r\in[1/n,1)$ then
  $\phi(r)=\psi(r-1/n)^{-1}\psi(r)=h$; therefore
  \[\Phi(b)=[(1,1/n),g]\cdot\Psi_n(b^n)\in[G,G].\qedhere\]
\end{proof}

\begin{proof}[Proof of Proposition~\ref{prop:imbed'}]
  The first assertion is simply Lemma~\ref{lem:B imbeds}.  For the
  last one: if $B$ is infinite, we wish to find a subgroup $H$ of $G$
  with the same cardinality as $B$, such that $\Phi$ maps into
  $[H,H]$. For each $b\in B$, choose a finite subset $S_b$ of $G$ such
  that $\Phi(b)\in[\langle S_b\rangle,\langle S_b\rangle]$, and a
  subgroup $G_b$, containing $S_b$, that is virtually a finite power
  of $B$. Consider the group $H$ generated by the union of all the
  $G_b$. As soon as $B$ is infinite, all $G_b$ have the same
  cardinality as $B$, and so does $H$.
\end{proof}

\subsection{Spreading, stabilizing, rectifiable sequences}
We also extend Proposition~\ref{prop:neumann-n:2} by replacing the
wreath product $G\wrwr\Z$ by permutational wreath products of the form
$G\wrwr_X P$ for a group $P$ acting on a set $X$, and considering
subgroups of the form $W=\langle f,P\rangle$ for a function
$f\colon X\to G$.

We already encountered in Proposition~\ref{prop:growthW} sufficient
conditions for such a group $W$ to be of subexponential growth, in
case $f$ is finitely generated. As we shall see, the group $W$ may
also have subexponential growth if $f$ is \emph{infinitely} supported,
but its support is sufficiently sparse, in a sense that we describe
now.

In this section, we assume a finitely generated group
$P=\langle S\rangle$ acting on the right on a set $X$ has been fixed.
We use the same notation for $X$ as a set and as a \emph{Schreier
  graph}, namely as the graph with vertex set $X$, and with for all
$x\in X,s\in S$ an edge labelled $s$ from $x$ to $x s$, see
Definition~\ref{def:schreier}. We denote by $d$ the path metric on
this graph.

\begin{defn}\label{def:spreading}
  A sequence $(x_0,x_1,\dots)$ in $X$ is \emph{spreading} if for all
  $R$ there exists $N$ such that if $i,j\ge N$ and $i\neq j$ then
  $d(x_i,x_j)\ge R$.
\end{defn}

\begin{exple}
  If all $x_i$ lie in order on a geodesic ray starting from $x_0$ (for
  example if $X$ itself is a ray starting from $x_0$) and for all $i$
  we have $d(x_0,x_{i+1})\ge 2 d(x_0,x_i)$, then $(x_i)$ is spreading.
\end{exple}

\begin{exse}
  Show that a sequence $(x_0,x_1,\dots)$ in $X$ is spreading if and
  only if for all $R$ there exists $N$ such that if $i\neq j$ and
  $i\ge N$ then $d(x_i,x_j)\ge R$.
\end{exse}

\begin{defn}\label{def:locally stabilizing}
  A sequence $(x_i)$ in $X$ \emph{locally stabilises} if for all $R$
  there exists $N$ such that if $i,j\ge N$ then the $S$-labelled
  radius-$R$ balls centered at $x_i$ and $x_j$ in $X$ are isomorphic
  as labelled graphs.
\end{defn}

\begin{defn}\label{def:rectifiable}
  A sequence of points $(x_i)$ in $X$ is \emph{rectifiable} if for all
  $i,j$ there exists $g \in P$ with $x_i g =x_j$ and $x_k g\ne x_\ell$
  for all $k\notin\{i,\ell\}$.
\end{defn}

For example, if $X=\Z$ and $P=\Z$ acting by translations, then
$\Sigma=\{2^i:i\in\N\}$ is rectifiable, since $2^j-2^i=2^\ell-2^k$
only has trivial solutions $i=k,j=\ell$ and $i=j,k=\ell$. It is also
spreading and locally stabilizing.

\begin{exse}
  Show that the sequence $\Sigma=(x_i)\subseteq X$ is rectifiable if
  and only if for all $i,j$ there exists $g\in P$ with $x_i g = x_j$
  and $\Sigma\cap\Sigma
  g\subseteq\{x_j\}\cup\operatorname{fixed.\!points}(g)$.
\end{exse}

\begin{defn}
  Fix a point $z\in X$. A sequence $(g_i)$ in $P$ is
  \emph{parallelogram-free at $z$} if, for all $i,j,k,\ell$ with $i\neq
  j$ and $j\neq k$ and $k\neq\ell$ and $\ell\neq i$ one has
  $z g_i^{-1}g_j g_k^{-1} g_\ell\ne z$.
\end{defn}
\[\begin{tikzpicture}[decoration={markings,mark=at position 0.5 with {\arrow{>}}}] 
  \node[inner sep=0] (z) at (0,0) [label=right:$z$] {$\bullet$};
  \node[inner sep=0] (zi) at (-3,-1) [label=left:$z g_i^{-1}$] {$\bullet$};
  \node[inner sep=0] (zij) at (-4.5,0.5) {$\bullet$};
  \node[inner sep=0] (zl) at (-1.5,1.5) {$\bullet$};
  \node[inner sep=0] (zx) at (-0.2,0.2) {$\bullet$};
  \draw[postaction={decorate}] (zi) -- node[below] {$i$} (z);
  \draw[postaction={decorate}] (zi) -- node[below left] {$j$} (zij);
  \draw[postaction={decorate}] (zl) -- node[above] {$k$} (zij);
  \draw[postaction={decorate}] (zl) -- node[above right] {$\ell$} (zx);
\end{tikzpicture}\]

\begin{lem}\label{lem:parallelogram}
  If $z\in X$ and $(g_i)$ is parallelogram-free at $z$, then $(z
  g_i^{-1})$ is a rectifiable sequence in $X$.
\end{lem}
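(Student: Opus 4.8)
The plan is to set $x_n := z g_n^{-1}$ for each index $n$ and to verify directly the two clauses in the definition of a rectifiable sequence, reading off everything from the parallelogram-free hypothesis.

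First I would isolate the auxiliary fact that the points $x_n$ are pairwise distinct. If $x_m = x_n$ with $m\neq n$, then $z g_m^{-1}g_n = z$; iterating this identity gives $z g_m^{-1}g_n g_m^{-1}g_n = z$, which is a forbidden parallelogram $z g_p^{-1}g_q g_r^{-1}g_s = z$ with $(p,q,r,s)=(m,n,m,n)$ --- all four adjacency inequalities reduce to $m\neq n$ --- contradicting the hypothesis. So distinctness holds.

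Next, given $i$ and $j$, I would take $g := g_i g_j^{-1}\in P$. Then $x_i g = z g_i^{-1}g_i g_j^{-1} = z g_j^{-1} = x_j$, which is the first clause (and for $i=j$ this is just $g=1$). For the second clause, suppose towards a contradiction that $x_k g = x_\ell$ for some $k\notin\{i,\ell\}$; unwinding the action, $z g_k^{-1}g_i g_j^{-1} = z g_\ell^{-1}$, i.e.\ $z g_k^{-1}g_i g_j^{-1}g_\ell = z$. I would then distinguish three cases. If $i=j$, then $g=1$ and $x_k g = x_\ell$ becomes $x_k=x_\ell$ with $k\neq\ell$, contradicting distinctness. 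If $i\neq j$ but $j=\ell$, then $x_\ell = x_j = x_i g$, so $x_k g = x_\ell$ forces $x_k = x_i$ after cancelling $g$, contradicting distinctness since $k\neq i$. Finally, if $i\neq j$ and $j\neq\ell$, then $z g_k^{-1}g_i g_j^{-1}g_\ell = z$ is a forbidden parallelogram with $(p,q,r,s)=(k,i,j,\ell)$: the needed inequalities $k\neq i$, $i\neq j$, $j\neq\ell$, $\ell\neq k$ all hold (here $k\neq i$ and $k\neq\ell$ because $k\notin\{i,\ell\}$), contradicting the hypothesis.

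The one place requiring care is the bookkeeping of these ``degenerate'' coincidences of indices: the parallelogram-free condition only speaks about quadruples with all adjacent entries distinct, so the cases $i=j$ and $j=\ell$ fall outside its scope and must be dispatched by the distinctness lemma --- which is itself extracted from parallelogram-freeness. Beyond that, the proof is a sequence of one-line manipulations of the right action of $P$ on $X$.
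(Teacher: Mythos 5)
Your proof is correct and follows essentially the same route as the paper's: set $x_i=zg_i^{-1}$, take $g=g_ig_j^{-1}$, and use the parallelogram-free hypothesis on $zg_k^{-1}g_ig_j^{-1}g_\ell=z$. The only difference is stylistic: you isolate the distinctness of the $x_n$ as an explicit preliminary lemma, whereas the paper uses it implicitly in the steps ``$i=j$ implies $k=\ell$'' and ``$j=\ell$ implies $k=i$''; your version is a bit more careful about these degenerate cases but is the same argument.
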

\begin{proof}
  Set $x_i=z g_i^{-1}$ for all $i\in\N$. Given $i,j\in\N$, consider
  $g=g_i g_j^{-1}$, so $x_i g = x_j$. If furthermore we have $x_k
  g=x_\ell$, then we have $z g_k^{-1} g_i g_j^{-1} g_\ell = z$, so
  either $k=i$, or $i=j$ which implies $k=\ell$, or $j=\ell$ which
  implies $k=i$, or $\ell=k$. In all cases $k\in\{i,\ell\}$ as was to
  be shown.
\end{proof}

It is clear that, if $P$ is finitely generated and $X$ is infinite,
then it admits spreading and locally stabilizing sequences. Indeed
every sequence contains a spreading subsequence, and every sequence
contains a stabilizing subsequence, and a subsequence of a spreading
or locally stabilizing sequence is again spreading, respectively
locally stabilizing.

We will content ourselves with the following rectifiable sequences,
based on the Grigorchuk groups $G_\omega=\langle a,b,c,d\rangle$, with
$\omega\in\Omega'$, for example the first Grigorchuk group $G_{012}$.
Recall the description of $G_\omega$ from~\S\ref{ss:grigorchuk}, and
in particular the Schreier graph of its action on $\8^\infty$
in~\eqref{eq:grigorbit}. We construct explicitly a spreading, locally
stabilizing, rectifiable sequence for the action of $G_\omega$ on $X$:
for all $i\in\N$, let us define
\[x_i=\8^\infty\9^i,
\]
the point at distance $2^i$ from the origin on the Schreier graph.

\begin{lem}\label{lem:Gspreading}
  For all $\omega\in\Omega$ containing infinitely many $0,1,2$'s, and
  for all $i,j\in\N$,
  \begin{enumerate}
  \item the marked balls of radius $2^{\min(i,j)}$ in $X$ around $x_i$
    and $x_j$ coincide;
  \item the distance $d(x_i,x_j)$ is $|2^i-2^j|$;
  \item there exists $g_{i,j}\in G_\omega$ of length $|2^i-2^j|$ with
    $x_i g_{i,j} = x_j$ and $x_k g_{i,j}\neq x_\ell$ for all
    $(k,\ell)\neq (i,j)$.
  \end{enumerate}
\end{lem}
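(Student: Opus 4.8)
\medskip
\noindent\textbf{Proof strategy.}
The plan is to read off all three statements from the recursive structure of the Schreier graph $X=X_\omega$ of $G_\omega$ on $\8^\infty G_\omega$ under the wreath recursion $\phi_\omega\colon G_\omega\to G_{\sigma\omega}\wr\sym(2)$. Since $\phi_\omega(a)$ is the pure transposition while $\phi_\omega(b),\phi_\omega(c),\phi_\omega(d)$ have trivial top permutation, the root-adjacent letter of a ray partitions $X=X^{(\9)}\sqcup X^{(\8)}$; the $a$-edges form a perfect matching between the two halves (pairing rays with the same deeper part), and every other edge stays inside one half, where it realises the action of $G_{\sigma\omega}$ on its own Schreier graph $X_{\sigma\omega}$ -- concretely, inside $X^{(\9)}$ the letters $b,c,d$ act through $\omega_0(b),\omega_0(c),\omega_0(d)\in\{1,a\}$. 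Under the matching, $x_0=\8^\infty$ lies in $X^{(\8)}$ and corresponds to the basepoint of $X_{\sigma\omega}$, while for $i\ge1$ the ray $x_i=\8^\infty\9^i$ lies in $X^{(\9)}$ and corresponds to $x_{i-1}$ in $X_{\sigma\omega}$. The quantitative heart of the matter is that a move of the deeper part costing one $a$-step in $X_{\sigma\omega}$ must be performed in $X_\omega$ by crossing into $X^{(\8)}$ and back, so distances between these ``unfolded'' marked points double at each step.

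For (2) I would induct on $\max(i,j)$. The observation above gives $d_\omega(x_i,x_j)=2\,d_{\sigma\omega}(x_{i-1},x_{j-1})$ for $i,j\ge1$ and $d_\omega(x_0,x_j)=2\,d_{\sigma\omega}(x_0,x_{j-1})+1$ (the extra unit being the $a$-edge $x_0\!-\!x_1$); together with $d(x_0,x_0)=0$ this solves to $d_\omega(x_i,x_j)=|2^i-2^j|$. For (1), assume $i\le j$ and unfold $i$ times: the radius-$2^i$ labelled ball around $x_i$, resp.\ around $x_j$, is assembled from the radius-$1$ ball around the basepoint of $X_{\sigma^i\omega}$ by a rule depending only on $\omega_0,\dots,\omega_{i-1}$ and on the finitely many generators crossed, so the two balls are the same $S$-labelled graph because $x_i$ and $x_j$ have the same first $i$ letters $\9^i$. (The hypothesis that each of $0,1,2$ occurs infinitely often in $\omega$ is not needed for (1) or (2) -- only afterwards, to know that the resulting sequence $(x_i)$ is genuinely spreading and locally stabilising.)

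For (3) I would build $g_{i,j}$ by the same recursion. Reduce first to $i=0$: restricting to the subtree hanging below the depth-$\min(i,j)$ vertex $\9^{\min(i,j)}$ carries $x_i,x_j$ to $x_0,x_{|i-j|}$ of $X_{\sigma^{\min(i,j)}\omega}$, and since $G_\omega$ is branched (Proposition~\ref{prop:Gbranched}) an element of $G_{\sigma^{\min(i,j)}\omega}$ lifts, after a bounded-order correction, to an element of $G_\omega$ supported on that subtree; note that all $x_k$ with $k<\min(i,j)$ lie outside the subtree (hence are fixed), while those with $k\ge\min(i,j)$ lie inside, reducing to the case $i=0$. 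For $i=0$ set $g_{0,0}=1$, $g_{0,1}=a$, and $g_{0,m}=\theta_\omega(g_{0,m-1})\,a$, where $\theta_\omega\colon G_{\sigma\omega}\to G_\omega$ is the injection from \S\ref{ss:ssgrowth} with $\phi_\omega(\theta_\omega(g))=\pair<*,g>$, $*\in\langle a,y\rangle$; then $x_0g_{0,m}=x_m$ (the $\theta_\omega$-factor moves the deeper part from the basepoint to $x_{m-1}$, the trailing $a$ flips the top letter), and since $\theta_\omega$ roughly doubles word length while $*$ stays in a bounded dihedral group, $\|g_{0,m}\|$ matches the lower bound $2^m-1$ coming from (2). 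On the $\8$-half (which meets $\{x_k\}$ only in $x_0$), $g_{0,m}$ acts through the finite group $\langle a,y\rangle$, where one checks directly that it moves $x_0$ off every $x_\ell$; on the $\9$-half it acts through $g_{0,m-1}$, rectifiable for $(x_k)$ in $X_{\sigma\omega}$ by induction.

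The genuine difficulty is the last clause of (3): that $x_kg_{i,j}\ne x_\ell$ for \emph{all} $(k,\ell)\ne(i,j)$, including the far-away points $x_k$ with $k$ large, which the step-by-step recursion does not immediately control. Here one must feed back the spreading estimate (2): $g_{i,j}$, read off the finite subtree, displaces points far from the segment $[x_i,x_j]$ by only a bounded amount, and by (2) a bounded neighbourhood of such an $x_k$ misses every other $x_\ell$ once $k$ is large. Making this bookkeeping airtight is the only real work; a tidy alternative is to exhibit words $h_i$ with $\8^\infty h_i=x_i$ and check that $(h_i)$ is parallelogram-free at $\8^\infty$, so that Lemma~\ref{lem:parallelogram} yields rectifiability at once.
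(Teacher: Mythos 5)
Your treatment of (1) and (2) follows the same route as the paper: unfold the wreath recursion by iterating the embedding $\theta_\omega$ of Equation~\eqref{eq:theta}, which shifts $x_m\mapsto x_{m+1}$, is $2$-Lipschitz on even-length words, and has image a net in the double-radius ball, so the radius-$2^{\min(i,j)}$ marked balls around $x_i$ and $x_j$ are carried over identically and your doubling recursion $d_\omega(x_i,x_j)=2\,d_{\sigma\omega}(x_{i-1},x_{j-1})$, $d_\omega(x_0,x_j)=2\,d_{\sigma\omega}(x_0,x_{j-1})+1$ solves to $|2^i-2^j|$. This is fine.

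Part (3) is where your route diverges and where the gap you flag is real. The paper does not try to \emph{construct} $g_{i,j}$ by recursion. It starts from an arbitrary $g$ with $x_ig=x_j$ (transitivity of $G_\omega$ on the orbit already gives one) and then uses two observations. First, for fixed $g$ the set of pairs $(k,\ell)$ with $x_kg=x_\ell$ is either finite or cofinite: at a deep enough level $N$ all wreath-recursion entries of $g$ along $\9^*$ are single generators, and the entry at $\9^N$ decides whether $g$ moves $x_k$ for all large $k$. Second, by the branching property (Proposition~\ref{prop:Gbranched}) there exist elements $h_u\in G_\omega$ for each $u\in\{\9,\8\}^*$ whose fixed points are \emph{exactly} the rays not ending in $u$. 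One then multiplies $g$ by $h_{\9^M}$ if in the cofinite case, reducing to finitely many bad pairs, and then by $h_{\8\9^\ell}$ for each remaining bad pair $(k,\ell)$, destroying it while leaving $x_ig=x_j$ intact. This correction scheme eliminates precisely the ``middle-range bookkeeping'' you identified, with no need to track displacements or rule out collisions case by case.

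In your recursive construction $g_{0,m}=\theta_\omega(g_{0,m-1})\,a$ there are further points that would need real work: the unknown factor $*\in\langle a,y\rangle$ appearing in $\phi_\omega(\theta_\omega(g))=\pair<*,g>$ affects the $\8$-half and is precisely what the paper labels a ``cheat'' elsewhere; and lifting $g_{0,|i-j|}$ to an element of $G_\omega$ supported on the subtree below $\9^{\min(i,j)}$ requires the constructed element to lie in the branching subgroup $K_{\sigma^{\min(i,j)}\omega}$, which is not automatic. The ``tidy alternative'' of checking parallelogram-freeness of explicit transporter words is plausible but is also left unverified and hides the same issues. Finally, note that the stated length $|2^i-2^j|$ is not preserved by the paper's corrections either (and is irrelevant to rectifiability), so you should not spend effort trying to match it exactly; what is used downstream is only the existence of \emph{some} transporter with the collision-avoidance property, together with the distance estimate of (2).
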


\begin{proof}
  (1), (2) Consider the map $\theta_\omega$ from
  Equation~\eqref{eq:theta}. It maps the stabilizer of $\8^\infty$ in
  $G_{\sigma\omega}$ to the stabilizer of $\8^\infty$ in $G_\omega$,
  and therefore defines a self-map of $X$ by sending $\8^\infty g$ to
  $\8^\infty\theta_\omega(g)$. A direct calculation shows that it
  sends $x\in X$ to $x\9$.

  Since $\theta_\omega$ is $2$-Lipschitz on words of even length in
  $\{a,b,c,d\}$, it maps the ball of radius $n$ around $x$ to the ball
  of radius $2n$ around $x\9$. Its image is in fact a net in the ball
  of radius $2n$: two points at distance $1$ in the ball of radius $n$
  around $x$ will be mapped to points at distance $1$ or $3$ in the
  image, connected either by a segment
  \tikz[xscale=0.68,baseline]{\footnotesize \path (0,0) edge
    node[above] {$a$} (1,0);} or by a path
  \tikz[xscale=0.68,baseline]{\footnotesize \path (0,0) edge
    node[above] {$a$} (1,0); \path (1,0) edge[bend left] node[above]
    {$x$} (2,0); \path (1,0) edge[bend right] node[below] {$y$} (2,0);
    \path (2,0) edge node[above] {$a$} (3,0);} for some
  $\{x,y\}\subset\{b,c,d\}$. In particular, the $2^n$-neighbourhoods
  of the balls about the $x_m$ coincide for all $m\ge n$.

  (3) Note, first, that there exists $g_{i,j}$ with $x_i g_{i,j} =
  x_j$, because the rays ending in $\8^\infty$ form a single
  orbit. Note, also, that we have $x_k g_{i,j} = x_\ell$ for either
  finitely many $(k,\ell)\neq(i,j)$ or for all but finitely many
  $(k,\ell)$, because there is a level $N$ at which the decomposition
  of $g_{i,j}$ consists entirely of generators; if the entry at $\9^N$
  of $g_{i,j}$ is trivial or `$d$' then all but finitely many of the
  $x_k$ are fixed; while otherwise (up to increasing $N$ by at most
  one) we may assume it is an `$a$'; then $\9^{N+1} g_{i,j}=\9^N\8$,
  so $x_k\neq x_\ell$ for all $k>N+1$.

  We use the following property of the Grigorchuk groups $G_\omega$:
  for every finite sequence $u\in\{\9,\8\}^*$ there exists an element
  $h_u\in G_\omega$ whose fixed points are precisely those sequences
  in $\{\9,\8\}^\infty$ that do not start with $u$. One may take for
  $h_u$ the element $[a,b]$, $[a,c]$ or $[a,d]$ inside the copy of the
  branching subgroup of $G_\omega$ that acts on $\{\9,\8\}^\infty u$,
  see Proposition~\ref{prop:Gbranched}.

  If the entry at $\9^N$ of $g_{i,j}$ is trivial, then we multiply
  $g_{i,j}$ with $h_{\9^M}$ for some $M>\max(N,i)$, so as to fall back
  to the second case.

  Then, for each pair $(k,\ell)\neq(i,j)$ with $x_k g_{i,j}=x_\ell$,
  we multiply $g_{i,j}$ with $h_{\8\9^\ell}$, so as to destroy the
  relation $x_k g_{i,j}=x_\ell$.

  The resulting element $g_{i,j}$ satisfies the required conditions.
\end{proof}

\subsection{Subexponential growth of wreath products}
The next step in the proof is an argument controlling the growth of a
subgroup of the form $W=\langle P,f\rangle\le G\wrwr_X P$, for a
function $f\colon X\to G$ with sparse-enough (but infinite!)
support.

We select a finitely generated group $P$ of subexponential growth
acting on a set $X$ with sublinear inverted orbit growth (see
Definition~\ref{def:invorbit}; recall, from
Exercise~\ref{exse:invgrowth}, that the property of having sublinear
orbit growth is independent of a choice of basepoint) and
subexponential inverted orbit choice growth. These are the hypotheses
for Corollary~\ref{cor:growthW}, which guarantee that $G\wr_X P$ has
subexponential growth as soon as $G$ has subexponential growth. The
main example we have in mind is a Grigorchuk group $P=G_\omega$ acting
on $X=\8^\infty P$, for $\omega\in\{0,1,2\}^\infty$ containing
infinitely many times each symbol, see Lemma~\ref{lem:Gspreading}.

We also assume that there are rectifiable sequences in $X$, and (using
the results of the previous section) we fix a rectifiable, spreading,
locally stabilizing sequence $(x_0,x_1,\dots)$ of elements of $X$.

Finally, we fix a countable group $G$, and a finite or infinite
sequence of elements $(b_1,b_2,\dots)$ generating $G$.

\begin{defn}
  For an increasing finite or infinite sequence $0\le n(1) < n(2)
  <\dots$ of integers, define $f\colon X\to G$ by
  \[f(x_{n(1)})=b_1,\qquad f(x_{n(2)})=b_2,\qquad \dots,\qquad f(x)=1\text{ for other }x.
  \]
  The group $W_{(n)}=W_{n(1),n(2),\dots}$ is then defined as the
  subgroup $\langle P,f\rangle$ of the unrestricted wreath product
  $G\wrwr_X P$.
\end{defn}

If all but finitely many $b_i$ are trivial, then $W_{(n)}$ has
subexponential growth since then $G$ is finitely generated and
$W_{(n)}$ is a subgroup of $G\wr_X P$. However, if $(n(i))$ is sparse
enough then $W_{(n)}$ may have subexponential growth even if $f$ has
infinite support:
\begin{prop}\label{prop:Wsubexp}
  If $G$ has locally subexponential growth, then there exists an
  infinite sequence $(n(i))$ such that the group $W_{(n)}$ has
  subexponential growth.
\end{prop}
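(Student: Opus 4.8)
The plan is to choose the gaps $n(i)$ between the "active" sites recursively and so rapidly increasing that, up to any prescribed radius $R$, the group $W_{(n)}$ looks like a wreath product $G_R\wr_X P$ with $G_R$ a \emph{finitely generated} subgroup of $G$, so that Corollary~\ref{cor:growthW} applies at that scale. The key point is that an element of $W_{(n)}$ of norm $\le R$ can only ``see'' those $b_i$ whose site $x_{n(i)}$ lies in the inverted orbit of a word of length $\le R$; since that inverted orbit has size $\le\Delta(R)$ and is contained in the ball of radius $R$ about the basepoint in the Schreier graph $X$, only boundedly many $b_i$ (depending on $R$) are involved, and the rectifiability/spreading of $(x_i)$ guarantees no unwanted collisions occur among the remaining sites.

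First I would set up the bookkeeping: write $v_k$ for a log-concave subexponential majorant of the growth function of $\langle b_1,\dots,b_k\rangle\le G$, which exists for each $k$ by hypothesis and Lemma~\ref{lem:concavehull}. Next I would prove a ``locality'' lemma: for any $R$, if $w$ is a word of length $\le R$ in the generators $S\cup\{f\}$ of $W_{(n)}$, then its value $cg\in G\wrwr_X P$ has support contained in $\mathcal O(w')\subseteq B_X(R)$ for the underlying word $w'$ in $P$, hence in the set $\{x_{n(1)},\dots,x_{n(k_R)}\}$ where $k_R$ is the number of active sites inside $B_X(R)$; moreover each coordinate $c(x_{n(i)})$ is a product of at most $R$ of the $b_i$'s, so lies in the ball of radius $\le R$ of $\langle b_1,\dots,b_{k_R}\rangle$. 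Using the rectifiability of $(x_i)$ exactly as in Neumann's argument (Proposition~\ref{prop:neumann-n:2}) one checks that distinct choices of these coordinates give distinct elements, so that the ball of radius $R$ in $W_{(n)}$ injects into the ball of radius $CR$ in $\langle b_1,\dots,b_{k_R}\rangle\wr_X P$. Now I would invoke Proposition~\ref{prop:growthW} (with $H=\langle b_1,\dots,b_{k_R}\rangle$, growth $\le v_{k_R}$) together with the sublinearity of $\Delta$ and subexponentiality of $\Sigma$ for $(X,\xi)$, to get
\[
v_{W_{(n)}}(R)\le v_P(CR)\cdot v_{k_R}(CR/\Delta(CR))^{\Delta(CR)}\cdot(2CR)^{\Delta(CR)}\cdot\Sigma(CR).
\]
Everything on the right except the middle factor is already subexponential in $R$ uniformly; the middle factor is subexponential \emph{for each fixed} $k$, so the whole thing is subexponential provided $k_R$ grows slowly enough compared to $R$.

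This last provision is where the recursion on $n(i)$ enters, and it is the main obstacle — not a deep one, but the heart of the argument. I would choose $n(i)$ inductively so that the site $x_{n(i)}$ lies outside the ball $B_X(R_i)$ for a radius $R_i$ chosen large enough that $v_{i}(R_i/\Delta(R_i))^{\Delta(R_i)}\le\exp(R_i/i)$ (possible since $v_i$ is subexponential, so $\log v_i(m)/m\to0$, and $\Delta$ is sublinear so $\Delta(R)=o(R)$); concretely one picks $R_{i+1}$ huge relative to $i$ and then $n(i+1)$ huge relative to $R_{i+1}$ using that $(x_m)$ is spreading and that $X$ has at least linear... rather, using only that $d(\xi,x_m)\to\infty$. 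Then for $R\in[R_i,R_{i+1})$ we have $k_R\le i$, so the middle factor is $\le v_i(CR/\Delta(CR))^{\Delta(CR)}\le\exp(CR/i)$, and since $i\to\infty$ as $R\to\infty$ this forces $\tfrac1R\log v_{W_{(n)}}(R)\to0$. Assembling the pieces gives subexponential growth of $W_{(n)}$, completing the proof; the only care needed is to make the two interleaved recursions (choosing $R_i$ from $i$, then $n(i)$ from $R_i$) consistent, and to keep track that the constant $C$ coming from rectifiability and from passing between the generating sets does not depend on $i$.
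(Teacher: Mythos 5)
The heart of your argument is the claimed ``locality'' lemma: that a word $w$ of length $\le R$ in $S\cup\{f\}$ evaluates to $cg\in G\wrwr_X P$ with $c$ supported inside the inverted orbit $\mathcal O(w')\subseteq B_X(R)$, hence only the finitely many $b_i$ with $x_{n(i)}\in B_X(R)$ are involved. This is false. The whole point of the construction is that $f$ has \emph{infinite} support $\{x_{n(1)},x_{n(2)},\dots\}$ whenever infinitely many $b_i$ are non-trivial; consequently $c$ is a product of conjugates ${}^{q}\!f^{\pm1}$ with $\|q\|<R$, whose support is contained not in $B_X(\xi,R)$ but in $\bigcup_{j}B_X(x_{n(j)},R)$, and this union is infinite. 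So $c$ is generically infinitely supported, and you cannot conclude that the ball of radius $R$ in $W_{(n)}$ injects into a ball of $\langle b_1,\dots,b_{k_R}\rangle\wr_X P$ by ignoring the far-away sites: they are not ignored, their values simply have to be \emph{controlled}. Rectifiability has nothing to do with this step; in the paper it is used only to establish Lemma~\ref{lem:contains [G,G]} ($[G,G]$ imbeds in $[W_{(n)},W_{(n)}]$), not to bound supports.

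The missing ingredient is Proposition~\ref{prop:Wconvergence}, which is exactly a corrected version of your locality lemma and requires two hypotheses your proposal never invokes: that the sequence $(x_i)$ is \emph{locally stabilizing} (so the $R$-balls in the Schreier graph around distant $x_j$ are isomorphic as labelled graphs), and that all the $b_i$ have the \emph{same order} (the paper enforces this up front by replacing $G$ with $G\times Z$ and $b_i$ with $b_iz$ for a suitable cyclic $Z$). Under these hypotheses, one shows that for an element $h=(c,g)$ of norm $\le m(i)$, the restriction of $c$ to a neighbourhood of a far-away site $x_j$ ($j>n(i)$) is \emph{determined} by its restriction near $x_{n(i)}$ via the identification $b_i\mapsto b_j$; this gives a bijection (not merely an injection defined via some ad hoc truncation) between the $m(i)$-ball of $W_{(n)}$ and the $m(i)$-ball of the finitely supported truncation $W_{n(1),\dots,n(i)}\le G\wr_X P$. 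Once that is in place, the rest of your recursion works, but the paper's bookkeeping is cleaner: choose $m(i)$ so that $v_{W_{n(1),\dots,n(i)}}(m(i))\le\epsilon_i^{m(i)}$ for a sequence $\epsilon_i\downarrow1$ (possible because $W_{n(1),\dots,n(i)}$ has subexponential growth, by Theorem~\ref{thm:wreathgrowth} rather than a fresh invocation of Proposition~\ref{prop:growthW}), and conclude by submultiplicativity of the growth function, avoiding the explicit $\Delta,\Sigma$ estimates you redo.
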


The proof of Proposition~\ref{prop:Wsubexp} follows from a stronger,
and independently interesting, statement: arbitrarily large balls in
$W_{(n)}$ are approximable by groups of the form
$W_{(n(1),\dots,n(i))}$, which have subexponential growth by the remark above:
\begin{prop}\label{prop:Wconvergence}
  Assume that the sequence $(x_i)$ in $X$ is spreading and locally
  stabilizing, and that all elements $b_i$ have the same order.

  Then for every increasing sequence $(m(i))$ there exists an
  increasing sequence $(n(i))$ with the following property: the ball
  of radius $m(i)$ in $W_{(n)}$ coincides with the ball of radius
  $m(i)$ in $W_{n(1),n(2),\dots,n(i)}$, via the natural identification
  $P\leftrightarrow P,f\leftrightarrow f$ between $W_{(n)}$ and
  $W_{n(1),\dots,n(i)}$.

  Furthermore, the term $n(i)$ depends only on the previous terms
  $n(1),\dots,n(i-1)$, on the initial terms $m(1),\dots,m(i)$, and on
  the ball of radius $m(i)$ in the subgroup
  $\langle b_1,\dots,b_{i-1}\rangle$ of $G$.
\end{prop}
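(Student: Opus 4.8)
The plan is to build the sequence $(n(i))$ inductively, choosing each $n(i)$ large enough that, within the ball of radius $m(i)$, the extra decorations $b_{i+1}, b_{i+2}, \dots$ sitting at the faraway points $x_{n(i+1)}, x_{n(i+2)}, \dots$ are invisible, and that the finite truncation $W_{n(1),\dots,n(i)}$ genuinely agrees with $W_{(n)}$ on that ball. First I would fix notation: an element of $W_{(n)}$ in the ball of radius $m(i)$ is a word of length $\le m(i)$ in the generators $S \cup \{f\}$; rewriting it in the form $c\,p$ with $p \in P$ and $c\colon X\to G$, the argument in \S\ref{ss:genwreath} shows that the support of $c$ is contained in the inverted orbit $\mathcal O(w)$ of the underlying word $w$ in $P$, hence in the ball of radius $m(i)$ about the basepoint $\xi$ in the Schreier graph $X$. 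Since $(x_i)$ is spreading, there is $N_1$ such that for $j,k \ge N_1$, $j\ne k$, we have $d(x_j,x_k) \ge 2m(i)+1$; choosing $n(i) \ge N_1$ (and $> n(i-1)$) guarantees that any ball of radius $m(i)$ meeting $\{x_{n(1)},\dots,x_{n(i)},x_{n(i+1)},\dots\}$ meets at most one of the points with index $\ge i$. Wait — more carefully, it meets $x_{n(1)},\dots,x_{n(i-1)}$ (which are already fixed and at bounded mutual distance is irrelevant) plus at most one further $x_{n(j)}$ with $j\ge i$; by taking $n(i)$ even larger we can arrange that a ball of radius $m(i)$ about any point $\xi$ that already sees one of $x_{n(1)},\dots,x_{n(i-1)}$ cannot reach any $x_{n(j)}$ with $j > i$. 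This is where "$n(i)$ depends only on $n(1),\dots,n(i-1)$ and $m(1),\dots,m(i)$" enters: the finitely many relevant basepoints and radii are already determined.

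Next I would handle the locally stabilizing hypothesis. Two words $w, w'$ of length $\le m(i)$ in $S \cup \{f\}$ evaluate to the same element of $W_{(n)}$ iff they have the same image $p \in P$ and the same decoration $c$; the image in $P$ only depends on the $S$-letters, so that part is insensitive to which $W_{n(1),\dots}$ we work in. For the decoration: at each point $x$ in the support, $c(x)$ is a product of conjugates (by elements of $P$) of the values $f(x_{n(1)}) = b_1, \dots$; precisely, $c(x)$ is a word in those $b_\ell$ for which some $x_{n(\ell)}$ is visited, of length $\le m(i)$. Because $(x_i)$ is locally stabilizing, there is $N_2$ such that the marked balls of radius $m(i)$ about $x_j$ and $x_k$ coincide for $j,k\ge N_2$; choosing $n(i)\ge N_2$ makes the "combinatorics of visiting $x_{n(i)}$" — i.e.\ which prefixes of $w$ map $\xi$ into the radius-$m(i)$ ball about $x_{n(i)}$, and with what $P$-elements — identical to what it would be for any later admissible choice. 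Together with the assumption that all $b_\ell$ have the same order $p$, this means that the value $c(x_{n(i)})$, as a formal word in a single generator of $C_p$, is determined, and hence whether two words agree is decided by looking at the finite data: the image in $P$, and for $\ell < i$ the value in $\langle b_1,\dots,b_{i-1}\rangle$, and for the (at most one) index $\ge i$ a word in $C_p = \langle b_\ell \rangle$. This gives the final clause of the statement: $n(i)$ depends on $n(1),\dots,n(i-1)$, on $m(1),\dots,m(i)$, and on the ball of radius $m(i)$ in $\langle b_1,\dots,b_{i-1}\rangle$.

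With these choices, I would verify the equality of balls directly. Take $n(i)$ exceeding $\max(N_1, N_2, n(i-1))$ as above. I claim the identity map on generators $P \leftrightarrow P$, $f \leftrightarrow f$ restricts to a bijection between the radius-$m(i)$ ball of $W_{(n)}$ and that of $W_{n(1),\dots,n(i)}$. Surjectivity is clear since both balls are images of the same set of words of length $\le m(i)$. For injectivity/well-definedness: a word $w$ of length $\le m(i)$ represents the trivial element of $W_{(n)}$ iff its $P$-image is trivial and all the decorations $c(x)$ vanish; by the spreading choice, the only nontrivial decorations can occur at supports meeting $\{x_{n(1)},\dots,x_{n(i)}\}$ (no later point is reachable), and by the stabilizing choice the condition "$c(x) = 1$" is the same word problem in $G$ truncated to $\langle b_1, \dots, b_i\rangle$ whether computed in $W_{(n)}$ or $W_{n(1),\dots,n(i)}$ — indeed the letters $b_{i+1}, b_{i+2},\dots$ of $f$ simply never get activated within radius $m(i)$. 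Hence $w =_{W_{(n)}} 1$ iff $w =_{W_{n(1),\dots,n(i)}} 1$, and the same applied to $w^{-1}w'$ gives the bijection of balls.

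The main obstacle I anticipate is the bookkeeping in the second paragraph: carefully arguing that a prefix $w_{k+1}\cdots w_\ell$ of the reduced word carries $\xi$ to within distance $0$ of $x_{n(\ell)}$ precisely at the moments when $f$ contributes $b_\ell$ to the decoration, and that this pattern of "hits" is governed only by the marked Schreier ball around $x_{n(\ell)}$ — so that enlarging $n(\ell)$ within the locally stabilizing regime does not change the resulting word in $\langle b_\ell\rangle$. The equal-order hypothesis is what lets this word be interpreted uniformly in $C_p$ regardless of $\ell$, which is essential for the $n(i)$ to be choosable before knowing $b_i$ itself. Once that combinatorial invariance is pinned down, the rest is the spreading estimate plus a routine induction; Proposition~\ref{prop:Wsubexp} then follows by diagonalizing, since each $W_{n(1),\dots,n(i)}$ has subexponential growth by Corollary~\ref{cor:growthW}, and agreement of radius-$m(i)$ balls for $m(i)\to\infty$ forces the growth of $W_{(n)}$ to be dominated by that of the finite truncations.
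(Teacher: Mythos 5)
Your first paragraph misapplies the inverted-orbit argument of \S\ref{ss:genwreath}: that argument concerns wreath products generated by $S\cup T@\xi$ where the decoration generator sits at a single point $\xi$, in which case the support of the decoration is indeed carried along the inverted orbit of $\xi$. Here $W_{(n)}=\langle P,f\rangle$ is generated by a single function $f$ whose support is the \emph{infinite} set $\{x_{n(1)},x_{n(2)},\dots\}$; conjugating $f$ by a word $q$ of length $\le m(i)-1$ gives $\operatorname{supp}({}^qf)=\{x_{n(k)}q^{-1}:k\ge1\}$, which clusters near every $x_{n(k)}$, not near $\xi$. (Already the word $w=f$, of length one, has decoration $c=f$ with nontrivial value at every single $x_{n(k)}$.) So the support of $c$ lies in balls of radius $m(i)-1$ around the $x_{n(k)}$'s, not in the ball of radius $m(i)$ around $\xi$.

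This error resurfaces in your third paragraph: the claim "the letters $b_{i+1},b_{i+2},\dots$ of $f$ simply never get activated within radius $m(i)$" is false, and the injectivity argument built on it does not stand. Those letters \emph{do} get activated, at points near $x_{n(j)}$ for every $j>i$; what makes the proposition true is that the restriction of $c$ to the ball around $x_{n(j)}$ is \emph{determined} by its restriction to the ball around $x_{n(i)}$, via the marked-ball isomorphism supplied by local stabilization together with the isomorphism $\langle b_j\rangle\cong\langle b_i\rangle$ supplied by the equal-order hypothesis. You touch the right mechanism in your second paragraph ("the value $c(x_{n(i)})$, as a formal word in a single generator of $C_p$, is determined"), but the third paragraph regresses to the incorrect "nothing happens far out" picture. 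The argument should instead run: if two words of length $\le m(i)$ have the same image in $W_{n(1),\dots,n(i)}$, their decorations agree on the balls around $x_{n(1)},\dots,x_{n(i)}$; agreement near $x_{n(i)}$, transported by local stabilization and the order isomorphism, forces agreement near each $x_{n(j)}$ with $j>i$; hence the decorations agree everywhere, and the words have the same image in $W_{(n)}$. (Your sharpening of the spreading constant to $2m(i)+1$ rather than the paper's $m(i)$ is actually the correct threshold — that part of your argument is an improvement, not an error.)
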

\begin{proof}
  Choose $n(i)$ such that $d(x_j,x_k)\ge m(i)$ for all $j\neq k$ with
  $k\ge n(i)$, and such that the balls of radius $m(i)$ around
  $x_{n(i)}$ and $x_j$ coincide for all $j>n(i)$.

  Consider then an element $h\in W_{(n)}$ in the ball of radius
  $m(i)$, and write it in the form $h=(c,g)$ with $c\colon X\to G$ and
  $g\in P$. The function $c$ is a product of conjugates of $f$ by
  words of length $<m(i)$. Its support is therefore contained in the
  union of balls of radius $m(i)-1$ around the $x_j$, with $j$ either
  $\ge n(i)$ or of the form $n(k)$ for $k<i$. In particular, the
  entries of $c$ are in $\langle
  b_1,\dots,b_{i-1}\rangle\cup\bigcup_{j\ge i}\langle b_j\rangle$. For
  $j>n(i)$, the restriction of $c$ to the ball around $x_j$ is
  determined by the restriction of $c$ to the ball around $x_{n(i)}$,
  via the identification $b_i\mapsto b_j$, because the neighbourhoods
  in $X$ coincide and all cyclic groups $\langle b_j\rangle$ are
  isomorphic.

  It follows that the element $h\in W$ is uniquely determined by the
  corresponding element in $W_{n(1),\dots,n(i)}$.
\end{proof}

\begin{proof}[Proof of Proposition~\ref{prop:Wsubexp}]
  Let $Z=\langle z\rangle$ be a cyclic group whose order (possibly
  $\infty$) is divisible by the order of all the $b_i$'s.  We replace
  $G$ by $G\times Z$ and each $b_i$ by $b_i z$, so as to guarantee
  that all generators in $G$ have the same order.

  Let $\epsilon_i$ be a decreasing sequence tending to $1$. We now
  construct a sequence $m(i)$ inductively, and obtain the sequence
  $n(i)$ by Proposition~\ref{prop:Wconvergence}, making always sure
  that $m(i)$ depends only on $m(j),n(j)$ with $j<i$.

  Denote by $v_i$ the growth function of the group
  $W_{n(1),\dots,n(i)}$. Since the group $W_{n(1),\dots,n(i)}$ is
  contained in $G\wr_X P$, it has subexponential growth. Therefore,
  there exists $m(i)$ be such that
  \[v_i(m(i))\le \epsilon_i^{m(i)}.
  \]
  By Proposition~\ref{prop:Wconvergence}, the terms
  $n(i+1),n(i+2),\dots$ can be chosen in such a manner that the balls
  of radius $m(i)$ coincide in $W_{(n)}$ and $W_{n(1),\dots,n(i)}$.

  Denote now by $w$ the growth function of $W_{(n)}$. We then have
  $w(m(i))\le\epsilon_i^{m(i)}$ for all $i\in\N$. Therefore,
  \[w(R)\le\epsilon_i^{R+m(i)}\text{ for all }R>m(i),\]
  so $\limsup\sqrt[R]{w(R)}\le\epsilon_i$ for all $i\in\N$. Thus the
  growth of $W_{(n)}$ is subexponential.
\end{proof}

Finally, the rectifiability of the sequence $(x_i)$ guarantees that
functions with singleton support and arbitrary values in $[G,G]$
belong to $W_{(n)}$ for all sequences $n$:
\begin{lem}\label{lem:contains [G,G]}
  If the sequence $(x_i)$ is rectifiable, then $[W_{(n)},W_{(n)}]$
  contains $[G,G]$ as a subgroup for all choices of
  $n=n(1)<n(2)<\cdots$.
\end{lem}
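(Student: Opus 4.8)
The plan is to realize $[G,G]$ inside $[W_{(n)},W_{(n)}]$ as the subgroup of those base-group functions supported at the single point $y_1:=x_{n(1)}$ with value in $[G,G]$; that is, via the map $c\mapsto c@y_1$. This map is visibly an injective homomorphism of $G$, hence of $[G,G]$, into the base group $G^X$ of $G\wrwr_X P$, so the entire content of the lemma is that the image of $[G,G]$ lies in $[W_{(n)},W_{(n)}]$. First I would pass to the subsequence $y_m:=x_{n(m)}$; it is again rectifiable — the collision-freeness for $(y_m)$ follows from that of $(x_i)$ since $m\mapsto n(m)$ is injective — one has $f(y_m)=b_m$ while $f$ vanishes off $\{y_m:m\ge1\}$, and (by rectifiability) all $y_m$ lie in a single $P$-orbit.

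The key computation I would carry out is this. Fix indices $i,j$ and use rectifiability of $(y_m)$ to choose $p\in P$ with $y_jp=y_i$ and such that $y_kp=y_\ell$ forces $k\in\{j,\ell\}$. Since the commutator in the direct product $G^X$ is taken coordinatewise, $[f,f^p]$ is the function $x\mapsto[f(x),f(xp^{-1})]$. A coordinate $x$ can contribute non-trivially only when $x=y_a$ and $xp^{-1}=y_b$ for some $a,b$, i.e.\ $y_bp=y_a$; the chosen property of $p$ then forces $b=j$ — whence $y_a=y_jp=y_i$, so $a=i$ and the coordinate value is $[b_i,b_j]$ — or $b=a$, whence $p$ fixes $y_a$ and the bracket $[b_a,b_a]$ is trivial. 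Thus $[f,f^p]=[b_i,b_j]@y_i$, which lies in $[W_{(n)},W_{(n)}]$ because $f$ and $f^p=p^{-1}fp$ both lie in $W_{(n)}$. Conjugating by an element of $P\le W_{(n)}$ carrying $y_i$ to $y_1$, and using that $[W_{(n)},W_{(n)}]$ is normal in $W_{(n)}$, yields $[b_i,b_j]@y_1\in[W_{(n)},W_{(n)}]$ for all $i,j$.

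It then remains to upgrade these particular commutators to all of $[G,G]@y_1$. I would set $N=\{c\in G:c@y_1\in[W_{(n)},W_{(n)}]\}$, a subgroup of $G$ containing every $[b_i,b_j]$ by the previous step. To see $N$ is normal, write a given $g\in G$ as a word $b_{k_1}^{\epsilon_1}\cdots b_{k_s}^{\epsilon_s}$, pick $p_t\in P$ with $y_{k_t}p_t=y_1$, and form $v=(f^{\epsilon_1})^{p_1}\cdots(f^{\epsilon_s})^{p_s}$; then $v\in G^X\cap W_{(n)}$ and $v(y_1)=\prod_t b_{k_t}^{\epsilon_t}=g$. Since conjugation of a base-group element by the base-group element $v$ is coordinatewise, $(c@y_1)^v=c^{v(y_1)}@y_1=c^g@y_1$, which lies in $[W_{(n)},W_{(n)}]$ whenever $c\in N$; so $c^g\in N$. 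Hence $N$ is a normal subgroup of $G$ containing all $[b_i,b_j]$, the images of the $b_i$ in $G/N$ commute pairwise and generate it, so $G/N$ is abelian and $[G,G]\le N$ — exactly the claim that $c@y_1\in[W_{(n)},W_{(n)}]$ for every $c\in[G,G]$. This exhibits the desired imbedding $c\mapsto c@y_1$ of $[G,G]$ into $[W_{(n)},W_{(n)}]$.

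The one step I expect to demand real care is the coordinate bookkeeping in the second paragraph: one must verify that rectifiability of $(y_m)$ genuinely rules out \emph{every} spurious coincidence $y_bp=y_a$ apart from the wanted $(a,b)=(i,j)$ and the harmless diagonal $a=b$, and one must track the conventions $f^p={}^{p^{-1}}f$ and $({}^gf)(x)=f(xg)$ consistently so that the translating element of $P$ really moves the support onto $y_1$ rather than a nearby point. The remaining manipulations are routine wreath-product algebra.
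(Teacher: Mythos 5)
Your proof is correct and follows essentially the same route as the paper: rectifiability produces the elementary commutator $[b_i,b_j]$ as a singleton-supported base-group element of $[W_{(n)},W_{(n)}]$, and conjugation by base-group elements of $W_{(n)}$ whose value at the base point ranges over $G$ shows that the set of $c$ with $c@(\text{base point})\in[W_{(n)},W_{(n)}]$ is normal in $G$, hence contains $[G,G]$. The only differences are presentational: you place the base point at $y_1=x_{n(1)}$ rather than $x_0$ and form $[f,f^p]$ then translate it to $y_1$, where the paper forms $[f^{g_i},f^{g_j}]$ to land on $x_0$ directly; and you spell out the conjugating element $v$ with $v(y_1)=g$ for an arbitrary word $g$, where the paper checks conjugation only by $f^{g_i}$ (which agrees with $\iota(b_i)$ on the relevant coordinate) for the generators $b_i$.
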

\begin{proof}
  We denote by $\iota\colon G\to G^X\rtimes P$ the imbedding of $G$
  mapping the element $b\in G$ to the function $X\to G$ with value $b$
  at $x_0$ and $1$ elsewhere. We abbreviate $W=W_{(n)}$. We shall show
  that $[W,W]$ contains $\iota([G,G])$. For this, denote by $H$ the
  subgroup $\iota([G,G])\cap[W,W]$.

  We first consider an elementary commutator $g=[b_i,b_j]$. Let
  $g_i,g_j\in P$ respectively map $x_i,x_j$ to $x_0$, and be such that
  $g_i g_j^{-1}$ maps no $x_k$ to $x_\ell$ with $k\neq\ell$, except
  for $x_i g_i g_j^{-1}=x_j$. Consider $[f^{g_i},f^{g_j}]\in [W,W]$;
  it belongs to $G^X$, and has value $[b_i,b_j]$ at $x_0$ and is
  trivial elsewhere, so equals $\iota(g)$ and therefore $\iota(g)\in
  H$.

  We next show that $H$ is normal in $G^X$. For this, consider $h\in
  H$. It suffices to show that $h^{\iota(b_i)}$ belongs to $H$ for all
  $i$. Now $h^{\iota(b_i)}=h^{f^{g_i}}$ belongs to $H$, and we are
  done.
\end{proof}

We are now ready to complete the proof of Theorem~\ref{thm:imbed}.  By
Proposition~\ref{prop:imbed'}, the countable, locally subexponentially
growing group $B$ imbeds in $[G,G]$ for a countable, locally
subexponentially growing group $G$. Let $(b_1,b_2,\dots)$ be a
generating set for $G$. By Proposition~\ref{prop:Wsubexp}, there
exists an increasing sequence $(n(i))$ such that the group $W=W_{(n)}$
has subexponential growth. By Lemma~\ref{lem:contains [G,G]}, $[G,G]$
imbeds in $[W,W]$, so $B$ imbeds in $[W,W]$ and we are done.

\begin{exse}
  Give examples of sequences $(x_i)$ that are only spreading, or only
  stabilizing, and such that $W_{(n)}$ has exponential growth, even
  when the sequence $(n(i))$ grows arbitrarily fast.
\end{exse}

%%%%%%%%%%%%%%%%%%%%%%%%%%%%%%%%%%%%%%%%%%%%%%%%%%%%%%%%%%%%%%%%
\section{Groups of non-uniform exponential growth}\label{ss:nug}
Recall that $v_{G,S}(R)$ denotes the growth function of a group $G$
generated by a finite set $S$. The \emph{volume entropy} of $(G,S)$ is
\[\lambda_{G,S}:=\lim_{R\to\infty}\frac{\log v_{G,S}(R)}R.\]
The limit exists because the function $v_{G,S}$ is submultiplicative
(namely, $v_{G,S}(R_1+R_2)\le v_{G,S}(R_1) v_{G,S}(R_2)$.) Indeed,
apply the following lemma to the sequence $(\log v_{G,S}(n))_{n\ge1}$:
\begin{lem}[Fekete~\cite{fekete:verteilung}]
  Let $(a_n)$ be a subadditive sequence: $a_{n+m}\le a_n+a_m$ for all
  $m,n\ge1$. Then $\lim a_n/n$ exists and equals $\inf a_n/n$.
\end{lem}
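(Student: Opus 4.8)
The plan is to set $L:=\inf_{n\ge1}a_n/n\in[-\infty,\infty)$ and show that $a_n/n\to L$. One inequality is immediate: since every term $a_n/n$ is at least $L$, we automatically get $\liminf_{n\to\infty}a_n/n\ge L$. All the work is in the reverse bound $\limsup_{n\to\infty}a_n/n\le L$.

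For that, I would fix an arbitrary $m\ge1$ and perform Euclidean division: given $n$, write $n=qm+r$ with $q\ge0$ and $1\le r\le m$. Iterating subadditivity gives $a_n=a_{qm+r}\le q\,a_m+a_r$, hence
\[
\frac{a_n}{n}\le\frac{q\,a_m}{n}+\frac{a_r}{n}.
\]
Now let $n\to\infty$ with $m$ held fixed: then $q/n\to1/m$, while $r$ ranges over the finite set $\{1,\dots,m\}$ so $a_r$ stays bounded and $a_r/n\to0$. Therefore $\limsup_{n\to\infty}a_n/n\le a_m/m$. Since $m$ was arbitrary, taking the infimum over $m$ yields $\limsup_{n\to\infty}a_n/n\le L$.

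Combining the two inequalities gives $L\le\liminf a_n/n\le\limsup a_n/n\le L$, so the limit exists and equals $L=\inf_n a_n/n$, as claimed. The only point needing a word of care is the case $L=-\infty$: the displayed estimate still shows that for every $m$ one has $\limsup_n a_n/n\le a_m/m$, and since $a_m/m$ can be made arbitrarily negative this forces $a_n/n\to-\infty$ as well, so no separate treatment is really required. There is no genuine obstacle here beyond bookkeeping the remainder term $a_r$; the argument is entirely elementary.
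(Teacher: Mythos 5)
Your proof is correct and follows essentially the same route as the paper: Euclidean division $n=qm+r$, iterated subadditivity to get $a_n\le q\,a_m+a_r$, and then letting $n\to\infty$ with $m$ fixed to obtain $\limsup a_n/n\le a_m/m$ before taking the infimum over $m$. The only (harmless) cosmetic difference is that you range $r$ over $\{1,\dots,m\}$, avoiding any reference to $a_0$, and you take the infimum directly rather than via an auxiliary $B>\inf a_n/n$ as the paper does.
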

\begin{proof}
  Set $A=\inf a_n/n$ and consider any $B>A$. Choose $k\ge1$ such that
  $a_k/k<B$. Every $n\in\N$ may be written in the form $n=r k+s$ with
  $r\in\N$ and $s\in\{0,\dots,k-1\}$. Thus
  \[\frac{a_n}n=\frac{a_{r k+s}}n\le\frac{r a_k+a_s}n=\frac{a_k}k\frac{n-s}n+\frac{a_s}n,\]
  so $\limsup_{n\to\infty}a_n/n\le a_k/k\le B$. Since $B>A$ was
  arbitrary, $\limsup a_n/n\le L$ and we are done.
\end{proof}
Furthermore, the following are equivalent: $G$ has exponential word
growth; $\lambda_{G,S}>0$ for some generating set $S$; and
$\lambda_{G,S}>0$ for all generating sets $S$.

Let $G$ be a group of exponential growth. Note that, even though
$\lambda_{G,S}>0$ for all $S$, one might have $\lambda_{G,S_i}\to0$
along a sequence of generating sets $S_i$. It is easy to see that this
cannot happen for $G$ a free group of rank $k\ge2$; indeed then each
$S_i$ contains a subset of cardinality $k$ generating a free subgroup,
so $\lambda_{G,S_i}\ge\log(2k-1)>0$ for all $i$.  Let us say that a
finitely generated group $G$ has \emph{uniform exponential growth} if
$\inf_S\lambda_{G,S}>0$, and \emph{non-uniform exponential growth} if
$\lambda_{G,S}>0$ for all $S$ yet $\inf_S\lambda_{G,S}=0$.

The existence of groups of non-uniform exponential growth is asked by
Gromov in~\cite{gromov:metriques}*{Remarque~5.12};
see~\cite{harpe:uniform} for a survey. There have been quite a few
positive results: Osin showed in~\cite{osin:entropy} that virtually
solvable groups have uniform exponential growth unless they are
virtually nilpotent; Eskin, Mozes and Oh obtained the same result
in~\cite{eskin-m-o:uniform} for finitely generated linear groups in
characteristic $0$; Koubi showed in~\cite{koubi:unifexpo} that
word-hyperbolic groups have exponential growth unless they are
virtually cyclic.

A \emph{Golod-Shafarevich group} is a residually-$p$ group such that
the associated Hopf algebra $\bigoplus_{n\ge0}\varpi^n/\varpi^{n+1}$
from~\S\ref{ss:algebralb} has exponential growth; equivalently, the
Lie algebra $\bigoplus_{n\ge1}\gamma_n(G)/\gamma_{n+1}(G)$
from~\eqref{eq:liealg} has exponential growth. Since by
Proposition~\ref{prop:alg-gp} the growth of $\overline{\Bbbk G}$ is
always a lower bound for the growth of $G$, such groups have uniformly
exponential growth.

Among groups $G$ of uniform exponential growth, one may ask whether
the infimal entropy $\inf_{\langle S\rangle=G}\lambda_{G,S}$ is
realised. Recall that a group $G$ is \emph{Hopfian} if it is not
isomorphic to a proper quotient of itself. Sambusetti proves
in~\cite{MR2000j:20056} that if $G$ is a free product $G=G_1*G_2$ with
$G_1$ non-Hopfian and $G_2$ non-trivial, then $\inf_{\langle
  S\rangle=G}\lambda_{G,S}$ is not attained.

It was widely suspected, since the appearance of groups of
intermediate growth, that examples of groups of non-uniform
exponential growth should exist. The first examples of groups of
non-uniform exponential growth were exhibited by Wilson,
see~\cite{wilson:ueg}. We now give a simple construction showing that
such groups abound:
\begin{thm}[\cite{bartholdi-erschler:orderongroups}*{Theorem~E}]\label{thm:nueg}
  Every countable group may be imbedded in a group of non-uniform
  exponential growth.

  Furthermore, let $\eta_+\approx2.46$ denote the positive root of the
  polynomial $T^3-T^2-2T-4$. Then the group $W$ in which the countable
  group imbeds may be required to have the following property: there
  is a constant $K$ such that, for all $R>0$, there exists a
  generating set $S$ of $W$ with
  \[v_{W,S}(r) \le \exp(K r^{\log2/\log\eta_+})\text{ for all }r\in[0,R].
  \]
\end{thm}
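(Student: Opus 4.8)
The plan is to build $W$ as a permutational wreath product over the first Grigorchuk group, with a base function of infinite but carefully placed support, so that $W$ globally contains a free subgroup (hence has exponential growth) while, at each finite scale, a suitable regeneration of $W$ makes it indistinguishable from one of the subexponential groups of Section~\ref{ss:perm}. Throughout, write $\alpha=\log2/\log\eta_+$.

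\textbf{Step 1 (reduction).} Given a countable group $B$, Proposition~\ref{prop:neumann-n:1} embeds $B$ into $[G_0,G_0]$ for a countable $G_0$. Replacing $G_0$ by $G_0\times H$ with $H$ chosen so that $[H,H]$ contains a non-abelian free group $F_2$ (e.g.\ $H=\bigoplus_{\N}(F_2\wr\Z)$, so that even cofinitely many ``blocks'' of $H$ still witness an $F_2$ inside $[H,H]$), and then by $G_0\times\langle z\rangle$ with $z$ of infinite order, we may assume: $B\hookrightarrow[G_0,G_0]$; $[G_0,G_0]\supseteq F_2$; and $G_0$ has a generating sequence $b_1,b_2,\dots$ all of whose entries have the same (infinite) order. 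None of these changes affects countability.

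\textbf{Step 2 (the group $W$ and exponential growth).} Let $P=G_\omega$ with $\omega=(012)^\infty$, acting on $X=\8^\infty P$, whose Schreier graph is the half-line~\eqref{eq:grigorbit}, and recall from Lemma~\ref{lem:Gspreading} that $x_i:=\8^\infty\9^i$ is a rectifiable, spreading, locally stabilizing sequence with $d(x_0,x_i)=2^i-1$. Put $f\colon X\to G_0$, $f(x_i)=b_i$ and $f\equiv1$ otherwise, and $W:=\langle P,f\rangle\le G_0\wrwr_X P$, a finitely generated group. By Lemma~\ref{lem:contains [G,G]} (rectifiability of $(x_i)$) we have $[G_0,G_0]@x_0\subseteq[W,W]$, whence $B\hookrightarrow[W,W]\le W$; in particular $W$ contains $F_2$, so $v_W(R)\ge v_{F_2}(R/C)\succsim 3^{R/C}$ for a constant $C$, i.e.\ $W$ has exponential growth, and $\lambda_{W,S}>0$ for every generating set $S$.

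\textbf{Step 3 (slow generating sets, and conclusion).} Fix $R>0$ and choose $M=M(R)$ with $2^M>R$. Let $f_R$ be $f$ with the ``shallow'' portion of its support relocated below scale $R$: formally, pick indices $n_0<n_1<\cdots$ with $2^{n_0}>2R$, and set $f_R(x_{n_\ell})=b_\ell$ for $\ell\le i(R):=\lfloor R^{1/4}\rfloor$, $f_R(x_i)=b_i$ for $i>i(R)$, and $f_R\equiv1$ elsewhere. Using the rectifying elements of Lemma~\ref{lem:Gspreading}(3) exactly as in the proof of Lemma~\ref{lem:contains [G,G]}, one recovers inside $\langle P,f_R\rangle$, up to finitely many explicit correction elements $b_\ell@x_\ell,\ b_\ell@x_{n_\ell}$ ($\ell\le i(R)$), the original $f$; adjoining these corrections gives a generating set $S_R$ of $W$. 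Since the entire ``deep'' part of $\mathrm{supp}(f_R)$ lies outside $B_X(R)$, and $(x_i)$ is spreading and locally stabilizing, the argument of Proposition~\ref{prop:Wconvergence} shows that for all $r\le R$ the ball $B_{W,S_R}(r)$ is accounted for by the ball of radius $r$ in $P=G_{(012)^\infty}$ for its standard generators, times the contribution of the $O(i(R))$ correction elements, each of which ranges over a \emph{cyclic} group $\langle b_\ell\rangle$ placed along the line $X$ at ``cost'' $\le r$; since a cyclic lamp placed along the half-line at cost $r$ yields only $\exp\!\big(O(\sqrt r\log r)\big)$ possibilities and $i(R)\sqrt R\log R=o(R^\alpha)$, combining with $v_P(r)\precsim\exp(r^\alpha)$ (end of §\ref{ss:ssgrowth}, cf.\ Theorem~\ref{thm:grigWgrowth}) gives a constant $K$ independent of $R$ with $v_{W,S_R}(r)\le\exp(Kr^\alpha)$ for all $r\le R$. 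Finally, by Fekete's lemma $\lambda_{W,S_R}=\inf_r\log v_{W,S_R}(r)/r\le\log v_{W,S_R}(R)/R\le KR^{\alpha-1}\to0$, so $\inf_S\lambda_{W,S}=0$ while $\lambda_{W,S}>0$ for all $S$ by Step~2; thus $W$ has non-uniform exponential growth, and the quantitative statement holds with this $K$.

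\textbf{Main obstacle.} The delicate point is Step~3: the regenerated function $f_R$ must simultaneously (i) generate $W$ together with $P$ and finitely many corrections — which forces its support to carry \emph{all} of the $b_i$ and so to be infinite and not wholly beyond scale $R$ — and (ii) contribute only $\exp(o(R^\alpha))$ to balls of radius $R$, which requires that every copy of $F_2$ inside $W$ have all of its generators pushed to $S_R$-length $\gtrsim R$, while the finitely many values still visible below scale $R$ generate only cyclic groups. Controlling this bookkeeping — reconstructing $f$ from $f_R$ without creating spurious overlaps, via the rectifiability of $(x_i)$, and estimating the visible contribution uniformly in $R$ — is where essentially all the work lies; the rest is the soft combination of exponential growth with Fekete's lemma.
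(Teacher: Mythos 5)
Your proposal takes a genuinely different route from the paper's proof, and it contains a gap that the paper's approach is specifically designed to avoid.

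The paper's proof is much cleaner: it embeds $B$ via Theorem~\ref{thm:hnntorsion} into a $2$-generated group $G=\langle s,t\rangle$ whose generators have order~$5$, and then takes $W=G\wr_X G_{012}$, the \emph{restricted} permutational wreath product (a single finitely generated group fixed once and for all). The varying data is the generating set $S_i=\{a,b,c,d,s@x_0,t@x_i\}$, with $x_i$ moving deep into the Schreier graph. Since the balls of radius $R$ around $x_0$ and $x_i$ in $X$ are disjoint and isomorphic once $d(x_0,x_i)\ge 2R$, a word of length $\le R$ in $S_i$ can only stack lamp values from $\langle s\rangle$ near $x_0$ and from $\langle t\rangle$ near $x_i$, and \emph{because $s$ and $t$ have order $5$} these visible lamp contributions are governed by the \emph{finite} group $C_5\times C_5$. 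Hence the $R$-ball of $(W,S_i)$ is identified with that of $(C_5\times C_5)\wr_X G_{012}$, and Theorem~\ref{thm:grigWgrowth} gives the uniform bound $\exp(K R^{\alpha})$ directly. Non-uniformity then follows trivially from $K R^{\alpha}<\epsilon R$ for $R$ large.

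Two concrete problems with your construction. First, $f_R$ is not manifestly an element of $W=\langle P,f\rangle$: you change the values of $f$ at the shallow positions $x_\ell$ and relocate them to $x_{n_\ell}$, but Lemma~\ref{lem:contains [G,G]} only gives you that $[G_0,G_0]@x_0\subseteq W$, not single-lamp elements $b_\ell@x$ for arbitrary $b_\ell\notin[G_0,G_0]$. Without those, $f_R$ and your ``correction elements'' need not lie in $W$, so $S_R$ need not be a generating set of $W$ at all. Second, and more fundamentally, you arrange in Step~1 for your generators $b_i$ to have the \emph{same infinite order}, because Proposition~\ref{prop:Wconvergence} requires equal orders — but infinite order is exactly what ruins the quantitative estimate. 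A $\Z$-valued lamp visible inside the radius-$R$ ball is not bounded by a fixed finite group, and your claim that it contributes $\exp(O(\sqrt r\log r))$ is unjustified (Theorem~\ref{thm:wreathgrowth} only gives subexponentiality for $H$ of subexponential growth, with no rate comparable to $\exp(r^\alpha)$). The paper sidesteps both problems at once by pushing the bounded-order requirement of Theorem~\ref{thm:hnntorsion} into Step~1, so the visible local picture at every scale is the \emph{same finite-group} wreath product $W'=(C_5\times C_5)\wr_X G_{012}$, and the growth rate $\exp(K R^\alpha)$ comes for free from Theorem~\ref{thm:grigWgrowth}.

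Your framing of the ``main obstacle'' is accurate — the bookkeeping of reconstructing $f$ while keeping visible contributions small is indeed where the work lies — but the remedy is not to push infinite-order lamps beyond scale $R$; it is to choose generators of the same bounded finite order from the start.
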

\begin{proof}
  Let $B$ be a countable group. By Theorem~\ref{thm:hnntorsion}, one
  may imbed $B$ into a group $G$ generated by two elements $s,t$ of
  order $5$. Without loss of generality, assume that $G$ has
  exponential growth (if needed, replace first $B$ by $B\times F_2$).

  The group $W$ in which $G$ imbeds is the wreath product
  $G\wr_X G_{012}$ of $G$ with the first Grigorchuk group.  We also
  consider $A=C_5\times C_5=\langle s',t'\rangle$, and the wreath
  product $W'=A\wr_X G_{012}$.

  We consider the points $x_0=\9^\infty$ and $x_i=\9^\infty\8\9^i$ for
  all $i\ge1$ in the Schreier graph $X$, and the generating sets
  $S_i=\{a,b,c,d,s@x_0,t@x_i\}$ of $W$ and
  $S'=\{a,b,c,d,s'@x_0,t'@x_0\}$ of $W'$.

  Note that the sequence $(x_i)$ is spreading and locally stabilizing
  (see Definitions~\ref{def:spreading} and~\ref{def:locally
    stabilizing}); better, for all $R$ the radius-$R$ balls around
  $x_0$ and $x_i$ in $X$ are isomorphic as labelled graphs for all $i$
  large enough, because the action on a sequence $\in\{\9,\8\}^\infty$
  of an element of $G_{012}$ of length $R$ depends only on the last
  $\lceil\log_2(R)\rceil$ symbols of the sequence.

  We now claim that, for all $R\in\N$, there exists $i$ such that the
  balls of radius $R$ in the Cayley graphs of $(W,S_i)$ and $(W',S')$
  coincide. By Theorem~\ref{thm:grigWgrowth}, there exists a constant
  $K$ such that, for all $R\in\N$, the ball of radius $R$ in the
  Cayley graph of $(W',S')$ has cardinality
  $v_{W',S'}(R)\le\exp(K R^{\log2/\log\eta_+})$. Assuming the claim,
  we get $v_{W,S_i}(R)\le\exp(K R^{\log2/\log\eta_+})$, from which the
  second claim of the theorem follows.

  For every $\epsilon>0$ there exists $R$ such that
  $K R^{\log2/\log\eta_+}<\epsilon R$, so
  $v_{W,S_i}(R)\le\exp(\epsilon R)$ and $\lambda_{W,S_i}\le\epsilon$
  for some $i$. It follows then that $W$ has non-uniform exponential
  growth.

  It remains to prove the claim. Given $R\in\N$, let $i$ be large
  enough so that the distance between $x_0$ and $x_i$ in $X$ is at
  least $2R$. Consider a word $w$ in $S_i$ of length $\le R$, and let
  $w'$ be the corresponding word in $S'$ obtained by replacing
  $s@x_0,t@x_i$ respectively by $s'@x_0,t'@x_0$. We show that $w$
  represents the identity in $W$ if and only if $w'$ represents the
  identity in $W'$.

  Write $w=(c,g)$ in $W$, with $c\colon X\to G$ and $g\in G_{012}$.
  Similarly, write $w'=(c',g)$ in $W'$, with $c'\colon X\to A$ and the
  same $g\in G_{012}$. Note that the support of $c$ is contained in
  the union of the balls of radius $R$ around $x_0$ and $x_i$, and
  these balls are isomorphic and disjoint. Therefore, $c$ can be
  written in the form $c=c_1c_2$ with $c_1\colon X\to\langle s\rangle$
  and $c_2\colon X\to\langle t\rangle$, and $c_1,c_2$ have disjoint
  support so they commute. The function $c'$ may correspondingly be
  written as $c'=c'_1c'_2$ with $c'_1\colon X\to\langle s'\rangle$
  obtained by composing $c_1$ with the isomorphism
  $\langle s\rangle\to\langle s'\rangle$, and
  $c'_2\colon X\to\langle t'\rangle$ obtained by composing the
  isomorphism from the radius-$R$ ball around $x_0$ to the radius-$R$
  ball around $x_i$, the map $c_2$, and the isomorphism
  $\langle t\rangle\to\langle t'\rangle$. Therefore, $c'=1$ if and
  only if $c=1$, so the balls in $W$ and $W'$ are isomorphic.
\end{proof}

%%%%%%%%%%%%%%%%%%%%%%%%%%%%%%%%%%%%%%%%%%%%%%%%%%%%%%%%%%%%%%%%
%\section{Random walks}
%measure; entropy; random walk. amenability, Poisson
%boundary. Kaimanovich-Vershik.

%%%%%%%%%%%%%%%%%%%%%%%%%%%%%%%%%%%%%%%%%%%%%%%%%%%%%%%%%%%%%%%%
%\section{Distortion}
%for every function, there exists a group of subexp. growth with
%worse distortion into Hilbert space.

%present imbedding of $G_\omega$ into Hilbert space, by taking
%$H=L^2(boundary)$, imbedding isometrically for each $\xi\in boundary$
%the Schreier graph $G/G_\xi$ into $\mathbb C$ with $G_\xi$ mapping to
%$0$ and $a G_\xi$ mapping to $1$, and sending $g\in G$ to $(\xi\mapsto
%position of \xi g)$. Compute its distortion... should be not too bad.

\begin{bibdiv}
\begin{biblist}

\font\cyreight=wncyr8
\bibselect{math}
\end{biblist}
\end{bibdiv}

\end{document}